\newcommand{\theoremref}[1]{\hyperref[#1]{Theorem~\ref*{#1}}}
\newcommand{\claimref}[1]{\hyperref[#1]{Claim~\ref*{#1}}}
\newcommand{\situationref}[1]{\hyperref[#1]{Situation~\ref*{#1}}}
\newcommand{\lemmaref}[1]{\hyperref[#1]{Lemma~\ref*{#1}}}
\newcommand{\definitionref}[1]{\hyperref[#1]{Definition~\ref*{#1}}}
\newcommand{\propositionref}[1]{\hyperref[#1]{Proposition~\ref*{#1}}}
\newcommand{\conjectureref}[1]{\hyperref[#1]{Conjecture~\ref*{#1}}}
\newcommand{\corollaryref}[1]{\hyperref[#1]{Corollary~\ref*{#1}}}
\newcommand{\exerciseref}[1]{\hyperref[#1]{Exercise~\ref*{#1}}}
\numberwithin{equation}{section}
\theoremstyle{plain}
\newtheorem{theorem}[equation]{Theorem}
\newtheorem{proposition}[equation]{Proposition}
\newtheorem{lemma}[equation]{Lemma}
\newtheorem{corollary}[equation]{Corollary}
\newtheorem{conjecture}[equation]{Conjecture}
\theoremstyle{definition}
\newtheorem{definition}[equation]{Definition}
\newtheorem{example}[equation]{Example}
\theoremstyle{remark}
\newtheorem{remark}[equation]{Remark}
\newcommand{\Q}{\mathbb{Q}}
\newcommand{\Z}{\mathbb{Z}}
\newcommand{\A}{\mathbb{A}}
\newcommand{\C}{\mathbb{C}}
\renewcommand{\P}{\mathbb{P}}
\DeclareMathOperator{\pr}{pr}
\DeclareMathOperator{\im}{Im}
\DeclareMathOperator{\Gr}{Gr}
\begin{document}
\title{Multiplicativity of perverse filtration for Hilbert schemes of fibered surfaces}
\author{Zili Zhang}
\thanks{The author was partially supported by NSF grant DMS-1301761.}
\maketitle
\begin{center}
{\it Mathematics Department, Stony Brook University}\\
Email: zili.zhang$@$stonybrook.edu
\end{center}

\medskip
\noindent
Let $S\to C$ be a smooth projective surface with numerically trivial canonical bundle fibered onto a curve. We prove the multiplicativity of the perverse filtration with respect to the cup product on $H^*(S^{[n]},\Q)$ for the natural morphism $S^{[n]}\to C^{(n)}$. We also prove the multiplicativity for five families of Hitchin systems obtained in a similar way and compute the perverse numbers of the Hitchin moduli spaces. We show that for small values of $n$ the perverse numbers match the predictions of the numerical version of the de Cataldo-Hausel-Migliorini $P=W$ conjecture and of  the conjecture by Hausel, Letellier and Rodriguez-Villegas.

\tableofcontents

\section{Introduction}
\noindent
\subsection{Non-abelian Hodge theory and the $P=W$ conjecture}
Given any smooth complex projective variety $X$ and any algebraic reductive group $G$, there are two natural moduli spaces associated with them, the moduli of Higgs $G$-bundles $M_D$ and the character variety $M_B$. Simpson proved that these moduli spaces are algebraic varieties, and that they are canonically diffeomorphic to each other. The diffeomorphism induces a canonical identification of the cohomology of these moduli spaces. The algebraicity endows the cohomology groups with mixed Hodge structures. Furthermore, the moduli of Higgs bundle carries a proper Hitchin map to an affine space, so that the rational cohomology $H^*(M_D)$ is endowed with the Leray filtration and with the perverse (Leray) filtration. Under the canonical isomorphism, one may compare the filtrations mentioned above.  One remarkable result is in \cite{P=W}, where de Cataldo, Hausel and Migliorini considered the case when $X$ is any curve with genus $g\ge2$ and $G=GL(2,\mathbb C)$. They proved that, via the non-abelian Hodge theorem, the perverse filtration $P$ on $H^*(M_D)$ for the Hitchin map equals the mixed Hodge theoretic weight filtration $W$ on $H^*(M_B)$; they proved that $P=W.$ There are numerous and diverse Hitchin-type moduli spaces $M_D$ that come with natural Hitchin-type maps $h: M_D \to \mathbb A$ and which have corresponding (twisted) character varieties $M_B$. It is implicitly conjectured in \cite{exchange} and \cite{P=W} that the $P=W$ phenomenon appears whenever the non-abelian Hodge theory holds.

\begin{conjecture}[The $P=W$ conjecture]\label {1.1}
Under the canonical isomorphism between the cohomology groups predicted by non-abelian Hodge theory, the perverse filtration and the weight filtration correspond to each other.
\end{conjecture}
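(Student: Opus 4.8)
The plan is to follow the strategy pioneered in \cite{P=W} for curves and $GL(2,\C)$, reducing the equality of two filtrations to a finite verification on ring generators. First I would fix a collection of cohomology classes generating $H^*(M_B)$ as a $\Q$-algebra — in the curve/$GL_2$ case these are the tautological (Künneth) classes built from the universal bundle, and in general one expects an analogous tautological generating set. On the Betti side the mixed Hodge structure on $H^*(M_B)$ is explicit enough that the weight of each generator can be read off directly. On the Dolbeault side one transports these classes through the non-abelian Hodge homeomorphism and computes their perversity for the Hitchin map $h\colon M_D\to\A$, typically by restricting to the generic (abelian) Hitchin fibers and to a distinguished section, or by a direct support computation.

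The crux is multiplicativity. The weight filtration $W$ on $H^*(M_B)$ is automatically multiplicative, since it underlies a mixed Hodge structure and cup product is a morphism of mixed Hodge structures. The perverse filtration $P$, by contrast, is \emph{not} multiplicative for an arbitrary proper map; its multiplicativity for the Hitchin map is a genuine input, not a formality. Granting that $P$ is multiplicative, so that $P_i\cdot P_j\subseteq P_{i+j}$, the equality $P=W$ on the whole ring follows from its validity on generators together with a count of perverse versus weight Poincaré polynomials, which forces the a priori inclusion to be an equality in each degree. So the proof divides into: (i) multiplicativity of $P$ for the Hitchin map; (ii) a normalization matching $P$ and $W$ on a chosen generating set; (iii) a numerical argument upgrading inclusion to equality.

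The main obstacle — and the reason the conjecture remains open in general — is step (i). This is precisely the type of statement the present paper secures in the model cases of Hilbert schemes of fibered surfaces and of the five associated families of Hitchin systems, where one can exploit the factorization $S^{[n]}\to C^{(n)}$ and the resulting product-like geometry of the maps in play. For a general reductive $G$ and a general base curve no such handle is available, so one would need either a new cohomological mechanism forcing $P_i\cdot P_j\subseteq P_{i+j}$ for Hitchin maps (via the Hitchin section, the scaling $\C^*$-action, or support theorems), or a different route altogether — for instance realizing the non-abelian Hodge comparison isomorphism geometrically, rather than as a bare homeomorphism, so that it visibly intertwines $P$ and $W$. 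I therefore expect essentially all of the difficulty to sit in (i), with (ii) and (iii) amounting to bookkeeping once the right generators and a compatible normalization have been pinned down.
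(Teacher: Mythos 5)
The statement you are addressing is Conjecture \ref{1.1}, and what you have written is a strategy outline, not a proof: you yourself defer the two substantive inputs --- (i) multiplicativity of $P$ for a general Hitchin map and (ii) the matching of $P$ and $W$ on a generating set under the non-abelian Hodge correspondence --- and these are exactly where the content of the conjecture lives. There is no finite verification to fall back on until both are supplied, and (ii) is not mere bookkeeping either: one must know how the chosen generators transport under a correspondence that is only a diffeomorphism, and then compute their perversities, which in \cite{P=W} already required substantial work (supports, the relative Lefschetz package, and explicit tautological classes for $GL_2$). So as a proof of the statement as given, the proposal has a genuine gap; indeed the statement remains a conjecture, and this paper does not claim to prove it in general.

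It is also worth comparing your sketch with what the paper actually does in the cases it can handle. The paper's contribution to step (i) is the multiplicativity theorems (Theorem \ref{multiplicative^n} and Theorem \ref{multiplicative open}), proved not by a general cohomological mechanism for Hitchin maps but by exploiting the special structure $M_D\cong S^{[n]}\to C^{(n)}$: a perverse decomposition for $f\colon S\to C$, functoriality under products and symmetric products, Lehn's ring description, and a sharp perversity bound for small diagonals. For the full $P=W$ statement the paper only treats $n=1$ (Section 5.3), and there the route is different from your generator-matching plan: both filtrations are computed explicitly (Proposition \ref{decomposition 5 cases} on the Dolbeault side, the del Pezzo models of Theorem \ref{10.1} on the Betti side), and the identification is clinched by the intrinsic characterizations $P_1H^2(M_D)=\im\left(H^2_c(M_D)\to H^2(M_D)\right)$ and $W_2H^2(M_B)=\im\left(H^2_c(M_B)\to H^2(M_B)\right)$ (Lemma \ref{10.2}), which match under any diffeomorphism without choosing generators at all. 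For $n\ge 2$ even in these five families the paper obtains only numerical evidence, precisely because the Betti-side weight filtration is not known; your step (ii) would face the same obstruction.
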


\noindent
It had known previously that the weight filtration in mixed Hodge structure is multiplicative with respect to the cup product, which means $\cup:W_kH^i\times W_lH^j\to W_{k+l}H^{i+j}$. The perverse filtration is not multiplicative in general, even for proper maps between smooth projective varieties. So one key step in \cite{P=W} is to establish that the perverse filtration is also multiplicative.

\medskip
\noindent
This paper is devoted to prove that for five special families of Hitchin moduli spaces, the perverse filtrations on the rational cohomology are multiplicative with respect to the cup product. By using the same method, we can also prove the multiplicativity of perverse filtration on the cohomology groups of Hilbert schemes of points on elliptic K3 surfaces defined by the natural elliptic fibration.

\subsection{Perverse filtration}
In this section, we define the perverse filtration on the cohomology group defined by maps between algebraic varieties. We follow the convention in \cite{P=W}. We always work with varieties over the field of complex numbers $\C$. All cohomology groups have rational coefficients.

\medskip
\noindent
A sheaf $\mathcal{F}$ on a variety $Y$ is constructible if there is a finite partition $Y=\sqcup Y_i$ into locally closed smooth subvarieties such that each $\mathcal{F}|_{Y_i}$ is a local system of $\Q$-vector spaces. A constructible complex $K$ on a variety $Y$ is a bounded complex of sheaves whose cohomology sheaves $\mathcal{H}^i(K)$ are all constructible. Denote by $D^b_c(Y)$ the derived category of constructible complexes. The perverse truncation with the middle perversity is denoted by $^\mathfrak{p}\tau_{\le p}$. Given $K\in D^b_c(Y)$, we have a sequence of truncated complexes
$$
\cdots\to {^\mathfrak{p}}\tau_{\le p-1}K\to {^\mathfrak{p}}\tau_{\le p}K\to{^\mathfrak{p}}\tau_{\le p+1}K\to\cdots\to K,
$$
where ${^\mathfrak{p}}\tau_{\le p}K=0$ for $p\ll0$ and ${^\mathfrak{p}}\tau_{\le p}K=K$ for $p \gg0$. The perverse filtration $P$ on the hypercohomology $\mathbb{H}^*(Y,K)$ is defined as
$$
P_p\mathbb{H}^*(Y,K):=\im\left\{\mathbb{H}^*(Y,{^\mathfrak{p}}\tau_{\le p}K)\to \mathbb{H}^*(Y,K)\right\}.
$$
Let $f:X\to Y$ be a proper morphism between smooth quasi-projective varieties. The defect of the semismallness is defined to be $r(f):=\dim X\times_Y X-\dim X$. Denote by $Rf_*$ the derived push-forward functor. Then $H^*(X,\Q)=\mathbb{H}^*(Y,Rf_*\Q_X)$ is naturally endowed with a perverse filtration.  For the purpose of this paper, we want the perverse filtration to be of the type $[0,2r(f)]$, i.e. $P_{-1}=0$ and $P_{2r(f)}=H^*(X;\Q)$. To achieve this, we define
$$
P_pH^*(X;\Q):=P_p\mathbb{H}^{*-\dim X}\left(Y,Rf_*\Q_X[\dim X]\right).
$$
\noindent
Remark 2.1.3 of \cite{hodge} shows that the perverse filtration defined above is of the type $[-r(f),r(f)]$, so a shifting by $r(f)$ leads to our definition of geometric perverse filtration, which will be used throughout the paper.

\begin{definition}[\cite{P=W} 1.4.1]
Let $f:X\to Y$ be a proper morphism between smooth quasi-projective varieties. Let $r(f)=\dim X\times_Y X-\dim X$ be the defect of semismallness. Define the geometric perverse filtration as
$$
P_pH^d(X;\Q):=\im\left\{\mathbb{H}^{d-\dim X+r(f)}\left(Y, {^{\mathfrak{p}}\tau_{\le p}}Rf_*\Q_X[\dim X-r(f)]\right)\to H^d(X,\Q)\right\},
$$
where the $^{\mathfrak{p}}\tau_{\le p}$ is the truncation functor of the standard perverse $t$-structure. The filtration is of the type $[0,2r(f)]$. Let
$$
\Gr_p^PH^d(X;\Q):=P_pH^d(X;\Q)/P_{p-1}H^d(X;\Q).
$$
The perverse filtration is multiplicative with respect to the cup product if the following condition holds for any integer $p,q,i,j$.
$$
P_pH^i(X;\Q)\cup P_qH^j(X;\Q)\to P_{p+q}H^{i+j}(X;\Q).
$$ 
\end{definition}

\begin{definition}
Let $f:X\to Y$ as before. Given a cohomology class $0\neq\alpha\in H^*(X)$, define the perversity of $\alpha$, denoted as $\mathfrak{p}(\alpha)$, to be the integer such that $\alpha\in P_{\mathfrak{p}(\alpha)}H^*(X)$ and $\alpha\not\in P_{\mathfrak{p}(\alpha)-1}H^*(X)$. By our choice of perversity, the function $\mathfrak{p}$ takes value in the interval $[0,2r(f)]$. Define $\mathfrak{p}(0)=-\infty$. We say that the perverse filtration is {\it multiplicative with respect to cup product} if and only if for any two classes $\alpha,\beta\in H^*(X)$, one has $\mathfrak{p}(\alpha\cup\beta)\le \mathfrak{p}(\alpha)+\mathfrak{p}(\beta)$. 
\end{definition}

\begin{definition}
Let $f:X\to Y$ as before. A perverse decomposition for $Rf_*\Q_X$ is an isomorphism in $D^b_c(Y)$
$$
Rf_*\Q_X[\dim X-r(f)]\cong \bigoplus_{i=0}^{2r(f)}\mathcal{P}_i[-i],
$$
where $\mathcal{P}_i$ are suitable perverse sheaves. In particular, we have
$$
P_pH^*(X;\Q)=\mathbb{H}\left(\bigoplus_{i=0}^p\mathcal{P}_i[-i]\right)
$$
and
$$
\Gr_p^PH^*(X;\Q)=\mathbb{H}^*\left(\mathcal{P}_p[-p]\right).
$$
\end{definition}

\noindent
We will use the perversity function $\mathfrak{p}$ for a cohomology class $\alpha\in H^*(X)$ without mentioning the map $X\to Y$ when no confusion arises. We say {\it the perversity of} $\alpha$ for $\mathfrak{p}(\alpha)$. To simplify notation, we say {\it the perverse decomposition for the map} $f:X\to Y$ for the perverse decomposition for $Rf_*\Q_X$. We say {\it the perverse filtration for the map} $f:X\to Y$ for the perverse filtration on the cohomology group $H^*(X;\Q)$ defined by the map $f:X\to Y$.

\begin{example}
We give an example of map between varieties such that the perverse filtration is not multiplicative. Let $f:X\to Y=\P^3$ be the blowing-up at point $o$. Let $E\cong \P^2$ be the exceptional divisor. Then $r(f)=4-3=1$ and we have the perverse decomposition
$$
Rf_*\Q_X[2]=\left\{\Q_o\right\}\oplus\left\{\Q_Y[3]\right\}[-1]\oplus\left\{\Q_o\right\}[-2].
$$
Then we have $P_0H^*(X;\Q)=\mathbb{H}^*(\Q_o)=\Q E$ and $\Gr_2^PH^*(X;\Q)=\mathbb{H}^*(\Q_o[-2])=\Q H_E$, where $H_E$ is the generator of $H^2(E)$. Note that $E^2=-H_E$ in $H^*(X;\Q)$. So $\mathfrak{p}(E\cup E)=2>0=\mathfrak{p}(E)+\mathfrak{p}(E)$.
\end{example}

\begin{definition}
Let $f:X\to Y$ be as before. A basis $\alpha_1,\cdots,\alpha_k$ of cohomology group $H^*(X;\Q)$ is filtered with respect to the perverse filtration if the following property holds for any $0\le p\le 2r(f)$.
$$
P_pH^*(X)=\text{Span }\{\alpha_i\mid \mathfrak{p}(\alpha_i)\le p,1\le i\le k\}.
$$
\end{definition}

\subsection{Main Results}
In this paper, we study a beautiful and classical class of Hitchin systems $h: M_D\to \mathbb A$. They are five families of moduli spaces of parabolic Higgs bundles over $\mathbb P^1$ with marked points, labeled by the affine Dynkin diagrams $\widetilde{A_0}$, $\widetilde{D_4}$, $\widetilde{E_6}$, $\widetilde{E_7}$ and $\widetilde{E_8}$. In this setting, Theorems 4.1 and 5.1 in \cite{grochenig} states that these $M_D$ are the Hilbert schemes $S^{[n]}$ of $n$-points of five distinct smooth algebraic elliptically fibered surfaces $f:S \to \mathbb A^1$. There are, for each of the five surfaces and for each $n\ge 1$, Hitchin maps $h:M_D=S^{[n]}\to \A^n$, hence a perverse filtration $P$ on the cohomology groups $H^*(M_D)$. The construction of Hitchin map $h$ is analogous to the one that starts with an elliptic $K3$ surface $f:S\to \P^1$ and yields the natural map $h:S^{[n]}\to \P^n$. The Main result of the paper is the multiplicativity of these perverse filtrations.

\begin{theorem}[Theorem \ref{multiplicative open}]
For the five families of Hitchin systems defined in section 5.1, the perverse filtration on the rational cohomology $H^*(M_D;\Q)$ defined by the map $h:M_D\to \A$ is multiplicative.
\end{theorem}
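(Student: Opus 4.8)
The plan is to run, for the five open elliptically fibered surfaces $f\colon S\to\A^1$ of section~5.1, the same argument that established multiplicativity in the compact Hilbert scheme case for $S^{[n]}\to C^{(n)}$; the point is that these surfaces, though non-compact, still carry a trivial canonical bundle. Indeed each such $S$ is the complement, inside a relatively minimal elliptic surface $\bar f\colon\bar S\to\P^1$, of the fibers over $\P^1\smallsetminus\A^1$ (the fibers carrying the affine Dynkin decorations), and since $K_{\bar S}$ is a combination of fibral divisors its restriction to $S$ is trivial; hence $K_S\cong\mathcal O_S$, and $K_{S^{[n]}}$ is trivial as well. Moreover $h\colon M_D=S^{[n]}\to\A=\A^n=(\A^1)^{(n)}$ is, just as for its elliptic $K3$ prototype $S^{[n]}\to\P^n$, an abelian fibration over its base: over a reduced divisor $c_1+\cdots+c_n$ the fiber is the product of the elliptic curves $f^{-1}(c_i)$. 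So the structural inputs of the compact argument are all in force, and I would proceed in three steps.

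\emph{First}, compute the perverse decomposition of $Rf_*\Q_S$. Since $S$ is smooth and the generic fiber of $f$ is an elliptic curve, $r(f)=1$ and $Rf_*\Q_S[1]\cong\mathcal P_0\oplus\mathcal P_1[-1]\oplus\mathcal P_2[-2]$, with $\mathcal P_0,\mathcal P_2$ shifted constant sheaves on $\A^1$ and $\mathcal P_1$ assembled from $R^1f_*\Q_S$ together with skyscrapers at the discriminant; over a curve this decomposition is automatically symmetric under relative hard Lefschetz, and it yields a filtered basis of $H^*(S;\Q)$, i.e.\ a basis on which the perversity function $\mathfrak p_f\in\{0,1,2\}$ of $f$ is constant. \emph{Second}, transport this to the Hilbert scheme via G\"ottsche's formula: $\bigoplus_n H^*(S^{[n]};\Q)$ is the Fock space on $H^*(S;\Q)$ generated by the Nakajima operators, and --- as established in the compact case --- there is a compatible perverse decomposition for $h$ whose graded pieces are indexed by the Nakajima monomials $\mathfrak q_{m_1}(\gamma_1)\cdots\mathfrak q_{m_k}(\gamma_k)\,|0\rangle$, with the $\gamma_i$ drawn from the filtered basis above and with perversity given by an explicit additive formula, $\sum_i\bigl(\mathfrak p_f(\gamma_i)+2(m_i-1)\bigr)$ in the present normalization. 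The one place where non-compactness must be watched is the verification that this candidate really is a direct sum decomposition in $D^b_c(\A^n)$ with no hidden summands; this follows from the $\delta$-regular (weak abelian fibration) structure of $h$ over $\A^n$ --- the step where the triviality of $K_{S^{[n]}}$ enters --- by a support-theorem-type argument, or else by restricting the decomposition already known for a compactification $\bar S^{[n]}\to\P^n$ along $\A^n\hookrightarrow\P^n$, using that perverse truncation commutes with restriction to an open subset of the base.

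\emph{Third}, deduce multiplicativity. Since the Nakajima monomials form a basis filtered by the perverse filtration, the inequality $\mathfrak p(\alpha\cup\beta)\le\mathfrak p(\alpha)+\mathfrak p(\beta)$ need only be checked for $\alpha,\beta$ such monomials, and the cup product of two Nakajima monomials expands into Nakajima monomials via the Lehn--Sorger / Li--Qin--Wang description of the ring $H^*(S^{[n]};\Q)$. The hard part --- and essentially the only content beyond bookkeeping --- is to show that in every term of this expansion the perversity, read off from the additive formula above, does not exceed the sum of the two input perversities. This is where the geometry is used rather than merely formal algebra: the boundary (punctual) classes carry large perversity, and, as the blow-up example in the introduction shows, for a general proper map their products can jump perversity; here it is the combination of the symmetric, $[0,2]$ shape of $f$'s perverse decomposition (automatic over the curve $\A^1$) with the specific structure constants of the Hilbert scheme ring that forces the bound to close up. I expect this term-by-term verification --- which in the end reduces to an inequality about partitions and their ``ages'' --- to be where nearly all of the work lies.
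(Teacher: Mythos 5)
Your plan reproduces the paper's overall strategy (perverse decomposition of $Rf_*\Q_S$ over $\A^1$, transport to the Hilbert scheme, filtered basis, term-by-term check of the cup product in the Lehn--Sorger description), but it stops exactly where the actual content of the theorem begins. You defer ``nearly all of the work'' to an unexecuted ``term-by-term verification,'' and you never supply the two inputs that make that verification close: (i) multiplicativity of the perverse filtration on $H^*(S)$ itself for the surface-to-curve map (needed because the pull-back maps $\varphi^*$ in Lehn's product formula are iterated cup products on $S$; this is Proposition \ref{multiplicative} in the paper), and (ii) a sharp bound for the Gysin push-forward along small diagonals, $\mathfrak{p}(\Delta_{n,*}\gamma)\le\mathfrak{p}(\gamma)+2(n-1)r(f)$, which controls the push-forward maps $\varphi_*$ (Lemmas \ref{pull-back} and \ref{push-forward}). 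The paper stresses that the general perversity bounds for the diagonal are too weak and that this estimate must be extracted from the special geometry: in the projective case via a signed-orthonormal filtered basis built from the relative Hodge--Riemann relations (Proposition \ref{basis}, Proposition \ref{diagonal}), and for the five open surfaces by a direct Borel--Moore computation giving $\Delta_{2,*}(1)=-\sum_i[E_i]\otimes[E_i]$ and the vanishing of all higher diagonal push-forwards (Proposition \ref{diagonal open}). Without these, your claim that the remaining inequality is ``about partitions and their ages'' is not justified -- it is not a purely combinatorial statement. Two further points you leave unaddressed: Lehn's ring description is a theorem about projective surfaces, and for the non-compact $M_D$ the paper has to prove a separate statement (Proposition \ref{4.10}), using the surjectivity of $H^*(\overline{M_D})\to H^*(M_D)$ for the compactification $\widetilde{E\times\P^1/\Gamma}$; and in the open case the Euler class term $e^{g(\sigma,\tau)}$ in Lehn's formula vanishes because $M_D$ is smooth and non-compact, which is what reduces the check to $g(\sigma,\tau)=0$.

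Two smaller corrections. First, the place where you say non-compactness ``must be watched'' -- whether the candidate decomposition of $Rh_*\Q$ is really a direct sum over $\A^n$ -- is not actually a problem: the decomposition theorem and the Hilbert--Chow decomposition of \cite{hilb} hold for proper maps of smooth quasi-projective varieties, so Proposition \ref{4.4} is formal and no support-theorem or $\delta$-regularity argument is needed (your alternative, restricting a decomposition for $\overline{S}^{[n]}\to\P^n$ over $\A^n$, would also work but is unnecessary). The genuine non-compactness issues are the ones listed above. Second, your additive perversity formula $\sum_i\bigl(\mathfrak{p}_f(\gamma_i)+2(m_i-1)\bigr)$ does not match the normalization in force here: with the perverse filtration of type $[0,2n]$ used in the paper, the correct shift is $\sum_i\bigl(\mathfrak{p}_f(\gamma_i)+(m_i-1)\bigr)$ (Corollary \ref{perversity} and Proposition \ref{4.15}); the factor $2$ belongs to the weight filtration on the character-variety side, and carrying it into the perversity bookkeeping would derail the term-by-term estimate you propose.
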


\noindent
In the proof, we develop a systematic framework to describe the perverse filtration on $H^*(M_D)$ in terms of the one on $H^*(S)$ defined by the map $S\to \A^1$. We use the decomposition theorem of Beilinson-Bernstein-Deligne-Gabber \cite{BBD} as our main tool to decompose $Rf_*\Q_S$. By using the explicit geometry, we pick a very special basis of $H^*(S)$ which is filtered respect to the perverse filtration, and use it to produce a filtered basis of $H^*(M_D)$. The key step is the determination of the precise perversity of the class of the small diagonals in the product $S^n$; the general bounds for these perversities are too weak for the problem, and we have to improve upon them by using the special geometry. The description of Lehn in \cite{lehn}, of the cohomology ring of $H^*(S^{[n]})$ is a key ingredient in our approach. Since $M_D$ is not compact, we prove that it is also valid for all the moduli spaces $M_D$ we are working with. 

\medskip
\noindent
By using similar techniques, we may also prove the multiplicativity of perverse filtration for Hilbert schemes of projective surfaces with numerically trivial canonical bundle. In fact, one can start with any smooth quasi-projective surfaces $f:S\to C$ fibered over a curve and obtain a map $f^{[n]}:S^{[n]}\to C^{(n)}$, where $C^{(n)}$ is the $n$-th symmetric product of $C$. In this case, since we don't have explicit description of the map, we have to use the ``relative" Hodge-Riemann bilinear relations, due to de Cataldo and  Migliorini \cite{hodge}, to ``calculate" the perverse filtration and to produce special basis for the cohomology groups which are adapted to our problem. We have the following result.

\begin{theorem}[Theorem \ref{multiplicative^n}]
Let $f:S\to C$ be a surjective morphism from a smooth projective surface with numerically trivial canonical bundle to a smooth projective curve. Then the perverse filtration of $H^*(S^{[n]};\Q)$ defined by the morphism $f:S^{[n]}\to C^{(n)}$ is multiplicative, namely, we have
$$
P_pH^*(S^{[n]};\Q)\cup P_{p'}H^*(S^{[n]};\Q)\subset P_{p+p'}H^*(S^{[n]};\Q).
$$
\end{theorem}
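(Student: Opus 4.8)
The plan is to reduce the statement about $S^{[n]}$ to one about the product $S^n$ together with combinatorial control of the $\mathfrak{S}_n$-action, and then to reduce the product case to the case of $S$ itself. First I would establish, following de Cataldo--Migliorini \cite{hodge}, that the perverse filtration on $H^*(S^{[n]};\Q)$ for $f^{[n]}:S^{[n]}\to C^{(n)}$ is computed, up to the natural identification, from the $\mathfrak{S}_n$-invariant part of the perverse filtration on $H^*(S^n;\Q)$ for $f^n:S^n\to C^n$, via the commutative square relating $S^{[n]}\to C^{(n)}$ to $S^n\to C^n$ (the Hilbert--Chow morphism on the source, the quotient on the target). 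More precisely, Lehn's description \cite{lehn} presents $H^*(S^{[n]})$ in terms of the Nakajima operators, and these are built from the classes of the nested Hilbert schemes / small diagonals; hence it suffices to (i) know the perverse filtration on $H^*(S^n)$, (ii) know the perversities of the small-diagonal classes $\Delta_I\subset S^n$, and (iii) check that cup product respects the filtration on the generators.

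The key steps, in order, are as follows. \textbf{Step 1: Multiplicativity for $S$ itself.} For $f:S\to C$ with $K_S$ numerically trivial, show $P_pH^*(S)\cup P_{p'}H^*(S)\subset P_{p+p'}H^*(S)$ for the map to the curve $C$. Since $r(f)\le 1$ here, the perverse filtration has type $[0,2]$, and the decomposition theorem gives $Rf_*\Q_S[\dim S - r(f)]\cong \bigoplus_{i=0}^{2}\mathcal P_i[-i]$; multiplicativity is a short check on degrees, using that the only nontrivial products to control involve $P_1\cup P_1\subset P_2$, which follows from the relative Hard Lefschetz / relative Hodge--Riemann relations of \cite{hodge} once one knows the generic fiber is an elliptic curve or an abelian-surface-type fiber behaves well. \textbf{Step 2: Multiplicativity for $S^n\to C^n$.} The decomposition for a product is the external tensor product of the decompositions of the factors, $Rf^n_*\Q_{S^n}\cong (Rf_*\Q_S)^{\boxtimes n}$, and the cup product on $H^*(S^n)$ is the graded tensor product of the cup products on the factors; so multiplicativity for $S^n$ follows formally from Step 1, with perversity adding across factors. \textbf{Step 3: Perversity of small diagonals.} Compute $\mathfrak{p}(\Delta_I)$ for each partition-type small diagonal class in $H^*(S^n)$. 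Here the general upper bound $\mathfrak{p}(\alpha)\le \deg\alpha$ and lower bounds are not sharp enough; one pins down the exact value using that $\Delta_I$ is the pushforward of the fundamental class of a copy of $S^{|I'|}$ (where $|I'|=n-|I|+1$) and tracking how pushforward interacts with the product decomposition, together with the numerically-trivial-canonical-bundle hypothesis to control the relevant Euler classes / normal bundle contributions. \textbf{Step 4: Assemble via Lehn.} Using Lehn's generators and relations for $H^*(S^{[n]})$, express an arbitrary class as a product of Nakajima-type classes whose perversities are known from Steps 2--3, pass to $\mathfrak{S}_n$-invariants, and conclude that the induced filtration on $H^*(S^{[n]})$ is multiplicative, checking the relations in Lehn's presentation are compatible with the additivity of perversity.

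I expect the main obstacle to be \textbf{Step 3}, the precise determination of the perversities of the small-diagonal classes. The naive bounds coming from the support and degree of $\Delta_I$ are off by exactly the amount that would make the argument fail, so one must use the special geometry: that $f:S\to C$ has numerically trivial canonical bundle forces the fibers to be of a very restricted type (elliptic curves, or in the relatively minimal elliptic-surface picture, with controlled singular fibers), and this is what makes the relative Hard Lefschetz package of \cite{hodge} give the sharp perversity. A secondary obstacle is bookkeeping: matching Lehn's cohomological operators with classes on $S^n$ in a way that respects both the $\mathfrak{S}_n$-equivariant structure and the perverse filtration requires care, since the Nakajima operators are defined via correspondences on $S^{[n-k]}\times S^{[n]}\times S^k$ and one must verify that the correspondences themselves are filtered of the expected perversity. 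Once Steps 1--3 are in place, Step 4 is essentially formal, driven by the additivity $\mathfrak{p}(\alpha\cup\beta)\le\mathfrak{p}(\alpha)+\mathfrak{p}(\beta)$ propagating through a generating set.
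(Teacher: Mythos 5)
Your overall skeleton (surface case, products, sharp control of small diagonals, then Lehn) is the right one, but several of your key steps would fail as written. First, the reduction of $H^*(S^{[n]})$ to the $\mathfrak{S}_n$-invariant part of $H^*(S^n)$ is not correct: $H^*(S^{[n]})$ is strictly larger than $H^*(S^{(n)})=H^*(S^n)^{\mathfrak{S}_n}$, and one needs the decomposition theorem for the Hilbert--Chow morphism (Theorem \ref{hilb-chow}), which produces a sum over all partitions $\nu$ with cohomological and perverse shifts; the correct identification is $P_pH^*(S^{[n]})=\bigoplus_\nu\bigl(P_{p+l(\nu)-n}H^*(S^{l(\nu)})\bigr)^{\mathfrak{S}_\nu}[2l(\nu)-2n]$ (Corollary \ref{perversity}, Proposition \ref{4.15}), and the shift $n-l(\nu)$ is exactly what must cancel against the diagonal-pushforward defect in the end; omitting it the bookkeeping does not close. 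Second, your Step 1 is justified incorrectly: with $r(f)=1$ the filtration has type $[0,2]$, so $P_1\cup P_1\subset P_2$ is vacuous; the genuinely dangerous products are those of $P_0$ with the perversity-one classes of irreducible components of singular fibers, and relative Hard Lefschetz/Hodge--Riemann alone does not imply multiplicativity (the paper's own example, the blow-up of a point in $\P^3$, satisfies the full decomposition package yet is not multiplicative). The paper's Proposition \ref{multiplicative} instead compares the perverse filtration with the ordinary Leray filtration via the explicit surface-over-curve decomposition and checks those particular products geometrically; no hypothesis on $K_S$ is needed there.

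Third, your mechanism for the sharp diagonal estimate is misattributed: it is not the numerically trivial canonical bundle nor the restricted fiber types that give it. The paper's Proposition \ref{diagonal} proves $\mathfrak{p}(\Delta_{n,*}\gamma)\le\mathfrak{p}(\gamma)+2(n-1)r(f)$ for any morphism of smooth projective varieties with multiplicative filtration, by writing $\Delta_{2,*}(\gamma)=\sum_i\alpha_i\otimes(\beta_i\cup\gamma)$ in a filtered basis whose Poincar\'e-dual basis has complementary perversities, $\mathfrak{p}(\alpha^d_{p,i})+\mathfrak{p}(\beta^d_{p,i})=2r(f)$; such a basis is produced by a Gram--Schmidt argument resting on the non-degeneracy of the pairing between opposite perverse graded pieces (Proposition \ref{basis}, using \cite{hodge}). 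The triviality of $K_S$ enters only to invoke the Lehn--Sorger ring structure. Finally, in Step 4 ``checking the relations in Lehn's presentation on Nakajima generators'' is not logically sufficient for multiplicativity of a filtration: one needs an upper bound on the perversity of the cup product of arbitrary classes, which the paper gets from the closed Lehn--Sorger formula on the wreath product $A\{\mathfrak{S}_n\}$, the two functoriality lemmas (pullback along orbit maps does not increase perversity, Lemma \ref{pull-back}; pushforward increases it by at most $2(|I|-|J|)$, Lemma \ref{push-forward}), and the case analysis on the graph defect $g(\sigma,\tau)\in\{0,1\}$ coming from the Euler-class factor $e^{g(\sigma,\tau)}$. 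Filling these four points essentially reconstructs the paper's argument; as proposed, each of them is a genuine gap.
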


\noindent
As a byproduct of our formalism, we can prove that if there is a pair of smooth surfaces $S_P$ and $S_W$, such that the perverse filtration  (defined by some proper map $h:S_P\to \A^1$) on the cohomology $H^*(S_P)$ corresponds naturally to the weight filtration in the mixed Hodge structure on the cohomology of $H^*(S_W)$, then this correspondence induces an identification between the perverse filtration on $H^*(S_P^{[n]})$ and the weight filtration on $H^*(S_W^{[n]})$.  This generalizes \cite{exchange} Theorem 4.1.1.

\medskip
\noindent
There is a numerical version of the $P=W$ conjecture, namely instead of requiring the filtrations to correspond via the non-abelian Hodge theorem, one only requires the dimensions of the graded pieces to be the same. Conjectures in \cite{perverse number} and \cite{hodge number} predict the perverse numbers and mixed Hodge numbers for the moduli of parabolic Higgs bundles over curves with marked points and the corresponding character varieties. In our five families of Hitchin systems, we compute the perverse filtration explicitly, and also the perverse numbers. 

\begin{theorem}[Theorem \ref{8.1}]
Let $f:S\to \A^1$ be the $n=1$ case of the five families of Hitchin systems defined in section 5.1. Denote the perverse numbers by $p^{i,j}_n=\dim \Gr_i^PH^j(S^{[n]})$. Let the perverse Poincar\'e polynomial be $P_n(q,t)=\sum_{i,j}p^{i,j}_nq^it^j$. Then in the $\widetilde{A_0}$ case, the generating series is
$$
\sum_{n=0}^\infty s^nP_n(q,t)=\prod_{m=1}^\infty\frac{(1+s^mq^{m}t^{2m-1})^2}{(1-s^mq^{m-1}t^{2m-2})(1-s^mq^{m+1}t^{2m})}.
$$
In the other four cases $\widetilde{D_4}$, $\widetilde{E_6}$, $\widetilde{E_7}$ and $\widetilde{E_8}$, the generating series are
$$
\sum_{n=0}^\infty s^nP_n(q,t)=
\prod_{m=1}^\infty\frac{1}{(1-s^mq^{m-1}t^{2m-2})(1-s^mq^mt^{2m})^k(1-s^mq^{m+1}t^{2m})}
$$
where $k$ is an integer defined in Proposition \ref{decomposition 5 cases}.
\end{theorem}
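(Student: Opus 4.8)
The plan is to read off the generating series from the perverse‑filtered basis of $H^*(S^{[n]})$ built in the proof of \theoremref{multiplicative open}, fed by the $n=1$ perverse decomposition of Proposition~\ref{decomposition 5 cases}. Fix a basis $\alpha_1,\dots,\alpha_r$ of $H^*(S)$ adapted to the perverse decomposition of Proposition~\ref{decomposition 5 cases} for $f\colon S\to\A^1$, and write $d_i=\deg\alpha_i$ and $p_i=\mathfrak{p}(\alpha_i)$. By Nakajima's realization of $\bigoplus_{n}H^*(S^{[n]})$ as a Fock space together with Lehn's description of the cup product \cite{lehn} — valid for the non‑compact surfaces at hand by the extension proved earlier in the paper — the monomials $\prod_j\mathfrak{q}_{m_j}(\alpha_{i_j})|0\rangle$, indexed by the finite multisets of pairs $(m_j,i_j)$ with all $m_j\ge1$ and $\sum_j m_j=n$ (with the proviso that, for fixed $m$, an index $i$ having $d_i$ odd occurs at most once, since $\mathfrak{q}_m(\alpha_i)$ then squares to zero), form a basis of $H^*(S^{[n]})$, and $\deg\bigl(\prod_j\mathfrak{q}_{m_j}(\alpha_{i_j})|0\rangle\bigr)=\sum_j(d_{i_j}+2m_j-2)$. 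The essential input, which is exactly what the proof of \theoremref{multiplicative open} delivers, is that this basis is filtered for $h\colon S^{[n]}\to\A^n$ and that $\mathfrak{p}\bigl(\prod_j\mathfrak{q}_{m_j}(\alpha_{i_j})|0\rangle\bigr)=\sum_j(p_{i_j}+m_j-1)$.

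Granting this, the theorem becomes a generating‑function manipulation. Since the basis is filtered, $P_n(q,t)=\sum_\beta q^{\mathfrak{p}(\beta)}t^{\deg\beta}$, the sum over the basis monomials $\beta$ just described. Passing to $\sum_{n}s^nP_n(q,t)$ and summing over multisets factors the series as a product over ``letters'' $(m,i)$, $m\ge1$: a letter with $d_i$ even contributes the bosonic factor $(1-s^m q^{p_i+m-1}t^{d_i+2m-2})^{-1}$, and a letter with $d_i$ odd the fermionic factor $(1+s^m q^{p_i+m-1}t^{d_i+2m-2})$. Grouping the letters according to the $n=1$ perverse numbers $p^{p,d}_1=\#\{i:p_i=p,\ d_i=d\}$ (the quantities of the theorem at $n=1$) yields
$$
\sum_{n\ge0}s^nP_n(q,t)=\prod_{m\ge1}\ \prod_{p,\,d}\bigl(1-(-1)^d\,s^m q^{p+m-1}t^{d+2m-2}\bigr)^{-(-1)^d\,p^{p,d}_1}.
$$
It then remains to substitute Proposition~\ref{decomposition 5 cases}. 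In the $\widetilde{A_0}$ case the nonzero perverse numbers are $p^{0,0}_1=1$, $p^{1,1}_1=2$, $p^{2,2}_1=1$, which reproduces the first displayed product; in each of the $\widetilde{D_4}$, $\widetilde{E_6}$, $\widetilde{E_7}$, $\widetilde{E_8}$ cases they are $p^{0,0}_1=1$, $p^{1,2}_1=k$, $p^{2,2}_1=1$ with $k$ as in Proposition~\ref{decomposition 5 cases}, which reproduces the second and explains why among those four only the exponent $k$ changes.

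The real content behind this computation — and the step I expect to be the main obstacle — is the perversity identity $\mathfrak{p}\bigl(\prod_j\mathfrak{q}_{m_j}(\alpha_{i_j})|0\rangle\bigr)=\sum_j(p_{i_j}+m_j-1)$ invoked above, particularly its lower‑bound half. Via Lehn's formulas these monomials are expressed through the classes of the small diagonals of the products $S^n$; multiplicativity of the perverse filtration (\theoremref{multiplicative open}, \theoremref{multiplicative^n}) furnishes the upper bound on the perversities of those diagonal classes, but the general lower bound is too weak, and improving it demands the explicit geometry of the elliptic fibrations $f\colon S\to\A^1$ together with the Poincar\'e--Lefschetz pairing on the non‑compact $S^{[n]}$ — equivalently the relative Hodge--Riemann relations of \cite{hodge} — so as to pin the perversities exactly and to confirm that the resulting filtered generators are linearly independent. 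That analysis is performed in the earlier sections, so for the present theorem nothing is left beyond the substitution and the elementary product expansion above.
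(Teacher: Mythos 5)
Your computation is correct and, at bottom, it is the same count as the paper's: both reduce the generating series to the $n=1$ perverse numbers of \propositionref{decomposition 5 cases} weighted by the cycle-length shifts (degree $+2(m-1)$, perversity $+(m-1)$), with odd-degree classes treated fermionically and even-degree classes bosonically. The paper performs this count sheaf-theoretically, by taking hypercohomology of the perverse decomposition of \propositionref{4.4} (which rests on the Hilbert--Chow decomposition, \theoremref{hilb-chow}) and matching the partition-indexed summands against the expansion of the infinite product, whereas you package it as a sum over a filtered Nakajima/wreath-product monomial basis; these are equivalent bookkeepings. The one point you should fix is the provenance of the key perversity identity $\mathfrak{p}\bigl(\prod_j\mathfrak{q}_{m_j}(\alpha_{i_j})|0\rangle\bigr)=\sum_j(p_{i_j}+m_j-1)$: it is \emph{not} delivered by (the proof of) \theoremref{multiplicative open}, and its lower-bound half does not require the diagonal estimates of \propositionref{diagonal open} or the relative Hodge--Riemann relations --- those inputs concern the cup product, while perverse numbers are purely additive invariants. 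The exact perversities of the monomial basis are precisely \corollaryref{perversity} and \propositionref{4.15}, which follow from \theoremref{hilb-chow} together with the functoriality of perverse decompositions under products and symmetric products (Section 2); likewise Lehn's ring structure and its non-compact extension are not needed here. What \emph{is} needed, and what you use implicitly when listing the $n=1$ perverse numbers, is the vanishing $\mathbb{H}^*(j_*\hat{R}^1)=0$ in the $\widetilde{D_4},\widetilde{E_6},\widetilde{E_7},\widetilde{E_8}$ cases, proved inside \propositionref{decomposition 5 cases}; with the citations redirected in this way, nothing is missing and your product expansion reproduces both displayed formulas.
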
 

\noindent
Using the explicit description of the corresponding character varieties for $n=1$ in \cite{character}, we prove the full version of the $P=W$ conjecture for $S\to  \mathbb{A}^1$ in each of our five cases. For $n\ge2$, little is known about the corresponding character varieties. However, there are conjectures concerning the shape of the filtration $W$ on $H^*(M_B)$ in \cite{hodge number}. Mathematica computations show that for small $n$, the perverse numbers obtained in our theorem match the conjectural mixed Hodge numbers in \cite{hodge number}.

\medskip
\noindent
{\bf Acknowledgements.}
I am grateful to my advisor Mark de Cataldo who suggested to me this project and explained the Hodge-theoretic interpretation of the decomposition theorem to me. I thank Dingxin Zhang for discussion on numerous technical details. I thank Lie Fu for discussions on the diagonal in the Cartesian product of K3 surfaces, which motivated my estimation of the perversity of diagonals. I thank Zhiwei Yun for pointing out an inaccuracy in an earlier version of the paper. I thank  Luca Migliorini, Jingchen Niu, Carlos Simpson, Qizheng Yin and Letao Zhang for useful conversations. I would also like to thank the referee for the careful review and the valuable comments.

\section{Functoriality of the perverse filtrations}
\noindent
In this chapter we prove that external tensor products, symmetric products and alternating products of perverse sheaves are perverse. We also show we may describe the perverse filtrations for $f^n:X^n\to Y^n$ and $f^{(n)}:X^{(n)}\to Y^{(n)}$ in terms of the perverse filtration for $f:X\to Y$. We use terms ``perversity'', ``perverse decomposition''and ``perverse filtration'' under the convention defined in section 1.2.

\subsection{External tensor product}
\begin{proposition}\label{product}
Let $f_1:X_1\to Y_1$, $f_2:X_2\to Y_2$ be two proper morphisms between smooth quasi-projective varieties. Let $r(f)$ denote the defect of semismallness of $f$ defined in section 1.2. Let $\mathcal{F}_i$ and $\mathcal{G}_j$ be suitable perverse sheaves and 
$$
Rf_{1,*}\Q_{X_1}[\dim X_1-r(f_1)]\cong \bigoplus_{i=0}^{2r(f_1)} \mathcal{F}_i[-i],
$$

$$
Rf_{2,*}\Q_{X_2}[\dim X_2-r(f_2)]\cong\bigoplus_{j=0}^{2r(f_2)} \mathcal{G}_j[-j]
$$ 
be the perverse decompositions for map $f_1$, $f_2$, respectively. Then

$$
R(f_1\times f_2)_*\Q_{X_1\times X_2}[\dim X_1\times X_2-r(f_1\times f_2)]\cong\bigoplus_{i,j}\mathcal{F}_i\boxtimes \mathcal{G}_j[-i-j]
$$
is a perverse decomposition for the proper map $f_1\times f_2:X_1\times X_2\to Y_1\times Y_2$. In particular, for $\alpha_1\in H^*(X_1)$, $\alpha_2\in H^*(X_2)$, we have $\mathfrak{p}(\alpha_1\otimes\alpha_2)=\mathfrak{p}(\alpha_1)+\mathfrak{p}(\alpha_2)$, where $\alpha_1\otimes\alpha_2$ is viewed as a cohomology class in $H^*(X_1\times X_2)$, and the perverse filtration is defined by the map $f_1\times f_2:X_1\times X_2\to Y_1\times Y_2$.
\end{proposition}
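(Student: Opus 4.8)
The plan is to reduce everything to two inputs: the Künneth formula for derived pushforwards and the fact that the external tensor product of perverse sheaves is perverse. First I would observe that since $f_1$ and $f_2$ are proper, base change gives the Künneth isomorphism $R(f_1\times f_2)_*\Q_{X_1\times X_2}\cong Rf_{1,*}\Q_{X_1}\boxtimes Rf_{2,*}\Q_{X_2}$ in $D^b_c(Y_1\times Y_2)$; combined with $\dim(X_1\times X_2)=\dim X_1+\dim X_2$ and the additivity $r(f_1\times f_2)=r(f_1)+r(f_2)$ (which follows from $(X_1\times X_2)\times_{Y_1\times Y_2}(X_1\times X_2)=(X_1\times_{Y_1}X_1)\times(X_2\times_{Y_2}X_2)$), the normalizing shifts on the two sides match up. Plugging in the given perverse decompositions for $f_1$ and $f_2$ and expanding $\boxtimes$ over the direct sums yields the claimed isomorphism $\bigoplus_{i,j}\mathcal F_i\boxtimes\mathcal G_j[-i-j]$, with the shift on the $(i,j)$ summand correctly equal to $-(i+j)$.

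Next I would verify this is genuinely a \emph{perverse} decomposition, i.e. that for each fixed $k$ the degree-$k$ part $\bigoplus_{i+j=k}\mathcal F_i\boxtimes\mathcal G_j$ is a perverse sheaf on $Y_1\times Y_2$. This is exactly where the first result proved in this section enters: the external tensor product of two perverse sheaves on quasi-projective varieties is perverse (this is the statement advertised at the top of Section~2, typically proved by checking the support and cosupport conditions for $\boxtimes$, using that $\boxtimes$ of the standard $t$-structures behaves well and that the $\mathcal F_i$, $\mathcal G_j$ are perverse by hypothesis). A finite direct sum of perverse sheaves is perverse, so $\bigoplus_{i+j=k}\mathcal F_i\boxtimes\mathcal G_j$ is perverse, and the decomposition is of the required type $[0,2r(f_1\times f_2)]$ since $0\le i+j\le 2r(f_1)+2r(f_2)$.

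For the statement about perversities of cohomology classes, I would use the description of the perverse filtration in terms of a perverse decomposition from the third Definition in Section~1.2: $P_pH^*=\mathbb H^*(\bigoplus_{k\le p}\mathcal P_k[-k])$ and $\Gr_p^P H^*=\mathbb H^*(\mathcal P_p[-p])$. Taking hypercohomology of the decomposition above and applying the Künneth formula for hypercohomology, $\mathbb H^*(\mathcal F_i\boxtimes\mathcal G_j)=\mathbb H^*(\mathcal F_i)\otimes\mathbb H^*(\mathcal G_j)$, identifies $\Gr^P_p H^*(X_1\times X_2)$ with $\bigoplus_{i+j=p}\Gr^P_i H^*(X_1)\otimes\Gr^P_j H^*(X_2)$. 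Hence if $\alpha_1$ has perversity $\mathfrak p(\alpha_1)$ and $\alpha_2$ has perversity $\mathfrak p(\alpha_2)$, their classes lie in $P_{\mathfrak p(\alpha_i)}$ but not $P_{\mathfrak p(\alpha_i)-1}$, so $\alpha_1\otimes\alpha_2\in P_{\mathfrak p(\alpha_1)+\mathfrak p(\alpha_2)}$, and its image in $\Gr^P_{\mathfrak p(\alpha_1)+\mathfrak p(\alpha_2)}$ is the nonzero class $[\alpha_1]\otimes[\alpha_2]$ in the Künneth factor indexed by $(\mathfrak p(\alpha_1),\mathfrak p(\alpha_2))$; therefore $\mathfrak p(\alpha_1\otimes\alpha_2)=\mathfrak p(\alpha_1)+\mathfrak p(\alpha_2)$ exactly.

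The only genuinely non-formal point is the perversity of $\boxtimes$, but that is precisely the lemma this section opens with, so here it may be invoked; the rest is bookkeeping with Künneth and the shift conventions. The main thing to be careful about is that the normalization $[\dim X-r(f)]$ used throughout is \emph{not} the self-dual one, so one must track the shifts by hand rather than appealing to a symbolic "decomposition theorem respects products" slogan — but as noted, $\dim$ and $r$ are both additive under products, so the shifts do match.
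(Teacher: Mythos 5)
Your proposal is correct and follows essentially the same route as the paper: Künneth for the proper pushforward, additivity of $\dim$ and of $r$ via the fibered-product identity, perversity of the external tensor product, and then the graded-piece/Künneth argument to get the exact equality $\mathfrak p(\alpha_1\otimes\alpha_2)=\mathfrak p(\alpha_1)+\mathfrak p(\alpha_2)$. The only cosmetic difference is that the perversity of $\mathcal F_i\boxtimes\mathcal G_j$ is not a lemma opening the section (this proposition itself is the section's first result); the paper simply cites Proposition 10.3.6 of Kashiwara--Schapira for it, which is the same standard fact you invoke.
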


\begin{proof}
We first check that $r(f_1\times f_2)=r(f_1)+r(f_2)$. In fact, by the universal property of fibered product, we have
$$
(X_1\times X_2)\times_{(Y_1\times Y_2)}(X_1\times X_2)=(X_1\times_{Y_1}X_1)\times(X_2\times_{Y_2}X_2).
$$ 
So 
\begin{equation*}
\begin{array}{rrl}
r(f_1\times f_2)&=&\dim (X_1\times X_2)\times_{(Y_1\times Y_2)}(X_1\times X_2)-\dim Y_1\times Y_2\\
&=&\dim X_1\times_{Y_1}X_1+\dim X_2\times_{Y_2}X_2-\dim Y_1-\dim Y_2\\
&=& r(f_1)+r(f_2).
\end{array}
\end{equation*}
To prove the isomorphism, it suffices to note that $f_1$, $f_2$ and $f_1\times f_2$ are all proper, so $Rf_*=Rf_!$. By the K\"unneth formula (see exercise II.18 of \cite{sheaves}), we have 
$$
\begin{array}{rl}
&\displaystyle R(f_1\times f_2)_*\Q_{X_1\times X_2}[\dim X_1+\dim X_2-r(f_1)-r(f_2)]\\
=&\displaystyle R(f_1\times f_2)_*\Q_{X_1}\boxtimes\Q_{X_2}[\dim X_1+\dim X_2-r(f_1)-r(f_2)]\\
=&\displaystyle Rf_{1,*}\Q_{X_1}[\dim X_1-r(f_1)]\boxtimes Rf_{2,*}\Q_{X_2}[\dim X_2-r(f_2)]\\
\cong&\displaystyle \bigoplus_{i,j}\mathcal{F}_i\boxtimes \mathcal{G}_j[-i-j]
\end{array}
$$
\noindent
By Proposition 10.3.6 (i)(ii) of \cite{sheaves}, the external tensor product $\mathcal{F}_i\boxtimes\mathcal{G}_j$ is perverse. Therefore this gives a perverse decomposition.  To check the perversity is additive with respect to tensor product is basically by definition as follows. Let $p_1=\mathfrak{p}(\alpha_1)$, $p_2=\mathfrak{p}(\alpha_2)$. Recall that by definition of geometric perversity, we have  
$$
\alpha_1\in \mathbb{H}\left(\bigoplus_{i\le p_1}\mathcal{F}_i[-i]\right),\alpha_2\in \mathbb{H}\left(\bigoplus_{j\le p_2}\mathcal{G}_j[-j]\right)
$$

So
\begin{equation*}
\begin{split}
\alpha_1\otimes\alpha_2\in&\mathbb{H}\left(\bigoplus_{i\le p_1,j\le p_2}\mathcal{F}_i\boxtimes\mathcal{G}_j[-i-j]\right)\\
\subset&\mathbb{H}\left(\bigoplus_{i+j\le p_1+p_2}\mathcal{F}_i\boxtimes\mathcal{G}_j[-i-j]\right)\\
=&P_{p_1+p_2}H^*(X_1\times X_2)
\end{split}
\end{equation*}
This shows that $\mathfrak{p}(\alpha_1\otimes\alpha_2)\le \mathfrak{p}(\alpha_1)+\mathfrak{p}(\alpha_2)$. On the other hand, $\mathfrak{p}(\alpha_k)=p_k$ means that $\alpha_k\neq0\in \Gr_{p_k}^PH^*(X_k)$. So 
$$
0\neq\alpha_1\otimes\alpha_2\in \Gr_{p_1}^PH^*(X_1)\otimes \Gr_{p_2}^PH^*(X_2)\subset \Gr_{p_1+p_2}^PH^*(X_1\times X_2)
$$
This shows that $\mathfrak{p}(\alpha_1\otimes\alpha_2)=\mathfrak{p}(\alpha_1)+\mathfrak{p}(\alpha_2)$.
\end{proof}

\begin{corollary}\label{cartesian}
Let $f:X\to Y$ be a proper map between smooth quasi-projective varieties. Then the perverse filtration for the product map $f^n:X^n\to Y^n$ can be described as
$$
P_pH^*(X^n;\Q)={\rm{Span }}\{\alpha_1\otimes\cdots\otimes\alpha_n\mid \mathfrak{p}(\alpha_1)+\cdots+\mathfrak{p}(\alpha_n)\le p\},
$$
where $\alpha_i\in H^*(X;\Q)$ for $i=1,\cdots,n$.
\end{corollary}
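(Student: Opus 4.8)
The plan is to induct on $n$, with \propositionref{product} as the engine; the case $n=1$ is vacuous. Each $f^m\colon X^m\to Y^m$ is proper between smooth quasi-projective varieties, hence carries a perverse decomposition (for instance the $m$-fold external tensor power of a fixed one for $f$, which is legitimate by \propositionref{product}). I also record once that $H^*(X;\Q)$ has a basis $e_1,\dots,e_k$ that is filtered for $f$: fixing $Rf_*\Q_X[\dim X-r(f)]\cong\bigoplus_{i}\mathcal{P}_i[-i]$ and using that hypercohomology commutes with finite direct sums, one has $H^*(X;\Q)=\bigoplus_i\mathbb{H}^*(\mathcal{P}_i[-i])$ with $P_pH^*(X;\Q)=\bigoplus_{i\le p}\mathbb{H}^*(\mathcal{P}_i[-i])$, so a union of bases of the summands does the job, with $\mathfrak{p}(e_a)$ equal to the index of the summand containing $e_a$.

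For the inductive step, view $f^n$ as $f^{n-1}\times f\colon X^{n-1}\times X\to Y^{n-1}\times Y$ and apply \propositionref{product} with $f_1=f^{n-1}$, $f_2=f$. Its final assertion gives, for $\beta\in H^*(X^{n-1};\Q)$ and $\alpha\in H^*(X;\Q)$ viewed inside $H^*(X^n;\Q)\cong H^*(X^{n-1};\Q)\otimes H^*(X;\Q)$,
$$
\mathfrak{p}(\beta\otimes\alpha)=\mathfrak{p}(\beta)+\mathfrak{p}(\alpha).
$$
Taking $\beta=e_{a_1}\otimes\cdots\otimes e_{a_{n-1}}$ --- of perversity $\sum_{j<n}\mathfrak{p}(e_{a_j})$ by the inductive hypothesis --- and $\alpha=e_{a_n}$, we conclude that $\{e_{a_1}\otimes\cdots\otimes e_{a_n}\}$ is a basis of $H^*(X^n;\Q)$ with $\mathfrak{p}(e_{a_1}\otimes\cdots\otimes e_{a_n})=\sum_{j}\mathfrak{p}(e_{a_j})$, i.e. a filtered basis for $f^n$, whence
$$
P_pH^*(X^n;\Q)=\mathrm{Span}\Bigl\{e_{a_1}\otimes\cdots\otimes e_{a_n}\ \Big|\ \textstyle\sum_j\mathfrak{p}(e_{a_j})\le p\Bigr\}.
$$

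Finally I would pass to arbitrary classes. The displayed span is contained in $\mathrm{Span}\{\alpha_1\otimes\cdots\otimes\alpha_n\mid\sum_j\mathfrak{p}(\alpha_j)\le p\}$; conversely, iterating the inequality $\mathfrak{p}(\alpha\otimes\alpha')\le\mathfrak{p}(\alpha)+\mathfrak{p}(\alpha')$ supplied by \propositionref{product} shows each $\alpha_1\otimes\cdots\otimes\alpha_n$ with $\sum_j\mathfrak{p}(\alpha_j)\le p$ lies in $P_pH^*(X^n;\Q)$, so that span is also contained in $P_pH^*(X^n;\Q)$. Combining the two inclusions gives the asserted equality. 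There is no real obstacle here beyond bookkeeping; the one point deserving attention is that the tensor-product basis is genuinely \emph{filtered} --- this needs the exact additivity $\mathfrak{p}(\alpha_1\otimes\alpha_2)=\mathfrak{p}(\alpha_1)+\mathfrak{p}(\alpha_2)$ of \propositionref{product}, not merely the easy upper bound, which is precisely where that proposition does the essential work.
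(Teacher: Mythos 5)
Your route is the natural one (the paper itself leaves this corollary as an immediate consequence of \propositionref{product}), but there is a genuine gap at the one step you yourself single out. Knowing that every element of a basis has a prescribed perversity only gives the inclusion $\mathrm{Span}\{e_{a_1}\otimes\cdots\otimes e_{a_n}\mid\sum_j\mathfrak{p}(e_{a_j})\le p\}\subseteq P_pH^*(X^n)$; it does not give the reverse inclusion, i.e.\ it does not make the basis filtered. Moreover, exact additivity of $\mathfrak{p}$ on pure tensors is \emph{not} enough to force filteredness, contrary to your closing remark. Abstractly: take $V=W=\Q v_0\oplus \Q v_1$ with filtered bases and $\mathfrak{p}(v_i)=i$, and on $V\otimes W$ put $P_0=\langle v_0\otimes v_0,\; v_0\otimes v_1+v_1\otimes v_0\rangle$, $P_1=\langle v_0\otimes v_0,\,v_0\otimes v_1,\,v_1\otimes v_0\rangle$, $P_2$ everything. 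Every pure tensor $x\otimes y$ then has perversity $\mathfrak{p}(x)+\mathfrak{p}(y)$ (the extra vector $v_0\otimes v_1+v_1\otimes v_0$ spanning $P_0$ is not a pure tensor), yet the tensor basis is not filtered, since $P_0$ is two-dimensional while only one basis vector has perversity $0$. So the inference ``$\mathfrak{p}(e_{a_1}\otimes\cdots\otimes e_{a_n})=\sum_j\mathfrak{p}(e_{a_j})$, i.e.\ a filtered basis'' is not justified by the perversity statement of \propositionref{product} alone; and without it, your final combination of inclusions only yields $\mathrm{Span}\{\alpha_1\otimes\cdots\otimes\alpha_n\mid\sum_j\mathfrak{p}(\alpha_j)\le p\}\subseteq P_pH^*(X^n)$, one half of the corollary.

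The repair uses the part of \propositionref{product} you mention in passing but never invoke at the crucial step: its sheaf-level assertion that $\bigoplus_{i_1,\dots,i_n}\mathcal{F}_{i_1}\boxtimes\cdots\boxtimes\mathcal{F}_{i_n}[-(i_1+\cdots+i_n)]$ is a perverse decomposition for $f^n$. Consequently $P_pH^*(X^n)=\bigoplus_{i_1+\cdots+i_n\le p}\mathbb{H}^*\bigl(\mathcal{F}_{i_1}\boxtimes\cdots\boxtimes\mathcal{F}_{i_n}[-\textstyle\sum_j i_j]\bigr)$, and the K\"unneth isomorphism (already used in the proof of \propositionref{product}) identifies each summand with $\mathbb{H}^*(\mathcal{F}_{i_1}[-i_1])\otimes\cdots\otimes\mathbb{H}^*(\mathcal{F}_{i_n}[-i_n])$ inside $H^*(X)^{\otimes n}=H^*(X^n)$. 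Since your basis $\{e_a\}$ of $H^*(X)$ was chosen compatibly with the decomposition $H^*(X)=\bigoplus_i\mathbb{H}^*(\mathcal{P}_i[-i])$, each summand above is spanned by the $e_{a_1}\otimes\cdots\otimes e_{a_n}$ with $\mathfrak{p}(e_{a_j})=i_j$, which exhibits $P_pH^*(X^n)$ as exactly the span you need (equivalently, a dimension count of the graded pieces via K\"unneth closes the gap). With that substitution in place of the additivity argument, your induction and the final passage to arbitrary classes go through.
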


\subsection{Symmetric and alternating product}
\noindent
In this section, we give an explicit description of the $\mathfrak{S}_n$-action on the $n$-fold external tensor product of a bounded complex, where $\mathfrak{S}_n$ is the symmetric group of $n$ elements. We use the action to define the symmetric product in derived category of constructible sheaves, and show that the symmetric product of a perverse sheaf is still perverse. Our method is similar to the one in \cite{mhm}. Let $X^{(n)}=X^n/\mathfrak{S}_n$ denote the $n$-th symmetric product of $X$.\\
\noindent
\begin{definition}
Let $K^\bullet_i$ be a bounded complex of constructible sheaves on a complex quasi-projective variety $X_i$, for $1\le i\le n$. Then the $n$-fold external tensor product $\boxtimes_{i=1}^n K^\bullet_i$ on $\prod_{i=1}^n {X_i}$ is defined as follows.
\begin{enumerate}
\item{The $j$-th component is $\bigoplus_{\sum k_i=j}\boxtimes_{i=1}^n K^{k_i}_i$. }
\item{The differential is $\sum_{i=1}^n{(-1)^{k_1+\cdots+k_{i-1}}d_i}$ on the summand $\boxtimes_{i=1}^n K^{k_i}_i$, where $d_i$ is induced by the differential of $K_i$.}
\end{enumerate}
\end{definition}

\begin{definition}
Let $K^\bullet_i$  and $X_i$ as above. Then there is a natural $\mathfrak{S}_n$-action on $\boxtimes_{i=1}^n K^\bullet_i$ by:
$$
\sigma^{\#}:\boxtimes_{i=1}^n K^\bullet_i\xrightarrow{\sim}\sigma_*\left(\boxtimes_{i=1}^n K^\bullet_{\sigma(i)}\right)
$$
which is defined, for $m_i\in K_i^{p_i}$, by
$$
\boxtimes_{i=1}^nm_i\mapsto (-1)^{\nu(\sigma,p)}\sigma_*\left(\boxtimes_{i=1}^nm_{\sigma(i)}\right)
$$
where $\nu(\sigma,p)=\sum_{i<j,\sigma(j)<\sigma(i)}p_ip_j$.
\end{definition}

\begin{definition}
Let $X$ be a complex quasi-projective variety. Let $q:X^n\to X^{(n)}$ be the quotient map. For a bounded complex of constructible sheaves $K$ on $X$, we define the symmetric product and alternating product as
\begin{equation*}
\begin{array}{rrl}
K^{(n)}&=&\left(Rq_*K^{\boxtimes n}\right)^{\mathfrak{S}_n}\\
K^{\{n\}}&=&\left(Rq_*K^{\boxtimes n}\right)^{\text{sign}-\mathfrak{S}_n}
\end{array}
\end{equation*}
where 
$$
(-)^{\mathfrak{S}_n}=\frac{1}{n!}\sum_{\sigma\in\mathfrak{S}_n}Rq_*(\sigma^\#)
$$
is the symmetrizing projector and 
$$
(-)^{\text{sign}-\mathfrak{S}_n}=\frac{1}{n!}\sum_{\sigma\in\mathfrak{S}_n}(-1)^{\text{sign}(\sigma)}Rq_*(\sigma^\#)$$
is the alternating projector. Here we use the fact that $\mathfrak{S}_n$ acts trivially on $X^{(n)}$.
\end{definition}

\begin{remark}\label{quotient}
By \cite{macdonald} (1.1), we have the following canonical isomorphisms.
$$
H^*(X^{(n)},K^{(n)})=H^*(X^n,K^{\boxtimes n})^{\mathfrak{S}_n}=\bigoplus_{i+j=n}Sym^i H^{even}(X,K)\bigotimes \bigwedge^jH^{odd}(X,K)
$$
$$
H^*(X^{(n)},K^{\{n\}})=H^*(X^n,K^{\boxtimes n})^{\text{sign}-\mathfrak{S}_n}=\bigoplus_{i+j=n}\bigwedge^i H^{even}(X,K)\bigotimes Sym^jH^{odd}(X,K)
$$
Furthermore, we have
$$
(K[a])^{(n)}=
\begin{cases}
K^{(n)}[na]& \text{if }a\text{ is even.}\\
K^{\{n\}}[na]& \text{if }a\text{ is odd.}
\end{cases}
$$
\end{remark}

\begin{proposition}\label{symmetric}
Let $\mathcal{P}$ be a perverse sheaf on $X$. Let $q:X^n\to X^{(n)}$ be the quotient map. Then $\mathcal{P}^{(n)}$ and $\mathcal{P}^{\{n\}}$ are perverse sheaves on $X^{(n)}$.
\end{proposition}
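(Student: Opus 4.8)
The plan is to reduce the perversity of $\mathcal{P}^{(n)}$ and $\mathcal{P}^{\{n\}}$ to the perversity of the $n$-fold external tensor product $\mathcal{P}^{\boxtimes n}$ on $X^n$, and then use the behavior of perversity under the finite quotient map $q\colon X^n\to X^{(n)}$. First I would recall from \propositionref{product} (applied inductively, or rather its sheaf-theoretic content via Proposition 10.3.6 of \cite{sheaves}) that $\mathcal{P}^{\boxtimes n}$ is a perverse sheaf on $X^n$. The symmetric group $\mathfrak{S}_n$ acts on $\mathcal{P}^{\boxtimes n}$ by the signed permutation isomorphisms $\sigma^\#$ of the preceding definitions, and this action is compatible with the (trivial) $\mathfrak{S}_n$-action on $X^{(n)}$; thus $Rq_*\mathcal{P}^{\boxtimes n}$ carries an $\mathfrak{S}_n$-action in $D^b_c(X^{(n)})$, and $\mathcal{P}^{(n)}$, $\mathcal{P}^{\{n\}}$ are the images of the (anti)symmetrizing idempotents.

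The key step is: since $q$ is finite, $Rq_* = Rq_! = q_*$ is exact for the perverse $t$-structure (a finite morphism is both affine and proper, so $q_*$ is $t$-exact), hence $q_*\mathcal{P}^{\boxtimes n}$ is a perverse sheaf on $X^{(n)}$. Then I would invoke the fact that the category of perverse sheaves is abelian and, because we work with $\mathbb{Q}$-coefficients and $n!$ is invertible, that it is closed under taking direct summands cut out by idempotent endomorphisms: the projectors $(-)^{\mathfrak{S}_n}$ and $(-)^{\mathrm{sign}-\mathfrak{S}_n}$ are idempotents in $\mathrm{End}(q_*\mathcal{P}^{\boxtimes n})$, so their images $\mathcal{P}^{(n)}$ and $\mathcal{P}^{\{n\}}$ are subobjects — hence objects — of the abelian category of perverse sheaves on $X^{(n)}$. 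This gives the conclusion.

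The main obstacle I anticipate is a bookkeeping one rather than a conceptual one: making sure the signed $\mathfrak{S}_n$-action on $\mathcal{P}^{\boxtimes n}$ given by $\sigma^\#$ (with the sign $(-1)^{\nu(\sigma,p)}$ on a summand indexed by degrees $p=(p_i)$) genuinely defines an action \emph{of perverse sheaves}, i.e. that each $\sigma^\#$ is a morphism in $D^b_c(X^n)$ between $\mathcal{P}^{\boxtimes n}$ and $\sigma_*\mathcal{P}^{\boxtimes n}$ that is compatible with the perverse $t$-structure, and that the collection satisfies the cocycle condition $(\sigma\tau)^\# = \tau^\# \circ \tau^*(\sigma^\#)$ so that the symmetrizers are honest idempotents. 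This is essentially the content of the earlier definitions together with a sign computation $\nu(\sigma\tau,p) \equiv \nu(\tau,p) + \nu(\sigma, \tau\cdot p) \pmod 2$; I would cite \cite{mhm} for this and, if needed, sketch the verification. Once the action is in place, exactness of $q_*$ and the idempotent-splitting in the abelian category of perverse sheaves finish the proof with no further input. A secondary point worth a sentence is that, by \remarkref{quotient}, the same argument applies after shifts — the shift $\mathcal{P}[a]$ with $a$ odd swaps the roles of symmetric and alternating — but this is a consequence rather than an ingredient of the present statement.
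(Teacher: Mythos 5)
Your proposal is correct and follows essentially the same route as the paper: perversity of $\mathcal{P}^{\boxtimes n}$ via \propositionref{product}, $t$-exactness of $Rq_*$ for the finite map $q$, and then identifying $\mathcal{P}^{(n)}$, $\mathcal{P}^{\{n\}}$ as images of the (anti)symmetrizing idempotents. The only (harmless) difference is in the last step: you split the idempotent inside the abelian category of perverse sheaves, whereas the paper observes the two pieces are direct summands of $Rq_*\mathcal{P}^{\boxtimes n}$ in $D^b_c$ and proves separately, via the support and cosupport conditions, that a derived-category direct summand of a perverse sheaf is perverse.
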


\begin{proof}
By Proposition \ref{product}, $\mathcal{P}^{\boxtimes n}$ is perverse. Since the map $q:X^n\to X^{(n)}$ is finite, $Rq_*K^{\boxtimes n}$ is perverse (\cite{BBD} Corollaire 2.2.6 (i) ). It suffices to prove that the invariant part and the alternating part under the $\mathfrak{S}_n$-action are both perverse. By the definition of the projectors, we have 
$$
(Rq_*\mathcal{P}^{\boxtimes n})^{\mathfrak{S}_n}\to Rq_*\mathcal{P}^{\boxtimes n}\to(Rq_*\mathcal{P}^{\boxtimes n})^{\mathfrak{S}_n}
$$
$$
(Rq_*\mathcal{P}^{\boxtimes n})^{\text{sign}-\mathfrak{S}_n}\to Rq_*\mathcal{P}^{\boxtimes n}\to(Rq_*\mathcal{P}^{\boxtimes n})^{\text{sign}-\mathfrak{S}_n}
$$
where both compositions are the identity. This means that $(Rq_*\mathcal{P}^{\boxtimes n})^{\mathfrak{S}_n}$ and $(Rq_*\mathcal{P}^{\boxtimes n})^{\text{sign}-\mathfrak{S}_n}$ are both direct summands of $Rq_*\mathcal{P}^{\boxtimes n}$ in the bounded derived category of constructible sheaves. The proposition holds due to the following lemma.
\end{proof}

\begin{lemma}
Let $\mathcal{P}$ be a perverse sheaf on $X$. Suppose that $\mathcal{P}=K\oplus K'$ holds in $D^b_c(X)$, the bounded derived category of constructible sheaves. Then $K$ is perverse.
\end{lemma}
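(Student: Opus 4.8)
\noindent
The plan is to deduce this from the general fact that the heart of a $t$-structure is closed under direct summands. Recall from \cite{BBD} that $({}^\mathfrak{p}D^{\le 0},{}^\mathfrak{p}D^{\ge 0})$ is a $t$-structure on $D^b_c(X)$ whose heart is the abelian category of perverse sheaves, and that an object $C\in D^b_c(X)$ is perverse if and only if its perverse cohomology sheaves ${}^\mathfrak{p}\mathcal{H}^i(C)$ vanish for all $i\ne 0$. I would work with this cohomological characterization.

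\noindent
First, I would invoke that the perverse cohomology functors ${}^\mathfrak{p}\mathcal{H}^i\colon D^b_c(X)\to(\text{perverse sheaves})$ are additive (they are in fact homological functors). Applying them to the splitting $\mathcal{P}=K\oplus K'$ yields ${}^\mathfrak{p}\mathcal{H}^i(\mathcal{P})\cong {}^\mathfrak{p}\mathcal{H}^i(K)\oplus {}^\mathfrak{p}\mathcal{H}^i(K')$ for every $i$. Next, since $\mathcal{P}$ is perverse, the left-hand side is $0$ whenever $i\ne 0$; as a direct summand of the zero object in an additive category is zero, this forces ${}^\mathfrak{p}\mathcal{H}^i(K)=0$ for all $i\ne 0$. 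By the characterization recalled above, $K$ is perverse (and the same argument applies verbatim to $K'$).

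\noindent
A variant avoiding perverse cohomology altogether runs through the orthogonality description of the $t$-structure: $\mathcal{P}$ being perverse means $\mathrm{Hom}(\mathcal{P},L)=0$ for all $L\in{}^\mathfrak{p}D^{\ge 1}$ and $\mathrm{Hom}(L',\mathcal{P})=0$ for all $L'\in{}^\mathfrak{p}D^{\le -1}$. Since $\mathrm{Hom}(K,-)$ is a direct summand of $\mathrm{Hom}(\mathcal{P},-)$ and $\mathrm{Hom}(-,K)$ is a direct summand of $\mathrm{Hom}(-,\mathcal{P})$, both vanishing properties descend to $K$, so $K\in{}^\mathfrak{p}D^{\le 0}\cap{}^\mathfrak{p}D^{\ge 0}$, i.e.\ $K$ is perverse. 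There is no genuine obstacle here; the only point to get right is that perversity can be tested on each direct summand separately, which is exactly the content of additivity of the perverse cohomology functors (equivalently, idempotent-completeness of the heart inside $D^b_c(X)$).
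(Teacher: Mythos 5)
Your proof is correct, but it follows a different route from the paper. The paper works directly with the explicit description of the middle-perversity $t$-structure: from $\mathcal{H}^i(\mathcal{P})=\mathcal{H}^i(K)\oplus\mathcal{H}^i(K')$ it gets $\operatorname{Supp}\mathcal{H}^i(K)\subset\operatorname{Supp}\mathcal{H}^i(\mathcal{P})$, hence the dimension bound $\dim\operatorname{Supp}\mathcal{H}^i(K)\le -i$ giving the support condition (4.0.1') of \cite{BBD}, and then obtains the cosupport condition by applying Verdier duality to the splitting $\mathcal{P}^\vee=K^\vee\oplus(K')^\vee$. You instead argue purely $t$-structurally: additivity of the homological functors ${}^\mathfrak{p}\mathcal{H}^i$ applied to the split triangle gives ${}^\mathfrak{p}\mathcal{H}^i(\mathcal{P})\cong{}^\mathfrak{p}\mathcal{H}^i(K)\oplus{}^\mathfrak{p}\mathcal{H}^i(K')$, and vanishing for $i\ne 0$ passes to summands; your Hom-orthogonality variant likewise works, with the small caveat that the implication ``$\operatorname{Hom}$-vanishing $\Rightarrow$ lies in the aisle'' is not literally an axiom but the standard fact that ${}^\mathfrak{p}D^{\le 0}$ (resp.\ ${}^\mathfrak{p}D^{\ge 0}$) equals the full left (resp.\ right) orthogonal of ${}^\mathfrak{p}D^{\ge 1}$ (resp.\ ${}^\mathfrak{p}D^{\le -1}$), proved via the truncation triangle — so either cite that or rely on the first argument. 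What your approach buys is generality: it shows the heart of an arbitrary $t$-structure on a triangulated category is closed under direct summands, with no reference to supports or duality; the paper's argument is more concrete and elementary given that the perverse $t$-structure is defined by support and cosupport conditions, but it is special to that description.
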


\begin{proof}
The cohomology sheaf satisfies $\mathcal{H}^i \mathcal{P}=\mathcal{H}^iK\oplus\mathcal{H}^iK'$. Therefore $\text{Supp }\mathcal{H}^iK\subset \text{Supp } \mathcal{H}^i\mathcal{P}$, and thus $\dim \text{Supp }\mathcal{H}^iK\le\dim \text{Supp }\mathcal{H}^i \mathcal{P}\le -i$. This proves the support condition (4.0.1') in \cite{BBD}. Noting that $\mathcal{P}^\vee=K^\vee\oplus (K')^\vee$, the cosupport condition follows similarly.
\end{proof}

\subsection{Perverse filtration of symmetric products}
\noindent
In this section, we show that the perverse filtration for a symmetric product a morphism $f^{(n)}:X^{(n)}\to Y^{(n)}$ is compatible with the perverse filtration of the corresponding cartesian product $f^n:X^n\to Y^n$. We also use the symmetric product and the alternating product for perverse sheaves to give a perverse decomposition for the symmetric product of maps.

\begin{lemma} \label{5.1}
Let $X$ be a smooth quasi-projective variety. Let $q:X^n\to X^{(n)}$. Let $K_i\in D^b_c(X)$, $i=1,\cdots,n$. Then $\mathfrak{S}_n$ acts on 
$$
\widetilde{K}=\bigoplus_{\sigma\in\mathfrak{S}_n}K_{\sigma(1)}\boxtimes \cdots \boxtimes K_{\sigma(n)}
$$
as an endomorphism. Furthermore, $(Rp_*\widetilde{K})^{\mathfrak{S}_n}\cong Rp_*(K_1\boxtimes\cdots\boxtimes K_n)$. More generally, let $\bold{k}=(k_1,\cdots,k_m)$ be a $m$-tuple with $k_1+\cdots+k_m=n$. Let 
$$
\mathfrak{S}_{\bold{k}}:=\{\sigma:[n]\to [m]\mid |f^{-1}(i)|=k_i\},
$$
where $[n]$ denotes the set $\{1,\cdots,n\}$. Let $q_\bold{k}:X^{(k_1)}\times\cdots X^{(k_m)}\to X^{(n)}$ be the natural map. Then $\mathfrak{S}_n$ acts on 
$$
\widetilde{K}_{\bold{k}}:=\bigoplus_{\sigma\in\mathfrak{S}_{\bold{k}}}K_{\sigma(1)}\boxtimes \cdots \boxtimes K_{\sigma(n)},
$$
and we have 
$$
(Rq_*\widetilde{K}_{\bold{k}})^{\mathfrak{S}_n}\cong Rq_{\bold{k},*}(K_1^{(k_1)}\boxtimes\cdots\boxtimes K_m^{(k_m)}).
$$
Similarly, for the alternating part we have 
$$
(Rq_*\widetilde{K}_{\bold{k}})^{\text{sign-}\mathfrak{S}_n}\cong Rq_{\bold{k},*}(K_1^{\{k_1\}}\boxtimes\cdots\boxtimes K_m^{\{k_m\}}).
$$
\end{lemma}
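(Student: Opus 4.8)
The plan is to realise $Rq_*\widetilde{K}_{\mathbf{k}}$, together with its $\mathfrak{S}_n$-action, as an \emph{induced} $\mathfrak{S}_n$-equivariant object, and then to combine two general facts: that $\mathfrak{S}_n$-invariants of an induced object recover the invariants of the inducing subgroup (Frobenius reciprocity for the averaging idempotents), and that $Rq_*$ can be computed blockwise through a factorisation of $q$ by means of the Künneth formula. The $\mathfrak{S}_n$-action itself is the one assembled from the maps $\sigma^{\#}$ of the preceding definition: $\sigma^{\#}$ carries the summand indexed by an element of $\mathfrak{S}_{\mathbf{k}}$ to $\sigma_*$ of the summand indexed by the precomposite, and since $q\circ\sigma=q$ for the coordinate permutation $\sigma\colon X^n\to X^n$, applying $Rq_*$ turns this into an endomorphism of $Rq_*\widetilde{K}_{\mathbf{k}}$; that these endomorphisms compose correctly is the assertion that $\mathfrak{S}_n$ acts.

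For the main point I would first record the combinatorics. Fix the standard $\sigma_0\in\mathfrak{S}_{\mathbf{k}}$ sending the block $B_1=\{1,\dots,k_1\}$ to $1$, the block $B_2=\{k_1+1,\dots,k_1+k_2\}$ to $2$, and so on, so the corresponding summand of $\widetilde{K}_{\mathbf{k}}$ is $K_{\sigma_0(1)}\boxtimes\cdots\boxtimes K_{\sigma_0(n)}=K_1^{\boxtimes k_1}\boxtimes\cdots\boxtimes K_m^{\boxtimes k_m}=:L$. Then $\mathfrak{S}_n$ acts transitively on the finite set $\mathfrak{S}_{\mathbf{k}}$ with stabiliser of $\sigma_0$ the Young subgroup $H=\mathfrak{S}_{k_1}\times\cdots\times\mathfrak{S}_{k_m}\subset\mathfrak{S}_n$ of block-preserving permutations. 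Choosing coset representatives $\tau_1=e,\dots,\tau_N$ for $\mathfrak{S}_n/H$, every summand of $\widetilde{K}_{\mathbf{k}}$ is a coordinate permutation of $L$, and the maps $\tau_a^{\#}$ together with $q\circ\tau_a=q$ identify $Rq_*$ of each summand with $Rq_*L$. Carrying this identification out, and checking that the residual $\mathfrak{S}_n$-action permutes the $N$ copies of $Rq_*L$ through the induced-representation cocycle $\mathfrak{S}_n\times(\mathfrak{S}_n/H)\to H$ (which acts on $Rq_*L$ by the automorphisms $Rq_*(h^{\#})$, $h\in H$), shows that $(Rq_*\widetilde{K}_{\mathbf{k}},\,\mathfrak{S}_n)$ is isomorphic in $D^b_c(X^{(n)})$ to $\mathrm{Ind}_H^{\mathfrak{S}_n}\bigl(Rq_*L,\;h\mapsto Rq_*(h^{\#})\bigr)$. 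The sign factors $(-1)^{\nu(\sigma,p)}$ ride along inside the $\sigma^{\#}$; making sure they are exactly the cocycle and not something else is the one delicate point.

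I would then invoke the general fact that, in a $\mathbb{Q}$-linear idempotent-complete category carrying a finite group action, $(\mathrm{Ind}_H^{\mathfrak{S}_n}M)^{\mathfrak{S}_n}\cong M^{H}$, and, twisting by the sign character, $(\mathrm{Ind}_H^{\mathfrak{S}_n}M)^{\mathrm{sign}-\mathfrak{S}_n}\cong M^{\,\mathrm{sign}_{\mathfrak{S}_n}|_H}$, the latter denoting the image of $\tfrac1{|H|}\sum_{h\in H}\mathrm{sign}_{\mathfrak{S}_n}(h)\,h$ on $M$. With $M=Rq_*L$ this reduces the lemma to computing $(Rq_*L)^{H}$ and its sign analogue. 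Now factor $q=q_{\mathbf{k}}\circ\pi$ with $\pi=q_1\times\cdots\times q_m\colon X^n=X^{k_1}\times\cdots\times X^{k_m}\to X^{(k_1)}\times\cdots\times X^{(k_m)}$ and $q_i\colon X^{k_i}\to X^{(k_i)}$. As $H$ preserves the blocks we have $\pi\circ h=\pi$ for $h\in H$, so $Rq_*(h^{\#})=Rq_{\mathbf{k},*}\bigl(R\pi_*(h^{\#})\bigr)$; since $H$ acts trivially on $X^{(k_1)}\times\cdots\times X^{(k_m)}$ and $Rq_{\mathbf{k},*}$ commutes with the averaging idempotents, $(Rq_*L)^{H}=Rq_{\mathbf{k},*}\bigl((R\pi_*L)^{H}\bigr)$, and similarly with the sign idempotents. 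By the Künneth formula for the finite maps $q_i$ (as in the proof of \propositionref{product}), $R\pi_*L=Rq_{1,*}(K_1^{\boxtimes k_1})\boxtimes\cdots\boxtimes Rq_{m,*}(K_m^{\boxtimes k_m})$, with $H$ acting as the external product of the $\mathfrak{S}_{k_i}$-actions on the factors — here one uses that $\nu(h,p)=\sum_i\nu(h_i,p|_{B_i})$ for $h=(h_1,\dots,h_m)\in H$, since a block-preserving permutation has no cross-block inversions, and that $\mathrm{sign}_{\mathfrak{S}_n}|_H=\boxtimes_i\mathrm{sign}_{\mathfrak{S}_{k_i}}$. Because the external tensor product commutes with images of idempotents, $(R\pi_*L)^{H}=\bigl(Rq_{1,*}K_1^{\boxtimes k_1}\bigr)^{\mathfrak{S}_{k_1}}\boxtimes\cdots\boxtimes\bigl(Rq_{m,*}K_m^{\boxtimes k_m}\bigr)^{\mathfrak{S}_{k_m}}=K_1^{(k_1)}\boxtimes\cdots\boxtimes K_m^{(k_m)}$ by the very definition of $K_i^{(k_i)}$, and the sign idempotents give $K_1^{\{k_1\}}\boxtimes\cdots\boxtimes K_m^{\{k_m\}}$ in the same way. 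Assembling these isomorphisms yields both assertions; the first is the case $\mathbf{k}=(1,\dots,1)$, where $H=\{e\}$, $\pi=\mathrm{id}$ and $q_{\mathbf{k}}=q$, so $(Rq_*\widetilde{K})^{\mathfrak{S}_n}\cong Rq_*(K_1\boxtimes\cdots\boxtimes K_n)$.

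The step I expect to be the main obstacle is the first one: pinning down precisely, with all the sign twists $(-1)^{\nu(\sigma,p)}$ accounted for, the isomorphism between $Rq_*\widetilde{K}_{\mathbf{k}}$ and $\mathrm{Ind}_H^{\mathfrak{S}_n}(Rq_*L)$ — that is, verifying that the $\sigma^{\#}$ assemble into exactly the induced-representation cocycle valued in $H$. Once that compatibility is in place, the remaining steps are formal: Frobenius reciprocity for averaging idempotents, functoriality of $Rq_{\mathbf{k},*}$, and Künneth. The only ambient fact used is that the relevant $D^b_c$ is idempotent-complete, which is already implicit in the definitions of $K^{(n)}$ and $K^{\{n\}}$.
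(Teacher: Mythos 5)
Your argument is correct and is, in expanded form, exactly the paper's proof: the paper's terse remark that $\mathfrak{S}_n$ permutes the summands (up to sign) and that ``the invariant part of the push-forward is determined by any one of its summands'' is precisely your identification of $Rq_*\widetilde{K}_{\mathbf{k}}$ with $\mathrm{Ind}_H^{\mathfrak{S}_n}(Rq_*L)$ for the Young subgroup $H$, followed by taking $H$-invariants of the distinguished summand via the blockwise factorisation of $q$ and K\"unneth. You have simply supplied the details (the cocycle/sign bookkeeping and the commutation of $Rq_{\mathbf{k},*}$ and $\boxtimes$ with the averaging idempotents) that the paper leaves implicit.
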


\begin{proof}
Note that $\mathfrak{S}_n$ acts on $\widetilde{K}$ by permuting the direct summands (up to sign). The invariant part of the push-forward is determined by any one of its summands. 
\end{proof}

\begin{proposition}\label{5.2}
Let $X$ be a smooth quasi-projective variety. Let $q:X^n\to X^{(n)}$ be the natural quotient map. Let $K=\oplus_{i=1}^m K_i\in D^b_c(X)$. Then we have the expansion
$$
K^{(n)}\cong\bigoplus_{\bold{k}}Rq_{\bold{k},*}(K_1^{(k_1)}\boxtimes\cdots\boxtimes K_n^{(k_n)}).
$$ 
and
$$
K^{\{n\}}\cong\bigoplus_{\bold{k}}Rq_{\bold{k},*}(K_1^{\{k_1\}}\boxtimes\cdots\boxtimes K_n^{\{k_n\}}).
$$ 

\end{proposition}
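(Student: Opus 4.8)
The plan is to reduce the statement to \lemmaref{5.1} by performing the multinomial expansion of $K^{\boxtimes n}$ and then applying the symmetrizing (resp.\ alternating) projector termwise. First I would expand, using the compatibility of the external tensor product with finite direct sums (with the sign conventions of the $n$-fold $\boxtimes$ fixed above, so that this is an isomorphism of complexes respecting the $\mathfrak{S}_n$-action),
$$
K^{\boxtimes n}=\Bigl(\bigoplus_{i=1}^m K_i\Bigr)^{\boxtimes n}\cong\bigoplus_{(i_1,\dots,i_n)\in[m]^n}K_{i_1}\boxtimes\cdots\boxtimes K_{i_n}.
$$

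Next I would partition $[m]^n$ by the \emph{type} of a tuple: for $(i_1,\dots,i_n)$ set $k_j:=\#\{a:i_a=j\}$, so $\mathbf{k}=(k_1,\dots,k_m)$ ranges over all $m$-tuples of nonnegative integers with $\sum_j k_j=n$. The $\mathfrak{S}_n$-action on $K^{\boxtimes n}$ permutes the summands (up to the Koszul sign $\nu(\sigma,p)$) and preserves the type, so for each $\mathbf{k}$ the subcomplex
$$
\widetilde{K}_{\mathbf{k}}:=\bigoplus_{\operatorname{type}(i_\bullet)=\mathbf{k}}K_{i_1}\boxtimes\cdots\boxtimes K_{i_n}
$$
is an $\mathfrak{S}_n$-stable direct summand, and — after relabelling the distinct values $j$ with $k_j>0$ — it is exactly the complex $\widetilde{K}_{\mathbf{k}}$ appearing in \lemmaref{5.1}. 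Thus $K^{\boxtimes n}\cong\bigoplus_{\mathbf{k}}\widetilde{K}_{\mathbf{k}}$ as $\mathfrak{S}_n$-complexes. Now apply $Rq_*$ and the projector: since we are over $\Q$, the functor $(-)^{\mathfrak{S}_n}=\tfrac{1}{n!}\sum_{\sigma}Rq_*(\sigma^{\#})$ is an exact idempotent commuting with finite direct sums, so
$$
K^{(n)}=\bigl(Rq_*K^{\boxtimes n}\bigr)^{\mathfrak{S}_n}\cong\bigoplus_{\mathbf{k}}\bigl(Rq_*\widetilde{K}_{\mathbf{k}}\bigr)^{\mathfrak{S}_n}\cong\bigoplus_{\mathbf{k}}Rq_{\mathbf{k},*}\bigl(K_1^{(k_1)}\boxtimes\cdots\boxtimes K_m^{(k_m)}\bigr),
$$
where the last isomorphism is \lemmaref{5.1} and a factor $K_j^{(0)}$ is understood as the constant sheaf on the point $X^{(0)}$ (so those factors drop out, consistent with $q_{\mathbf{k}}$ being the map from the product over the indices with $k_j>0$). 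This proves the first formula; for $K^{\{n\}}$ one repeats the argument verbatim with the alternating projector $(-)^{\operatorname{sign}-\mathfrak{S}_n}$ in place of $(-)^{\mathfrak{S}_n}$, using the last assertion of \lemmaref{5.1}.

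The only genuinely delicate point — and the expected main obstacle — is the sign bookkeeping: one must verify that the $\mathfrak{S}_n$-action on $K^{\boxtimes n}$ really does restrict to each $\widetilde{K}_{\mathbf{k}}$, so that the decomposition $K^{\boxtimes n}\cong\bigoplus_{\mathbf{k}}\widetilde{K}_{\mathbf{k}}$ is $\mathfrak{S}_n$-equivariant, and that under this restriction the action coincides with the one \lemmaref{5.1} uses. This holds because a permutation of the tensor factors carries a tuple of type $\mathbf{k}$ to another tuple of type $\mathbf{k}$, and the Koszul sign $(-1)^{\nu(\sigma,p)}$ depends only on the parities of the degrees, which are transported along with the factors; the stabilizer in $\mathfrak{S}_n$ of a chosen representative tuple of type $\mathbf{k}$ is the Young subgroup $\mathfrak{S}_{k_1}\times\cdots\times\mathfrak{S}_{k_m}$, which is precisely what produces the symmetric powers $K_j^{(k_j)}$ (resp.\ the alternating powers $K_j^{\{k_j\}}$) in the end. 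Everything else in the argument is formal.
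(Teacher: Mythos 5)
Your proposal is correct and follows essentially the same route as the paper: expand $K^{\boxtimes n}$ over all multi-indices, group the summands by type $\mathbf{k}$ into the $\mathfrak{S}_n$-stable pieces $\widetilde{K}_{\mathbf{k}}$ of \lemmaref{5.1}, commute the projector with the direct sum and $Rq_*$, and conclude by \lemmaref{5.1} (and identically for the alternating projector). Your extra attention to the sign bookkeeping and the $k_j=0$ factors is a fine addition but does not change the argument.
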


\begin{proof}
By Lemma \ref{5.1}, we have
$$
\begin{array}{rll}
K^{(n)}&=&(Rq_*K^{\boxtimes n})^{\mathfrak{S}_n}\\
&=&\displaystyle\left(Rq_*\bigoplus_{1\le i_1,\cdots,i_n\le m} K_{i_1}\boxtimes\cdots\boxtimes K_{i_n}\right)^{\mathfrak{S}_n}\\
&=&\displaystyle\left(Rq_*\bigoplus_{\bold{k}}\widetilde{K}_{\bold{k}}\right)^{\mathfrak{S}_n}\\
&=&\displaystyle\bigoplus_{\bold{k}}\left(Rq_*\widetilde{K}_{\bold{k}}\right)^{\mathfrak{S}_n}\\
&=&\displaystyle\bigoplus_{\bold{k}}Rq_{\bold{k},*}\left(K_1^{(k_1)}\boxtimes\cdots\boxtimes K_m^{(k_m)}\right).
\end{array}
$$
\end{proof}

\begin{lemma}\label{5.3}
Let $f:X\to Y$ be a proper morphism between smooth quasi-projective varieties. We have the following commutative diagram
$$
\begin{tikzcd}
X^n\ar{r}{q}\ar{d}{f^n}& X^{(n)}\ar{d}{f^{(n)}}\\
Y^n\ar{r}{q}&Y^{(n)}
\end{tikzcd}
$$
 Let $K\in D^b_c(X)$, then $Rf^{(n)}_*K^{(n)}\cong (Rf_*K)^{(n)}$. 
\end{lemma}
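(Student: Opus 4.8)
\emph{Strategy.} The plan is to reduce the statement to three ingredients and then do a short diagram chase. The three ingredients are: (i) the square commutes, so $f^{(n)}\circ q=q\circ f^n$ (writing $q$ for both quotient maps $X^n\to X^{(n)}$ and $Y^n\to Y^{(n)}$), which gives a canonical isomorphism of functors $Rf^{(n)}_*\circ Rq_*\cong Rq_*\circ Rf^n_*$; (ii) the $n$-fold K\"unneth formula, exactly as in the proof of \propositionref{product} (valid since $f$, $f^n$ are proper, so $Rf_*=Rf_!$), which gives $Rf^n_*K^{\boxtimes n}\cong (Rf_*K)^{\boxtimes n}$ in $D^b_c(Y^n)$; and (iii) the fact that $(-)^{(n)}$ is the image of the idempotent $(-)^{\mathfrak{S}_n}=\frac1{n!}\sum_\sigma Rq_*(\sigma^\#)$, hence a direct summand, and that the additive functor $Rf^{(n)}_*$ preserves direct summands. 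Granting the equivariance point discussed below, the computation is then
\begin{align*}
Rf^{(n)}_*K^{(n)} &= Rf^{(n)}_*\bigl(Rq_*K^{\boxtimes n}\bigr)^{\mathfrak{S}_n}
\cong \bigl(Rf^{(n)}_*Rq_*K^{\boxtimes n}\bigr)^{\mathfrak{S}_n}\\
&\cong\bigl(Rq_*Rf^n_*K^{\boxtimes n}\bigr)^{\mathfrak{S}_n}
\cong\bigl(Rq_*(Rf_*K)^{\boxtimes n}\bigr)^{\mathfrak{S}_n}=(Rf_*K)^{(n)}.
\end{align*}

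\emph{Carrying it out.} First I would record the elementary compatibilities on the level of spaces: $f^{(n)}\circ q=q\circ f^n$, and the coordinate permutations $\sigma$ on $X^n$ and on $Y^n$ satisfy $f^n\circ\sigma=\sigma\circ f^n$ and $q\circ\sigma=q$. The first gives the functor isomorphism in (i); the others show that applying $Rf^{(n)}_*$ to the endomorphism $Rq_*(\sigma^\#)$ of $Rq_*K^{\boxtimes n}$ produces, under that isomorphism, the endomorphism $Rq_*\bigl(Rf^n_*\sigma^\#\bigr)$ of $Rq_*Rf^n_*K^{\boxtimes n}$. Since $Rf^{(n)}_*$ is additive it commutes with the averaging projector, so the whole matter reduces to identifying, under the K\"unneth isomorphism of (ii), the map $Rf^n_*\sigma^\#\colon Rf^n_*K^{\boxtimes n}\to\sigma_*Rf^n_*K^{\boxtimes n}$ with the symmetry map $\sigma^\#\colon (Rf_*K)^{\boxtimes n}\to\sigma_*(Rf_*K)^{\boxtimes n}$ defined above. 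Once this is checked, the transported idempotent on $Rq_*(Rf_*K)^{\boxtimes n}$ is literally $\frac1{n!}\sum_\sigma Rq_*(\sigma^\#)$, whose image is $(Rf_*K)^{(n)}$ by definition, so the displayed chain of isomorphisms is complete. The identical argument with the sign-twisted projector simultaneously yields $Rf^{(n)}_*K^{\{n\}}\cong(Rf_*K)^{\{n\}}$.

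\emph{Main obstacle.} The only non-formal step is the $\mathfrak{S}_n$-equivariance of the K\"unneth isomorphism for the \emph{sign-twisted} action defined above, i.e.\ that the Koszul signs $(-1)^{\nu(\sigma,p)}$ built into $\sigma^\#$ are respected by K\"unneth. The hard part will be keeping the bookkeeping honest here; my plan is to unwind the explicit model. Both $Rf^n_*K^{\boxtimes n}$ and $(Rf_*K)^{\boxtimes n}$ are assembled from their tensor factors by the same recipe for the external tensor product of complexes given above (using that $Rf_*$ of an external tensor product of complexes is, componentwise and compatibly with the shuffle differentials, the external tensor product of the $Rf_*$'s), and the sign $(-1)^{\nu(\sigma,p)}$ depends only on the internal degrees $p=(p_1,\dots,p_n)$ of the tensor factors and not on the sheaves; so the two actions agree under K\"unneth by direct inspection. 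Alternatively, one may verify the equivariance after restriction to a point of $Y^{(n)}$, where it reduces to the standard $\mathfrak{S}_n$-equivariance of the K\"unneth formula in cohomology.
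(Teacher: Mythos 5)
Your proof is correct and follows essentially the same route as the paper: the same chain of isomorphisms (definition of $(-)^{(n)}$ via the averaging projector, the K\"unneth isomorphism $Rf^n_*K^{\boxtimes n}\cong(Rf_*K)^{\boxtimes n}$, the commuting square $f^{(n)}\circ q=q\circ f^n$, and the fact that the pushforward commutes with the idempotent). The only difference is that you spell out the $\mathfrak{S}_n$-equivariance and sign bookkeeping that the paper's one-line computation leaves implicit, which is a reasonable addition but not a different argument.
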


\begin{proof}
\begin{equation*}
\begin{array}{rl}
(Rf_*K)^{(n)}&\cong\left(Rq_*(Rf_*K)^{\boxtimes n}\right)^{\mathfrak{S}_n}\cong\left(Rq_*Rf_*K^{\boxtimes n}\right)^{\mathfrak{S}_n}\\
&\cong \left(Rf^{(n)}_*Rq_*K^{\boxtimes n}\right)^{\mathfrak{S}_n}\cong Rf^{(n)}_*\left(Rq_*K^{\boxtimes n}\right)^{\mathfrak{S}_n}\cong Rf^{(n)}_*K^{(n)}.
\end{array}
\end{equation*}
\end{proof}

\begin{proposition}\label{5.4}
Let $f:X\to Y$ be a proper morphism between smooth quasi-projective varieties. Let
$$
Rf_*\Q_X[\dim X-r(f)]\cong\bigoplus_{i=0}^{2r(f)}\mathcal{P}_i[-i]
$$
be the perverse decomposition, where $\mathcal{P}_i$ are perverse sheaves on $Y$. Then the perverse decomposition of the map $f^{(n)}:X^{(n)}\to Y^{(n)}$ is given as follows. When $\dim X-r(f)$ is even, then
$$
\begin{array}{rrl}
& &Rf^{(n)}_*\Q_{X^{(n)}}[n(\dim X-r(f))]\\
&\cong&\displaystyle\bigoplus_{\bold{k}}Rq_{\bold{k},*}\left(\mathcal{P}_0^{(k_0)}\boxtimes(\mathcal{P}_1[-1])^{(k_1)}\boxtimes\cdots\boxtimes(\mathcal{P}_{2r(f)}[-2r(f)])^{(k_{2r(f)})}\right)\\
&\cong&\displaystyle\bigoplus_{\bold{k}}Rq_{\bold{k},*}\left(\mathcal{P}_0^{(k_0)}\boxtimes\mathcal{P}_1^{\{k_1\}}\boxtimes\cdots\boxtimes\mathcal{P}_{2r(f)}^{(k_{2r(f)})}\right)\left[-\sum_{i=0}^{2r(f)}{ik_i}\right].\\
\end{array}
$$

\noindent
When $\dim X-r(f)$ is odd, then
$$
\begin{array}{rrl}
& &Rf^{(n)}_*\Q_{X^{(n)}}[n(\dim X-r(f))]\\
&\cong&\displaystyle\bigoplus_{\bold{k}}Rq_{\bold{k},*}\left(\mathcal{P}_0^{\{k_0\}}\boxtimes(\mathcal{P}_1[-1])^{\{k_1\}}\boxtimes\cdots\boxtimes(\mathcal{P}_{2r(f)}[-2r(f)])^{\{k_{2r(f)}\}}\right)\\
&\cong&\displaystyle\bigoplus_{\bold{k}}Rq_{\bold{k},*}\left(\mathcal{P}_0^{\{k_0\}}\boxtimes\mathcal{P}_1^{(k_1)}\boxtimes\cdots\boxtimes\mathcal{P}_{2r(f)}^{\{k_{2r(f)}\}}\right)\left[-\sum_{i=0}^{2r(f)}{ik_i}\right].\\
\end{array}
$$
\end{proposition}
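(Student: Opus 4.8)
The plan is to assemble the statement from the functoriality results already proved—\lemmaref{5.3}, \propositionref{5.2}, \propositionref{symmetric}, \propositionref{product} and Remark~\ref{quotient}—so that the only real work is the bookkeeping of shifts and of the alternation between symmetric and alternating powers. Write $d=\dim X-r(f)$, so that $Rf_*\Q_X=\big(\bigoplus_{i=0}^{2r(f)}\mathcal P_i[-i]\big)[-d]$. Since $q\colon X^n\to X^{(n)}$ is finite and $\Q_X^{\boxtimes n}=\Q_{X^n}$, we have $\Q_X^{(n)}=(q_*\Q_{X^n})^{\mathfrak S_n}=\Q_{X^{(n)}}$; hence \lemmaref{5.3} applied to $K=\Q_X$ gives $Rf^{(n)}_*\Q_{X^{(n)}}\cong(Rf_*\Q_X)^{(n)}$, and therefore
$$
Rf^{(n)}_*\Q_{X^{(n)}}[nd]\;\cong\;\Big(\big(\textstyle\bigoplus_i\mathcal P_i[-i]\big)[-d]\Big)^{(n)}[nd].
$$
By Remark~\ref{quotient}, applied with the overall shift $a=-d$, the right-hand side is $\big(\bigoplus_i\mathcal P_i[-i]\big)^{(n)}$ when $d$ is even and $\big(\bigoplus_i\mathcal P_i[-i]\big)^{\{n\}}$ when $d$ is odd.

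I would then expand this using \propositionref{5.2} with the $2r(f)+1$ summands $K_i=\mathcal P_i[-i]$ (its symmetric form in the even case, its alternating form in the odd case). This yields precisely the first displayed isomorphism of the Proposition, with the sum over compositions $\mathbf k=(k_0,\dots,k_{2r(f)})$ of $n$ and $q_{\mathbf k}\colon Y^{(k_0)}\times\cdots\times Y^{(k_{2r(f)})}\to Y^{(n)}$. To pass to the second form, apply to each block the shift identity of Remark~\ref{quotient} together with its evident counterpart for alternating powers (both immediate from the K\"unneth descriptions recorded in that remark): $(\mathcal P_i[-i])^{(k_i)}=\mathcal P_i^{(k_i)}[-ik_i]$ for $i$ even and $\mathcal P_i^{\{k_i\}}[-ik_i]$ for $i$ odd, whereas $(\mathcal P_i[-i])^{\{k_i\}}=\mathcal P_i^{\{k_i\}}[-ik_i]$ for $i$ even and $\mathcal P_i^{(k_i)}[-ik_i]$ for $i$ odd. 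Pulling these shifts out collects them into a single $[-\sum_i ik_i]$, and the resulting parity pattern is exactly the one asserted (note $2r(f)$ is even, so the last block is $\mathcal P_{2r(f)}^{(k_{2r(f)})}$ when $d$ is even and $\mathcal P_{2r(f)}^{\{k_{2r(f)}\}}$ when $d$ is odd).

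It remains to check that this is a perverse decomposition, i.e.\ that after regrouping it has the form $\bigoplus_p\mathcal Q_p[-p]$ with the $\mathcal Q_p$ perverse and $0\le p\le 2r(f^{(n)})$. Each $\mathcal P_i^{(k_i)}$ and $\mathcal P_i^{\{k_i\}}$ is perverse by \propositionref{symmetric}; their external tensor product over the factors of $Y^{(k_0)}\times\cdots\times Y^{(k_{2r(f)})}$ is perverse by iterating \propositionref{product}; and since $q_{\mathbf k}$ is finite, $Rq_{\mathbf k,*}$ preserves perversity by \cite{BBD} Corollaire~2.2.6(i). Thus each summand is a perverse sheaf placed in cohomological degree $\sum_i ik_i$, and grouping by the value $p$ of this integer gives the desired form; since $0\le\sum_i ik_i\le 2n\,r(f)$ and $r(f^{(n)})=n\,r(f)$—which follows from the finite surjection $(X\times_Y X)^n=X^n\times_{Y^n}X^n\twoheadrightarrow X^{(n)}\times_{Y^{(n)}}X^{(n)}$—the decomposition is of type $[0,2r(f^{(n)})]$.

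I expect the only real difficulty to be the sign and parity bookkeeping: the symmetric/alternating alternation is controlled simultaneously by the parity of $d$ (fixing whether one starts from $(\,\cdot\,)^{(n)}$ or $(\,\cdot\,)^{\{n\}}$) and by the parity of each index $i$ (governing the block-by-block swap), and one must also be sure that the sign conventions built into the $\mathfrak S_n$-action on external tensor products are precisely those for which \propositionref{5.2} holds with complexes $K_i=\mathcal P_i[-i]$ in the slots rather than mere sheaves. Everything structural—perversity of symmetric, alternating, and external tensor powers, and exactness of finite pushforward—is already available.
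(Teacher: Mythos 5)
Your proposal is correct and follows essentially the same route as the paper: identify $Rf^{(n)}_*\Q_{X^{(n)}}$ with $(Rf_*\Q_X)^{(n)}$ via Lemma~\ref{5.3}, use the shift/parity identity of Remark~\ref{quotient} to pass to the symmetric or alternating power of $\bigoplus_i\mathcal{P}_i[-i]$ according to the parity of $\dim X-r(f)$, expand with Proposition~\ref{5.2}, and conclude perversity of each summand from Propositions~\ref{product} and~\ref{symmetric} together with the finiteness of $q_{\mathbf{k}}$. Your block-by-block parity bookkeeping and the check that the decomposition has the expected type are just more explicit versions of what the paper leaves implicit.
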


\begin{proof}
By the canonical isomorphism $(\Q_X)^{(n)}=\Q_{X^{(n)}}$, Remark \ref{quotient} and Lemma \ref{5.3}, we have
$$
Rf^{(n)}_*\Q_{X^{(n)}}[n(\dim X-r(f))]=
\begin{cases}
\left(Rf_*\Q_X[\dim X-r(f)]\right)^{(n)}& \text{if }\dim X-r(f) \text{ is even}.\\
\left(Rf_*\Q_X[\dim X-r(f)]\right)^{\{n\}}& \text{if }\dim X-r(f) \text{ is odd}.\\
\end{cases}
$$
Then we use Proposition \ref{5.2} to obtain the isomorphism. Using Proposition \ref{product}, Proposition \ref{symmetric} and the fact that the projection $q_\bold{k}$ is finite, we know this isomorphism is indeed a perverse decomposition.
\end{proof}

\noindent 
Although the perverse decomposition for the symmetric product is somewhat complicated, the perverse filtration is much simpler. It is compatible with the one for the cartesian product as one may expect. To see this, we need the following lemma.

\begin{lemma}\label{5.5}
Let $f:X\to Y$ be a proper morphism between smooth quasi-projective varieties. Then
$$
{^\mathfrak{p}}\tau_{\le p}\left((Rf_*\Q_X)^{(n)}\right)=\left(Rq_*\left({^\mathfrak{p}}\tau_{\le p}( Rf_*\Q_X)^{\boxtimes n}\right)\right)^{\mathfrak{S}_n}
$$
\end{lemma}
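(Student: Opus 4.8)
The plan is to reduce the statement to two formal facts about the perverse truncation functor ${}^{\mathfrak p}\tau_{\le p}$: that it commutes with pushforward along the \emph{finite} map $q$, and that it commutes with the symmetrizing idempotent. Throughout I write $K:=Rf_*\Q_X$ and $L:=Rq_*K^{\boxtimes n}$, so that $K^{(n)}=L^{\mathfrak S_n}$ by definition, the $\mathfrak S_n$-action on $L$ being the one used to define the symmetrizing projector, generated by the endomorphisms $Rq_*(\sigma^{\#})$ with $\sigma^{\#}\colon K^{\boxtimes n}\xrightarrow{\sim}\sigma_*K^{\boxtimes n}$.

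First I would note that $q\colon X^n\to X^{(n)}$ is finite, hence $Rq_*$ is perverse $t$-exact (\cite{BBD} Corollaire~2.2.6~(i), as already used in the proof of \propositionref{symmetric}). A $t$-exact functor commutes with truncation: applying $Rq_*$ to the truncation triangle ${}^{\mathfrak p}\tau_{\le p}K^{\boxtimes n}\to K^{\boxtimes n}\to {}^{\mathfrak p}\tau_{\ge p+1}K^{\boxtimes n}\to$ leaves the outer terms in perverse degrees $\le p$ and $\ge p+1$, so by uniqueness of the truncation triangle one obtains a canonical isomorphism ${}^{\mathfrak p}\tau_{\le p}L\cong Rq_*\big({}^{\mathfrak p}\tau_{\le p}K^{\boxtimes n}\big)$. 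I would then check this isomorphism is $\mathfrak S_n$-equivariant: each $\sigma$ acts on $X^n$ by an automorphism, so $\sigma_*$ is $t$-exact and $q\circ\sigma=q$; applying ${}^{\mathfrak p}\tau_{\le p}$ to the generator $Rq_*(\sigma^{\#})$ of the action on $L$ and using the commutations ${}^{\mathfrak p}\tau_{\le p}\sigma_*\cong\sigma_*{}^{\mathfrak p}\tau_{\le p}$ and ${}^{\mathfrak p}\tau_{\le p}Rq_*\cong Rq_*{}^{\mathfrak p}\tau_{\le p}$ turns it into $Rq_*\big({}^{\mathfrak p}\tau_{\le p}(\sigma^{\#})\big)$, which is exactly the corresponding generator of the $\mathfrak S_n$-action on $Rq_*\big({}^{\mathfrak p}\tau_{\le p}K^{\boxtimes n}\big)$.

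Second I would use that ${}^{\mathfrak p}\tau_{\le p}$ is an additive functor between $\Q$-linear triangulated categories, hence $\Q$-linear, and that $D^b_c$ is idempotent complete. The symmetrizing projector $e=\tfrac{1}{n!}\sum_{\sigma}\sigma\in\mathrm{End}(L)$ is idempotent with $eL=L^{\mathfrak S_n}$ and $L=eL\oplus(1-e)L$; applying ${}^{\mathfrak p}\tau_{\le p}$ gives ${}^{\mathfrak p}\tau_{\le p}L={}^{\mathfrak p}\tau_{\le p}(eL)\oplus{}^{\mathfrak p}\tau_{\le p}((1-e)L)$, and by $\Q$-linearity ${}^{\mathfrak p}\tau_{\le p}(e)=\tfrac{1}{n!}\sum_\sigma{}^{\mathfrak p}\tau_{\le p}(\sigma)$ is precisely the symmetrizing projector for the induced $\mathfrak S_n$-action on ${}^{\mathfrak p}\tau_{\le p}L$. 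Since it restricts to the identity on the first summand and to zero on the second, its image is ${}^{\mathfrak p}\tau_{\le p}(eL)={}^{\mathfrak p}\tau_{\le p}(L^{\mathfrak S_n})$, whereas by definition its image is $({}^{\mathfrak p}\tau_{\le p}L)^{\mathfrak S_n}$. Chaining the two steps, ${}^{\mathfrak p}\tau_{\le p}(K^{(n)})={}^{\mathfrak p}\tau_{\le p}(L^{\mathfrak S_n})=({}^{\mathfrak p}\tau_{\le p}L)^{\mathfrak S_n}=\big(Rq_*({}^{\mathfrak p}\tau_{\le p}K^{\boxtimes n})\big)^{\mathfrak S_n}$, which is the assertion. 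There is no deep difficulty here; the \emph{main obstacle} is purely bookkeeping — carrying the $\mathfrak S_n$-equivariance through the canonical isomorphism ${}^{\mathfrak p}\tau_{\le p}Rq_*\cong Rq_*{}^{\mathfrak p}\tau_{\le p}$, and confirming that "pass to $\mathfrak S_n$-invariants" genuinely commutes with ${}^{\mathfrak p}\tau_{\le p}$ — and everything follows from functoriality and $\Q$-linearity of ${}^{\mathfrak p}\tau_{\le p}$ together with the $t$-exactness of $Rq_*$ and of the coordinate permutations.
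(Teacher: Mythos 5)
Your proof is correct and follows the same route as the paper's (much terser) argument: $Rq_*$ is perverse $t$-exact because $q$ is finite, so truncation commutes with it, and the $\mathfrak S_n$-invariant part is a direct summand cut out by the symmetrizing idempotent, so truncation commutes with passing to invariants as well. The extra bookkeeping you carry out (equivariance of the canonical isomorphism, $\Q$-linearity and idempotent completeness) just makes explicit what the paper leaves implicit.
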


\begin{proof}
Note that the $\mathfrak{S}_n$-invariant part is a direct summand, so it commutes with the functor ${^\mathfrak{p}}\tau_{\le p}$. Furthermore, the quotient map $q$ is finite, hence $Rq_*$ is $t$-exact. 
\end{proof}

\begin{proposition}\label{5.6}
Let $f:X\to Y$ be a proper morphism between smooth quasi-projective varieties. Under the isomorphism 
\begin{equation*}
H^*\left(X^{(n)}\right)=\left(H^*(X^n)\right)^{\mathfrak{S}_n},
\end{equation*}
the perverse filtration can be identified as 
$$
P_pH^*\left(X^{(n)}\right)=\left(P_p H^*(X^n)\right)^{\mathfrak{S}_n},
$$ 
where the perversity on the right side is defined for the map $f^n:X^n\to Y^n.$
\end{proposition}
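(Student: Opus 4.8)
The plan is to read the statement off \lemmaref{5.5}, the key point being that over $\Q$ the functor $M\mapsto M^{\mathfrak{S}_n}$ of $\mathfrak{S}_n$-invariants is exact, so that it commutes with hypercohomology, with the perverse truncation functors, and with the formation of images of morphisms.

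Write $L:=Rf_*\Q_X$. Two identities organize everything: $Rf^n_*\Q_{X^n}=L^{\boxtimes n}$ with its geometric $\mathfrak{S}_n$-action (the K\"unneth isomorphism, as in the proof of \propositionref{product}), and $Rf^{(n)}_*\Q_{X^{(n)}}=L^{(n)}=(Rq_*L^{\boxtimes n})^{\mathfrak{S}_n}$ (\lemmaref{5.3} applied to $K=\Q_X$, using the canonical isomorphism $(\Q_X)^{(n)}=\Q_{X^{(n)}}$). Because $q$ is finite, $\mathbb{H}^*(Y^{(n)},Rq_*(-))=\mathbb{H}^*(Y^n,-)$; combining this with exactness of $(-)^{\mathfrak{S}_n}$ recovers the identification $H^*(X^{(n)})=\mathbb{H}^*(Y^{(n)},L^{(n)})=\mathbb{H}^*(Y^n,L^{\boxtimes n})^{\mathfrak{S}_n}=H^*(X^n)^{\mathfrak{S}_n}$ appearing in the statement. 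I also need that the two filtrations are indexed compatibly: iterating \propositionref{product} gives $r(f^n)=n\,r(f)$, and the finite surjection $(X\times_YX)^n=X^n\times_{Y^n}X^n\twoheadrightarrow X^{(n)}\times_{Y^{(n)}}X^{(n)}$ gives $r(f^{(n)})=n\,r(f)$ as well (this is already implicit in \propositionref{5.4}), so both definitions use the same shift $N:=n\dim X-n\,r(f)$.

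The main step is then to propagate the truncation through the above. Using the identity ${}^{\mathfrak{p}}\tau_{\le p}(M[N])=({}^{\mathfrak{p}}\tau_{\le p+N}M)[N]$, \lemmaref{5.5}, and $L^{\boxtimes n}[N]=Rf^n_*\Q_{X^n}[N]$, one obtains
$$
\begin{array}{rcl}
{}^{\mathfrak{p}}\tau_{\le p}\bigl(Rf^{(n)}_*\Q_{X^{(n)}}[N]\bigr)
&=&\bigl(Rq_*\,{}^{\mathfrak{p}}\tau_{\le p+N}(L^{\boxtimes n})\bigr)^{\mathfrak{S}_n}[N]\\
&=&\Bigl(Rq_*\,{}^{\mathfrak{p}}\tau_{\le p}\bigl(Rf^n_*\Q_{X^n}[N]\bigr)\Bigr)^{\mathfrak{S}_n}.
\end{array}
$$
Applying $\mathbb{H}^*(Y^{(n)},-)$, commuting it past $(-)^{\mathfrak{S}_n}$ and $Rq_*$, and invoking functoriality of the canonical morphisms ${}^{\mathfrak{p}}\tau_{\le p}(-)\to(-)$, one gets a commutative square whose top row is the canonical map whose image is $P_pH^*(X^{(n)})$ and whose bottom row is $(-)^{\mathfrak{S}_n}$ applied to the canonical map whose image is $P_pH^*(X^n)$. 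Since $(-)^{\mathfrak{S}_n}$ is exact it commutes with taking images, so $P_pH^*(X^{(n)})=(P_pH^*(X^n))^{\mathfrak{S}_n}$.

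I do not expect a genuine obstacle: granted \lemmaref{5.5}, the proposition is essentially formal. The points that deserve care are bookkeeping. One must verify $r(f^{(n)})=r(f^n)$, so that a single index $p$ labels the two filtrations the same way; one must track the shift $[N]$ through the truncation and the invariants (this is the only place where the even/odd distinction of \hyperref[quotient]{Remark~\ref*{quotient}} could intrude, were one to pass through symmetric and alternating products as in \propositionref{5.4}, but it is harmless here since \lemmaref{5.5} is stated directly for $L^{(n)}$); and one must work with $\Q$-coefficients, for otherwise $(-)^{\mathfrak{S}_n}$ fails to be exact and the three commutations above break down. As a consistency check one may use \corollaryref{cartesian}, which gives $P_pH^*(X^n)=\text{Span}\{\alpha_1\otimes\cdots\otimes\alpha_n\mid\mathfrak{p}(\alpha_1)+\cdots+\mathfrak{p}(\alpha_n)\le p\}$, and match its $\mathfrak{S}_n$-invariants monomial by monomial against the perverse decomposition of $f^{(n)}$ from \propositionref{5.4}.
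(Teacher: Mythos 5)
Your proposal is correct and takes essentially the same route as the paper's proof: identify $Rf^{(n)}_*\Q_{X^{(n)}}$ with $(Rf_*\Q_X)^{(n)}$ via \lemmaref{5.3}, commute the perverse truncation past the symmetric-product construction via \lemmaref{5.5}, then take hypercohomology and use exactness of $(-)^{\mathfrak{S}_n}$ over $\Q$ together with finiteness of $q$. Your additional bookkeeping (checking $r(f^{(n)})=r(f^n)=n\,r(f)$ so that the two filtrations carry the same shift and index) is a point the paper leaves implicit, but it is not a different argument.
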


\begin{proof}
By Lemma \ref{5.3} and Lemma \ref{5.5} , we have
$$
\begin{array}{rrl}
{^{\mathfrak{p}}\tau_{\le p}}Rf^{(n)}_*\Q_{X^{(n)}}&=& \displaystyle {^{\mathfrak{p}}\tau_{\le p}}(Rf_*\Q_X)^{(n)} \\
&=&\displaystyle \left(Rq_*{^{\mathfrak{p}}\tau_{\le p}}(Rf_*\Q_X)^{\boxtimes n}  \right)^{\mathfrak{S}_n}.\\
\end{array}
$$
After taking cohomology, we have
$$
\begin{array}{rrl}
P_pH^*(X^{(n)})&=&\mathbb H\left(Y^{(n)}, {^{\mathfrak{p}}\tau_{\le p}}Rf^{(n)}_*\Q_{X^{(n)}}\right)\\
&=&\displaystyle \left(P_pH^*(X^n)\right)^{\mathfrak{S}_n}.\\
\end{array}
$$
So the result follows.
\end{proof}

\section{Perversity of the diagonal}
\noindent
We will prove a technical result about the diagonal embedding, which is true for any smooth projective variety. The result is crucial in the proof of the multiplicativity of the perverse filtration for Hilbert schemes of smooth projective surfaces. In fact, by using Lehn's description of ring structure of Hilbert scheme of surfaces with numerically trivial canonical bundle, the perversity estimation of the diagonals is equivalent to the multiplicativity of perversity filtration for the Hilbert schemes.

\subsection{Filtered basis for cohomology groups}
\noindent
We choose and fix a basis for the cohomology group with the following properties, which is crucial in the perversity estimation of the diagonal embedding.
 
\begin{proposition}\label{basis}
Let $f:X\to Y$ be a morphism between projective varieties. Let $k(p,d)=\dim \Gr_p^PH^d(X)$.
There exists an $\Q$-basis 
$$
B=\{\beta^d_{p,i}\mid 0\le d\le 2\dim X,0\le p\le 2r(f),1\le i\le k(p,d)
\}\subset H^*(X)
$$ 
with the following properties:
\begin{enumerate}
\item{
$\beta^d_{p,i}\in P_pH^d(X)$. $\{\overline{\beta^d_{p,1}},\cdots,\overline{\beta^d_{p,k(p,d)}}\}$ is a basis of $\Gr_p^PH^d(X)$, where $\overline{\beta^d_{p,i}}$ is the image of $\beta^d_{p,i}$ under the natural quotient map 
$P_pH^d(X)\to \Gr_p^PH^d(X)$.  
}
\item{
The basis $B$ is signed orthonormal in the following sense.
\begin{equation*}
\langle\beta^d_{p,i},\beta^{d'}_{p',j}\rangle=
\begin{cases}
\pm 1 & d+d'=2\dim X,\text{ }p+p'=2r(f)\text{ and }i=j,\\
0 & otherwise.
\end{cases}
\end{equation*}
where $-1$ can only appear when $d=d'=\dim X$, $p=p'=r(f)$ and $i=j$.
}
\end{enumerate} 
In particular, if $A=\{\alpha^d_{p,i}\}$ is the dual basis of $B$ with respect to the Poincar\'e pairing, then we have
$$
\mathfrak{p}(\alpha^d_{p,i})+\mathfrak{p}(\beta^d_{p,i})=2r(f).
$$
\end{proposition}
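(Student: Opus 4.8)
The plan is to construct $B$ degree by degree: writing $n=\dim X$ and $r=r(f)$, I would choose the classes $\beta^d_{p,i}$ freely in the degrees $d\le n$, take Poincar\'e-dual bases in the complementary degrees $d\ge n$, diagonalize the cup product on the self-dual middle block, and then deduce the displayed equality for the dual basis $A$ from the self-duality of the perverse filtration. The structural input is the following: since $X$ is smooth projective and $f$ proper, $Rf_*\Q_X[n]$ is Verdier self-dual and $\mathbb D$ interchanges ${}^{\mathfrak p}\tau_{\le p}$ with ${}^{\mathfrak p}\tau_{\ge-p}$, so that — after the $r$-shift built into the geometric perverse filtration of Section 1.2 — the cup product makes the annihilator of $P_pH^d(X)$ equal to $P_{2r-1-p}H^{2n-d}(X)$ (see \cite{hodge}, \cite{P=W}). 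Choosing a self-dual decomposition $Rf_*\Q_X[n-r]\cong\bigoplus_{i=0}^{2r}\mathcal P_i[-i]$ with $\mathbb D\mathcal P_i\cong\mathcal P_{2r-i}$ (possible by \cite{BBD}, \cite{hodge}), this yields a bigrading $H^*(X)=\bigoplus_{d,p}H^{d,p}$ with $P_pH^d(X)=\bigoplus_{p'\le p}H^{d,p'}$ and $\Gr^P_pH^d(X)=H^{d,p}$, in which the cup product restricts to a perfect pairing $H^{d,p}\times H^{2n-d,2r-p}\to\Q$ and annihilates every other pair of blocks; in particular $P_pH^d(X)\perp P_qH^{2n-d}(X)$ whenever $p+q<2r$.

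Now I would build $B$ block by block. For a block $H^{d,p}$ with $d<n$, or with $d=n$ and $p<r$, pick any basis $\{\beta^d_{p,i}\}_i$ and, in the Poincar\'e-dual block $H^{2n-d,2r-p}$, take the dual basis, matched in the index so that $\langle\beta^d_{p,i},\beta^{2n-d}_{2r-p,i}\rangle=1$. For the central block $H^{n,r}$ the cup product restricts to a perfect bilinear form, symmetric because $n$ is even in every situation of this paper (in the alternating case one uses a symplectic basis instead), and one picks a basis of $H^{n,r}$ in which this form is signed-diagonal. These choices assemble into a basis $B$: property (1) holds because $P_pH^d(X)=\bigoplus_{p'\le p}H^{d,p'}$ is spanned by the chosen vectors of perversity $\le p$ and the $H^{n,r}$-basis lies in $\Gr^P_r$; property (2) holds because the only nonzero cup products among the $\beta$'s are the matched ones, which equal $+1$ outside the central block and $\pm1$ inside it, so $-1$ can occur only for $d=d'=n$, $p=p'=r$, $i=j$.

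For the concluding statement, let $A=\{\alpha^d_{p,i}\}$ be the Poincar\'e-dual basis of $B$, with $\alpha^d_{p,i}$ the class dual to $\beta^d_{p,i}$. By property (1), $\beta^d_{p,i}$ lies in $P_pH^d(X)$ with nonzero image in $\Gr^P_p$, so $\mathfrak p(\beta^d_{p,i})=p$. Since $\alpha^d_{p,i}$ pairs trivially with every $\beta^d_{p',j}$ with $p'\le p-1$, it lies in the annihilator of $P_{p-1}H^d(X)$, which is $P_{2r-p}H^{2n-d}(X)$; and it cannot lie in $P_{2r-p-1}H^{2n-d}(X)$, because $P_pH^d(X)$ — which contains $\beta^d_{p,i}$, with $\langle\beta^d_{p,i},\alpha^d_{p,i}\rangle=1$ — annihilates $P_{2r-p-1}H^{2n-d}(X)$. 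Hence $\mathfrak p(\alpha^d_{p,i})=2r-p$, and $\mathfrak p(\alpha^d_{p,i})+\mathfrak p(\beta^d_{p,i})=2r(f)$, as claimed.

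The step I expect to be the genuine obstacle is the central block: beyond the bookkeeping needed to keep properties (1) and (2) simultaneously valid with the signs $-1$ confined to $d=d'=n$, $p=p'=r$, one has to know that the cup product on $\Gr^P_rH^n(X)$ is $\Q$-equivalent to a signed-diagonal form, which is not automatic for a general smooth projective $X$ and must be read off from the explicit geometry (for the Hilbert schemes and surfaces in question the pertinent integral cohomology is torsion-free with unimodular cup product). If one only wants the displayed identity, it is enough to relax ``$\pm1$'' in property (2) to ``nonzero rational'', and then the entire argument is formal.
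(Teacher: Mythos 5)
Your overall architecture (choose bases in half the blocks, Poincar\'e-dual bases in the complementary blocks, diagonalize the middle, then read off $\mathfrak p(\alpha^d_{p,i})+\mathfrak p(\beta^d_{p,i})=2r(f)$ from the annihilator description of the filtration) would indeed yield the proposition, and your final deduction of the duality identity is fine. The genuine gap is the structural input you take for granted: the existence of a bigrading $H^*(X)=\bigoplus_{d,p}H^{d,p}$ splitting $P$ for which the cup product is \emph{exactly} block-anti-diagonal. You justify this by ``choosing a self-dual decomposition $Rf_*\Q_X[n-r]\cong\bigoplus_i\mathcal P_i[-i]$ with $\mathbb{D}\mathcal P_i\cong\mathcal P_{2r-i}$ (possible by \cite{BBD}, \cite{hodge})'', but the abstract isomorphisms $\mathbb{D}\mathcal P_i\cong\mathcal P_{2r-i}$ hold canonically for \emph{any} decomposition (Verdier duality exchanges the perverse truncations) and do not imply that the Poincar\'e pairing kills $H^{d,p}\otimes H^{2n-d,p'}$ when $p+p'>2r(f)$. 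What you actually need is a splitting isomorphism that intertwines the self-duality map $\epsilon\colon Rf_*\Q_X[n]\to\mathbb{D}(Rf_*\Q_X[n])$, i.e.\ whose off-anti-diagonal components in $\mathrm{Hom}(\mathcal P_p[-p],\mathbb{D}\mathcal P_{p'}[p'-2r])$ vanish; these components live in positive $\Ext$-groups and are not zero for an arbitrary decomposition-theorem splitting, and neither \cite{BBD} nor the parts of \cite{hodge} relevant here (which give only \lemmaref{zero}, the vanishing for $p+p'<2r(f)$, and \lemmaref{nondeg}, nondegeneracy on graded pieces) supply such a duality-compatible splitting. In fact the statement you assume at the sheaf level is essentially equivalent to the proposition being proved, so citing it without proof is close to circular. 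The paper's proof exists precisely to handle this: an arbitrary lift of graded bases is only ``triangular'' for the pairing, and the possibly nonzero pairings with $p+p'>2r(f)$ are corrected by hand via a Gram--Schmidt induction on the pairs $(p,d)$, modifying each lift by lower-perversity classes of the same cohomological degree. To repair your argument, either prove the existence of a self-dual splitting for projective maps (this is known, but requires a Deligne-type splitting argument, not just the decomposition theorem) or run the correction directly on $H^*(X)$ as the paper does.

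Two smaller remarks. Your caveat about the central block is legitimate: over $\Q$ nondegeneracy only gives a diagonal form with nonzero rational entries (and a skew form when $\dim X$ is odd), not $\pm1$; this affects the paper's normalization as well, but, as you note, the weaker normalization already suffices for the duality statement $\mathfrak p(\alpha^d_{p,i})+\mathfrak p(\beta^d_{p,i})=2r(f)$, which is the only part used later. Also note that your annihilator argument tacitly uses that $\dim\Gr_q^PH^{2n-d}=\dim\Gr_{2r-q}^PH^{d}$, which follows from \lemmaref{nondeg}; it is worth saying explicitly.
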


\noindent
To prove Proposition \ref{basis}, we need the following two results.

\begin{lemma}[\cite{hodge} Version 1,  Lemma 2.9.1]\label{zero}
Let $f:X\to Y$ be a proper map between smooth projective varieties. The Poincar\'e paring $$P_pH^{d}(X)\times P_{p'}H^{2\dim X-d}(X)\to \Q$$ is trivial for $p+p'<2r(f)$.
\end{lemma}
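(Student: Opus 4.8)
The plan is to reformulate the statement in terms of the complex $K := Rf_*\Q_X[N]$ on $Y$, where $N := \dim X$, and to exploit its Verdier self-duality. First I would record the bookkeeping: with $r := r(f)$ we have $Rf_*\Q_X[N-r] = K[-r]$, so unravelling the $r$-shift built into the definition of the geometric perverse filtration gives
$$
P_pH^d(X) = \im\Bigl(\mathbb{H}^{d-N}\bigl(Y,{}^{\mathfrak{p}}\tau_{\le p-r}K\bigr)\longrightarrow \mathbb{H}^{d-N}(Y,K)\Bigr),
$$
and likewise $P_{p'}H^{2N-d}(X) = \im\bigl(\mathbb{H}^{-m}(Y,{}^{\mathfrak{p}}\tau_{\le p'-r}K)\to \mathbb{H}^{-m}(Y,K)\bigr)$, where $m := d-N$, using $(2N-d)-N = -m$ and $H^d(X) = \mathbb{H}^{d-N}(Y,Rf_*\Q_X) = \mathbb{H}^{m}(Y,K)$. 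Since $X$ is smooth projective of dimension $N$ one has $\mathbb{D}_X\Q_X \cong \Q_X[2N]$, and since $f$ is proper $\mathbb{D}_Y\circ Rf_* \cong Rf_*\circ\mathbb{D}_X$; hence $\mathbb{D}_YK\cong K$. Taking hypercohomology on the projective (hence proper) variety $Y$, so that compactly supported and ordinary hypercohomology agree, this self-duality produces a perfect pairing $\mathbb{H}^m(Y,K)\times\mathbb{H}^{-m}(Y,K)\to\Q$, which by the standard compatibility of Verdier/Poincar\'e duality with proper pushforward is the Poincar\'e pairing of $X$ under the identifications above.

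Next I would compute the orthogonal complement of $P_{p'}H^{2N-d}(X)$ inside $H^d(X)=\mathbb{H}^m(Y,K)$. Applying $\mathbb{D}_Y$ to the canonical morphism ${}^{\mathfrak{p}}\tau_{\le p'-r}K\to K$ and using both $\mathbb{D}_YK\cong K$ and the standard identity $\mathbb{D}_Y\circ{}^{\mathfrak{p}}\tau_{\le a} \cong {}^{\mathfrak{p}}\tau_{\ge -a}\circ\mathbb{D}_Y$, the Verdier-dual morphism becomes the canonical morphism $K\to {}^{\mathfrak{p}}\tau_{\ge r-p'}K$. Since $P_{p'}H^{2N-d}(X)$ is the image of $\mathbb{H}^{-m}({}^{\mathfrak{p}}\tau_{\le p'-r}K)\to\mathbb{H}^{-m}(K)$, dualizing this surjection and transporting through the self-duality identifies the annihilator of $P_{p'}H^{2N-d}(X)$ with
$$
\ker\Bigl(\mathbb{H}^m(Y,K)\longrightarrow \mathbb{H}^m\bigl(Y,{}^{\mathfrak{p}}\tau_{\ge r-p'}K\bigr)\Bigr).
$$
Thus it remains to show that $P_pH^d(X)$ lies in this kernel, i.e. that the composite $\mathbb{H}^m({}^{\mathfrak{p}}\tau_{\le p-r}K)\to\mathbb{H}^m(K)\to\mathbb{H}^m({}^{\mathfrak{p}}\tau_{\ge r-p'}K)$ vanishes.

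This last point is a purely formal consequence of the perverse $t$-structure. The hypothesis $p+p'<2r$ gives $p-r\le (r-p')-1$, so the canonical morphism ${}^{\mathfrak{p}}\tau_{\le p-r}K\to K$ factors through ${}^{\mathfrak{p}}\tau_{\le (r-p')-1}K\to K$; and ${}^{\mathfrak{p}}\tau_{\le (r-p')-1}K\to K\to {}^{\mathfrak{p}}\tau_{\ge r-p'}K$ is zero, being two consecutive arrows in a truncation triangle. Applying $\mathbb{H}^m(Y,-)$ concludes the argument. I expect the only delicate part to be the first paragraph: verifying that the Verdier self-duality of $Rf_*\Q_X[N]$ genuinely induces the Poincar\'e pairing of $X$ with the correct degree and sign conventions, and that the $r$-shift in the definition of the geometric perverse filtration translates into the truncation ${}^{\mathfrak{p}}\tau_{\le p-r}K$ exactly as stated; once those identifications are pinned down, the required vanishing is immediate. (This is the argument of \cite{hodge}, Lemma~2.9.1.)
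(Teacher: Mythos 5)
Your proposal is correct and is essentially the paper's own argument: both reduce the statement to the vanishing of the composite $^{\mathfrak{p}}\tau_{\le p-r}K\to K\cong\mathbb{D}K\to\mathbb{D}(^{\mathfrak{p}}\tau_{\le p'-r}K)$ using the self-duality of $K=Rf_*\Q_X[\dim X]$ and the exchange $\mathbb{D}\circ{}^{\mathfrak{p}}\tau_{\le a}\cong{}^{\mathfrak{p}}\tau_{\ge -a}\circ\mathbb{D}$. The only cosmetic difference is that the paper quotes the $t$-structure axiom $\mathrm{Hom}(D^{\le p-r},D^{\ge r-p'})=0$ for $p-r<r-p'$, whereas you phrase the same vanishing via two consecutive arrows of the truncation triangle; these are interchangeable.
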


\begin{proof}
Denote by $D^b_c(Y)$ the bounded derived category of constructible sheaf on $Y$. Let 
$\epsilon:Rf_*\Q_X[n]\to \mathcal{D}(Rf_*\Q_X[n])$ be the duality ismorphism. For every $d$, the map $\epsilon$ defines the non-degenerate Poincar\'e pairing 
$$
\int_X:H^d(X)\times H^{2\dim X-d}(X)\to\Q.
$$
So to prove the pairing is trivial, it suffices to prove that the following composition is $0$:
$$
^{\mathfrak{p}}\tau_{\le p-r(f)}Rf_*\Q_X[n]\to Rf_*\Q_X[n]\xrightarrow{\epsilon}\mathcal{D}\left(Rf_*\Q_X[n]\right)\to\mathcal{D}(^{\mathfrak{p}}\tau_{\le p'-r(f)}Rf_*\Q_X[n]).
$$
By our choice of geometric perversity, the dualizing functor $\mathcal{D}$ satisfies $$\mathcal{D}\left(D^b_c(Y)^{\le p'-r(f)}\right)\subset D^b_c(Y)^{\ge r(f)-p'}.$$ By the axioms of a $t$-structure, $Hom(D^b_c(Y)^{\le p-r(f)},D^b_c(Y)^{\ge r(f)-p'})=0$, since $p-r(f)<r(f)-p'$. So the composition is $0$.
\end{proof}

\begin{lemma}\label{nondeg}
The pairing induced by the Poincar\'e pairing 
$$
\Gr_p^PH^d(X)\times \Gr_{2r(f)-p}^PH^{2n-d}(X)\to\Q
$$
is non-degenerate.
\end{lemma}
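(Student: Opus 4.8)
The plan is to deduce the non-degeneracy of the pairing on the graded pieces from the perverse decomposition together with the duality isomorphism, exactly in the spirit of the relative Hard Lefschetz / relative Poincaré--Verdier duality package of the decomposition theorem. Write $n=\dim X$ and fix a perverse decomposition $Rf_*\Q_X[n-r(f)]\cong\bigoplus_{i=0}^{2r(f)}\mathcal{P}_i[-i]$ as in Definition~2.6, so that $\Gr_p^PH^d(X)=\mathbb{H}^{d-n+r(f)}(Y,\mathcal{P}_p[-p])=\mathbb{H}^{d-n+r(f)-p}(Y,\mathcal{P}_p)$. The self-duality of $Rf_*\Q_X[n]$ under $\mathcal{D}$ translates, after the shift by $r(f)$ built into the geometric perversity, into a collection of isomorphisms $\mathcal{D}(\mathcal{P}_i)\cong\mathcal{P}_{2r(f)-i}$ of perverse sheaves on $Y$; this is the standard consequence of the fact that $\mathcal{D}$ is $t$-exact for the middle perversity and reverses the ``perverse degree'' $i\mapsto -i$, which the shift turns into $i\mapsto 2r(f)-i$.

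First I would make this duality on the summands explicit and check it is compatible with the Poincaré pairing on $X$: the pairing $H^d(X)\times H^{2n-d}(X)\to\Q$ is induced by the evaluation $Rf_*\Q_X[n]\otimes Rf_*\Q_X[n]\to\omega_Y$ coming from $\epsilon$, and under the decomposition this restricts, on the $(\mathcal{P}_p[-p])\otimes(\mathcal{P}_{2r(f)-p}[-(2r(f)-p)])$ component, to a perfect pairing $\mathcal{P}_p\otimes\mathcal{P}_{2r(f)-p}\to\omega_Y[\text{shift}]$ realizing $\mathcal{D}(\mathcal{P}_p)\cong\mathcal{P}_{2r(f)-p}$. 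Taking hypercohomology and using Poincaré--Verdier duality on $Y$ then identifies the induced pairing
$$
\Gr_p^PH^d(X)\times\Gr_{2r(f)-p}^PH^{2n-d}(X)\to\Q
$$
with the tautological perfect pairing $\mathbb{H}^{j}(Y,\mathcal{P}_p)\times\mathbb{H}^{-j}(Y,\mathcal{D}\mathcal{P}_p)\to\Q$ for the appropriate $j$, which is non-degenerate. One should also observe that components with $p+p'\ne 2r(f)$ contribute nothing, recovering and refining Lemma~\ref{zero}: for $p+p'<2r(f)$ the pairing vanishes, and for $p+p'>2r(f)$ one uses the symmetric argument with the roles of $p$ and $p'$ interchanged (or simply that the decomposition is a direct sum and the only surviving off-diagonal terms would have to pair $\mathcal{P}_p$ with $\mathcal{P}_{p'}$, $\mathcal{D}\mathcal{P}_p=\mathcal{P}_{2r(f)-p}\ne\mathcal{P}_{p'}$, and distinct simple-ish perverse summands in a fixed decomposition pair to zero on hypercohomology by degree/duality bookkeeping).

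The main obstacle, I expect, is the bookkeeping of shifts and signs: one has to be careful that the $r(f)$-shift in the definition of the geometric perverse filtration interacts correctly with $\mathcal{D}$, so that self-duality of $Rf_*\Q_X[n]$ really does give $\mathcal{D}\mathcal{P}_i\cong\mathcal{P}_{2r(f)-i}$ and not $\mathcal{D}\mathcal{P}_i\cong\mathcal{P}_{-i}$ or some other index, and that the induced pairing on hypercohomology lands in the claimed bidegree $(p,\,2r(f)-p)$ and $(d,\,2n-d)$. Once the duality $\mathcal{D}\mathcal{P}_p\cong\mathcal{P}_{2r(f)-p}$ is pinned down, non-degeneracy is formal from Verdier duality on $Y$. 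An alternative, and perhaps cleaner, route is to invoke directly the relative Hodge--Riemann / Poincaré duality statement from \cite{hodge} (the companion to Lemma~\ref{zero}, which is quoted there as the $p+p'=2r(f)$ boundary case being perfect); I would cite that if a self-contained argument threatens to become long, but the decomposition-theoretic argument above is short enough that I would include it.
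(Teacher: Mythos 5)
Your proposal is correct, but it follows a different route from the paper: the paper's proof of Lemma~\ref{nondeg} is a two-line citation, using Lemma~\ref{zero} for well-definedness and quoting Theorem~2.1.4 and Corollary~2.1.8 of \cite{hodge} for non-degeneracy, whereas you essentially re-prove that cited result from scratch. Your argument (self-duality of $Rf_*\Q_X[\dim X]$ under $\mathcal{D}$ for proper $f$ and smooth $X$, the induced canonical isomorphisms $\mathcal{D}(\mathcal{P}_p)\cong\mathcal{P}_{2r(f)-p}$ on perverse cohomology, and global Verdier duality on the projective base $Y$) is sound and can be completed; the one step you gloss over is the identification of the graded Poincar\'e pairing with the pairing coming from the diagonal block of the duality isomorphism. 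With respect to a fixed (non-canonical) decomposition, $\epsilon$ may have off-diagonal components $\mathcal{P}_i[r(f)-i]\to\mathcal{D}(\mathcal{P}_j)[j-r(f)]$, but these occur only for $i+j>2r(f)$ (the case $i+j<2r(f)$ vanishes, which is Lemma~\ref{zero} again), so after choosing lifts in $P_p$ and $P_{2r(f)-p}$ they contribute nothing to the pairing on $\Gr_p\times\Gr_{2r(f)-p}$, and the diagonal block ${}^{\mathfrak{p}}\mathcal{H}^{p-r(f)}(\epsilon)$ is an isomorphism because $\epsilon$ is; also note that passing from $\mathbb{H}^\bullet(Y,\mathcal{D}\mathcal{P}_p)$ to the dual of $\mathbb{H}^{-\bullet}(Y,\mathcal{P}_p)$ uses properness of $Y$, which holds here since the lemma is stated in the projective setting. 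What each approach buys: the paper's citation is shorter and inherits the precise normalization of the filtration used in \cite{hodge}, while your decomposition-theoretic argument is self-contained and makes transparent exactly which duality data (the isomorphisms $\mathcal{D}(\mathcal{P}_p)\cong\mathcal{P}_{2r(f)-p}$) underlie the non-degeneracy; either is acceptable, and indeed you flagged the citation route yourself as the fallback.
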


\begin{proof}
By Lemma \ref{zero}, the pairing is well-defined. The non-degeneracy is due to Theorem 2.1.4 and Corollary 2.1.8 of \cite{hodge}.
\end{proof}

\begin{proof}[Proof of Proposition \ref{basis}]
Denote $B^d_p=\{\beta\in B\mid \mathfrak{p}(\beta)=p,\beta\in H^d(X)\}$. We construct $B^d_p$ using a Gram-Schmidt type argument. We perform the construction inductively in the lexicographical order of pairs $(p,d)$. \\
Induction base: for $(p,d)\prec(r(f),\dim X)$, pick any basis of $\Gr_p^PH^d(X)$, and lift them to get $B^d_p$. For $(p,d)=(r(f),\dim X)$, by Lemma \ref{nondeg}, the self-intersection form is nondegenerate, so we may pick a basis such that the intersection matrix is diagonal and has only $\pm1$ on the diagonal. Denote any lift of this basis by $B^{\dim X}_{r(f)}$.\\
We are now going to find $B_{p,d}=\{\beta^d_{p,1},\cdots,\beta^d_{p,k(p,d)}\}$, assuming that all cases below $(p,d)$ are done. To simplify notation, we let $e=2\dim X-d$ and $q=2r(f)-p$. Note that $(q,e)\prec(p,d)$. By Lemma \ref{zero}, the pairing $\Gr_p^PH^d(X)\times \Gr_q^PH^e(X)$ is non-degenerate, so we may pick a basis $\widetilde{B^d_p}=\left\{\widetilde{\beta^d_{p,1}},\cdots,\widetilde{\beta^d_{p,k(p,d)}}\right\}$ such that the matrix of this bilinear pairing is the identity matrix with respect to the bases $B^e_q$ and $\widetilde{B^d_p}$. Modify $\widetilde{B^d_p}$ by setting

\begin{equation}\label{gram}
\left(
\begin{array}{ccc}
\beta^d_{p,1}\\
\cdots\\
\beta^d_{p,k(p,d)}
\end{array}
\right)
=
\left(
\begin{array}{ccc}
\widetilde{\beta^d_{p,1}}\\
\cdots\\
\widetilde{\beta^d_{p,k(p,d)}}
\end{array}
\right)
+
\sum_{i=q+1}^{p-1} A_i
\left(
\begin{array}{ccc}
\beta^d_{i,1}\\
\cdots\\
\beta^d_{i,k(i,d)}
\end{array}
\right),
\end{equation}
where the $A_i$ are $k(p,d)\times k(i,d)$ matrices of rational numbers to be determined. The condition that the $A_i$ need to satisfy is slightly different when $d<\dim X$, $d>\dim X$ and $d=\dim X$.
\begin{enumerate}
\item{
$d<\dim X$. For degree reasons, it suffices to require the orthogonality between $B^d_p$ and degree $d$ basis which precedes $(p,d)$ in the lexicographical order, namely $B^e_1,\cdots,B^e_{p-1}$. If we denote the Poincar\'e pairing by regular multiplication, then the condition can be written in matrix notation as
$$
\left(
\begin{array}{c}
\beta^d_{p,1}\\
\cdots\\
\beta^d_{p,k(p,d)}
\end{array}
\right)
\left(
\begin{array}{ccc}
\beta^e_{j,1},&\cdots,&\beta^e_{j,k(j,e)}
\end{array}
\right)=0,
$$

for $j=0,\cdots,\hat{q},\cdots,i-1$, and

$$
\left(
\begin{array}{c}
\beta^d_{p,1}\\
\cdots\\
\beta^d_{p,k(p,d)}
\end{array}
\right)
\left(
\begin{array}{ccc}
\beta^e_{q,1},&\cdots,&\beta^e_{q,k(q,e)}
\end{array}
\right)=I_{k(p,d)},
$$
where $I$ denotes the identity matrix. Pluging in \eqref{gram}, we have
 
$$
\left(
\begin{array}{ccc}
\widetilde{\beta^d_{p,1}}\\
\cdots\\
\widetilde{\beta^d_{p,k(p,d)}}
\end{array}
\right)
\left(
\begin{array}{ccc}
\beta^e_{j,1},&\cdots,&\beta^e_{j,k(j,e)}
\end{array}
\right)
+
\sum_{i=q+1}^{p-1} A_i
\left(
\begin{array}{c}
\beta^d_{i,1}\\
\cdots\\
\beta^d_{i,k(i,d)}
\end{array}
\right)
\left(
\begin{array}{ccc}
\beta^e_{j,1},&\cdots,&\beta^e_{j,k(j,e)}
\end{array}
\right)=0
$$
for $j=0,\cdots,\hat{q},\cdots,p-1$, and
$$
\left(
\begin{array}{ccc}
\widetilde{\beta^d_{p,1}}\\
\cdots\\
\widetilde{\beta^d_{p,k(p,d)}}
\end{array}
\right)
\left(
\begin{array}{ccc}
\beta^e_{q,1},&\cdots,&\beta^e_{q,k(q,e)}
\end{array}
\right)
+
\sum_{i=q+1}^{p-1} A_i
\left(
\begin{array}{c}
\beta^d_{i,1}\\
\cdots\\
\beta^d_{i,k(i,d)}
\end{array}
\right)
\left(
\begin{array}{ccc}
\beta^e_{q,1},&\cdots,&\beta^e_{q,k(q,e)}
\end{array}
\right)=I.
$$

\noindent
The second condition is always satisfied by $q+i<2r(f)$ and by Lemma \ref{zero}. The first condition is true when $j<q$ for the same reason. When $q\le j\le p-1$, by induction hypothesis, the first condition is reduced to

$$
\left(
\begin{array}{ccc}
\widetilde{\beta^d_{p,1}}\\
\cdots\\
\widetilde{\beta^d_{p,k(p,d)}}
\end{array}
\right)
\left(
\begin{array}{ccc}
\beta^e_{j,1},&\cdots,&\beta^e_{j,k(j,e)}
\end{array}
\right)
+
A_{2r(f)-j}=0.
$$
This solves $A_{2r(f)-j}$. Note that $q+1\le j\le p-1$, so $q+1\le 2r(f)-j\le p-1$ (note that $p+q=2r(f)$). That means that all $A_i$ are determined.
}

\item{
$j>\dim X$. The only difference in this case is that $B^e_p$ is already done, so we need one more condition to require $B^d_p$ to be orthogonal to $B^e_p$. To make this work, the sum taken in \eqref{gram} need to be from $q$ to $p-1$ instead of from $q+1$ to $p-1$. The computation is similar. 
}
\item{
$j=\dim X$. In this case $\widetilde{B^{\dim X}_p}$ need to be modified to be orthogonal to itself. The condition to be satisfied is exactly the same as $j>\dim X$ case, the result is slightly different: the matrix $A_q$ is different by a factor $2$.
}
\end{enumerate}
This completes the induction. In particular, the dual basis $\alpha^d_{p,i}=\pm\beta^e_{q,i}$, so $\mathfrak{p}(\alpha^d_{p,i})+\mathfrak{p}(\beta^e_{q,i})=2r(f)$.
\end{proof}

\begin{remark}
The assumption that $X$ and $Y$ are smooth varieties is not necessary. In fact the construction works for the intersection cohomology for singular varieties. 
\end{remark}

\begin{remark}
We point out an easy but important fact about $B$. The basis $B$ is filtered in the sense that 
$$
P_pH^*(X)=\text{Span }\{\beta\in B\mid \mathfrak{p}(\beta)\le p\}.
$$
\end{remark}

\noindent
By the additivity of perversities with respect to tensor products, we have the following.
\begin{corollary}\label{basis^n}
Let $f:X\to Y$ be a morphism between smooth projective varieties. Let $B=\{\beta_1,\cdots,\beta_k\}$ be the basis of $H^*(X)$ in Proposition \ref{basis}. Then the set $B^n$ defined by 
$$
B^n:=\{\beta_{i_1}\otimes\cdots\otimes\beta_{i_n}\mid 1\le i_1,\cdots, i_n\le k\}
$$
is a basis of $H^*(X^n)$. Furthermore, this basis is filtered with respect to the perverse filtration induced by map $f^n:X^n\to Y^n$ .
\end{corollary}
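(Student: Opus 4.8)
The plan is to deduce Corollary~\ref{basis^n} by a direct combination of Proposition~\ref{basis} and Proposition~\ref{product}, iterated $n-1$ times. First I would recall from Proposition~\ref{product} that for two proper maps $f_i:X_i\to Y_i$ the perverse decomposition of $f_1\times f_2$ is the external tensor product of the two perverse decompositions, and in particular $\mathfrak{p}(\alpha_1\otimes\alpha_2)=\mathfrak{p}(\alpha_1)+\mathfrak{p}(\alpha_2)$. Taking $X_1=X_2=\cdots=X$, $Y_1=\cdots=Y$, $f_i=f$, and inducting on $n$, one gets a perverse decomposition for $f^n:X^n\to Y^n$ together with the additivity formula $\mathfrak{p}(\beta_{i_1}\otimes\cdots\otimes\beta_{i_n})=\sum_{j=1}^n\mathfrak{p}(\beta_{i_j})$ for any classes $\beta_{i_j}\in H^*(X)$.

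Next, since $B=\{\beta_1,\dots,\beta_k\}$ is a $\Q$-basis of $H^*(X)$, the set $B^n=\{\beta_{i_1}\otimes\cdots\otimes\beta_{i_n}\}$ is automatically a $\Q$-basis of $H^*(X^n)$ by the Künneth formula (over $\Q$ there are no Tor terms). It remains to check that $B^n$ is filtered with respect to the perverse filtration for $f^n$, i.e.\ that for every $p$,
$$
P_pH^*(X^n)=\mathrm{Span}\{\beta_{i_1}\otimes\cdots\otimes\beta_{i_n}\mid \mathfrak{p}(\beta_{i_1})+\cdots+\mathfrak{p}(\beta_{i_n})\le p\}.
$$
The inclusion ``$\supseteq$'' is immediate from the additivity of perversity just established, together with the fact that $P_p$ is spanned by classes of perversity $\le p$. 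For the inclusion ``$\subseteq$'', I would invoke Corollary~\ref{cartesian}, which gives $P_pH^*(X^n)=\mathrm{Span}\{\gamma_1\otimes\cdots\otimes\gamma_n\mid \mathfrak{p}(\gamma_1)+\cdots+\mathfrak{p}(\gamma_n)\le p\}$ for arbitrary $\gamma_i\in H^*(X)$; then, using the fact from Proposition~\ref{basis} (and the remark following it) that $B$ is a filtered basis of $H^*(X)$, each $\gamma_i$ of perversity $\le p_i$ is a $\Q$-linear combination of elements $\beta_j\in B$ with $\mathfrak{p}(\beta_j)\le p_i$. Expanding $\gamma_1\otimes\cdots\otimes\gamma_n$ multilinearly in such terms exhibits it as a combination of elements of $B^n$ each satisfying the required perversity bound, which gives ``$\subseteq$''.

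There is no serious obstacle here; the corollary is essentially bookkeeping built on the two earlier propositions. The only mild point requiring care is that the additivity of perversity (not merely the inequality $\mathfrak{p}(\alpha\otimes\beta)\le\mathfrak{p}(\alpha)+\mathfrak{p}(\beta)$) is what makes the spanning statement an equality rather than just an inclusion; this is exactly the content of the ``In particular'' clause of Proposition~\ref{product}, so I would simply cite it. One should also note, as the excerpt's statement already does, that smoothness and projectivity of $X$ and $Y$ are assumed so that Proposition~\ref{basis} applies; with that in hand the argument is complete.
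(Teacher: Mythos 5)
Your argument is correct and matches the paper's (essentially unwritten) proof: the corollary is stated as an immediate consequence of the additivity of perversity under external tensor products (Proposition \ref{product}), combined with Corollary \ref{cartesian} and the filtered property of $B$, which is exactly the bookkeeping you carry out. No gaps; your fleshing out of both inclusions is the intended reasoning.
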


\subsection{Perversity estimation of small diagonals}
\noindent
In this section, we study the perversity of the small diagonal of the cartesian self-product。 This estimation is crucial to prove the multiplicativity of perverse filtration of Hilbert schemes.

\begin{proposition}\label{diagonal}
 Let $f:X\to Y$ be any morphism between smooth projective varieties. The small diagonal embedding $\Delta_n:X\to X^n$ induces a Gysin push-forward of cohomology
$$
\Delta_{n,*}:H^*(X)\to H^{*+2(n-1)\dim X}(X).
$$
Suppose the perverse filtration for $f:X\to Y$ is multiplicative, i.e. for any two classes $\alpha_1,\alpha_2\in H^*(X)$, we have 
$\mathfrak{p}(\alpha_1\cup\alpha_2)\le \mathfrak{p}(\alpha_1)+\mathfrak{p}(\alpha_2)$. Then for any $\gamma\in H^*(X)$, we have that
$$
\mathfrak{p}(\Delta_{n,*}(\gamma))\le \mathfrak{p}(\gamma)+2(n-1)r(f),
$$ 
where the perversity on the left side is defined by the map $f^n:X^n\to Y^n$ and the one on the right side is defined by $f:X\to Y$.
\end{proposition}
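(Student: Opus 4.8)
The plan is a Poincar\'e duality argument on $X^n$: the Gysin map $\Delta_{n,*}$ is adjoint to the restriction $\Delta_n^*\colon H^*(X^n)\to H^*(X)$ to the small diagonal, and on external products $\Delta_n^*$ is an iterated cup product, so the hypothesised multiplicativity on $X$ controls it; then \lemmaref{zero} converts the resulting estimate on $\Delta_n^*$ back into one on $\Delta_{n,*}$.

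First I would collect preliminaries. Iterating \propositionref{product} gives $r(f^n)=nr(f)$. Let $B=\{\beta_i\}$ be the basis of $H^*(X)$ from \propositionref{basis} and $A=\{\alpha_i\}$ its Poincar\'e-dual basis, so that $\mathfrak{p}(\alpha_i)+\mathfrak{p}(\beta_i)=2r(f)$. By \corollaryref{basis^n} the tensor products $B^n=\{\beta_{i_1}\otimes\cdots\otimes\beta_{i_n}\}$ form a basis of $H^*(X^n)$ that is filtered with respect to the perverse filtration for $f^n$. Moreover the Poincar\'e pairing on $X^n$ is, up to K\"unneth signs, the tensor product of the pairings on the factors, so the dual basis of $B^n$ is, up to sign, $A^n=\{\alpha_{i_1}\otimes\cdots\otimes\alpha_{i_n}\}$; and by additivity of perversity under external products (\propositionref{product}) each dual pair $(a,b)\in (A^n,B^n)$ satisfies $\mathfrak{p}(a)+\mathfrak{p}(b)=2nr(f)=2r(f^n)$. (Signs are harmless, since perversity is invariant under nonzero scaling.)

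Now set $p_0=\mathfrak{p}(\gamma)$ and expand $\Delta_{n,*}(\gamma)=\sum_{b\in B^n}c_b\,b$. Since $A^n$ is (up to sign) dual to $B^n$, we have $c_b\neq0$ if and only if $\langle\Delta_{n,*}(\gamma),a_b\rangle_{X^n}\neq0$, where $a_b\in A^n$ is dual to $b$. By the projection formula for the closed embedding $\Delta_n\colon X\hookrightarrow X^n$,
$\langle\Delta_{n,*}(\gamma),a_b\rangle_{X^n}=\langle\gamma,\Delta_n^*(a_b)\rangle_X$.
Writing $a_b=\pm\,\alpha_{i_1}\otimes\cdots\otimes\alpha_{i_n}$ and using that $\Delta_n$ is $x\mapsto(x,\dots,x)$, we get $\Delta_n^*(a_b)=\pm\,\alpha_{i_1}\cup\cdots\cup\alpha_{i_n}$; by the assumed multiplicativity for $f$,
$\mathfrak{p}\bigl(\Delta_n^*(a_b)\bigr)\le\sum_k\mathfrak{p}(\alpha_{i_k})=\mathfrak{p}(a_b)=2nr(f)-\mathfrak{p}(b)$,
i.e. $\Delta_n^*(a_b)\in P_{2nr(f)-\mathfrak{p}(b)}H^*(X)$.

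Finally, if $c_b\neq0$ then the Poincar\'e pairing of $\gamma\in P_{p_0}H^*(X)$ with $\Delta_n^*(a_b)\in P_{2nr(f)-\mathfrak{p}(b)}H^*(X)$ is nonzero, so \lemmaref{zero} (applied to $f$, which is proper since $X$ is projective) forces $p_0+\bigl(2nr(f)-\mathfrak{p}(b)\bigr)\ge 2r(f)$, i.e. $\mathfrak{p}(b)\le p_0+2(n-1)r(f)$. Hence every vector of $B^n$ occurring in $\Delta_{n,*}(\gamma)$ has perversity at most $p_0+2(n-1)r(f)$; since $B^n$ is a filtered basis for $f^n$, this gives $\Delta_{n,*}(\gamma)\in P_{p_0+2(n-1)r(f)}H^*(X^n)$, which is the claimed bound $\mathfrak{p}(\Delta_{n,*}\gamma)\le\mathfrak{p}(\gamma)+2(n-1)r(f)$. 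The only delicate point is the bookkeeping linking the filtration on $X^n$ to the cup product on $X$: identifying the dual basis of $B^n$ and computing its perversities, and being careful to invoke \lemmaref{zero} on $X$ for the map $f$ rather than on $X^n$ for $f^n$; everything else is the projection formula together with the identity $\Delta_n^*(\alpha_1\otimes\cdots\otimes\alpha_n)=\alpha_1\cup\cdots\cup\alpha_n$. I will now fill in these details.
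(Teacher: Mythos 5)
Your argument is correct: the duality of the bases $B^n$ and $A^n$ (up to K\"unneth signs), the projection formula identity $\langle\Delta_{n,*}\gamma,a_b\rangle_{X^n}=\pm\langle\gamma,\Delta_n^*(a_b)\rangle_X$, the bound $\mathfrak{p}(\Delta_n^*(a_b))\le\mathfrak{p}(a_b)=2nr(f)-\mathfrak{p}(b)$ from the multiplicativity hypothesis together with \propositionref{product} and \propositionref{basis}, and the final appeal to \lemmaref{zero} for $f$ (which applies, since $X,Y$ are projective) all hold, and since $B^n$ is filtered by \corollaryref{basis^n} the conclusion follows. The route differs from the paper's in its mechanism: the paper proves the case $n=2$ by writing down the explicit K\"unneth expansion $\Delta_{2,*}(\gamma)=\sum_i\alpha_i\otimes(\beta_i\cup\gamma)$ (\lemmaref{dual}, proved by a correspondence computation), estimates each summand directly, and then handles general $n$ by induction through the factorization $\Delta_n=(\Delta_2\times\mathrm{Id}^{n-2})\circ\Delta_{n-1}$; you instead treat all $n$ at once, never produce the pushforward class explicitly, and convert the problem by adjunction into a statement about $\Delta_n^*$ on the dual basis, with \lemmaref{zero} doing the work of excluding high-perversity basis vectors from the expansion. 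The two are dual formulations of the same underlying idea -- your coefficient computation is essentially how \lemmaref{dual} is proved -- but your version avoids both the induction and the explicit formula, at the cost of leaning on \lemmaref{zero} and the compatibility of the Poincar\'e pairing with K\"unneth on $X^n$, whereas the paper's explicit expansion has the side benefit of exhibiting the class $\Delta_{2,*}(\gamma)$ itself, a form of information it reuses in the non-compact setting of Proposition \ref{diagonal open}.
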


\noindent
We need an easy fact to prove the proposition.

\begin{lemma}\label{dual}
Let $X$ be a compact smooth manifold. Let $\beta_1,\cdots,\beta_k$ be an additive $\Q$-basis of $H^*(X)$. Let $\alpha_1,\cdots,\alpha_k$ be the dual basis with respect to the Poincar\'e pairing, namely $\langle\alpha_i,\beta_j\rangle=\delta_{ij}$. Then
$$
\Delta_{2,*}(\gamma)=\sum_{i=1}^k{\alpha_i\otimes(\beta_i\cup\gamma)}.
$$
\end{lemma}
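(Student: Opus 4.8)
The plan is to deduce the formula from the Künneth decomposition of the class of the diagonal together with the projection formula for the Gysin pushforward. Set $[\Delta] := \Delta_{2,*}(1) \in H^{2\dim X}(X\times X)$. The first step is to prove the classical identity
$$
[\Delta] = \sum_{i=1}^k \alpha_i\otimes\beta_i
$$
in $H^*(X\times X)$ (up to the Koszul signs discussed at the end).

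To establish this, recall that the Poincar\'e pairing on $H^*(X\times X)$ is non-degenerate: this follows from the Künneth isomorphism $H^*(X\times X)\cong H^*(X)\otimes H^*(X)$ together with Poincar\'e duality on $X$. Hence it suffices to check that $[\Delta]$ and $\sum_i\alpha_i\otimes\beta_i$ pair identically against every element $\beta_a\otimes\alpha_b$ of a $\Q$-basis of $H^*(X\times X)$. For the right-hand side this is immediate from $\langle\alpha_i,\beta_a\rangle=\delta_{ia}$ and $\langle\beta_i,\alpha_b\rangle=\delta_{ib}$. For the left-hand side, using $\Delta_2^*(\xi\otimes\eta)=\xi\cup\eta$, the projection formula $\Delta_{2,*}(1)\cup\omega=\Delta_{2,*}(\Delta_2^*\omega)$, and the compatibility of $\Delta_{2,*}$ with integration, one computes
$$
\int_{X\times X}[\Delta]\cup(\xi\otimes\eta)=\int_{X\times X}\Delta_{2,*}\bigl(\Delta_2^*(\xi\otimes\eta)\bigr)=\int_X\xi\cup\eta=\langle\xi,\eta\rangle ,
$$
which matches. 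So the two classes coincide. The second step is to apply the projection formula once more, now with $\omega=1\otimes\gamma$ and noting $\Delta_2^*(1\otimes\gamma)=\gamma$:
$$
\Delta_{2,*}(\gamma)=\Delta_{2,*}\bigl(\Delta_2^*(1\otimes\gamma)\bigr)=(1\otimes\gamma)\cup\Delta_{2,*}(1)=(1\otimes\gamma)\cup[\Delta]=\sum_{i=1}^k\alpha_i\otimes(\beta_i\cup\gamma),
$$
which is the assertion.

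The only point requiring genuine care is the bookkeeping of Koszul signs: the Künneth product $(\alpha_i\otimes\beta_i)\cup(\xi\otimes\eta)$ and the commutation of $\gamma$ past $\beta_i$ each produce signs depending on the degrees involved. These can be absorbed by normalizing the dual basis appropriately (replacing $\alpha_i$ by $(-1)^{\deg\alpha_i}\alpha_i$); since we work over $\Q$ and the lemma is used only to derive perversity inequalities, the precise signs are immaterial and the formula may be stated as above. Thus the "hard part" here is merely notational sign-tracking rather than any substantive difficulty.
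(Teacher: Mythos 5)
Your argument is correct, and it reaches the formula by a slightly different route than the paper. You first identify the class of the diagonal itself, $\Delta_{2,*}(1)=\sum_i\alpha_i\otimes\beta_i$, by pairing against the K\"unneth basis $\{\beta_a\otimes\alpha_b\}$ of $H^*(X\times X)$ and invoking non-degeneracy of the Poincar\'e pairing on the product, and then you twist by $\gamma$ with one more application of the projection formula, $\Delta_{2,*}(\gamma)=(1\otimes\gamma)\cup\Delta_{2,*}(1)$. The paper instead never isolates $\Delta_{2,*}(1)$: it treats $\Delta_{2,*}(\gamma)$ and $\sum_i\alpha_i\otimes(\beta_i\cup\gamma)$ directly as correspondences $\Phi\mapsto\bigl(\xi\mapsto\pr_{2,*}(\pr_1^*\xi\cup\Phi)\bigr)$, checks that both act on $H^*(X)$ as $\xi\mapsto\xi\cup\gamma$ by evaluating on the basis $\{\beta_j\}$, and concludes equality of the classes (implicitly using that the correspondence map $H^*(X\times X)\to\mathrm{End}(H^*(X))$ is injective, again by K\"unneth plus Poincar\'e duality). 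The two proofs use the same ingredients -- projection formula, $\pr_2\circ\Delta_2=\mathrm{id}$, and duality of the bases -- so the difference is organizational: your factorization through the classical K\"unneth decomposition of the diagonal makes the structure (and the Koszul sign issue, which you rightly flag and which the paper also suppresses) more transparent, while the paper's correspondence computation handles $\gamma$ in one pass without needing to name $[\Delta]$. Since the lemma is only used for perversity estimates, the sign normalization you mention is indeed harmless in both treatments.
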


\begin{proof}
Let $\pr_1,\pr_2:X\times X\to X$ be the projection maps to the two factors. Any cohomology class $\Phi\in H^*(X\times X)$ induces a correspondence 
\begin{eqnarray*}
[\Phi]:H^*(X)&\to& H^*(X)\\
\xi&\mapsto&\pr_{2,*}(\pr_1^*(\xi)\cup\Phi).
\end{eqnarray*}
Now the correspondence induced by left hand side is
\begin{equation*}
\begin{array}{rrl}
\displaystyle\left[\Delta_{2,*}(\gamma)\right](\xi)&=&\pr_{2,*}(\pr_1^*(\xi)\cup\Delta_{2,*}(\gamma))\\
&=&\pr_{2,*}(\xi\otimes 1\cup\Delta_{2,*}(\gamma))\\
&=&\pr_{2,*}\Delta_{2,*}(\Delta_2^*(\xi\otimes1)\cup\gamma)\\
&=&\Delta_2^*(\xi\otimes1)\cup\gamma\\
&=&\xi\cup\gamma,
\end{array}
\end{equation*}
where the second equality is due to the projection formula, and the third equality uses $\Delta_2\circ\pr_2=\text{id}$. The correspondence on right hand side computes as
\begin{equation*}
\begin{array}{rrl}
\displaystyle\left[\sum_{i=1}^k{\alpha_i\otimes\beta_i\cup\gamma}\right](\beta_j)&=&\displaystyle\sum_{i=1}^k{\pr_{2,*}(\beta_j\cup\alpha_i\otimes\beta_i\cup\gamma)}\\
&=&\pr_{2,*}(\beta_j\cup\alpha_j\otimes\beta_j\cup\gamma)\\
&=&\beta_j\cup\gamma.
\end{array}
\end{equation*}
Here we use the fact that the nontrivial push-forward takes place only when $\beta_j\cup\alpha_i$ is a cohomology class of top degree, and hence in this case $\beta_j\cup\alpha_i=\langle\beta_j,\alpha_i\rangle=\delta_{ij}$ by our choice of $\{\alpha_i\}$ and $\{\beta_i\}$. So there is only one non-zero pairing left in the summation. Now extending by linearity, we have
$$
\left[\sum_{i=1}^k{\alpha_i\otimes\beta_i\cup\gamma}\right](\xi)=\xi\cup\gamma.
$$
So the lemma follows.
\end{proof}

\begin{proof}[Proof of Proposition \ref{diagonal}]
We use induction on $n$ to prove the statement. Since $n=1$ is trivial, we prove for $n=2$ as induction basis.\\
Let $\{\beta^d_{p,i}\}$, $\{\alpha^d_{p,i}\}$ be the basis in Proposition \ref{basis}. Then by Lemma \ref{dual} we have
$$
\Delta_{2,*}(\gamma)=\sum_{p,d,i}{\alpha^d_{p,i}\otimes(\beta^d_{p,i}\cup\gamma)}
$$
Now by Proposition \ref{product}, Proposition \ref{basis}, and the hypothesis that the perverse filtration for $f:X\to Y$ is multiplicative, we have
\begin{equation*}
\begin{array}{rrl}
 \mathfrak{p}(\Delta_{2,*}(\alpha))&\le& \max_{p,d,i}\mathfrak{p}(\alpha^d_{p,i}\otimes(\beta^d_{p,i}\cup\gamma))\\
&\le& \max_{p,d,i}(\mathfrak{p}(\alpha^d_{p,i})+\mathfrak{p}(\beta^d_{p,i})+\mathfrak{p}(\gamma))\\
&\le& 2r(f)+\mathfrak{p}(\gamma).
\end{array}
\end{equation*}
\noindent
For general $n$, $\Delta_n$ can be decomposed into the following two diagonal maps. 

$$
X\xrightarrow{\Delta_{n-1}}X^{n-1}\xrightarrow{\Delta_2\times \text{Id}^{n-2}}X^n.
$$
\noindent
Then by induction hypothesis, we have

\begin{equation*}
\begin{array}{rrl}
\mathfrak{p}(\Delta_{n,*}(\gamma))&\le&\mathfrak{p}(\Delta_{n-1,*}\gamma)+2r(f)\\
&\le&\mathfrak{p}(\gamma)+2(n-2)r(f)+2r(f)\\
&=&\mathfrak{p}(\gamma)+2(n-1)r(f).
\end{array}
\end{equation*}
\end{proof}

\section{Hilbert scheme of points on surfaces}
\noindent
In this section we produce a perverse decomposition for the Hibert schemes of points on smooth surfaces in terms of a perverse decomposition for the fibered surface. We use Lehn's description of the ring structure of the cohomology of Hilbert schemes and the perversity estimation of the diagonal in self-cartesian product to prove the multiplicativity of the perverse filtration for Hilbert schemes. 

\subsection{Ring structure of Hilbert scheme of K3 surfaces}

In this section we recall the notation, definition and results on the cup product on the cohomology ring of the Hilbert scheme of points on surface with numerical trivial canonical bundle. Let $S$ be a projective surface with numerically trivial canonical bundle. Let $A=H^*(S;\Q)$ be the cohomology with $\Q$ coefficients. Let $[n]$ denote the set $\{1,\cdots,n\}$.

\begin{definition}[\cite{lehn} 2.1]
Let $I$ be a finite set of cardinality $n$. Define 
$$
A^I=\left(\bigoplus_{f:[n]\xrightarrow{\sim} I}A_{f(1)}\otimes\cdots\otimes A_{f(n)}\right)\bigg/ \mathfrak{S}_n.
$$
\end{definition}

\begin{remark}
In fact, $A^I$ is isomorphic to $A^{|I|}$. This isomorphism is canonical once an isomorphism $\varphi:[n]\to I$ is fixed. 
\end{remark}

\begin{definition}[\cite{lehn} 2.1]\label{4.3}
Let $\varphi:I\to J$ be a surjective map between sets. Then $\varphi$ induces a morphism $\varphi:S^J\to S^I$ by sending $(x_1,\cdots,x_{|J|})$ to $(x_{\varphi(1)},\cdots,x_{\varphi(|I|)})$. Define $\varphi_*$ and $\varphi^*$ to be the push-forward and pull-back map associated with $\varphi$ between the cohomology groups $H^*(S^I)$ and $H^*(S^J)$.
\end{definition}

\begin{remark}
The pull-back map can be described explicitly as follows. First note that $\varphi$ is a product of diagonal embedding map: the $j$-th copy of $S$ in $S^J$ is embedded diagonally in $S^{f^{-1}(j)}$. Pulling-back along the diagonal embedding is exactly the definition of cup product. So if we fix isomorphism $f:[n]\xrightarrow{\sim} I$, $g:[m]\xrightarrow{\sim} J$, we will have 
$$
\tilde{\varphi}:[n]\to I\to J \to [m]
$$
Therefore
\begin{eqnarray*}
\tilde{\varphi}^*:A^n&\to&A^m\\
a_1\otimes\cdots\otimes a_n&\mapsto&\bigotimes_{j=1}^m{\bigcup_{i\in\tilde{\varphi}^{-1}(j)}{a_i}}
\end{eqnarray*}
and
$$
\varphi^*:A^I\to A^n\xrightarrow{\tilde{\varphi}^*}A^m\to A^J
$$
It is easy to check that this is independent of choice of $f$ and $g$.
\end{remark}

\noindent
Now we define the wreath product of $A$ and $\mathfrak{S}_n$, which is used to describe the cohomology of Hilbert scheme of points on smooth surfaces. For a  permutation $\sigma\in \mathfrak{S}_n$ and a partition $\nu=1^{a_1}\cdots n^{a_n}$ of $n$, we say $\sigma$ is of type $\nu$ if $\sigma$ has exactly $a_i$ $i$-cycles. For $K$ a subgroup of $\mathfrak{S}_n$, and for a $K$-stable subset $E\subset[n]$, let $K\backslash E$ denote the set of orbits for the induced action of $K$ on $E$.

\begin{definition}[\cite{lehn} Lemma 2.7]
For $\sigma,\tau\in \mathfrak{S}_n$, the graph defect $g(\sigma,\tau):\langle\sigma,\tau\rangle\backslash[n]\to\Q$ is defined by
$$
g(\sigma,\tau)(E)=\frac{1}{2}(|E|+2-|\langle\sigma\rangle\backslash E|-|\langle\tau\rangle\backslash E|-|\langle\sigma\tau\rangle\backslash E|).
$$
In fact, $g(\sigma,\tau)$ takes value in non-negative integers.
\end{definition}

\begin{definition}[\cite{lehn} 2.8]\label{4.6}
The wreath product of $A$ and symmetric group $\mathfrak{S}_n$ as follows.
$$
A\{\mathfrak{S}_n\}:=\bigoplus_{\sigma\in \mathfrak{S}_n}A^{\otimes \langle\sigma\rangle\backslash[n]}[-2|\sigma|]\cdotp\sigma.
$$
$\mathfrak{S}_n$ acts on $A\{\mathfrak{S}_n\}$ as follows: the action of $\tau\in \mathfrak{S}_n$ on $[n]$ induces a bijection
\begin{eqnarray*}
\sigma:\langle\sigma\rangle\backslash[n]&\to&\langle\tau\sigma\tau^{-1}\rangle\backslash[n]\\
x&\mapsto&\tau x
\end{eqnarray*}
for each $\sigma$ and hence an isomorphism
\begin{eqnarray*}
\tilde{\tau}:A\{\mathfrak{S}_n\}&\to&A\{\mathfrak{S}_n\}\\
a\sigma&\mapsto&\tau^*(a)\tau\sigma\tau^{-1}.
\end{eqnarray*}
Let
$$
A^{[n]}:=(A\{\mathfrak{S}_n\})^{\mathfrak{S}_n}
$$
be the subspace of invariants.
\end{definition}

\noindent
Any inclusion $H\subset K$ of subgroups of $\mathfrak{S}_n$ induces a surjection $H\backslash[n]\twoheadrightarrow K\backslash[n]$ of set of orbits and hence induces a pull-back map
$$
f^{H,K}:A^{\otimes H\backslash[n]}\to A^{\otimes K\backslash[n]}
$$
and a push-forward map
$$
f_{K,H}:A^{\otimes K\backslash[n]}\to A^{\otimes H\backslash[n]}.
$$

\begin{definition}[\cite{lehn} 2.12]
For $\sigma,\tau\in \mathfrak{S}_n$, define
\begin{eqnarray*}
m_{\sigma,\tau}:A^{\otimes \langle\sigma\rangle\backslash[n]}\otimes A^{\otimes \langle\tau\rangle\backslash[n]} &\to&
A^{\otimes \langle\sigma\tau\rangle\backslash[n]}\\
a\otimes b&\mapsto& f_{\langle\sigma,\tau\rangle,\langle\sigma\tau\rangle}(f^{\langle\sigma\rangle,\langle\sigma,\tau\rangle}(a)\cdotp f^{\langle\tau\rangle,\langle\sigma,\tau\rangle}(b)\cdotp e^{g(\sigma,\tau)})
\end{eqnarray*}
where $e$ is the Euler class of $S$.
\end{definition}

\begin{proposition}[\cite{lehn} Proposition 2.13]
The product $A\{\mathfrak{S}_n\}\times A\{\mathfrak{S}_n\}\xrightarrow{\cdotp}A\{\mathfrak{S}_n\}$ defined by
$$
a\sigma\cdotp b\tau:=m_{\sigma,\tau}(a\otimes b)\sigma\tau
$$
is associative and $\mathfrak{S}_n$-equivariant. So it descends to a product on $A^{[n]}$.
\end{proposition}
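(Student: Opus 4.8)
The plan is to verify, in turn, that the formula $a\sigma\cdot b\tau=m_{\sigma,\tau}(a\otimes b)\,\sigma\tau$ defines an associative product, that the resulting product map $A\{\mathfrak{S}_n\}\otimes A\{\mathfrak{S}_n\}\to A\{\mathfrak{S}_n\}$ is a morphism of $\mathfrak{S}_n$-representations, and then to note that the descent to $A^{[n]}=(A\{\mathfrak{S}_n\})^{\mathfrak{S}_n}$ is automatic: if the product is $\mathfrak{S}_n$-equivariant, then the product of two invariant elements is again invariant, so it restricts to $A^{[n]}$, and associativity is inherited from the ambient algebra. So the real work is associativity, with equivariance a comparatively formal point.

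For $\mathfrak{S}_n$-equivariance, fix $\pi\in\mathfrak{S}_n$ and compare $\tilde\pi(a\sigma\cdot b\tau)$ with $\tilde\pi(a\sigma)\cdot\tilde\pi(b\tau)$. Conjugation by $\pi$ carries the orbit sets $\langle\sigma\rangle\backslash[n]$, $\langle\tau\rangle\backslash[n]$, $\langle\sigma,\tau\rangle\backslash[n]$, $\langle\sigma\tau\rangle\backslash[n]$ bijectively onto the corresponding orbit sets of the conjugated permutations, and the pull-back maps $f^{H,K}$, the push-forward maps $f_{K,H}$, the componentwise cup product on $A^{\otimes H\backslash[n]}$, and multiplication by the Euler class $e$ are all natural with respect to such relabellings. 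Moreover $g(\pi\sigma\pi^{-1},\pi\tau\pi^{-1})$ is $g(\sigma,\tau)$ transported along the induced bijection of orbit sets. Hence $\tilde\pi$ intertwines $m_{\sigma,\tau}$ with $m_{\pi\sigma\pi^{-1},\pi\tau\pi^{-1}}$, which is exactly equivariance.

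For associativity, both $(a\sigma\cdot b\tau)\cdot c\rho$ and $a\sigma\cdot(b\tau\cdot c\rho)$ lie in $A^{\otimes \langle\sigma\tau\rho\rangle\backslash[n]}\cdot\sigma\tau\rho$, so one must prove $m_{\sigma\tau,\rho}\bigl(m_{\sigma,\tau}(a\otimes b)\otimes c\bigr)=m_{\sigma,\tau\rho}\bigl(a\otimes m_{\tau,\rho}(b\otimes c)\bigr)$ in $A^{\otimes \langle\sigma\tau\rho\rangle\backslash[n]}$. First I would reduce to the transitive case: the group $G=\langle\sigma,\tau,\rho\rangle$ partitions $[n]$, every $\langle\sigma\rangle$-, $\langle\tau\rangle$-, $\langle\sigma,\tau\rangle$-, $\langle\sigma\tau\rangle$-\ldots orbit is contained in a unique $G$-orbit, and all of $f^{H,K}$, $f_{K,H}$, the cup products and the Euler-class factors are block-diagonal for this partition; hence both sides factor as tensor products over $G$-orbits and it suffices to assume $G$ acts transitively. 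In that case the claim is a statement about the graded-commutative Frobenius algebra $A=H^*(S;\Q)$ — the hypothesis that $K_S$ is numerically trivial being what makes $e=c_2(S)$ the only curvature input in Lehn's normalization — together with the combinatorics of $(\sigma,\tau,\rho)$. The ingredients are: (i) each $f^{H,K}$ with $H\subset K$ is the iterated restriction-to-diagonal map, hence a ring homomorphism; (ii) the projection formula $f_{K,H}\bigl(x\cdot f^{H,K}(y)\bigr)=f_{K,H}(x)\cdot y$; (iii) transitivity $f^{H,K}f^{G,H}=f^{G,K}$, $f_{G,H}f_{H,K}=f_{G,K}$, and base-change identities relating $f^{?}$ and $f_{?}$ across the fibre-product squares formed by the orbit sets of $\langle\sigma\rangle$, $\langle\tau\rangle$, $\langle\rho\rangle$, $\langle\sigma\tau\rangle$, $\langle\tau\rho\rangle$, $\langle\sigma,\tau\rangle$, $\langle\tau,\rho\rangle$, $\langle\sigma\tau,\rho\rangle$, $\langle\sigma,\tau\rho\rangle$ and $G$; and (iv) an additivity (cocycle) identity for the graph defect, reflecting that Euler characteristics of the associated surfaces add up, which makes the total power of $e$ match on the two sides. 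Feeding (i)--(iii) into both sides pushes all pull-backs inward and all push-forwards outward, reducing the identity to a comparison of two cup products in $A^{\otimes G\backslash[n]}$ weighted by powers of $e$; then (iv) equates the weights, and the cup products agree by commutativity and associativity of $A$.

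The main obstacle I anticipate is the bookkeeping in step (iii): identifying the relevant squares of orbit sets as cartesian squares and writing down the correct base-change formula for $f^{?}$ composed with $f_{?}$ in each configuration, since the orbit sets attached to $\langle\sigma\tau\rangle$, $\langle\tau\rho\rangle$, $\langle\sigma\tau,\rho\rangle$, $\langle\sigma,\tau\rho\rangle$ do not sit inside an obvious lattice of subgroups and must be handled orbit-by-orbit. This is precisely where the Frobenius structure of $A$ and the identification of $e$ with the Euler class of $S$ enter, and where essentially all of the content of Lehn's Proposition lies; the reduction to the transitive case and the equivariance statement are routine by comparison.
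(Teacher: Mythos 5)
The paper does not prove this proposition at all: it is imported verbatim from Lehn--Sorger (\cite{lehn}, Prop.~2.13), so your outline has to stand on its own as a proof, and as such it has a genuine gap. The parts you call routine are indeed fine: $\mathfrak{S}_n$-equivariance follows from naturality of $f^{H,K}$, $f_{K,H}$, the cup product and the graph defect under conjugation, and both sides of the associativity identity do factor over the orbits of $\langle\sigma,\tau,\rho\rangle$. But the decisive step is exactly the one you defer as ``anticipated bookkeeping,'' and the one precise assertion you make about it --- item (iv), that the graph defect satisfies an additivity (cocycle) identity which by itself makes the powers of $e$ match --- is false. Take $n=4$, $\sigma=(1\,4\,3\,2)$, $\tau=\rho=(1\,2\,3\,4)$, so $\sigma\tau=\mathrm{id}$ and $\tau\rho=(1\,3)(2\,4)$; all relevant subgroups act transitively on $[4]$, and the definition gives $g(\sigma,\tau)+g(\sigma\tau,\rho)=0+0=0$ while $g(\tau,\rho)+g(\sigma,\tau\rho)=1+1=2$. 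So the two ways of bracketing do not even produce the same defect exponents, and no argument that relies only on additivity of $g$ can close the proof.

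What actually makes the two sides agree is the interaction of the defect factors with the extra Euler factors produced in your step (iii): the composition $f^{\langle\sigma\tau\rangle,\langle\sigma\tau,\rho\rangle}\circ f_{\langle\sigma,\tau\rangle,\langle\sigma\tau\rangle}$ restricts a Gysin class to a set-theoretically cartesian but non-transverse intersection of diagonals, and the resulting excess-intersection (self-intersection) formula contributes, on each $\langle\sigma,\tau,\rho\rangle$-orbit $B$, a factor $e$ raised to the power $|\langle\sigma\tau\rangle\backslash B|-|\langle\sigma,\tau\rangle\backslash B|-|\langle\sigma\tau,\rho\rangle\backslash B|+|\langle\sigma,\tau,\rho\rangle\backslash B|$; in the example above these excess exponents are $3$ and $1$ on the two sides, exactly compensating the defect mismatch, and only the combined exponent is symmetric, namely $|B|+|\langle\sigma,\tau,\rho\rangle\backslash B|-\tfrac12\bigl(|\langle\sigma\rangle\backslash B|+|\langle\tau\rangle\backslash B|+|\langle\rho\rangle\backslash B|+|\langle\sigma\tau\rho\rangle\backslash B|\bigr)$. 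Establishing this push--pull/excess rule in the Frobenius-algebra setting, together with the projection formula and the bookkeeping of where $e^2=0$ kills terms, is precisely the content of Lehn--Sorger's proof; since your proposal stops at announcing these identities rather than proving them (and states the key numerical identity incorrectly), it is a strategy outline rather than a proof. A minor additional point: numerical triviality of $K_S$ is not what is at stake here --- the proposition is a purely algebraic statement about $A\{\mathfrak{S}_n\}$ for any graded Frobenius algebra with a chosen Euler element.
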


\begin{theorem}[\cite{lehn} Theorem 3.2]\label{lehn}
Let $S$ be a smooth projective surface with numerically trivial canonical divisor.  Let $S^{[n]}$ denote the Hilbert scheme of $n$ points on  the surface $S$. Then there is a canonical isomorphism of graded rings
$$
H^*(S;\Q)^{[n]}\xrightarrow{\cong}H^*(S^{[n]};\Q).
$$
\end{theorem}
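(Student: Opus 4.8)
The plan is to realize both sides as one Fock space and to match, on the two sides, a small family of ring generators together with the relations among them. Write $A=H^*(S;\Q)$ with its Poincar\'e pairing $\langle-,-\rangle$ and Euler class $e\in H^4(S)$, and set $\mathbf{F}:=\bigoplus_{n\ge 0}H^*(S^{[n]};\Q)$ and $\mathbf{A}:=\bigoplus_{n\ge 0}A^{[n]}$, both bigraded by weight $n$ and cohomological degree. First I would record the underlying graded vector spaces. By Nakajima and Grojnowski, $\mathbf{F}$ is the irreducible lowest-weight module over the Heisenberg superalgebra generated by operators $\mathfrak{q}_m(\alpha)$ ($m\in\Z\setminus\{0\}$, $\alpha\in A$), with $[\mathfrak{q}_m(\alpha),\mathfrak{q}_{m'}(\alpha')]=m\,\delta_{m+m',0}\,\langle\alpha,\alpha'\rangle\cdot\mathrm{id}$ and vacuum $|0\rangle\in H^0(S^{[0]})=\Q$; thus $\mathbf{F}$ has the monomial basis $\mathfrak{q}_{-\lambda_1}(\alpha_{i_1})\cdots\mathfrak{q}_{-\lambda_r}(\alpha_{i_r})|0\rangle$ indexed by partitions $\lambda$ whose parts are decorated by a fixed homogeneous basis of $A$. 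On the other side, Definition \ref{4.6} exhibits $A^{[n]}=(A\{\mathfrak{S}_n\})^{\mathfrak{S}_n}$ with a basis indexed by $\mathfrak{S}_n$-orbits of pairs (a permutation of given cycle type, a cohomology class on the tensor product over its set of cycles), i.e.\ by the same decorated partitions, one class per cycle. Matching these bases cycle by cycle gives a canonical isomorphism of bigraded vector spaces $\Phi\colon\mathbf{A}\xrightarrow{\ \sim\ }\mathbf{F}$, compatible with G\"ottsche's Betti-number formula; everything then reduces to showing $\Phi$ is multiplicative.

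Next I would cut the problem down to two families of generators on the geometric side. Let $\mathfrak{d}$ be the operator on $\mathbf{F}$ given in weight $n$ by cup product with (a fixed multiple of) the boundary class $c_1(\mathcal{O}_S^{[n]})$. Lehn's computation of the action of Chern classes of tautological sheaves, carried out through the geometry of the nested Hilbert schemes $S^{[n,n+1]}$, expresses the commutators $[\mathfrak{d},\mathfrak{q}_{-m}(\alpha)]$ in terms of Nakajima operators of higher index together with correction terms involving $K_S$ and $e$; since $K_S$ is numerically trivial, cup product with $K_S$ is zero, and since $e\in H^4(S)$ all powers $e^{\ge 2}$ vanish for degree reasons, so the formula collapses to an index-raising term plus a single $e$-term. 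Iterating $[\mathfrak{d},-]$ against the operators $\mathfrak{q}_{-1}(\alpha)$ then produces every $\mathfrak{q}_{-m}(\alpha)$; hence $\mathbf{F}$ is generated as a graded $\Q$-algebra by the classes $\mathfrak{q}_{-1}(\alpha)|0\rangle\in H^*(S^{[n]})$ (all $n$, all $\alpha$) and the boundary classes, and every relation among these is a formal consequence of the Heisenberg relations and Lehn's commutator identity.

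Finally I would match these generators on the combinatorial side and conclude. Under $\Phi$, the class $\mathfrak{q}_{-1}(\alpha)$ corresponds to the element of $A^{[n]}$ supported on the identity permutation (the $1^n$ sector) carrying $\alpha$ on one factor, and $\mathfrak{d}$ corresponds, up to a universal scalar, to multiplication by the $\mathfrak{S}_n$-symmetrization of $\sum_{\tau\ \text{transposition}}\tau\in A\{\mathfrak{S}_n\}$ (the $2$-cycle sector decorated by $1\in A$). Using Lehn's product $a\sigma\cdot b\tau=m_{\sigma,\tau}(a\otimes b)\,\sigma\tau$ together with the graph defect $g(\sigma,\tau)$ — which, against a transposition $\tau$, takes only the values $0$ and $1$ and so governs precisely whether a single factor of $e$ enters — one checks by a finite symmetric-group computation that these model generators obey exactly the commutation relations of the previous paragraph. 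Since $\mathbf{F}$ and $\mathbf{A}$ are each generated by the matched generators subject only to the matched relations, $\Phi$ intertwines the two boundary operators and the two families of $\mathfrak{q}_{-1}$-multiplications, hence all products, and is therefore an isomorphism of graded rings, necessarily the canonical one. The hard part will be the interplay of the last two steps: proving Lehn's commutator formula for the boundary operator geometrically, and verifying that, after the simplifications forced by $K_S\equiv 0$ and $e^2=0$, it is reproduced by the purely combinatorial multiplication in $A\{\mathfrak{S}_n\}$ controlled by the graph defect; the vector-space bijection, the reduction to generators, and the formal conclusion are comparatively routine.
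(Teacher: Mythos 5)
The paper contains no proof of Theorem \ref{lehn} for you to be measured against: the statement is imported verbatim from Lehn--Sorger \cite{lehn} (Theorem 3.2), and the only related work done here is the quasi-projective variant in Proposition \ref{4.10}. So your proposal is an attempted reconstruction of the cited theorem itself, and in outline it does follow the known route: the Nakajima--Grojnowski Fock-space description, Lehn's boundary operator and its commutators (simplified because $K_S$ is numerically trivial, hence zero in rational cohomology, and $e^2=0$), and a comparison with the symmetric-group model of Definition \ref{4.6}.

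As a proof, however, the two steps you treat as comparatively routine contain genuine gaps. The generation claim is not correct as stated: $\mathfrak{q}_{-1}(\alpha)|0\rangle$ lies in $H^*(S^{[1]})$, not $H^*(S^{[n]})$, and even if one reads it as the symmetrized class $\mathfrak{q}_{-1}(\alpha)\mathfrak{q}_{-1}(1)^{n-1}|0\rangle$, these classes together with the boundary class do not generate the cup-product ring $H^*(S^{[n]})$: for a K3 surface and $n=3$ they span in degree $4$ at most a $277$-dimensional subspace ($\dim\mathrm{Sym}^2H^2(S^{[3]})+1=276+1$), while $\dim H^4(S^{[3]})=299$ by G\"ottsche's formula. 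The operator fact you invoke --- that iterated commutators of $\mathfrak{d}$ with the $\mathfrak{q}_{-1}$'s produce every $\mathfrak{q}_{-m}$ --- concerns the action on the whole Fock space and does not translate into ring generation in fixed weight $n$. The generation statement that is actually available, and the one the Lehn--Sorger argument runs on, uses the full family of tautological Chern character classes $G_i(\alpha,n)$, $0\le i<n$ (Li--Qin--Wang), so the intertwining has to be verified for cup product by each of these, and expressing those multiplication operators in Nakajima terms and matching them with the combinatorial model is precisely where the substance of \cite{lehn}, building on \cite{lehn0}, lies. Likewise, the assertion that ``every relation among these is a formal consequence of the Heisenberg relations and Lehn's commutator identity'' cannot simply be posited --- it is essentially equivalent to the theorem --- so the closing step (``generated by the matched generators subject only to the matched relations, hence $\Phi$ is multiplicative'') is circular as written. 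The sound scheme is: fix the vector-space isomorphism $\Phi$ with the correct normalizations (the factors of $m$ and signs in the Heisenberg relations and the shifts $[-2|\sigma|]$ in Definition \ref{4.6} are not innocent), prove that $\Phi$ conjugates multiplication by every member of a genuine ring-generating family into its combinatorial counterpart, and only then conclude.
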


\noindent
For later use, we need a non-compact version of this theorem. 

\begin{proposition}\label{4.10}
Let $S$ be a smooth quasi-projective surface with trivial canonical divisor. Suppose $\overline{S}$ is a smooth projective surface which contains $S$ as an open set with the property that the natural restriction map $H^*(\overline{S})\to H^*(S)$ is surjective. Then there is a canonical isomorphism of graded rings
$$
H^*(S;\Q)^{[n]}\xrightarrow{\cong}H^*(S^{[n]};\Q).
$$  
\end{proposition}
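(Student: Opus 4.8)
The plan is to reduce the non-compact case to the projective case of Theorem~\ref{lehn} by a deformation/specialization argument, using the surjectivity hypothesis on $H^*(\overline{S})\to H^*(S)$ together with the fact that $\overline{S}^{[n]}$ is a smooth projective compactification of $S^{[n]}$ whose cohomology we already understand. First I would recall that, since $S\subset\overline{S}$ is open and $S$ is itself a surface, the open immersion $S^{[n]}\hookrightarrow\overline{S}^{[n]}$ identifies $S^{[n]}$ with an open subscheme; moreover Lehn's construction of the ring $A^{[n]}$ from $A=H^*(S;\Q)$ is purely formal in $A$ once one knows the Euler class $e$, the diagonal classes, and the cup product structure of $A$. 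So the first step is to observe that all the structure constants entering Lehn's product—the maps $f^{H,K}$, $f_{K,H}$, $m_{\sigma,\tau}$ and the graph-defect powers $e^{g(\sigma,\tau)}$—make sense verbatim for $A=H^*(S;\Q)$, since these only use pull-back along diagonals, push-forward along diagonals (which is well-defined on cohomology of smooth varieties via Poincaré–Lefschetz duality on the smooth open surface), cup product, and the Euler class $e=e(S)\in H^4(S)$, which is the restriction of $e(\overline S)$.

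The core of the argument is the construction of the comparison map $H^*(S;\Q)^{[n]}\to H^*(S^{[n]};\Q)$ and the verification that it is an isomorphism of rings. I would build it by restriction from the projective case: the restriction map $\overline{S}^{[n]}\to S^{[n]}$ on cohomology, composed with Lehn's isomorphism $H^*(\overline S;\Q)^{[n]}\xrightarrow{\sim}H^*(\overline S^{[n]};\Q)$, gives a ring surjection $H^*(\overline S;\Q)^{[n]}\twoheadrightarrow H^*(S^{[n]};\Q)$ provided I can show $H^*(\overline S^{[n]})\to H^*(S^{[n]})$ is surjective. The latter surjectivity is the key geometric input: I would deduce it from the surjectivity of $H^*(\overline S)\to H^*(S)$ by using the G\"ottsche-type / Nakajima-type generation of $H^*(\overline S^{[n]})$ by Nakajima operators applied to classes pulled back from $\overline S$ (equivalently, by Lehn's theorem $H^*(\overline S^{[n]})$ is generated as a ring by tautological classes built from $H^*(\overline S)$ and the Chern classes of the tautological bundle, all of which are compatible with restriction to the open part $S^{[n]}$). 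Then I would check that the kernel of $H^*(\overline S;\Q)^{[n]}\twoheadrightarrow H^*(S^{[n]};\Q)$ is exactly the ideal generated by $\ker(H^*(\overline S)\to H^*(S))$ in each tensor slot, i.e. that it equals $(\ker(H^*\overline S\to H^* S))^{[n]}$ in the appropriate sense, so that the quotient is canonically $H^*(S;\Q)^{[n]}$; here I would use that restriction of cohomology commutes with cup product, with diagonal push-forward (base change / compatibility of Gysin maps with open restriction), and with the Euler class, so Lehn's structure constants for $\overline S$ descend to those for $S$.

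The main obstacle I expect is this last compatibility: showing that Lehn's push-forward maps $f_{K,H}$ (diagonal Gysin maps) for $\overline S$ restrict correctly to the corresponding maps for $S$, and hence that the whole multiplicative structure passes to the quotient without extra relations. On the open surface $S$ the Gysin map along a diagonal $S\hookrightarrow S\times S$ still exists (both are smooth), and the square relating it to the closed diagonal in $\overline S\times\overline S$ is Cartesian with the horizontal maps open immersions, so proper base change for the Gysin morphism gives the compatibility—but one has to be careful that the relevant push-forwards are along proper (indeed finite, after restricting to diagonals) maps, which they are. Once this is in place, the statement follows formally: $H^*(S^{[n]};\Q)\cong H^*(\overline S;\Q)^{[n]}/(\ker)^{[n]}\cong H^*(S;\Q)^{[n]}$, and the isomorphism is canonical because each step (Lehn's canonical isomorphism, restriction, and the identification of the quotient) is. A secondary technical point is to confirm that $A=H^*(S;\Q)$ still satisfies the formal input Lehn needs—namely that $K_S$ numerically trivial forces the relevant Euler-class corrections to behave as in the compact case; but since $e(S)$ is the restriction of $e(\overline S)$ and the canonical divisor condition is inherited, this is immediate.
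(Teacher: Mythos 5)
Your argument hinges on composing the restriction map $H^*(\overline{S}^{[n]})\to H^*(S^{[n]})$ with ``Lehn's isomorphism $H^*(\overline{S};\Q)^{[n]}\xrightarrow{\sim}H^*(\overline{S}^{[n]};\Q)$,'' i.e.\ on applying Theorem~\ref{lehn} to the compactification $\overline{S}$. This is the genuine gap: Theorem~\ref{lehn} requires the surface to have numerically trivial canonical divisor, and $\overline{S}$ is only assumed to be \emph{some} smooth projective surface compactifying $S$ with surjective restriction on cohomology. In the situations this proposition is actually used for (e.g.\ $S=M_D=\widetilde{T^*E/\Gamma}$ with $\overline{S}=\widetilde{E\times\P^1/\Gamma}$), the canonical bundle of $\overline{S}$ is \emph{not} numerically trivial, so there is no ring isomorphism $H^*(\overline{S})^{[n]}\cong H^*(\overline{S}^{[n]})$ of the wreath-product form: the cup product on $H^*(\overline{S}^{[n]})$ carries correction terms involving $K_{\overline{S}}$. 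Your closing remark that ``the canonical divisor condition is inherited'' addresses $S$, not $\overline{S}$, and so misses where the hypothesis is actually needed; as stated, the ring surjection $H^*(\overline{S})^{[n]}\twoheadrightarrow H^*(S^{[n]})$ you want to build does not exist, and the subsequent identification of its kernel with $(\ker(H^*\overline{S}\to H^*S))^{[n]}$ has nothing to descend.

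The paper's proof goes around this exactly at the point you skip: it uses the surjectivity $H^*(\overline{S}^{[n]})\to H^*(S^{[n]})$ (deduced from $H^*(\overline{S})\to H^*(S)$ surjective via G\"ottsche's formula — your Nakajima/tautological-classes route for this step is fine and is not where the problem lies), so the cup product on $H^*(S^{[n]})$ is determined by that on $H^*(\overline{S}^{[n]})$; but for the latter it invokes the \emph{general} description of the ring structure from \cite{lehn0}, valid for an arbitrary smooth projective surface, in which the structure constants involve $K_{\overline{S}}$. The key observation is then that after restricting to $S^{[n]}$ every term involving $K_{\overline{S}}$ vanishes, because $K_{\overline{S}}|_S=K_S$ is trivial, so the restricted product obeys the $K=0$ formula — i.e.\ precisely the wreath-product ring $H^*(S;\Q)^{[n]}$ of Theorem~\ref{lehn}. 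To repair your write-up you would need to replace the appeal to Theorem~\ref{lehn} for $\overline{S}$ by this general $K$-dependent structure theorem and carry out the vanishing-of-$K$-terms argument; the compatibilities you list (restriction commutes with cup product, diagonal Gysin maps, and the Euler class) are the right ingredients for that check, but they do not by themselves substitute for it.
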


\begin{proof}
First, we notice that Theorem \ref{lehn} is obtained by setting canonical bundle $K=0$ in the main theorem of \cite{lehn0}.
In our cases, by G\"ottsche formula, $H^*(\overline{S})\to H^*(S)$ being surjective implies $H^*\left(\overline{S}^{[n]}\right)\to H^*\left(S^{[n]}\right)$ is surjective. Therefore, the cup product in $H^*\left(S^{[n]}\right)$ is completely determined by the ring structure of $H^*\left(\overline{S}^{[n]}\right)$. Since $\overline{S}$ is projective but may not have trivial canonical bundle, we have to use the ring structure described implicitly in \cite{lehn0}. However, after resticting to $S^{[n]}$, all terms involving canonical bundle $K_{\overline{S}}$ vanish, which is exactly the same situation as the proof in Theorem \ref{lehn}. So the theorem also works for non-compact surface with trivial canonical bundle.
\end{proof}

\subsection{Perverse filtration for Hilbert schemes of fibered surfaces}
To descrie the perverse filtration for Hilbert schemes of fibered surfaces, we need to introduce some notation first. Partitions of $n$ are denoted as $\nu=1^{a_1}\cdots n^{a_n}$, where $\sum_{i=1}^n{ia_i}=n$. The length of a partition is denoted by $l(\nu)=\sum_{i=1}^n{a_i}$. Set $\mathfrak{S}_\nu:=\mathfrak{S}_{a_1}\times\cdots\times \mathfrak{S}_{a_n}$. For a quasi-projective vareity $X$, set $X^{(\nu)}:=X^{l(\nu)}/\mathfrak{S}_\nu=X^{(a_1)}\times\cdots\times X^{(a_n)}$. For $K\in D^b_c(X)$, denote the multi-symmetric and multi-alternating external tensor product by
$$
K^{(\nu)}=\boxtimes_{i=1}^n K^{(a_i)}\in D^b_c(X^{(\nu)}),
$$ 
$$
K^{\{\nu\}}=\boxtimes_{i=1}^n K^{\{a_i\}}\in D^b_c(X^{(\nu)}).
$$
We still have 
$$
(K[a])^{(\nu)}=
\begin{cases}
K^{(\nu)}[l(\nu)a], & a\text{ is even,}\\
K^{\{\nu\}}[l(\nu)a], & a\text{ is odd.}
\end{cases}
$$
In fact, since the external tensor product is compatible with push-forward and perversity, all result in Section 2.3 can be generalized to the multi-symmetric or multi-alternating context.\\

\noindent
Let $f:S\to C$ be a proper map from a smooth quasi-projective surface to a smooth quasi-projective curve. Then we have the following diagram.
$$
\begin{tikzcd}
\ &\ & S^{[n]}
 \arrow{d}[swap]{\pi}
 \arrow[bend left]{dd}{h}\\
S^{l(\nu)}\ar{r}{/\mathfrak{S}_\nu}\ar{d}[swap]{f^{l(\nu)}}&
S^{(\nu)}\ar{r}{r_S^{(\nu)}}\ar{d}[swap]{f^{(\nu)}}&
S^{(n)}\ar{d}[swap]{f^{(n)}}\\
C^{l(\nu)}\ar{r}{/\mathfrak{S}_\nu}&
C^{(\nu)}\ar{r}{r_C^{(\nu)}}&C^{(n)}\\
\end{tikzcd}
$$

\noindent
To obtain a perverse decomposition for the map $h$, we need the following result:

\begin{theorem}[\cite{hilb} Theorem 4.1.1] \label{hilb-chow}
 Let $S$ be a smooth quasi-projective algebraic surface. Then we have the decomposition theorem for the Hilbert-Chow morphism.
$$
R\pi_*\Q_{S^{[n]}}[2n]\cong\bigoplus_\nu Rr_{S,*}^{(\nu)}\Q_{S^{(\nu)}}[2l(\nu)].
$$
\end{theorem}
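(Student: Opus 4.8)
The statement is the decomposition theorem for the Hilbert--Chow morphism $\pi\colon S^{[n]}\to S^{(n)}$, and the plan is to deduce it from the decomposition theorem of \cite{BBD} in its sharp form for semismall maps, together with an explicit identification of the summands. The first step is to prove that $\pi$ is semismall. I would stratify $S^{(n)}$ by partition type: for a partition $\nu=1^{a_1}\cdots n^{a_n}$ of $n$, the locus $S^{(n)}_\nu$ of $0$-cycles of type exactly $\nu$ is smooth, locally closed, of dimension $2l(\nu)$, and $\pi$ is a topologically locally trivial fibration over it, with fibre $\prod_{i,k}\mathrm{Hilb}^i_0(S,x_{i,k})$ over a point whose support is $\{x_{i,k}\mid 1\le k\le a_i\}$. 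By Brian\c{c}on's theorem the punctual Hilbert scheme $\mathrm{Hilb}^i_0$ is irreducible of dimension $i-1$, so the fibre has dimension $\sum_i a_i(i-1)=n-l(\nu)$, whence $2(n-l(\nu))+\dim S^{(n)}_\nu=2n=\dim S^{[n]}$; thus $\pi$ is semismall and every stratum is relevant.

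Because $S^{[n]}$ is smooth and $\pi$ is proper and semismall, $R\pi_*\Q_{S^{[n]}}[2n]$ is a semisimple perverse sheaf and the decomposition theorem for semismall maps gives
$$
R\pi_*\Q_{S^{[n]}}[2n]\ \cong\ \bigoplus_{\nu}\ \mathrm{IC}_{\overline{S^{(n)}_\nu}}(L_\nu),
$$
where $L_\nu$ is the local system on $S^{(n)}_\nu$ with stalk the top cohomology $H^{2(n-l(\nu))}$ of the fibre, equipped with its monodromy. The heart of the matter, and the step I expect to be the main obstacle, is to show that each $L_\nu$ is the trivial rank-one system $\Q_{S^{(n)}_\nu}$. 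It has rank one because, $\mathrm{Hilb}^i_0$ being irreducible of dimension $i-1$, the top cohomology of the fibre is spanned by the external product of the fundamental classes of the factors. For triviality of the monodromy, I would observe that $\pi_1(S^{(n)}_\nu)$ acts on this class in two ways: moving a point $x_{i,k}$ around inside $S$ fixes the fundamental class of $\mathrm{Hilb}^i_0(S,x_{i,k})$, while permuting, for a fixed multiplicity $i$, the $a_i$ factors of that multiplicity introduces no Koszul sign, since each such factor contributes in the \emph{even} degree $2(i-1)$. Hence $\mathrm{IC}_{\overline{S^{(n)}_\nu}}(L_\nu)=\mathrm{IC}_{\overline{S^{(n)}_\nu}}(\Q)$.

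The remaining identification $\mathrm{IC}_{\overline{S^{(n)}_\nu}}(\Q)\cong Rr_{S,*}^{(\nu)}\Q_{S^{(\nu)}}[2l(\nu)]$ is then formal. The map $r_S^{(\nu)}\colon S^{(\nu)}=\prod_i S^{(a_i)}\to S^{(n)}$ factors as the closed embedding $\prod_i(D\mapsto iD)$ followed by the finite addition-of-cycles map, so it is finite, and it restricts to an isomorphism over the open stratum $S^{(n)}_\nu$, because a cycle of type exactly $\nu$ determines uniquely which points carry which multiplicity; hence its image is the irreducible closed set $\overline{S^{(n)}_\nu}$ and $r_S^{(\nu)}$ is generically injective onto it. As $S^{(\nu)}$ is a product of symmetric products of a smooth surface it is a $V$-manifold, so $\Q_{S^{(\nu)}}[2l(\nu)]=\mathrm{IC}_{S^{(\nu)}}(\Q)$; its push-forward $r_{S,*}^{(\nu)}\Q_{S^{(\nu)}}[2l(\nu)]$ is then a pure perverse sheaf which, as a complex, is concentrated in the single degree $-2l(\nu)$, so in its decomposition into intersection complexes every summand must have full support $\overline{S^{(n)}_\nu}$, and since the generic coefficient is the trivial system it equals $\mathrm{IC}_{\overline{S^{(n)}_\nu}}(\Q)$. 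Summing over $\nu$ completes the proof.

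Finally, semismallness is a local statement and the decomposition theorem is available over any smooth quasi-projective base, so the argument is valid for smooth quasi-projective $S$, as in the statement; this decomposition is then combined with the one for $f^{(n)}\colon S^{(n)}\to C^{(n)}$ to describe the perverse filtration for $h\colon S^{[n]}\to C^{(n)}$.
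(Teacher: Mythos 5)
Your proposal is correct, and since the paper itself does not prove this statement but quotes it from \cite{hilb}, the right comparison is with that reference: your route --- semismallness of the Hilbert--Chow morphism via Brian\c{c}on's theorem, the decomposition theorem for semismall maps with all strata relevant, triviality of the monodromy local systems by the even-degree K\"unneth argument, and the identification of each $\mathrm{IC}_{\overline{S^{(n)}_\nu}}(\Q)$ with $Rr^{(\nu)}_{S,*}\Q_{S^{(\nu)}}[2l(\nu)]$ using that $S^{(\nu)}$ is a rational homology manifold and that $r^{(\nu)}_S$ is finite and an isomorphism over the open stratum --- is essentially the argument of that cited paper. The only step worth flagging is the final one: concentration in a single cohomological degree with trivial generic coefficient does not by itself force a perverse sheaf to be an intersection complex (compare $j_!\Q_U[1]$ on a curve), so "every summand has full support" genuinely needs the semisimplicity of $Rr^{(\nu)}_{S,*}\Q_{S^{(\nu)}}[2l(\nu)]$ coming from the decomposition theorem for the finite map $r^{(\nu)}_S$, which you do invoke, so the argument stands.
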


\noindent
The main result of this section is the following.

\begin{proposition}\label{4.4}
Let $f:S\to C$ be a proper map from a smooth quasi-projective surface to a smooth quasi-projective curve. Let
\begin{equation*}
Rf_*\Q_S[1]=\mathcal{P}_0\oplus \mathcal{P}_1[-1]\oplus \mathcal{P}_2[-2]
\end{equation*}
 be a perverse decomposition, where $\mathcal{P}_0,\mathcal{P}_1,\mathcal{P}_2$ are perverse sheaves on $C$. Then a perverse decomposition of the morphism $h:S^{[n]}\to C^{(n)}$ is given by:
\begin{equation*}
\begin{split}
Rh_*\Q_{S^{[n]}}[n]\cong&\bigoplus_{\nu=1^{a_1}\cdots n^{a_n}} Rr_{C,*}^{(\nu)}\left((\mathcal{P}_0[0]\oplus \mathcal{P}_1[-1]\oplus \mathcal{P}_2[-2])^{\{\nu\}}\right)[l(\nu)-n]\\
\cong&\bigoplus_{\nu=1^{a_1}\cdots n^{a_n}} Rr_{C,*}^{(\nu)}\left(\bigoplus_{\bold{r}+\bold{s}+\bold{t}=\bold{a}}Rq_{\bold{r},\bold{s},\bold{t},*}\mathcal{P}_0^{\{\bold{r}\}}\boxtimes \mathcal{P}_1^{(\bold{s})}\boxtimes \mathcal{P}_2^{\{\bold{t}\}}\right)[-C(\bold{r},\bold{s},\bold{t})]
\end{split}
\end{equation*}
where $C(\bold{r},\bold{s},\bold{t})=n-l(\nu)+\sum_{i=1}^n(s_i+2t_i)$, $\bold{r}=(r_1,\cdots,r_n)$, and similarly for $\bold{s}$ and $\bold{t}$. Here $\bold{r}+\bold{s}+\bold{t}=\bold{a}$ means that $r_i+s_i+t_i=a_i$ holds for any $i$, and 
$$
q_{\bold{r},\bold{s},\bold{t}}:\prod_{i=1}^{l(\nu)}C^{(r_i)}\times C^{(s_i)}\times C^{(t_i)}\to\prod_{i=1}^{l(\nu)}C^{(a_i)}
$$
 is induced by the natural map $C^{(r_i)}\times C^{(s_i)}\times C^{(t_i)}\to C^{(a_i)}$.
\end{proposition}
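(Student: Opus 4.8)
The plan is to combine the decomposition theorem for the Hilbert--Chow morphism (\theoremref{hilb-chow}) with the multi-symmetric generalization of \propositionref{5.4}, using the commutativity of the diagram relating $\pi$, $r_S^{(\nu)}$, $r_C^{(\nu)}$ and the various $f$'s. The starting point is $R\pi_*\Q_{S^{[n]}}[2n]\cong\bigoplus_\nu Rr_{S,*}^{(\nu)}\Q_{S^{(\nu)}}[2l(\nu)]$. Since $h=r_C^{(n)}\circ f^{(n)}\circ r_S^{(\nu)}\circ(\text{quotient})$ factors appropriately, I would push forward this isomorphism along $f^{(n)}$ (or rather along the composite down to $C^{(n)}$), so that the computation reduces to understanding $Rf^{(\nu)}_*\Q_{S^{(\nu)}}$ for each partition $\nu$. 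Here $\Q_{S^{(\nu)}}=\boxtimes_i \Q_{S^{(a_i)}}$, so by the multi-symmetric analogue of \lemmaref{5.3}, $Rf^{(\nu)}_*\Q_{S^{(\nu)}}\cong\boxtimes_i (Rf_*\Q_S)^{(a_i)}$, each factor being computed by \propositionref{5.4}.

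The second step is bookkeeping with the shifts. We are given $Rf_*\Q_S[1]\cong\mathcal{P}_0\oplus\mathcal{P}_1[-1]\oplus\mathcal{P}_2[-2]$, with $\dim S-r(f)=1$ odd (assuming $f$ is surjective so $r(f)=1$; in general $\dim S - r(f)$ is the relevant parity and one checks it is odd in the fibered case). By \remarkref{quotient} and its multi-symmetric version, $(Rf_*\Q_S[1])^{\{a_i\}}$ is the alternating/symmetric symmetric power of $\mathcal{P}_0\oplus\mathcal{P}_1[-1]\oplus\mathcal{P}_2[-2]$, and by \propositionref{5.2} this expands as $\bigoplus_{r_i+s_i+t_i=a_i}Rq_{r_i,s_i,t_i,*}(\mathcal{P}_0^{\{r_i\}}\boxtimes\mathcal{P}_1^{(s_i)}\boxtimes\mathcal{P}_2^{\{t_i\}})[-(s_i+2t_i)]$ — note the alternating product of $\mathcal{P}_1[-1]$, an odd shift of a perverse sheaf, is $\mathcal{P}_1^{(s_i)}[-s_i]$, hence the switch between $\{\cdot\}$ and $(\cdot)$ for the middle factor. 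Multiplying these over $i$ and matching all the shifts with the overall normalization $[n]$ (keeping track of the $[2l(\nu)]$ from Hilbert--Chow, the $[l(\nu)]$ discrepancies, and the per-factor $[-(s_i+2t_i)]$) yields exactly $[-C(\bold{r},\bold{s},\bold{t})]$ with $C(\bold{r},\bold{s},\bold{t})=n-l(\nu)+\sum_i(s_i+2t_i)$.

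The final step is to verify that the resulting isomorphism is genuinely a \emph{perverse} decomposition, not merely an isomorphism in $D^b_c(C^{(n)})$: each summand must be $Rq_{\bold{r},\bold{s},\bold{t},*}$ of a perverse sheaf, placed in the correct cohomological shift, and the shifts must land in $[0,2r(h)]$. Perversity of $\mathcal{P}_0^{\{r_i\}}\boxtimes\mathcal{P}_1^{(s_i)}\boxtimes\mathcal{P}_2^{\{t_i\}}$ follows from \propositionref{symmetric} and \propositionref{product}; the maps $q_{\bold{r},\bold{s},\bold{t}}$ and $r_C^{(\nu)}$ are finite, so the pushforwards preserve perversity (as in the proof of \propositionref{5.4}). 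One should also double-check that $r(h)$ is what the indexing demands — this is the defect of semismallness of $h:S^{[n]}\to C^{(n)}$, which one computes to be $2(n-l(\nu))$ for the $\nu$-stratum contribution, consistent with the range of $C(\bold{r},\bold{s},\bold{t})$.

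I expect the main obstacle to be purely combinatorial/organizational: correctly matching up the three layers of symmetrization — the outer sum over partitions $\nu$ from Hilbert--Chow, the inner products over the parts $a_i$ of $\nu$, and the further splitting $a_i=r_i+s_i+t_i$ from the three perverse pieces — while keeping every cohomological shift exact. The parity subtlety (odd shifts turning symmetric powers into alternating ones and vice versa, per \remarkref{quotient}) is where sign/indexing errors are most likely, and one must be careful that the $\Q$-coefficient alternating powers behave as in the remark. The actual sheaf-theoretic content is entirely supplied by the already-proven \theoremref{hilb-chow}, \propositionref{5.2}, \propositionref{5.4}, and \lemmaref{5.3}; no new geometric input is needed here.
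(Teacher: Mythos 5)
Your proposal is correct and follows essentially the same route as the paper: push the Hilbert--Chow decomposition (\theoremref{hilb-chow}) down via the commutative diagram, identify $Rf^{(\nu)}_*\Q_{S^{(\nu)}}$ with the multi-symmetric product of $Rf_*\Q_S$ (\lemmaref{5.3}), expand using \propositionref{5.2}/\propositionref{5.4} with the odd-shift parity switch of \remarkref{quotient}, and then verify perversity of each summand via \propositionref{product}, \propositionref{symmetric} and the finiteness ($t$-exactness) of $q_{\bold{r},\bold{s},\bold{t}}$ and $r_C^{(\nu)}$. The only cosmetic slip is your aside about $r(h)$ being ``$2(n-l(\nu))$'' — the defect is $r(h)=n$ globally, with the shifts $C(\bold{r},\bold{s},\bold{t})$ ranging in $[0,2n]$ — but this does not affect the argument.
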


\begin{proof}
The proof is formal.
\begin{equation*}
\begin{array}{rl}
Rh_*\Q_{S^{[n]}}[n]\cong& Rf_*^{(n)}R\pi_*\Q_{S^{[n]}}[n]\\
\cong&\displaystyle Rf_*^{(n)}\left(\bigoplus_\nu Rr_{S,*}^{(\nu)}\Q_{S^{(\nu)}}[2l(\nu)-n]\right)\\
\cong&\displaystyle \bigoplus_\nu Rf_*^{(n)}Rr_{S,*}^{(\nu)}\Q_{S^{(\nu)}}[2l(\nu)-n]\\
\cong&\displaystyle \bigoplus_\nu Rr_{C,*}^{(\nu)}Rf_*^{(\nu)}\Q_{S^{(\nu)}}[2l(\nu)-n]\\
\cong&\displaystyle \bigoplus_\nu Rr_{C,*}^{(\nu)}\left(Rf_*\Q_S\right)^{(\nu)}[2l(\nu)-n]\\
\cong&\displaystyle \bigoplus_\nu Rr_{C,*}^{(\nu)}\left(Rf_*\Q_S[1]\right)^{\{\nu\}}[l(\nu)-n]\\
\cong&\displaystyle \bigoplus_\nu Rr_{C,*}^{(\nu)}\left(\mathcal{P}_0[0]\oplus\mathcal{P}_1[-1]\oplus\mathcal{P}_2[-2]\right)^{\{\nu\}}[l(\nu)-n]\\
\cong&\displaystyle \bigoplus_{\nu=1^{a_1}\cdots n^{a_n}} Rr_{C,*}^{(\nu)}\left(\bigoplus_{\bold{r}+\bold{s}+\bold{t}=\bold{a}}Rq_{\bold{r},\bold{s},\bold{t},*}\mathcal{P}_0^{\{\bold{r}\}}\boxtimes \mathcal{P}_1^{(\bold{s})}\boxtimes \mathcal{P}_2^{\{\bold{t}\}}\right)[-C(\bold{r},\bold{s},\bold{t})].
\end{array}
\end{equation*}

\noindent
Here we use Lemma \ref{5.3} and Proposition \ref{5.4} and the fact that $\dim S-r(h)=1$ is odd. By Proposition \ref{product} and Proposition \ref{symmetric} and the fact that the map $q_{\bold{r},\bold{s},\bold{t}}$ is finite, all terms in the parenthesis are perverse. Note that $r_{C,*}^{(\nu)}$ is a closed embedding, so $Rr_{C,*}^{(\nu)}$ is $t$-exact, it preserves perversity. Therefore the above gives a perverse decomposition.
\end{proof}

\begin{remark}
In the Proposition \ref{4.4}, we have symmetric product for odd perversity term and alternating products for even perversity terms. This counter-intuitive result is due to the fact that $\mathcal{P}_i[-i]$ are direct summands of $Rf_*\Q_S[1]$ rather than $Rf_*\Q_S$. See Proposition \ref{5.4}.
\end{remark}

\begin{corollary}\label{perversity}
Under the isomorphism 
\begin{equation*}
H^*\left(S^{[n]}\right)=\bigoplus_\nu\left(H^*(S^{l(\nu)})\right)^{\mathfrak{S}_\nu}[2l(\nu)-2n],
\end{equation*}
the perverse filtration can be identified as 
$$
P_pH^*\left(S^{[n]}\right)=\bigoplus_\nu\left(P_{p+l(\nu)-n} H^*(S^{l(\nu)})\right)^{\mathfrak{S}_\nu}[2l(\nu)-2n],
$$ 
where the perversity on the right side is taken with respect to $f^{l(\nu)}:S^{l(\nu)}\to C^{l(\nu)}.$
\end{corollary}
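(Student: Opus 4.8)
The plan is to extract the cohomological statement directly from the perverse decomposition in \propositionref{4.4}. First I would apply the hypercohomology functor $\mathbb{H}^*(C^{(n)},-)$ to the decomposition of $Rh_*\Q_{S^{[n]}}[n]$ and use the truncation compatibility of direct sums: since $\bigoplus_\nu$ is a finite direct sum of summands that each sit in a single ``perverse layer'' range, the truncation $^{\mathfrak p}\tau_{\le p}$ of the whole complex is the direct sum of the truncations of the pieces, and taking hypercohomology commutes with direct sums. The crucial bookkeeping point is the shift $[l(\nu)-n]$ (equivalently $[2l(\nu)-2n]$ on the level of the original cohomology, since $\dim S - r(h) = 1$ is odd and we pass from $(-)^{(\nu)}$ to $(-)^{\{\nu\}}$): a summand indexed by $\nu$ that would contribute to perversity degree $p'$ for the map $f^{(\nu)}:S^{(\nu)}\to C^{(\nu)}$ gets pushed, by the shift $[l(\nu)-n]$, to contribute to perversity degree $p' - (l(\nu)-n) = p' + n - l(\nu)$ for the map $h$. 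Inverting, the part of $P_pH^*(S^{[n]})$ coming from the $\nu$-summand is exactly the image of $P_{p+l(\nu)-n}$ of that summand.

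Next I would identify the $\nu$-summand's contribution with $\left(P_{p+l(\nu)-n}H^*(S^{l(\nu)})\right)^{\mathfrak S_\nu}$. By \lemmaref{5.3} and the string of isomorphisms already used in the proof of \propositionref{4.4}, the $\nu$-summand is $Rr_{C,*}^{(\nu)}(Rf_*\Q_S)^{(\nu)}[2l(\nu)-n]$ with $r_C^{(\nu)}$ a closed embedding (so $Rr_{C,*}^{(\nu)}$ is $t$-exact and preserves perversity and hypercohomology). Thus the perverse filtration on the hypercohomology of this summand over $C^{(n)}$ agrees with the perverse filtration on $\mathbb{H}^*(C^{(\nu)},(Rf_*\Q_S)^{(\nu)})$ up to the overall shift. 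Now \propositionref{5.6}, applied with the multi-symmetric product $X^{(\nu)} = X^{(a_1)}\times\cdots\times X^{(a_n)}$ in place of $X^{(n)}$ — which is legitimate since, as the text remarks, all results of Section~2.3 generalize to the multi-symmetric setting — identifies $P_p'H^*(S^{(\nu)})$ with $\left(P_p'H^*(S^{l(\nu)})\right)^{\mathfrak S_\nu}$, where the right-hand perversity is computed for $f^{l(\nu)}:S^{l(\nu)}\to C^{l(\nu)}$. Combining this with the shift computation gives precisely $P_pH^*(S^{[n]}) = \bigoplus_\nu\left(P_{p+l(\nu)-n}H^*(S^{l(\nu)})\right)^{\mathfrak S_\nu}[2l(\nu)-2n]$.

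I expect the main obstacle to be a clean treatment of the two shift conventions simultaneously: the degree shift $[2l(\nu)-2n]$ that appears on the cohomology groups in the stated isomorphism $H^*(S^{[n]}) = \bigoplus_\nu(H^*(S^{l(\nu)}))^{\mathfrak S_\nu}[2l(\nu)-2n]$ (coming from Theorem~\ref{hilb-chow}), versus the perverse-degree shift by $l(\nu)-n$ that the decomposition in \propositionref{4.4} imposes because we work with $Rf_*\Q_S[1]$ and its $\{\nu\}$-power rather than with $Rf_*\Q_S$ directly. One must check that these are consistent: a cohomology class living in cohomological degree $d$ and perversity $p$ for $h$, when viewed inside the $\nu$-summand, sits in cohomological degree $d-2l(\nu)+2n$ and perversity $p+l(\nu)-n$ for $f^{l(\nu)}$. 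The verification is a direct unwinding of the indices in the double decomposition of \propositionref{4.4} together with the type-$[0,2r]$ normalization of all perverse filtrations involved, but it is the only place where a sign or off-by-one error could creep in. Everything else is formal: exactness of $Rr_{C,*}^{(\nu)}$, commutation of $^{\mathfrak p}\tau_{\le p}$ with finite direct sums, and the already-established \propositionref{5.6}.
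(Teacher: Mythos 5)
Your proposal is correct and follows essentially the same route as the paper: apply the decomposition of \propositionref{4.4}, use the $t$-exactness of $Rr_{C,*}^{(\nu)}$ together with the shift $[l(\nu)-n]$ to convert perversity $p$ for $h$ into perversity $p+l(\nu)-n$ on the $\nu$-summand, and then identify that summand with the $\mathfrak{S}_\nu$-invariants of the filtration on $H^*(S^{l(\nu)})$. The only cosmetic difference is that you cite the multi-symmetric generalization of \propositionref{5.6} directly, whereas the paper re-runs that argument at the level of truncations via \lemmaref{5.5}; since the paper itself licenses this generalization, the two arguments coincide in substance.
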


\begin{proof}
By $t$-exactness of $Rr^{(\nu)}_{C,*}$ and Proposition \ref{4.4}, we have
$$
\begin{array}{rrl}
{^{\mathfrak{p}}\tau_{\le p}}Rh_*\Q_{S^{[n]}}[n]&=&\displaystyle \bigoplus_{\nu=1^{a_1}\cdots n^{a_n}}Rr_{C,*}^{(\nu)}\left( {^{\mathfrak{p}}\tau_{\le p+l(\nu)-n}}(Rf_*\Q_S[1])^{\{\nu\}}  \right)[l(\nu)-n]\\
&=&\displaystyle \bigoplus_{\nu=1^{a_1}\cdots n^{a_n}}Rr_{C,*}^{(\nu)}\left( Rq_*{^{\mathfrak{p}}\tau_{\le p+l(\nu)-n}}(Rf_*\Q_S[1])^{\boxtimes l(\nu)}  \right)^{\text{sign-}\mathfrak{S}_n}[l(\nu)-n].\\
\end{array}
$$
where $q$ is the quotient map denoted as $/\mathfrak{S}_\nu$ in the previous diagram and the last isomorphism is due to Corollary \ref{5.5}. After taking the cohomology, we have
$$
\begin{array}{rrl}
P_pH^*(S^{[n]})[n]
&=&\mathbb H\left(C^{(n)}, {^{\mathfrak{p}}\tau_{\le p}}Rh_*\Q_{S^{[n]}}[n]\right)\\
&=&\displaystyle \bigoplus_{\nu=1^{a_1}\cdots n^{a_n}}\mathbb{H}\left(C^{(\nu)}, \left(Rq_*{^{\mathfrak{p}}\tau_{\le p+l(\nu)-n}}(Rf_*\Q_S[1])^{\boxtimes l(\nu)}  \right)^{\text{sign-}\mathfrak{S}_n}\right)[l(\nu)-n]\\
&=&\displaystyle \bigoplus_{\nu=1^{a_1}\cdots n^{a_n}}\mathbb{H}\left(C^{l(\nu)}, {^{\mathfrak{p}}\tau_{\le p+l(\nu)-n}}(Rf_*\Q_S[1])^{\boxtimes l(\nu)}  \right)^{\text{sign-}\mathfrak{S}_n}[l(\nu)-n]\\
&=&\displaystyle \bigoplus_{\nu=1^{a_1}\cdots n^{a_n}}\left(P_{p+l(\nu)-n}H^*\left(C^{l(\nu)},(Rf_*\Q_S[1])^{\boxtimes l(\nu)}\right)\right)^{\text{sign-}\mathfrak{S}_n}[l(\nu)-n]\\
&=&\displaystyle \bigoplus_{\nu=1^{a_1}\cdots n^{a_n}}\left(P_{p+l(\nu)-n}H^*\left(S^{l(\nu)},(\Q_S[1])^{\boxtimes l(\nu)}\right)\right)^{\text{sign-}\mathfrak{S}_n}[l(\nu)-n]\\
&=&\displaystyle \bigoplus_{\nu=1^{a_1}\cdots n^{a_n}}\left(P_{p+l(\nu)-n}H^*\left(S^{l(\nu)},\Q_S^{\boxtimes l(\nu)}\right)\right)^{\mathfrak{S}_n}[2l(\nu)-n]\\
&=&\displaystyle \bigoplus_{\nu=1^{a_1}\cdots n^{a_n}}\left(P_{p+l(\nu)-n}H^*(S^{l(\nu)})\right)^{\mathfrak{S}_n}[2l(\nu)-n].\\
\end{array}
$$
So the result follows.
\end{proof}

\noindent
In fact, we may define an abstract perversity function on the wreath product $H^*(S)\{\mathfrak{S}_n\}$ (Definition \ref{4.6}) which is easier to handle with. We will show that after restrict to the $\mathfrak{S}_n$-invariant part, it is the same as the one defined by the map $S^{[n]}\to C^{(n)}$.

\begin{definition}
Let $f:S\to C$ be a morphism from a smooth quasi-projective surface to a smooth quasi-projective curve. Let $\mathfrak{B}$ be any basis of $H^*(S)$ which is filtered with respect to the perverse filtration for the map $f:S\to C$ . Then we have that
$$
\mathfrak{B}\{\mathfrak{S}_n\}:=\left\{\bigotimes_{i=1}^n\bigotimes_{j=1}^{a_i}\alpha_{ij}\cdotp\sigma\mid \alpha_{ij}\in \mathfrak{B},\sigma\text{ is of type }1^{a_1}\cdots n^{a_n}\right\}
$$
is a basis of $H^*(S)\{\mathfrak{S}_n\}$. We define an abstract perversity on $\mathfrak{B}\{\mathfrak{S}_n\}$ as
$$
\mathfrak{p}_{abs}\left(
\bigotimes_{i=1}^n\bigotimes_{j=1}^{a_i}\alpha_{ij}\cdotp\sigma
\right)=\sum_{i=1}^n\sum_{j=1}^{a_i}{\mathfrak{p}(\alpha_{ij})}+\sum_{i=1}^n{(i-1)a_i},
$$
and extend by linearity in the sense that 
$$
P_pH^*(S)\{\mathfrak{S}_n\}=\text{Span}\{\beta\in \mathfrak{B}\{\mathfrak{S}_n\}\mid \mathfrak{p}_{abs}(\beta)\le p\}.
$$
In particular, the basis is filtered with respect to the abstract perverse filtration by definition.
\end{definition}

\begin{proposition} \label{4.15}
The abstract perversity is invariant under the $\mathfrak{S}_n$-action. Furthermore, after restriction to $H^*(S^{[n]})$, it is the same as the perversity given by the morphism $h:S^{[n]}\to C^{(n)}$ in Corollary \ref{perversity}.
\end{proposition}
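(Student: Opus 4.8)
The plan is to exhibit the filtered basis $\mathfrak{B}\{\mathfrak{S}_n\}$ of $H^*(S)\{\mathfrak{S}_n\}$ explicitly as a direct-sum decomposition indexed by partitions $\nu$, match it summand-by-summand with the decomposition of $H^*(S^{[n]})$ obtained in Corollary \ref{perversity}, and then check that the abstract perversity function agrees with the shift appearing there. First I would fix a filtered basis $\mathfrak{B}$ of $H^*(S)$ (e.g.\ the basis of Proposition \ref{basis}) and note that, by Definition \ref{4.6}, for a permutation $\sigma$ of type $\nu=1^{a_1}\cdots n^{a_n}$ the summand $A^{\otimes\langle\sigma\rangle\backslash[n]}[-2|\sigma|]\cdot\sigma$ of $A\{\mathfrak{S}_n\}$ is just $H^*(S)^{\otimes l(\nu)}$ placed in a shifted degree, with the $l(\nu)$ tensor factors indexed by the cycles of $\sigma$; the shift $-2|\sigma|=-2(n-l(\nu))$ accounts exactly for the $[2l(\nu)-2n]$ appearing in Corollary \ref{perversity}. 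Collecting the $\sigma$ of a fixed cycle type and passing to $\mathfrak{S}_n$-invariants, one gets $(H^*(S)^{\otimes l(\nu)})^{\mathfrak{S}_\nu}$, where $\mathfrak{S}_\nu=\mathfrak{S}_{a_1}\times\cdots\times\mathfrak{S}_{a_n}$ permutes the cycles of equal length together with any internal sign coming from cyclic reindexing. This is precisely the summand $(H^*(S^{l(\nu)}))^{\mathfrak{S}_\nu}[2l(\nu)-2n]$ of Corollary \ref{perversity}.

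Second, for $\mathfrak{S}_n$-invariance of $\mathfrak{p}_{abs}$: the $\mathfrak{S}_n$-action $\tilde\tau$ of Definition \ref{4.6} sends $a\sigma\mapsto\tau^*(a)\,\tau\sigma\tau^{-1}$, so it preserves the cycle type of $\sigma$ (hence the term $\sum_i(i-1)a_i$ is unchanged) and it permutes the tensor factors of $a$ according to the induced bijection $\langle\sigma\rangle\backslash[n]\to\langle\tau\sigma\tau^{-1}\rangle\backslash[n]$; since $\tau^*$ on each factor is an isomorphism of $H^*(S)$ sending a filtered basis element to a $\pm1$ multiple of a filtered basis element of the same perversity (here one uses that the diagonal pullbacks in Lehn's $\varphi^*$ are cup products with the fundamental class, i.e.\ the identity — or more simply that for single-cycle reindexing $\varphi$ is an isomorphism $S^{\langle\sigma\rangle\backslash[n]}\cong S^{\langle\tau\sigma\tau^{-1}\rangle\backslash[n]}$), the quantity $\sum_{i,j}\mathfrak{p}(\alpha_{ij})$ is also unchanged. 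Hence $\mathfrak{p}_{abs}(\tilde\tau(\beta))=\mathfrak{p}_{abs}(\beta)$ for every basis element $\beta$, and invariance of the filtration follows by linearity.

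Third, and this is the crux, I must compare $\mathfrak{p}_{abs}$ restricted to the $\nu$-summand with the perversity $P_{p+l(\nu)-n}H^*(S^{l(\nu)})$ of Corollary \ref{perversity}. Under the identification of the $\nu$-summand with $(H^*(S^{l(\nu)}))^{\mathfrak{S}_\nu}$, Corollary \ref{basis^n} (together with Proposition \ref{5.6} for the $\mathfrak{S}_\nu$-invariants) says that $P_qH^*(S^{l(\nu)})$ is spanned by those $\beta_{i_1}\otimes\cdots\otimes\beta_{i_{l(\nu)}}$ with $\sum_k\mathfrak{p}(\beta_{i_k})\le q$, where $\mathfrak{p}$ is taken for $f:S\to C$. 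So a basis vector in the $\nu$-summand lying in $P_{p+l(\nu)-n}H^*(S^{l(\nu)})$ is exactly one with $\sum_{i,j}\mathfrak{p}(\alpha_{ij})\le p+l(\nu)-n$, i.e.\ $\sum_{i,j}\mathfrak{p}(\alpha_{ij})+(n-l(\nu))\le p$; and since $n-l(\nu)=\sum_i(i-1)a_i$ for a partition of type $\nu$, this reads $\mathfrak{p}_{abs}(\beta)\le p$. Thus the two filtrations are spanned by the same basis elements on each summand, hence equal. The main obstacle I anticipate is bookkeeping the signs and the cyclic reindexing carefully enough that the $\mathfrak{S}_\nu$-invariant subspace is correctly identified — in particular verifying that the $\mathfrak{S}_\nu$-action on $(H^*(S)^{\otimes l(\nu)})$ coming from $\tilde\tau$ really is (up to the expected Koszul signs) the permutation action appearing in Proposition \ref{5.6}/Corollary \ref{basis^n}, so that the two descriptions of $P_q$ of the $\nu$-summand genuinely match rather than merely having the same dimension.
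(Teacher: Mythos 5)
Your proposal is correct and follows essentially the same route as the paper: both arguments match the wreath-product summands indexed by cycle type $\nu$ with the $\nu$-summands of Corollary~\ref{perversity}, use the identity $n-l(\nu)=\sum_i(i-1)a_i$ to see that the degree shift in $\mathfrak{p}_{abs}$ reproduces the shift $P_{p+l(\nu)-n}$, observe that the $\mathfrak{S}_n$-action merely permutes tensor factors (so $\mathfrak{p}_{abs}$ is invariant), and conclude by comparing filtered bases on the two sides via Corollary~\ref{basis^n}. Your write-up is simply a more detailed version of the paper's argument, with extra (correct) bookkeeping of the $\mathfrak{S}_\nu$-invariants and signs.
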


\begin{proof}
Note that $\mathfrak{S}_n$ acts on cohomology by permuting the factors, so the abstract perversity is invariant under $\mathfrak{S}_n$-action. 
By definition, we have 
$$
n-l(\nu)=\sum_{i=1}^n(i-1)a_i
$$ 
\noindent
and
$$
\mathfrak{p}_{abs}\left(\bigotimes_{i=1}^n\bigotimes_{j=1}^{a_i}\alpha_{ij}\cdotp\sigma
\right)=p\text{ if and only if }
\mathfrak{p}_{abs}\left(\bigotimes_{i=1}^n\bigotimes_{j=1}^{a_i}\alpha_{ij}\right)=p+n-l(\nu).
$$
where $\nu=1^{a_1}\cdots n^{a_n}$ be any partition of $n$. Comparing with Corollary \ref{perversity}, the abstract perversity is the same as the geometric perversity induced by the morphism $h$ on the basis. Note that on both sides the bases are filtered with respect to perverse filtration, so the abstract perverse filtration coincides with the geometric perverse filtration.
\end{proof}

\subsection{Multiplicativity of the perverse filtration}
\begin{proposition}\label{multiplicative}
Let $f:S\to C$ be a proper surjective morphism from a smooth quasi-projective surface to a smooth quasi-projective curve. Then the perverse filtration on $H^*(S)$ is multiplicative.
\end{proposition}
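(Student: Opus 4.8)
The plan is to exploit that the base is a curve, which forces the perverse filtration to be so short that only two inclusions need checking. Since $f$ is surjective with $1$-dimensional fibres, $\dim(S\times_C S)=3$, hence $r(f)=3-\dim S=1$ and the perverse filtration on $H^*(S)$ has type $[0,2]$, i.e. $0=P_{-1}\subseteq P_0\subseteq P_1\subseteq P_2=H^*(S)$. Consequently $\mathfrak{p}(\alpha\cup\beta)\le\mathfrak{p}(\alpha)+\mathfrak{p}(\beta)$ is automatic whenever $\mathfrak{p}(\alpha)+\mathfrak{p}(\beta)\ge 2$ (the case where $\alpha$ or $\beta$ vanishes being trivial), so it suffices to establish $P_0\cdot P_0\subseteq P_0$ and $P_0\cdot P_1\subseteq P_1$. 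First I would replace $f$ by its Stein factorisation: the finite part has an exact pushforward, so this alters neither the cohomology ring nor the perverse filtration, and we may assume $f$ has connected fibres.

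Next I would pin down $P_0$. In the perverse decomposition $Rf_*\Q_S[1]\cong\mathcal{P}_0\oplus\mathcal{P}_1[-1]\oplus\mathcal{P}_2[-2]$ one has $\mathcal{P}_0={}^{\mathfrak{p}}\mathcal{H}^{-1}(Rf_*\Q_S[2])\cong\Q_C[1]$, the intersection complex of the smooth curve $C$: the adjunction morphism $\Q_C[1]\to\mathcal{P}_0$ is injective (it is so over the locus where $f$ is smooth, and $\Q_C[1]$ is a simple perverse sheaf), its cokernel is punctual, and a punctual summand of $\mathcal{P}_0$ would force a punctual direct summand of $R^1f_*\Q_S$, which there is none of since $R^1f_*\Q_S$ is the $j_*$-extension of its restriction to the smooth locus (local invariant cycle theorem). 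This is exactly the point where the hypothesis ``$S\to C$ is a fibration'' enters: for a birational contraction $\mathcal{P}_0$ does acquire a skyscraper and the statement genuinely fails, as in the blow-up example above. Hence $P_0H^*(S)=\mathbb{H}^{*-1}(C,\Q_C[1])=f^*H^*(C)$, and being the image of a ring homomorphism this is a subring of $H^*(S)$; this gives $P_0\cdot P_0\subseteq P_0$.

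The crux is $P_0\cdot P_1\subseteq P_1$, i.e. stability of $P_1H^*(S)$ under multiplication by $f^*H^*(C)$. Assume first $S$, hence $C$, projective. Combining \lemmaref{zero} with the non-degeneracy of \lemmaref{nondeg}, a dimension count shows that $P_1H^d(S)$ is the annihilator of $P_0H^{2\dim S-d}(S)$ under Poincar\'e duality, and thus equals $\big(f^*H^{2\dim S-d}(C)\big)^{\perp}$. So, for $\alpha\in H^*(C)$, $\gamma\in P_1H^d(S)$ and an arbitrary $\beta\in H^{2\dim S-d-\deg\alpha}(C)$, I would compute
$$
\langle f^*\alpha\cup\gamma,\ f^*\beta\rangle=\langle\gamma,\ f^*(\alpha\cup\beta)\rangle .
$$
Since $C$ is a curve and $\dim S=2$, the class $\alpha\cup\beta\in H^{4-d}(C)$ vanishes for $d<2$; and when it is nonzero, $f^*(\alpha\cup\beta)\in f^*H^{4-d}(C)=P_0H^{4-d}(S)$ pairs trivially with $\gamma\in P_1H^d(S)$ by \lemmaref{zero}, because $1+0<2=2r(f)$. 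Hence $f^*\alpha\cup\gamma$ is orthogonal to $f^*H^{2\dim S-d-\deg\alpha}(C)=P_0H^{2\dim S-(d+\deg\alpha)}(S)$, i.e. $f^*\alpha\cup\gamma\in P_1H^{d+\deg\alpha}(S)$, which is the required inclusion.

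For $S$ only quasi-projective I would run the identical computation with the Poincar\'e--Verdier pairing $H^d(S)\times H_c^{2\dim S-d}(S)\to\Q$ in place of Poincar\'e duality, using the perverse filtration on $H_c^*$, the compactly-supported analogues of \lemmaref{zero} and \lemmaref{nondeg} from \cite{hodge}, the identification $P_0H_c^*(S)=f^*H_c^*(C)$, and the fact that $H_c^*(C)$ is still concentrated in degrees $\le 2$; alternatively one reduces to the projective case whenever $S$ admits a smooth projective compactification whose cohomology surjects onto $H^*(S)$ compatibly with the perverse filtrations. The hard part will be the two structural inputs above: the identification $\mathcal{P}_0\cong\Q_C[1]$ (where the fibration hypothesis really bites) and the duality $P_1H^d(S)=\big(P_0H^{2\dim S-d}(S)\big)^{\perp}$, which rests on the relative Hodge--Riemann bilinear relations of \cite{hodge}; once these are in hand the cup-product estimate itself is purely formal.
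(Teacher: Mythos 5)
Your overall strategy is sound and genuinely different from the paper's, and in the projective case it is essentially complete: you reduce (correctly, since $r(f)=1$ forces the filtration to have type $[0,2]$) to $P_0\cdot P_0\subseteq P_0$ and $P_0\cdot P_1\subseteq P_1$, identify $P_0H^*(S)=f^*H^*(C)$ after Stein factorization, and characterize $P_1H^d(S)$ as the annihilator of $f^*H^{4-d}(C)$ using \lemmaref{zero} and \lemmaref{nondeg}, so that the projection formula finishes the job. The paper argues differently: it quotes the explicit perverse decomposition of \cite{surface} Theorem 3.2.3 (valid for proper surjective maps from a smooth quasi-projective surface to a smooth curve), observes that the perverse filtration differs from the multiplicative Leray filtration only in that the skyscraper classes $\Q_p^{n_p-1}$ (fundamental classes of fiber components) move from level $2$ to level $1$, and checks cup products with those classes by a direct intersection-theoretic argument. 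One local gap in your version: your justification that $\mathcal{P}_0$ has no punctual summand is incomplete. The local invariant cycle theorem gives surjectivity of the adjunction $R^1f_*\Q_S\to j_*\hat{R}^1$, but such a surjection exists even if $R^1f_*\Q_S$ has a skyscraper direct summand (the summand is simply killed); what you need is the absence of punctual subsheaves or summands, which you can get from relative Hard Lefschetz ($\mathcal{P}_0\cong\mathcal{P}_2$) combined with $R^3f_*\Q_S=0$, or simply by citing \cite{surface} Theorem 3.2.3 as the paper does.

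The more serious gap is the quasi-projective case, which is the actual hypothesis of the proposition and the case the paper needs later (Lemma \ref{pull-back} and the non-compact Hitchin moduli $M_D\to\C$ in \theoremref{multiplicative open}). Your argument there is only a sketch: \lemmaref{zero} and \lemmaref{nondeg} are stated for smooth projective varieties, and the $H^*\times H^*_c$ analogues you invoke are plausible (the vanishing statement goes through via the self-duality of $Rf_*\Q_S[2]$ and the $t$-structure axioms, and nondegeneracy of the graded pairing follows from the decomposition theorem plus Verdier duality on $C$), but they are asserted rather than proved or precisely referenced. Your fallback reduction to a projective compactification "compatible with the perverse filtrations" is also not automatic: even when $H^*(\overline{S})\to H^*(S)$ is surjective, one needs surjectivity on each $P_p$ (a strictness statement), which does not follow formally from $t$-exactness of restriction to an open subset. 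By contrast, the paper's route through the explicit decomposition of \cite{surface} costs nothing extra in the open case, which is precisely why it is used. If you write out the $H^*$--$H_c^*$ versions of \lemmaref{zero} and \lemmaref{nondeg} (or restrict your duality argument to a setting where they are available), your proof closes; as it stands, the statement you prove in full is only the projective one.
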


\begin{proof}
\noindent
The perverse decomposition of a proper map from a surface to curve can be found in \cite{surface} Theorem 3.2.3. Let $f:S\to C$ be a proper surjective map from a smooth surface to a smooth curve. Let $\hat{f}:\hat{S}\to\hat{C}$ be its smooth part. Let $j:\hat{C}\to C$ be the open embedding. Let $\hat{R}^i=R^i\hat{f}_*\Q_{\hat{S}}$. Then one has a non-canonical perverse decomposition of map $f$:
$$
 Rf_*\Q_S[1]\cong\left\{j_*\hat{R}^0[1]\right\}\bigoplus\left\{j_*\hat{R}^1[1]\oplus \oplus_{p\in C\setminus \hat{C}}\Q_p^{n_p-1}\right\}[-1]\bigoplus\{j_*\hat{R}^2[1]\}[-2]
$$
where $n_p$ is the number of irreducible components of fiber over $p$. Note that the ordinary Leray filtration is multiplicaive, and the only difference between ordinary Leray filtration and the perverse filtration is that the classes corresponding to $\oplus_{p\in C\setminus\hat{C}}\Q_p^{n_p-1}$ are shifted from $R^2f_*\Q_S[-2]$ to $\mathcal{P}_1[-1]$. Therefore, it suffices to check cup products with these classes. Furthermore, these classes are in perversity $1$, the only possibility to violate the multiplicativity is that their cup with perversity $0$ classes have perversity $2$. However, they are fundamental classes of irreducible components of special fibers, and perversity $0$ classes are pull-backs of classes on the curve. They don't meet if the pull-back class is not the surface itself, and cupping with fundamental class of the surface is the identity map. So in both cases, the multiplicativity is preserved.
\end{proof}

\begin{theorem}\label{multiplicative^n}
Let $f:S\to C$ be a surjective morphism from a smooth projective surface with numerically trivial canonical bundle to a smooth projective curve. Then the perverse filtration of $H^*(S^{[n]};\Q)$ with respect to the morphism $h:S^{[n]}\to C^{(n)}$ is multiplicative, namely, we have
$$
P_pH^*(S^{[n]};\Q)\cup P_{p'}H^*(S^{[n]};\Q)\subset P_{p+p'}H^*(S^{[n]};\Q)
$$
\end{theorem}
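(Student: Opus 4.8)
The plan is to reduce the multiplicativity on $H^*(S^{[n]})$ to the three ingredients already assembled: Lehn's ring structure (\theoremref{lehn}, via \propositionref{4.10} in the quasi-projective case), the identification of the geometric perversity with the abstract perversity on the wreath product $H^*(S)\{\mathfrak{S}_n\}$ (\propositionref{4.15}), and the perversity estimate for small diagonals (\propositionref{diagonal}), which applies because \propositionref{multiplicative} tells us that the perverse filtration for $f:S\to C$ is itself multiplicative. Since the abstract perverse filtration is \emph{defined} by a filtered basis of $H^*(S)\{\mathfrak{S}_n\}$, and since the $\mathfrak{S}_n$-invariant subspace inherits a filtered basis, it suffices to show: for $a\sigma, b\tau \in H^*(S)\{\mathfrak{S}_n\}$ which are products of basis elements, one has $\mathfrak{p}_{abs}(a\sigma\cdot b\tau)\le \mathfrak{p}_{abs}(a\sigma)+\mathfrak{p}_{abs}(b\tau)$, and then restrict to invariants.

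Concretely, I would first unwind $\mathfrak{p}_{abs}(a\sigma)=\sum \mathfrak{p}(\text{entries of }a) + 2|\sigma|$ (using $|\sigma|=\sum(i-1)a_i$ for $\sigma$ of type $1^{a_1}\cdots n^{a_n}$, so the combinatorial shift is $2|\sigma|$, matching \definitionref{4.6}'s $[-2|\sigma|]$). Then I would analyze the product formula $a\sigma\cdot b\tau = m_{\sigma,\tau}(a\otimes b)\,\sigma\tau$, where $m_{\sigma,\tau}$ is the composite of the pullback maps $f^{\langle\sigma\rangle,\langle\sigma,\tau\rangle}$, $f^{\langle\tau\rangle,\langle\sigma,\tau\rangle}$ (these are cup products along diagonal embeddings, i.e. iterated small-diagonal pullbacks), multiplication by the Euler class $e^{g(\sigma,\tau)}$, and the pushforward $f_{\langle\sigma,\tau\rangle,\langle\sigma\tau\rangle}$ (a Gysin map for a small-diagonal-type embedding). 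The key numerical identity to exploit is the graph-defect formula for $g(\sigma,\tau)$: on each orbit $E\in\langle\sigma,\tau\rangle\backslash[n]$ one has $2g(\sigma,\tau)(E)+|\langle\sigma\rangle\backslash E|+|\langle\tau\rangle\backslash E|+|\langle\sigma\tau\rangle\backslash E| = |E|+2$. I would track, orbit by orbit, how each of the three operations changes $\mathfrak{p}_{abs}$: pullback (cup product) is controlled by multiplicativity for $f:S\to C$ (\propositionref{multiplicative}) and by \propositionref{product}; cupping with the Euler class $e\in H^4(S)$ raises perversity by at most $2r(f)=4$ per factor of $e$ — here we need $r(f)=2$, i.e. $\mathfrak{p}(e)\le 4$, which follows from the top-dimensionality of $e$; and the Gysin pushforward along the small-diagonal embedding raises perversity by at most $2r(f)$ times the drop in the number of orbits, exactly as in \propositionref{diagonal}. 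Assembling these orbit-wise bounds and summing against the identity for $g(\sigma,\tau)$ should yield precisely $\mathfrak{p}_{abs}(a\sigma)+\mathfrak{p}_{abs}(b\tau)$ on the nose, with the "$+2$" in the defect formula absorbing the $2|\sigma\tau|$ versus $2|\sigma|+2|\tau|$ discrepancy.

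The main obstacle is the bookkeeping in that orbit-by-orbit estimate: one must simultaneously control (i) the perversity cost of the pullbacks $f^{H,\langle\sigma,\tau\rangle}$, which involves cup products of potentially many classes and repeated use of the multiplicativity for $f$, (ii) the correct number of Euler-class factors $g(\sigma,\tau)(E)$, and (iii) the Gysin shift $2r(f)\bigl(|H\backslash E|-|\langle\sigma\tau\rangle\backslash E|\bigr)$, and verify that the sum telescopes to the claimed bound using the graph-defect identity. A subtlety is that the classes being cupped need not be pullbacks from the curve, so one genuinely needs the full strength of \propositionref{multiplicative} for $f:S\to C$ (not merely the Leray multiplicativity), and one must double-check that the Euler class $e$ of $S$ has perversity exactly $2r(f)$ — it does, since it is a top-degree class on $S$ and the filtration is of type $[0,2r(f)]$ with $\mathfrak{p}$ maximal on $H^{4}(S)$. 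Once the local estimate is in place, passing to $H^*(S^{[n]})=\bigl(H^*(S)\{\mathfrak{S}_n\}\bigr)^{\mathfrak{S}_n}$ is immediate: cup product is $\mathfrak{S}_n$-equivariant, the invariant subspace has a filtered basis by \propositionref{4.15}, and the filtration is spanned by basis elements of bounded abstract perversity, so the bound on basis elements gives the inclusion $P_pH^*(S^{[n]})\cup P_{p'}H^*(S^{[n]})\subset P_{p+p'}H^*(S^{[n]})$.
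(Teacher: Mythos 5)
Your overall architecture is exactly the paper's: reduce via \propositionref{4.15} to the abstract perversity on $H^*(S)\{\mathfrak{S}_n\}$, use Lehn's product formula $a\sigma\cdot b\tau=m_{\sigma,\tau}(a\otimes b)\sigma\tau$, and estimate orbit by orbit using the pull-back bound (\lemmaref{pull-back}, resting on \propositionref{multiplicative} and \propositionref{product}), the Gysin/small-diagonal bound (\lemmaref{push-forward}, resting on \propositionref{diagonal}), the Euler class, and the graph-defect identity. But your bookkeeping contains a genuine numerical gap, and in this theorem the numerics are the whole content. For $f:S\to C$ a surface fibered over a curve one has $r(f)=\dim S\times_C S-\dim S=3-2=1$, not $2$: the perverse filtration on $H^*(S)$ is of type $[0,2]$, so $\mathfrak{p}(e)\le 2$ (not $\le 4$), and \propositionref{diagonal} gives $\mathfrak{p}(\Delta_{n,*}\gamma)\le\mathfrak{p}(\gamma)+2(n-1)$, i.e.\ the pushforward $f_{\langle\sigma,\tau\rangle,\langle\sigma\tau\rangle}$ costs $2$ (not $4=2r$) per unit drop in the number of orbits. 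Likewise the combinatorial shift in the abstract perversity is $\sum_i(i-1)a_i=n-l(\nu)=|\sigma|$, not $2|\sigma|$: the $2|\sigma|$ in \definitionref{4.6} is the cohomological degree shift, while \corollaryref{perversity} shows the perversity shift between $H^*(S^{l(\nu)})$ and $H^*(S^{[n]})$ is only $n-l(\nu)$.

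These two errors are consistently ``doubled,'' so your orbit-wise estimate would indeed telescope against the graph-defect identity --- but only for the filtration with shift $2|\sigma|$ and cost $4$, which is \emph{not} the perverse filtration identified in \propositionref{4.15}. Since $|\sigma|+|\tau|-|\sigma\tau|\ge 0$, multiplicativity with the doubled shift is strictly weaker than, and does not imply, the statement you need. Conversely, if you keep the correct shift $n-l(\nu)$ but use your costs ($4$ per orbit drop, $\mathfrak{p}(e)\le 4$), the slack on an orbit $E$ becomes $2g(\sigma,\tau)(E)+2-2|\langle\sigma\tau\rangle\backslash E|$, which is negative already for $g=0$ and $|\langle\sigma\tau\rangle\backslash E|\ge 2$, so the estimate fails. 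The proof only closes with the sharp constants $r(f)=1$, $\mathfrak{p}(e)\le 2$, diagonal cost $2(n-1)$, and shift $n-l(\nu)$; establishing and exploiting exactly these is the substance of the paper's argument (and of the $g(\sigma,\tau)=0,1$ case analysis, where $e^g=0$ for $g\ge 2$ because $e$ is of top degree). With those corrections your plan coincides with the paper's proof.
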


\noindent
To prove this theorem, we need the following two lemmata. In fact, they are stated with slightly more general hypothesis so that we can also apply them in quasi-projective case in Chapter 5. We use the notation introduced in section 4.1.

\begin{lemma}\label{pull-back}
Let $f:S\to C$ be a surjective proper morphism from a smooth quasi-projective surface to a smooth quasi-projective curve. Then for any surjective map between sets $\varphi:I\twoheadrightarrow J$, the pullback map (Definition \ref{4.3}) $\varphi^*:H^*(S)^I\to H^*(S)^J$ does not increase perversity, where the perversity is defined for the map $S^{|I|}\to C^{|I|}$ and $S^{|J|}\to C^{|J|}$.
\end{lemma}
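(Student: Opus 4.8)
The plan is to unwind the explicit formula for $\varphi^*$ recorded in the remark following Definition \ref{4.3}, reduce to decomposable cohomology classes, and then combine the multiplicativity of the perverse filtration on $H^*(S)$ (Proposition \ref{multiplicative}) with the additivity of perversity under external tensor products (Proposition \ref{product}, Corollary \ref{cartesian}). First I would fix set-isomorphisms $[n]\xrightarrow{\sim}I$ and $[m]\xrightarrow{\sim}J$, giving an induced surjection $\tilde{\varphi}:[n]\twoheadrightarrow[m]$ whose fibers $\tilde{\varphi}^{-1}(1),\dots,\tilde{\varphi}^{-1}(m)$ partition $[n]$ into nonempty blocks; under these identifications the pullback becomes
$$
\varphi^*:H^*(S)^{\otimes n}\longrightarrow H^*(S)^{\otimes m},\qquad a_1\otimes\cdots\otimes a_n\longmapsto\bigotimes_{j=1}^m\Bigl(\ \bigcup_{i\in\tilde{\varphi}^{-1}(j)}a_i\ \Bigr),
$$
and this is independent of the chosen isomorphisms. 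Since $\varphi^*$ is $\Q$-linear, since the perversity of a sum of classes is at most the maximum of the perversities of the summands, and since by Corollary \ref{cartesian} the subspace $P_pH^*(S^n)$ is spanned by decomposable tensors $a_1\otimes\cdots\otimes a_n$ with $\sum_{i=1}^n\mathfrak{p}(a_i)\le p$, it suffices to establish $\mathfrak{p}\bigl(\varphi^*(a_1\otimes\cdots\otimes a_n)\bigr)\le\sum_{i=1}^n\mathfrak{p}(a_i)$ for every such decomposable tensor, where the left-hand perversity is taken for $f^m:S^m\to C^m$ and the right-hand ones for $f:S\to C$.

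For the decomposable case, set $b_j:=\bigcup_{i\in\tilde{\varphi}^{-1}(j)}a_i\in H^*(S)$, so $\varphi^*(a_1\otimes\cdots\otimes a_n)=b_1\otimes\cdots\otimes b_m$. By Proposition \ref{product}, iterated over the $m$ factors, one has $\mathfrak{p}(b_1\otimes\cdots\otimes b_m)=\sum_{j=1}^m\mathfrak{p}(b_j)$. By Proposition \ref{multiplicative} the perverse filtration on $H^*(S)$ is multiplicative, so $\mathfrak{p}(b_j)\le\sum_{i\in\tilde{\varphi}^{-1}(j)}\mathfrak{p}(a_i)$ for each $j$. Summing over $j$ and using that the blocks $\tilde{\varphi}^{-1}(j)$ partition $[n]$ gives
$$
\mathfrak{p}\bigl(\varphi^*(a_1\otimes\cdots\otimes a_n)\bigr)=\sum_{j=1}^m\mathfrak{p}(b_j)\le\sum_{j=1}^m\ \sum_{i\in\tilde{\varphi}^{-1}(j)}\mathfrak{p}(a_i)=\sum_{i=1}^n\mathfrak{p}(a_i),
$$
which is exactly the required bound; equivalently $\varphi^*\bigl(P_pH^*(S)^I\bigr)\subseteq P_pH^*(S)^J$ for every $p$.

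I do not expect a serious obstacle here: once the explicit formula for $\varphi^*$ is available, the statement is a formal consequence of multiplicativity on $H^*(S)$ together with additivity of perversity under $\boxtimes$. The only points worth a line of verification are that the explicit description of $\varphi^*$ is independent of the auxiliary set-isomorphisms (already recorded after Definition \ref{4.3}) and that $P_pH^*(S^n)$ is genuinely generated by filtration-adapted decomposable tensors, which is precisely Corollary \ref{cartesian}; note also that only the properness and surjectivity of $f$ are used, through Proposition \ref{multiplicative}, so no hypothesis on the canonical bundle is needed for this lemma.
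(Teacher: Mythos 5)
Your proposal is correct and follows essentially the same route as the paper: reduce to decomposable classes via a filtered basis of $H^*(S^{|I|})$ built from tensor products (the paper invokes Corollary \ref{basis^n}, you invoke Corollary \ref{cartesian}, which is the same mechanism), then combine the explicit formula for $\varphi^*$ with additivity of perversity under $\boxtimes$ (Proposition \ref{product}) and multiplicativity on $H^*(S)$ (Proposition \ref{multiplicative}). No gaps.
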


\begin{proof}
Let $\mathfrak{B}$ be any filtered basis of $H^*(S)$ with respect to the perverse filtration for the map $f:S\to C$. By Corollary \ref{basis^n}, the basis $\mathfrak{B}^{|I|}$ is filtered, so it suffices to compute the perversity of the pull-back of the elements in $\mathfrak{B}^{|I|}$. Pick an element $\alpha_1\otimes\cdots\otimes\alpha_{|I|}\in \mathfrak{B}^{|I|}$, then
$$
\varphi^*(\alpha_1\otimes\cdots\otimes\alpha_{|I|})=\bigotimes_{j=1}^{|J|}\bigcup_{i\in \varphi^{-1}(j)}\alpha_i
$$
Note that by Proposition \ref{product}
$$
\mathfrak{p}\left(\alpha_1\otimes\cdots\otimes\alpha_{|I|}\right)=\sum_{j=1}^{|I|}{\mathfrak{p}(\alpha_j)}
$$
where perversity on the left side is defined by map $S^{|I|}\to C^{|I|}$, and the perversity on the right side is defined by $S\to C$. Now by Proposition \ref{product} and Lemma \ref{multiplicative}, we have that
$$
\mathfrak{p}\left(\bigotimes_{j=1}^{|J|}\bigcup_{i\in \varphi^{-1}(j)}\alpha_i\right)\le\sum_{j=1}^{|J|}{\sum_{i\in \varphi^{-1}(j)}{\mathfrak{p}(\alpha_i)}}=\sum_{j=1}^{|I|}{\alpha_j},
$$
where the perversity on the left side is defined by the map $S^{|J|}\to C^{|J|}$.
\end{proof}

\begin{lemma}\label{push-forward}
Let $f:S\to C$ be a surjective proper morphism from a smooth quasi-projective surface to a smooth quasi-projective curve. Suppose that we have the perversity estimation of small diagonals 
$$
\mathfrak{p}(\Delta_{n,*}(\gamma))\le \mathfrak{p}(\gamma)+2(n-1)
$$
for any $\gamma\in H^*(S)$ and small diagonal embedding $\Delta_n:S\to S^n$. Then for any surjective map $\varphi:I\twoheadrightarrow J$, the push-forward map
$$
\varphi_*:H^*(S)^J\to H^*(S)^I
$$
increases the perversity at most by $2(|I|-|J|)$.
\end{lemma}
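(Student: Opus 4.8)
The plan is to reduce the assertion to the assumed perversity bound for small diagonals by decomposing $\varphi$ into such diagonals. As recorded in the remark following \definitionref{4.3}, the morphism $\varphi:S^J\to S^I$ is a product of diagonal embeddings: after fixing bijections $I\cong[|I|]$, $J\cong[|J|]$, the $j$-th copy of $S$ in $S^J$ is embedded into the subproduct $S^{\varphi^{-1}(j)}$ of $S^I$ by the small diagonal $\Delta_{|\varphi^{-1}(j)|}$, so that $\varphi=\prod_{j\in J}\Delta_{|\varphi^{-1}(j)|}$. Since the Gysin push-forward along a product of closed embeddings is, via the K\"unneth isomorphism, the external tensor product of the individual Gysin maps, this gives on cohomology
$$
\varphi_*=\boxtimes_{j\in J}\bigl(\Delta_{|\varphi^{-1}(j)|}\bigr)_*,
$$
where $(\Delta_k)_*:H^*(S)\to H^{*+2(k-1)\dim S}(S^k)$ denotes the small-diagonal Gysin map.

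Next I would fix any filtered basis $\mathfrak{B}$ of $H^*(S)$ for $f:S\to C$. By \corollaryref{basis^n} the induced basis $\mathfrak{B}^{|J|}$ of $H^*(S)^J=H^*(S^{|J|})$ is filtered for the map $f^{|J|}:S^{|J|}\to C^{|J|}$, so it suffices to bound $\mathfrak{p}(\varphi_*\gamma)$ for a single basis vector $\gamma=\gamma_1\otimes\cdots\otimes\gamma_{|J|}$, the general case following by linearity because a filtered basis spans every step $P_p$. By \propositionref{product} we have $\mathfrak{p}(\gamma)=\sum_{j}\mathfrak{p}(\gamma_j)$. Plugging in the decomposition of $\varphi_*$ from the previous paragraph,
$$
\varphi_*\gamma=\bigotimes_{j\in J}\bigl(\Delta_{|\varphi^{-1}(j)|}\bigr)_*(\gamma_j),
$$
and applying \propositionref{product} once more --- now reassembling the external factors over the blocks $\varphi^{-1}(j)$, whose perversities are taken with respect to the maps $f^{|\varphi^{-1}(j)|}:S^{|\varphi^{-1}(j)|}\to C^{|\varphi^{-1}(j)|}$ --- gives $\mathfrak{p}(\varphi_*\gamma)=\sum_{j\in J}\mathfrak{p}\bigl((\Delta_{|\varphi^{-1}(j)|})_*\gamma_j\bigr)$.

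Finally I would invoke the hypothesis $\mathfrak{p}(\Delta_{k,*}\gamma_j)\le\mathfrak{p}(\gamma_j)+2(k-1)$ termwise and sum:
$$
\mathfrak{p}(\varphi_*\gamma)\le\sum_{j\in J}\bigl(\mathfrak{p}(\gamma_j)+2(|\varphi^{-1}(j)|-1)\bigr)=\Bigl(\sum_{j\in J}\mathfrak{p}(\gamma_j)\Bigr)+2(|I|-|J|)=\mathfrak{p}(\gamma)+2(|I|-|J|),
$$
using that the fibres of $\varphi$ partition $I$, so $\sum_{j\in J}|\varphi^{-1}(j)|=|I|$ while $\sum_{j\in J}1=|J|$. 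Since the $\gamma$'s form a filtered basis, this yields $\varphi_*\bigl(P_pH^*(S)^J\bigr)\subset P_{p+2(|I|-|J|)}H^*(S)^I$, as claimed.

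All the genuinely non-formal content is packaged into the hypothesis: the perversity estimate for small diagonals, which is \propositionref{diagonal} together with \propositionref{multiplicative} in the projective setting, and its quasi-projective analogue in Chapter 5. I expect the only delicate point in the argument itself to be the reduction in the first paragraph --- verifying that $\varphi$ really is this particular product of diagonal maps and that Gysin push-forward commutes with external products --- together with the need to keep careful track, throughout the two applications of \propositionref{product}, of which morphism each occurrence of $\mathfrak{p}$ is computed for: first $\mathfrak{p}$ is split over the $|J|$ original tensor factors, and then $\mathfrak{p}(\varphi_*\gamma)$ is reassembled over the $|J|$ blocks $\varphi^{-1}(j)$.
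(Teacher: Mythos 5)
Your proposal is correct and follows essentially the same route as the paper: reduce to a filtered basis element via \corollaryref{basis^n}, write $\varphi_*(\gamma_1\otimes\cdots\otimes\gamma_{|J|})=\pm\bigotimes_{j}\Delta_{|\varphi^{-1}(j)|,*}(\gamma_j)$, and combine \propositionref{product} (additivity of perversity over the blocks $S^{|\varphi^{-1}(j)|}$) with the assumed diagonal estimate, summing $2(|\varphi^{-1}(j)|-1)$ to get $2(|I|-|J|)$. The only difference is presentational: the paper takes the block decomposition of $\varphi_*$ as immediate from the definition, while you spell out the K\"unneth compatibility of the Gysin maps, which is a harmless elaboration.
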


\begin{proof}
Again, it suffices to prove for a basis element $\alpha_1\otimes\cdots\otimes\alpha_{|J|}\in \mathfrak{B}^{|J|}$.
Let $b_j=|\varphi^{-1}(j)|$.
By definition,
$$
\varphi_*(\alpha_1\otimes\cdots\otimes\alpha_{|J|})=\pm\bigotimes_{j=1}^{|J|}\Delta_{b_j,*}(\alpha_j).
$$
By the hypothesis, we have
\begin{equation*}
\begin{array}{rrl}
\displaystyle\mathfrak{p}\left(\pm\bigotimes_{j=1}^{|J|}\Delta_{b_j,*}(\alpha_j)\right)&\le&\displaystyle\sum_{j=1}^{|J|}{\mathfrak{p}\left(\Delta_{b_j,*}(\alpha_j)\right)}\\
&=&\displaystyle\sum_{j=1}^{|J|}{\mathfrak{p}(\alpha_j)+2(b_i-1)}\\
&=&\displaystyle\sum_{j=1}^{|J|}{\mathfrak{p}(\alpha_j)}+2\sum_{j=1}^{|J|}{b_i}-2\sum_{j=1}^{|J|}{1}\\
&=&\displaystyle\sum_{j=1}^{|J|}{\mathfrak{p}(\alpha_j)}+2(|I|-|J|),
\end{array}
\end{equation*}
where the perversity on the left side is defined by the map $S^{|I|}\to C^{|I|}$, the perversities on the right side of the first line is defined by the map $S^{b_j}\to C^{b_j}$.
\end{proof}

\begin{proof}[Proof of Theorem \ref{multiplicative^n}]
By Proposition \ref{4.15}, it suffices to prove that the abstract perverse filtration defined on $H^*(S;\Q)\{\mathfrak{S}_n\}$ is multiplicative. Furthermore, it suffices to prove the result for our filtered basis, namely
\begin{equation*}
\begin{array}{rl}
 &\displaystyle \mathfrak{p}\left(\bigotimes_{i=1}^n\bigotimes_{j=1}^{a_i}\alpha_{ij}\cdotp\sigma\cup\bigotimes_{i=1}^n\bigotimes_{j=1}^{a_i'}\alpha'_{ij}\cdotp\tau\right)\\
\le&\displaystyle 
\sum_{i=1}^n{\sum_{j=1}^{a_i}{\mathfrak{p}(\alpha_{ij})}}+\sum_{i=1}^n{(i-1)a_i}\\
+&\displaystyle 
\sum_{i=1}^n{\sum_{j=1}^{a'_i}{\mathfrak{p}(\alpha'_{ij})}}+\sum_{i=1}^n{(i-1)a'_i},
\end{array}
\end{equation*}
where $\alpha_{ij}$ and $\alpha'_{ij}$ run over basis $B$ obtained in Proposition \ref{basis}. By Proposition \ref{basis}, Corollary \ref{basis^n} and Proposition \ref{diagonal}, the hypotheses of Lemma \ref{pull-back} and Lemma \ref{push-forward} are satisfied. Note that the cup product formula computes independently on each orbit of $\langle\sigma,\tau\rangle$-action on $[n]$ individually. Let $E$ be an orbit of the action $\langle\sigma,\tau\rangle$ on $[n]$, i.e. $|\langle\sigma,\tau\rangle\backslash E|=1$. The product is computed by
\begin{equation*}
\begin{array}{rrl}
A^{\otimes \langle\sigma\rangle\backslash E}\cdotp\sigma|_E\otimes A^{\otimes \langle\tau\rangle\backslash E}\cdotp\tau|_E &\to&
A^{\otimes \langle\sigma\tau\rangle\backslash E}\cdotp\sigma\tau|_E\\
a\cdotp\sigma|_E\otimes a'\cdotp\tau|_E&\mapsto& f_{\langle\sigma,\tau\rangle,\langle\sigma\tau\rangle}(f^{\langle\sigma\rangle,\langle\sigma,\tau\rangle}(a)\cdotp
f^{\langle\tau\rangle,\langle\sigma,\tau\rangle}(a')\cdotp e^{g(\sigma,\tau)})\cdotp\sigma\tau|_E
\end{array}
\end{equation*}
for every $E$. Note that the Euler class $e$ is of top degree, hence $e^g=0$ for $g\ge 2$, so that it suffices to consider the following two cases.
\begin{enumerate}
\item{
$g(\sigma,\tau)=0$. By Lemma \ref{pull-back} and Lemma \ref{push-forward}, we have
\begin{equation*}
\begin{array}{rrl}
& &\displaystyle\mathfrak{p}\left(f_{\langle\sigma,\tau\rangle,\langle\sigma\tau\rangle}(f^{\langle\sigma\rangle,\langle\sigma,\tau\rangle}(a)\cdotp
f^{\langle\tau\rangle,\langle\sigma,\tau\rangle}(a')\cdotp e^{g(\sigma,\tau)})\cdotp\sigma\tau|_E\right)\\
&=&\displaystyle\mathfrak{p}\left(f_{\langle\sigma,\tau\rangle,\langle\sigma\tau\rangle}(f^{\langle\sigma\rangle,\langle\sigma,\tau\rangle}(a)\cdotp
f^{\langle\tau\rangle,\langle\sigma,\tau\rangle}(a'))\right)+|E|-|\langle\sigma\tau\rangle\backslash E|\\
&=&\displaystyle\mathfrak{p}\left(f^{\langle\sigma\rangle,\langle\sigma,\tau\rangle}(a)\cdotp
f^{\langle\tau\rangle,\langle\sigma,\tau\rangle}(a'))\right)+2(|\langle\sigma\tau\rangle\backslash E|-1)+|E|-|\langle\sigma\tau\rangle\backslash E|\\
&=&\displaystyle\mathfrak{p}(a)+\mathfrak{p}(a')+|E|+|\langle\sigma\tau\rangle\backslash E|-2\\
&=&\displaystyle\mathfrak{p}(a\cdotp\sigma)-(|E|-|\langle\sigma\rangle\backslash E|)+\mathfrak{p}(a'\cdotp\tau)-(|E|-|\langle\tau\rangle\backslash E|)+|E|+|\langle\sigma\tau\rangle\backslash E|-2\\
&=&\displaystyle\mathfrak{p}(a\cdotp\sigma)+\mathfrak{p}(a'\cdotp\tau)-2g(\sigma,\tau)\\
&=&\displaystyle\mathfrak{p}(a\cdotp\sigma)+\mathfrak{p}(a'\cdotp\tau)
\end{array}
\end{equation*}
}
\item{
$g(\sigma,\tau)=1$. Since $e$ itself is already in top degree, so the only nonzero case arise for $a=a'=1$. Then
\begin{equation*}
\begin{array}{rrl}
& &\displaystyle\mathfrak{p}\left(f_{\langle\sigma,\tau\rangle,\langle\sigma\tau\rangle}(f^{\langle\sigma\rangle,\langle\sigma,\tau\rangle}(1)\cdotp
f^{\langle\tau\rangle,\langle\sigma,\tau\rangle}(1)\cdotp e^{g(\sigma,\tau)})\cdotp\sigma\tau|_E\right)\\
&=&\displaystyle\mathfrak{p}\left(f_{\langle\sigma,\tau\rangle,\langle\sigma\tau\rangle}(e)\right)+|E|-|\langle\sigma\tau\rangle\backslash E|\\
&=&\displaystyle2+2(|\langle\sigma\tau\rangle\backslash E|-1)+|E|-|\langle\sigma\tau\rangle\backslash E|\\
&=&\displaystyle|E|+|\langle\sigma\tau\rangle\backslash E|\\
&=&\displaystyle|E|-|\langle\sigma\rangle\backslash E|+|E|-|\langle\tau\rangle\backslash E|\\
&=&\displaystyle\mathfrak{p}(1\cdotp\sigma)+\mathfrak{p}(1\cdotp\tau)
\end{array}
\end{equation*}
The last but one equality is due to $g(\sigma,\tau)=1$, which means $|E|=|\langle\sigma\rangle\backslash E|+|\langle\tau\rangle\backslash E|+|\langle\sigma\tau\rangle\backslash E|$.
}
\end{enumerate}
\end{proof}

\noindent
An application of the theorem is the multiplicativity of perverse filtration for the elliptic fibration of Hilbert schemes of points on K3 surfaces.

\begin{theorem}
Let $S$ be an elliptic K3 surface and $f:S\to \P^1$ be the elliptic fibration. Then the perverse filtration on $H^*(S^{[n]})$ defined by the natural map $h:S^{[n]}\to \P^n$ is multiplicative. 
\end{theorem}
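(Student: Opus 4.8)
The plan is to obtain this statement as an immediate corollary of \theoremref{multiplicative^n}, applied with $C=\P^1$. First I would check the hypotheses of that theorem for the elliptic fibration: an elliptic K3 surface $S$ is a smooth projective surface with trivial, hence numerically trivial, canonical bundle, and $f:S\to\P^1$ is a surjective morphism onto the smooth projective curve $C=\P^1$. Therefore \theoremref{multiplicative^n} applies verbatim and shows that the perverse filtration on $H^*(S^{[n]};\Q)$ associated with the morphism $h:S^{[n]}\to C^{(n)}=(\P^1)^{(n)}$ is multiplicative, where $h=f^{(n)}\circ\pi$ is the composition of the Hilbert--Chow morphism $\pi:S^{[n]}\to S^{(n)}$ with $f^{(n)}:S^{(n)}\to (\P^1)^{(n)}$.

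It then remains to recognize that this $h$ is the map of the statement. Writing $\P^1=\P(V)$ for a $2$-dimensional vector space $V$, the $n$-th symmetric product $(\P^1)^{(n)}$ is canonically isomorphic to $\P(\Sym^n V^\vee)=\P^n$, via the assignment sending an effective degree-$n$ divisor on $\P^1$ to the corresponding degree-$n$ homogeneous form, taken up to scalar. Under this identification, the composition
$$
S^{[n]}\xrightarrow{\ \pi\ }S^{(n)}\xrightarrow{\ f^{(n)}\ }(\P^1)^{(n)}=\P^n
$$
is precisely the natural map $h:S^{[n]}\to\P^n$ attached to the elliptic fibration, namely the one playing the role of the Hitchin map in the discussion of the introduction. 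Hence the perverse filtration defined by $h:S^{[n]}\to\P^n$ coincides with the one defined by $h:S^{[n]}\to C^{(n)}$, and multiplicativity follows.

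There is essentially no obstacle in this argument: the only point needing a line of justification is the identification of the natural map $S^{[n]}\to\P^n$ with $f^{(n)}\circ\pi$, which is the fibration structure on the Hilbert scheme of a fibered surface already set up in section~4.2, and the projectivity of $S$ makes Lehn's ring description (\theoremref{lehn}) directly available so that no non-compact refinement is required in this case. Everything else is a direct invocation of \theoremref{multiplicative^n}.
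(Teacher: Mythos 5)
Your proposal is correct and matches the paper's intent exactly: the paper states this theorem as an immediate application of \theoremref{multiplicative^n}, using precisely the facts you cite (a K3 surface is smooth projective with trivial canonical bundle, $f$ is surjective onto $\P^1$, and $(\P^1)^{(n)}\cong\P^n$ identifies $f^{(n)}\circ\pi$ with the natural map $h$). Nothing further is needed.
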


\subsection{The Hilbert schemes of a $P=W$ package}
\begin{definition}
A $P=W$ package is a $5$-tuple $(X_P,X_W,h,A,\Xi)$ where
\begin{enumerate}
\item{$X_P,X_W,A$ are smooth quasi-projective varieties. $h:X_P\to A$ is proper morphism. $\Xi:X_P\to X_W$ is a diffeomorphism.}
\item{$P_kH^*(X_P)=W_{2k}H^*(X_W)=W_{2k+1}H^*(X_W)$ for any $k$. Here perverse filtration is defined for map $f$, and the identity is induced by pulling-back via $\Xi$.}
\end{enumerate}
A homological $P=W$ package $(X_P,X_W,h,A,\Phi)$ is the same as a $P=W$ package except that the diffeomorphism $\Xi$ is replaced by an isomorphism $\Phi:H^*(X_W)\xrightarrow{\sim}H^*(X_P)$.
\end{definition}

\begin{theorem}\label{4.22}
If $S_P$ and $S_W$ are smooth surfaces and $(S_P,S_W,h,\A^1,\Phi)$ is a homological $P=W$ package. Then the Cartesian product $(S_P^n,S_W^n,h^n,\A^n,\Phi^n)$, the symmetric product $(S_P^{(n)},S_W^{(n)},h^{(n)},\A^n,\Phi^{(n)})$ and the Hilbert scheme $(S_P^{[n]},S_W^{[n]},h^{[n]},\A^n, \Phi^{[n]})$ are also homological $P=W$ packages, where $h^{[n]}:X_P^{[n]}\to X_P^{(n)}\to \A^n$ and
\begin{equation*}
\begin{array}{rl}
\Phi^{[n]}: H^*\left(X_W^{[n]};\Q\right)=&\displaystyle\bigoplus_{\nu=1^{a_1}\cdots n^{a_n}}\bigotimes_{i=1}^n H^*\left(X_W^{(a_i)};\Q\right)[2n-2l(\nu)]\\
\xrightarrow{\oplus \Phi^{(\nu)}} &\displaystyle\bigoplus_{\nu=1^{a_1}\cdots n^{a_n}} \bigotimes_{i=1}^n H^*\left(X_P^{(a_i)};\Q\right)[2n-2l(\nu)]=H^*\left(X_P^{[n]};\Q\right)
\end{array}
\end{equation*}
\end{theorem}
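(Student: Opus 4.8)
The three assertions are established in the order Cartesian product, symmetric product, Hilbert scheme, each built on the previous one. In all three cases the first condition in the definition of a homological $P=W$ package is routine: $h^n$, $h^{(n)}=h^n/\mathfrak{S}_n$ and $h^{[n]}\colon S_P^{[n]}\to S_P^{(n)}\to\mathbb{A}^n=(\mathbb{A}^1)^{(n)}$ are proper because $h$ is and the Hilbert--Chow morphism is projective; $S_P^n$ and $S_P^{[n]}$ are smooth quasi-projective, and for the (singular) symmetric product the perverse filtration is the one supplied by Proposition \ref{5.6}; and $\Phi^n$, $\Phi^{(n)}$, $\Phi^{[n]}$ are isomorphisms by construction. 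Thus everything reduces to the identity $P_kH^*(\cdot_P)=W_{2k}H^*(\cdot_W)=W_{2k+1}H^*(\cdot_W)$ for each of the three constructions, which I would verify on a suitably chosen filtered basis.

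For the Cartesian product, fix a basis $\mathfrak{B}$ of $H^*(S_W)$ adapted to the weight filtration, so that each $\beta\in\mathfrak{B}$ has a weight $w(\beta)=\min\{m:\beta\in W_mH^*(S_W)\}$; condition (2) of the package forces $w(\beta)$ to be even, forces $\Phi(\mathfrak{B})$ to be a basis of $H^*(S_P)$ filtered with respect to the perverse filtration of $h$, and gives $\mathfrak{p}(\Phi(\beta))=\tfrac12 w(\beta)$. By the K\"unneth formula for mixed Hodge structures, $\mathfrak{B}^{\otimes n}$ is a basis of $H^*(S_W^n)$ with $w(\beta_1\otimes\cdots\otimes\beta_n)=\sum_i w(\beta_i)$ (so $H^*(S_W^n)$ again has only even weights); by Corollary \ref{basis^n} and the additivity of perversity in Proposition \ref{product}, $\Phi^n(\mathfrak{B}^{\otimes n})=\Phi(\mathfrak{B})^{\otimes n}$ is a basis of $H^*(S_P^n)$ filtered for the perverse filtration of $h^n$, with $\mathfrak{p}\bigl(\Phi^n(\beta_1\otimes\cdots\otimes\beta_n)\bigr)=\sum_i\mathfrak{p}(\Phi(\beta_i))=\tfrac12\sum_i w(\beta_i)$. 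Comparing these two filtered bases yields $P_pH^*(S_P^n)=W_{2p}H^*(S_W^n)=W_{2p+1}H^*(S_W^n)$ under $\Phi^n$. The symmetric product case follows at once: the weight filtration on $H^*(S_W^{(n)})=H^*(S_W^n)^{\mathfrak{S}_n}$ is the restriction of the one on $H^*(S_W^n)$ (the symmetrizing projector is a morphism of mixed Hodge structures), the perverse filtration on $H^*(S_P^{(n)})$ is the $\mathfrak{S}_n$-invariant part of the one on $H^*(S_P^n)$ by Proposition \ref{5.6}, and $\Phi^{(n)}$ is the restriction of $\Phi^n$. The same argument applied to $\mathfrak{S}_\nu=\mathfrak{S}_{a_1}\times\cdots\times\mathfrak{S}_{a_n}$ gives the matching for $\Phi^{(\nu)}=\bigotimes_i\Phi^{(a_i)}$.

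For the Hilbert scheme I would combine Corollary \ref{perversity}, which expresses
\[
P_pH^*(S_P^{[n]})=\bigoplus_{\nu=1^{a_1}\cdots n^{a_n}}\bigl(P_{\,p+l(\nu)-n}H^*(S_P^{l(\nu)})\bigr)^{\mathfrak{S}_\nu}[2l(\nu)-2n],
\]
with the G\"ottsche--Soergel description of the mixed Hodge structure on the cohomology of the Hilbert scheme of points on a smooth quasi-projective surface, which is an isomorphism of mixed Hodge structures
\[
H^*(S_W^{[n]})\cong\bigoplus_{\nu=1^{a_1}\cdots n^{a_n}}\bigl(H^*(S_W^{l(\nu)})\bigr)^{\mathfrak{S}_\nu}(l(\nu)-n)[2l(\nu)-2n],
\]
the Tate twist by $l(\nu)-n$ shifting weights by $2(n-l(\nu))$, so that $W_kH^*(S_W^{[n]})=\bigoplus_\nu\bigl(W_{\,k+2l(\nu)-2n}H^*(S_W^{l(\nu)})\bigr)^{\mathfrak{S}_\nu}[2l(\nu)-2n]$. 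Feeding in the symmetric-product identification already proved for the pair $(S_P^{l(\nu)},S_W^{l(\nu)})$, namely $\bigl(P_qH^*(S_P^{l(\nu)})\bigr)^{\mathfrak{S}_\nu}=\bigl(W_{2q}H^*(S_W^{l(\nu)})\bigr)^{\mathfrak{S}_\nu}=\bigl(W_{2q+1}H^*(S_W^{l(\nu)})\bigr)^{\mathfrak{S}_\nu}$ with $q=p+l(\nu)-n$, and matching the $\nu$-summands termwise --- which is precisely how $\Phi^{[n]}=\bigoplus_\nu\Phi^{(\nu)}$ is built --- yields $P_pH^*(S_P^{[n]})=W_{2p}H^*(S_W^{[n]})=W_{2p+1}H^*(S_W^{[n]})$, completing the proof.

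The routine parts are the properness/smoothness bookkeeping and the K\"unneth and invariance properties of the weight filtration. The one point that needs care --- and the main obstacle --- is the alignment in the Hilbert scheme step: one must check that the common shift $[2l(\nu)-2n]$ and the perversity shift $p+l(\nu)-n$ in Corollary \ref{perversity} are compatible with the Tate twists $(l(\nu)-n)$ in the G\"ottsche--Soergel decomposition, i.e.\ that on each $\nu$-summand ``perversity $q$'' corresponds to ``weight $2q$'' before the common shift. The half-weight normalisation $\mathfrak{p}(\Phi(\beta))=\tfrac12 w(\beta)$ is exactly what makes this work, and this is the only place where the precise $[0,2r(f)]$-normalisation of the geometric perverse filtration is essential.
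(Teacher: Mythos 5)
Your proposal is correct and follows essentially the same three-step route as the paper: K\"unneth for mixed Hodge structures together with Corollary~\ref{cartesian} (additivity of perversity) for the Cartesian product, $\mathfrak{S}_n$-invariance of both filtrations via Proposition~\ref{5.6} for the symmetric product, and the mixed-Hodge-theoretic decomposition of $H^*(S_W^{[n]})$ (your G\"ottsche--Soergel citation; the paper invokes Theorem~5.3.1 of \cite{hilb}) matched against Corollary~\ref{perversity} for the Hilbert scheme. The only cosmetic difference is that you argue on a weight-filtered basis where the paper compares spanning sets directly; the shift/Tate-twist bookkeeping you flag as the delicate point is exactly the content of the paper's Step~3.
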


\begin{proof}
The proof is obtained by comparing the functoriality of the weight filtration for the mixed Hodge structure and the one for the perverse filtration. \\
\noindent
Step 1.
On one hand, by the K\"unneth formula for mixed Hodge structures, 
$$
W_wH^*(S_W^n;\Q)=\text{Span }\{\alpha_1\otimes\cdots\otimes\alpha_n\mid \mathfrak{w}(\alpha_1)+\cdots+\mathfrak{w}(\alpha_n)\le w\},
$$
where $\alpha_i\in H^*(S_W)$, and the function $\mathfrak{w}$ denotes the weight of a cohomology class. On the other hand, by Corollary \ref{cartesian}, 
$$
P_pH^*(S_P^n;\Q)=\text{Span }\{\alpha_1'\otimes\cdots\otimes\alpha_n'\mid \mathfrak{p}(\alpha_1')+\cdots+\mathfrak{p}(\alpha_n')\le p\},
$$
where $\alpha_i'\in H^*(S_P)$. By the hypothesis that $(S_P,S_W,h,\A^1,\Phi)$ is a homological $P=W$ package, we have that $2\mathfrak{p}(\Phi\alpha_i)=\mathfrak{w}(\alpha_i)$. This implies that
$$
W_{2k}H^*(S_W^n;\Q)=W_{2k+1}H^*(S_W^n;\Q)=P_kH^*(S_P^n;\Q).
$$
So $(S_P^n,S_W^n,h^n,\A^n,\Phi^n)$ is a homological $P=W$ package.

\medskip
\noindent
Step 2. On one hand, the mixed Hodge structure is functorial with respect to finite group quotient. So we have
$$
W_wH^*(S_W^{(n)};\Q)=\left(W_wH^*(S_W^n;\Q)\right)^{\mathfrak{S}_n}.
$$
On the other hand, by Proposition \ref{5.6}, we have
$$
P_pH^*(S_P^{(n)};\Q)=\left(P_pH^*(S_P^n;\Q)\right)^{\mathfrak{S}_n}.
$$
Then result in step 1 immediately implies 
$$
W_{2k}H^*(S_W^{(n)};\Q)=W_{2k+1}H^*(S_W^{(n)};\Q)=P_kH^*(S_P^{(n)};\Q).
$$
So $(S_P^{(n)},S_W^{(n)},h^{(n)},\A^n,\Phi^{(n)})$ is a homological $P=W$ package.

\medskip
\noindent
Step 3. On one hand, Theorem 5.3.1 in \cite{hilb} asserts that
$$
H^*\left(S_W^{[n]};\Q\right)(n)\cong\bigoplus_{\nu}H^*\left(S_W^{(\nu)};\Q\right)[2l(\nu)-2n]\left(l(\nu)\right)
$$
is an isomorphism of mixed Hodge structures, so we have
$$
W_wH^*\left(S_W^{[n]};\Q\right)=\bigoplus_\nu W_{w+2l(\nu)-2n} H^*(S_W^{(\nu)};\Q)[2l(\nu)-2n].
$$
On the other hand, by Corollary \ref{perversity}, the perverse filtration for the map $h^{[n]}:S_P^{[n]}\to \A^n $ can be expressed as 
\begin{equation*}
\begin{array}{rrl}
P_pH^*\left(S_P^{[n]};\Q\right)&=&\displaystyle\bigoplus_\nu\left(P_{pl(\nu)-n} H^*(S_P^{l(\nu)};\Q)\right)^{\mathfrak{S}_\nu}[2l(\nu)-2n]\\
&=&\displaystyle\bigoplus_\nu P_{p+l(\nu)-n} H^*(S_P^{(\nu)};\Q)[2l(\nu)-2n]
\end{array}
\end{equation*}
where the perversities on the right side are taken with respect to $h^{l(\nu)}:S_P^{l(\nu)}\to \A^{l(\nu)}$ and  $h^{(\nu)}:S_P^{(\nu)}\to \A^{l(\nu)}$ respectively.
The result in step 2 implies that 
$$
W_{2k}H^*(S_W^{[n]};\Q)=W_{2k+1}H^*(S_W^{[n]};\Q)=P_kH^*(S_P^{[n]};\Q).
$$
So $(S_P^{[n]},S_W^{[n]},h^{[n]},\A^n,\Phi^{[n]})$ is a homological $P=W$ package.
\end{proof}

\section{Applications to the $P=W$ conjecture}
\noindent
In this chapter, we will consider five families of Hitchin systems and the corresponding character varieties. We will prove the multiplicativity of the perverse filtration for the Hitchin map, compute perverse numbers and prove the full version of $P=W$ for the $n=1$ case. 
\subsection{Five families of Hitchin systems}
\noindent 
We first define the five families of Hitchin systems we consider.

\begin{enumerate}
\item{Type $\widetilde{A_0}(n)$. Consider the moduli space of degree $0$ rank $n$ parabolic Higgs bundles over an elliptic curve $(E,0)$, whose Higgs field can have at worst a first order pole at $0$ and the residue of the Higgs field at $0$ is nilpotent with respect to a multi-dimension $\{n,1,0\}$ flag.}
\item{Type $\widetilde{D_4}(n)$. Consider the moduli space of degree $0$ rank $2n$ parabolic Higgs bundles over a weighted curve $(\P^1,p_1,p_2,p_3,p_4)$, whose Higgs field can have at worst a first order pole at marked points and the residues of the Higgs field are nilpotent with respect to a multi-dimension $\{2n,n,0\}$ flag for $p_1,p_2,p_3$, and a multi-dimension $\{2n,n,1,0\}$ flag for $p_4$.}
\item{Type $\widetilde{E_6}(n)$. Consider the moduli space of degree $0$ rank $3n$ parabolic Higgs bundles over a weighted curve $(\P^1,p_1,p_2,p_3)$, whose Higgs field can have at worst a first order pole at marked points and the residues of the Higgs field is nilpotent with respect to a multi-dimension $\{3n,2n,n,0\}$ flag for $p_1,p_2$ and a multi-dimension $\{3n,2n,n,1,0\}$ flag for $p_3$.}
\item{Type $\widetilde{E_7}(n)$. Consider the moduli space of degree $0$ rank $4n$ parabolic Higgs bundles over a weighted curve $(\P^1,p_1,p_2,p_3)$, whose Higgs field can have at worst a first order pole at marked points and the residues of the Higgs field are nilpotent with respect to a multi-dimension $\{4n,2n,0\}$ flag for $p_1$, a multi-dimension $\{4n,3n,2n,n,0\}$ flag for $p_2$ and a multi-dimension $\{4n,3n,2n,n,1,0\}$ flag for $p_3$.}
\item{Type $\widetilde{E_8}(n)$. Consider the moduli space of degree $0$ rank $6n$ parabolic Higgs bundles over a weighted curve $(\P^1,p_1,p_2,p_3)$, whose Higgs field can have at worst a first order pole at marked points and the residues of the Higgs field are nilpotent with respect to a multi-dimension $\{6n,3n,0\}$ flag for $p_1$, a multi-dimension $\{6n,4n,2n,0\}$ flag for $p_2$ and a multi-dimension $\{6n,5n,4n,3n,2n,n,1,0\}$ flag for $p_3$.}
\end{enumerate}

\noindent
The geometry of the above moduli spaces of parabolic Higgs bundles are described explicitly by the following theorem  in \cite{grochenig} due to Gr\"ochenig. 

\begin{theorem}[\cite{grochenig} Theorem 4.1]
We consider the moduli of parabolic Higgs bundles in $n=1$ case for any of the five families. Let $\Gamma:=\{0\},\Z/2\Z,\Z/3\Z,\Z/4\Z,\Z/6\Z$ respectively. Let $M_D$ denote the moduli of parabolic Higgs bundles. Then $M_D$ is isomorphic to $\Gamma$-equivariant Hilbert scheme on $T^*E$, which is the crepant resolution of the quotient $T^*E/\Gamma$.
\end{theorem}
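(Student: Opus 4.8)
The theorem is Gr\"ochenig's \cite{grochenig}; a natural strategy proceeds in three stages. First I would identify the parabolic Higgs bundles of the prescribed type with $\Gamma$-equivariant Higgs bundles on $E$; then apply the spectral (Fourier--Mukai) transform on $T^*E$ to present the moduli space as the $\Gamma$-equivariant Hilbert scheme $\mathrm{Hilb}^\Gamma(T^*E)$; finally invoke the two-dimensional McKay correspondence to identify $\mathrm{Hilb}^\Gamma(T^*E)$ with the crepant resolution of $T^*E/\Gamma$. The $\widetilde A_0$ case is degenerate: $\Gamma$ is trivial and rank-one Higgs bundles on $E$ with nilpotent residue are just $T^*\mathrm{Pic}^0(E)\cong T^*E$, so the content lies in the other four families.

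For the first stage, the key observation is that each of those four weighted curves is the coarse space of a quotient orbifold $[E/\Gamma]$: the underlying $\P^1$'s with their marked points are exactly the four Euclidean orbifold curves $(2,2,2,2)$, $(3,3,3)$, $(2,4,4)$, $(2,3,6)$ — precisely the orbifold $\P^1$'s that occur as global quotients of an elliptic curve, for $\Gamma=\Z/2\Z,\Z/3\Z,\Z/4\Z,\Z/6\Z$ respectively. Hence a parabolic bundle on $(\P^1,p_1,\dots,p_k)$ with the given flag data amounts to a sheaf on $[E/\Gamma]$, that is, a $\Gamma$-equivariant sheaf on $E$; the parabolic weights correspond to the local $\Gamma$-characters at the orbifold points, the rank on $E$ equals the parabolic rank (so $|\Gamma|$ when $n=1$), and the nilpotency of the residue with respect to the prescribed flag becomes a prescribed local $\Gamma$-representation type at each fixed point — generically the regular representation $\C[\Gamma]$. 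Because $\omega_E\cong\mathcal O_E$ the Higgs field is simply a $\Gamma$-equivariant endomorphism $\phi\colon V\to V$, and one must check that parabolic semistability matches $\Gamma$-equivariant semistability for these weights.

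For the second stage, write $T^*E\cong E\times\C$ with $\C=H^0(E,\omega_E)$. The spectral construction sends a degree-zero rank-$|\Gamma|$ Higgs bundle $(V,\phi)$ on $E$ to a length-$|\Gamma|$ torsion sheaf on $T^*E$, finite flat over $E$; on the diagonalizable locus it is $\bigoplus_i (L_i,c_i)$ with eigen-line-bundles $L_i\in\mathrm{Pic}^0(E)$ and constant eigenvalues $c_i$. A $\Gamma$-equivariant structure makes this a $\Gamma$-equivariant torsion sheaf, and the regular-representation condition forces it to be a $\Gamma$-cluster, i.e.\ a point of $\mathrm{Hilb}^\Gamma(T^*E)$. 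Carried out in families this yields a morphism $M_D\to\mathrm{Hilb}^\Gamma(T^*E)$ which is an isomorphism over the locus of free $\Gamma$-orbits; one then extends it across the $\Gamma$-fixed locus by matching the non-semisimple degenerations of equivariant Higgs bundles with the non-reduced (``fat point'') clusters supported at the fixed points.

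For the third stage, $\mathrm{Hilb}^\Gamma(T^*E)$ is smooth because \'etale-locally near a fixed point it is $\mathrm{Hilb}^\Gamma(\C^2)$, the minimal resolution of a du Val singularity; the support morphism $\mathrm{Hilb}^\Gamma(T^*E)\to T^*E/\Gamma$ is proper and an isomorphism over the free locus; and since $\Gamma$ acts symplectically the quotient is Gorenstein with trivial canonical class, so this minimal resolution is crepant. The hard part is the second stage: upgrading the spectral correspondence from a set-theoretic bijection to an isomorphism of moduli \emph{schemes}, which requires matching the stability/GIT data on the two sides and, above all, controlling the scheme structure along the $\Gamma$-fixed locus, where the Higgs bundles degenerate and the spectral clusters become non-reduced.
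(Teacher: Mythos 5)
You are proposing a proof for a statement the paper never proves: it is quoted as Theorem 4.1 of \cite{grochenig} and used as a black box, so there is no argument of the author's to compare yours against. Taken on its own terms, your outline does capture the broad strategy behind Gr\"ochenig's result (the parabolic--equivariant dictionary for the orbifold quotients $E/\Gamma$ of signatures $(2,2,2,2)$, $(3,3,3)$, $(2,4,4)$, $(2,3,6)$; a transform turning equivariant Higgs data into $\Gamma$-clusters on $T^*E$; smoothness and crepancy of $\mathrm{Hilb}^\Gamma$ via Ito--Nakamura), and you correctly locate where the real work lies.

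There is, however, a concrete inconsistency at the heart of your second stage. The object you describe --- ``a length-$|\Gamma|$ torsion sheaf on $T^*E$, finite flat over $E$'' --- cannot exist: a sheaf whose pushforward to $E$ is finite flat of degree $|\Gamma|$ is the BNR spectral sheaf, which has one-dimensional support (the spectral curve), while a $\Gamma$-cluster, i.e.\ a point of $\mathrm{Hilb}^\Gamma(T^*E)$, is a zero-dimensional subscheme of length $|\Gamma|$. No equivariant structure or regular-representation condition converts the former into the latter, so as written your stage two does not produce a map to the $\Gamma$-Hilbert scheme at all. The mechanism that trades rank for length on an elliptic curve is not the naive spectral construction but the Fourier--Mukai transform: a degree-zero semistable bundle of rank $r$ on $E$ transforms to a torsion sheaf of length $r$ on $\hat E\cong E$, and the Higgs field then upgrades this to a length-$r$ sheaf on $T^*\hat E$; your own phrase ``$\bigoplus_i(L_i,c_i)$ with $L_i\in\mathrm{Pic}^0(E)$'' is implicitly this transform (each eigen-pair becoming a point of $T^*\hat E$), and it is some such transform, carried out equivariantly and in families, that must replace the spectral sheaf in your argument. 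Beyond that, the step you flag as ``the hard part'' --- matching parabolic stability with the cluster condition and controlling the scheme structure over the $\Gamma$-fixed locus so that the pointwise bijection becomes an isomorphism of moduli schemes --- is precisely the content of Gr\"ochenig's proof and is not supplied by your sketch; ``matching the degenerations'' states the goal rather than proving it.
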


\begin{theorem}[\cite{grochenig} Theorem 5.1]\label{moduli}
Let $M_D(n)$ denote the moduli space of parabolic Higgs bundle in any of the five families, and $M_D(1)$ is abbreviated to $M_D$. Then we have
$$
M_D^{[n]}\cong M_D(n)
$$
The Hitchin map  $M_D^{[n]}\to\mathbb{A}^n$ factors through the Hilbert-Chow map
$$
M_D^{[n]}\to M_D^{(n)}\to (\mathbb{A}^1)^{(n)}=\mathbb{A}^n,
$$
where $M_D^{(n)}\to (\mathbb{A}^1)^{(n)}$ is induced by $M_D^n\to (\mathbb{A}^1)^n$.
\end{theorem}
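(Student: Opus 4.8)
The plan is to deduce the statement from the $n=1$ description recorded just above (\cite{grochenig} Theorem 4.1) together with a spectral/Fourier--Mukai correspondence for moduli of parabolic Higgs bundles. First I would unwind the $n=1$ input: $M_D=M_D(1)$ is the crepant resolution $\pi_0\colon\widetilde S\to S_0=T^*E/\Gamma$ --- with $\Gamma$ trivial, so $\widetilde S=T^*E$, in the $\widetilde{A_0}$ case --- and for nontrivial $\Gamma$ it is moreover the $\Gamma$-Hilbert scheme $\Gamma\text{-Hilb}(T^*E)$. The cotangent projection $T^*E\to\A^1$ is $\Gamma$-invariant, hence descends to $S_0\to\A^1$ and pulls back to a proper fibration $h_1\colon M_D=\widetilde S\to\A^1$ with generic fibre a quotient of the elliptic curve; this is the $n=1$ Hitchin map. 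In particular $M_D$ is a smooth quasi-projective surface with trivial canonical bundle, so $M_D^{[n]}$ is smooth and carries the ring structure of Section 4.1.

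The core of the argument is a spectral correspondence. Using the parabolic Beauville--Narasimhan--Ramanan correspondence together with the Fourier--Mukai transform on $E$, I would identify $M_D(n)$ with a moduli space $\mathcal M_n$ of $\Gamma$-equivariant coherent sheaves on $T^*E$ carrying the numerical invariants forced by the prescribed rank, degree and flag type: a semistable degree-$0$ parabolic Higgs bundle is sent to a length-$n$ torsion sheaf $F$ on $T^*E$ ($\Gamma$-equivariantly), the $\A^1$-direction encoding the eigenvalues of the Higgs field, while the parabolic flags at the marked point(s) provide a cyclic framing $\mathcal O\twoheadrightarrow F$, so that $\mathcal M_n$ is a Quot scheme rather than a symmetric product. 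Along the way one checks that under this dictionary the Hitchin map --- the characteristic polynomial of the Higgs field --- is read off from the support of $F$ along the $\A^1$-fibres, i.e. it records a point of $\Sym^n\A^1=\A^n$.

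Next I would apply the derived McKay equivalence $D^b_\Gamma(T^*E)\simeq D^b(\widetilde S)$, normalised so that on closed points it recovers the identification of the first paragraph, and track the numerical invariants through it: $\mathcal M_n$ then becomes the moduli of length-$n$ quotients of $\mathcal O_{\widetilde S}$, that is $\mathcal M_n\cong\widetilde S^{[n]}=M_D^{[n]}$. Under the equivalence the support map of the previous paragraph becomes the Hilbert--Chow morphism $M_D^{[n]}\to M_D^{(n)}=\widetilde S^{(n)}$ followed by $\Sym^n(h_1)\colon\widetilde S^{(n)}\to\Sym^n\A^1=\A^n$, which is precisely the factorisation of the Hitchin map asserted in the theorem. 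In the $\widetilde{A_0}$ case $\Gamma$ is trivial, the McKay step is vacuous, and $M_D(n)\cong(T^*E)^{[n]}$ drops straight out of the spectral side.

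I expect the main obstacle to be the genericity and stability bookkeeping. One must verify that for the parabolic weights fixed in Section 5.1 semistability coincides with stability and the moduli functor is fine; that the cyclic framing genuinely cuts the moduli space down to the Hilbert scheme and not to some birational flop of it; and that the correspondence extends over the loci where the spectral curve is non-reduced or reducible, i.e. where the Higgs field is not regular semisimple. This is where one uses the explicit geometry of $\widetilde S$ --- its chains of $(-2)$-curves over the singular points of $S_0$ --- and the good behaviour of Hilbert schemes of smooth surfaces; a quick consistency check is that away from these bad loci both sides are visibly the open locus in $\Sym^n$ of the respective surfaces, so at a minimum $M_D(n)$ and $M_D^{[n]}$ are birational of the expected dimension $2n$.
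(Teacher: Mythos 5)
The first thing to say is that the paper contains no proof of this statement: Theorem \ref{moduli} is imported verbatim from \cite{grochenig} (Theorem 5.1) and is used in Section 5 purely as a black box, so there is no internal argument to compare yours against. Read as a reconstruction of the cited source, your outline does follow the right route: the identification of parabolic Higgs bundles on the marked base with $\Gamma$-equivariant Higgs bundles on $E$, the spectral (BNR) description combined with a Fourier--Mukai transform along $E$ converting these into $\Gamma$-equivariant finite-length sheaves on $T^*E$, the derived McKay equivalence with the crepant resolution $\widetilde{S}=M_D$, and the observation that the Hitchin map becomes the support map, hence factors through Hilbert--Chow followed by $\Sym^n$ of the $n=1$ fibration. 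That is essentially how Gr\"ochenig argues.

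Judged as a standalone proof, however, what you have is an outline with the substantive steps deferred, and you defer exactly the hard part. The claims that the parabolic weights force semistability $=$ stability, that the flag datum transforms into a cyclic framing so that one lands on $\widetilde{S}^{[n]}$ rather than on the symmetric product (unframed moduli) or on a birational flop, that stability is actually preserved through the Fourier--Mukai and McKay equivalences, and that one gets an isomorphism of moduli spaces (not just a bijection on closed points away from the non-reduced spectral locus) are precisely the content of Gr\"ochenig's Sections 4--5; none of them is verified in your sketch, and your closing ``birational of the expected dimension'' check falls well short of the asserted isomorphism. Two smaller slips in the setup: the projection $T^*E\to\A^1$ is not $\Gamma$-invariant (the group acts on the fibre coordinate through a nontrivial character); what descends is its composition with $\A^1\to\A^1/\Gamma\cong\A^1$, and it is this descended map, pulled back to the resolution, that gives the $n=1$ Hitchin map, whose generic fibre is isomorphic to $E$ itself rather than a proper quotient of it. For the purposes of this paper the correct move is simply to invoke \cite{grochenig}, which is what the author does.
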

\noindent

\noindent
In parabolic non-abelian Hodge theory, the moduli of parabolic Higgs bundle is canonically diffeomorphic to the corresponding character variety. The $P=W$ Conjecture \ref{1.1} asserts that under this canonical diffeomorphism, the weight filtration in mixed Hodge structure on the cohomology of character variety corresponds to the perverse filtration on the cohomology of the Higgs moduli space with respect to the Hitchin map. By the Simpson's table on page 720 in \cite{harmonic}, we may find the charcter varieties corresponding to our five families of moduli of parabolic Higgs bundles. The Conjecture 1.2.1 in \cite{hodge number} predicts that all cohomology class are of Hodge-Tate type and the mixed Hodge numbers depend on the multiplicities of eigenvalues of the monodromy action around the punctures rather than the eigenvalues themself. So for our purpose, we list the corresponding moduli description of character varieties for our five families of Hitchin systems without mentioning the specific eigenvalues for the monodromy action.
\begin{enumerate}
\item{Let $E$ be any elliptic curve. Consider $GL(n,\C)$-representations of $\pi_1(E\setminus p)$ such that the image of small loops around punctures are in a prescribed conjugacy class whose multiplicities of eigenvalue are of type $$(n-1,1).$$}
\item{Let $C=\P^1\setminus\{p_1,\cdots,p_4\}$. Consider $GL(2n,\C)$-representations of $\pi_1(C)$ such that the image of small loops around punctures are in a prescribed conjugacy classes whose multiplicities of eigenvalue are of type $$(n,n)(n,n)(n,n)(n,n-1,1).$$
}
\item{Let $C=\P^1\setminus\{p_1,p_2,p_3\}$. Consider $GL(3n,\C)$-representations of $\pi_1(C)$ such that the image of small loops around punctures are in a prescribed conjugacy classes whose multiplicities of eigenvalue are of type $$(n,n,n)(n,n,n)(n,n,n-1,1).$$
}
\item{Let $C=\P^1\setminus\{p_1,p_2,p_3\}$. Consider $GL(4n,\C)$-representations of $\pi_1(C)$ such that the image of small loops around punctures are in a prescribed conjugacy classes whose multiplicities of eigenvalue are of type $$(2n,2n)(n,n,n,n)(n,n,n,n-1,1).$$
}
\item{Let $C=\P^1\setminus\{p_1,p_2,p_3\}$. Consider $GL(6n,\C)$-representations of $\pi_1(C)$ such that the image of small loops around punctures are in a prescribed conjugacy classes whose multiplicities of eigenvalue are of type $$(3n,3n)(2n,2n,2n)(n,n,n,n,n,n-1,1).$$
}
\end{enumerate}

\noindent
We have the following explicit description for these character varieties for $n=1$ cases.

\begin{theorem}[\cite{character} Theorem 6.14 and 6.19]\label{10.1}
The character varieties $M_B(1)$ above can be described explicitly as follows.
\begin{enumerate}
\item{Type $\widetilde{A_0}$. $\C^*\times\C^*$.}
\item{Type $\widetilde{D_4}$. Degree $3$ del Pezzo surface with a triangle removed.}
\item{Type $\widetilde{E_6}$. Degree $3$ del Pezzo surface with a nodal $\P^1$ removed.}
\item{Type $\widetilde{E_7}$. Degree $2$ del Pezzo surface with a nodal $\P^1$ removed.}
\item{Type $\widetilde{E_8}$. Degree $1$ del Pezzo surface with a nodal $\P^1$ removed.}
\end{enumerate}
Furthermore, these del Pezzo surfaces can be expressed by an explicit formula in weighted projective space away from the singularities, and the removed triangle or nodal $\P^1$ are cut out by a hyperplane section.
\end{theorem}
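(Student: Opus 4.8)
The statement is the content of \cite{character}, and I indicate the route by which one establishes it. The uniform idea is to present each $M_B(1)$ as an explicit affine GIT quotient of a space of representations with prescribed semisimple conjugacy classes at the punctures, and then to read off its closure and its boundary. For type $\widetilde{A_0}$ this is immediate: $\pi_1(E\setminus\{p\})$ is free on two generators $a,b$ with the loop around $p$ equal to the commutator $[a,b]$, and in rank one every $GL(1,\C)$-representation is abelian, so $[a,b]=1$ automatically, the prescribed rank-one conjugacy class at $p$ is forced to be $\{1\}$, and $M_B(1)$ is just the space of pairs $(a,b)\in\C^*\times\C^*$. For the other four types $\pi_1$ is that of a punctured $\P^1$, generated by loops $\gamma_1,\dots,\gamma_m$ with $\gamma_1\cdots\gamma_m=1$, so $M_B(1)=\{(A_i)\mid A_i\in\mathcal{C}_i,\ \textstyle\prod_i A_i=1\}/\!\!/GL(r,\C)$ with $r=2,3,4,6$; these are the two-dimensional multiplicative quiver varieties attached to the comet-shaped quivers of affine types $\widetilde{D_4},\widetilde{E_6},\widetilde{E_7},\widetilde{E_8}$.

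For $\widetilde{D_4}$ (four punctures, $r=2$) this is the Painlev\'e~VI character variety, and I would eliminate the $GL(2,\C)$-action by passing to the invariant ring: it is generated by the pairwise traces $x_i=\operatorname{tr}(\gamma_j\gamma_k)$ together with the fixed boundary traces, and the relations reduce to a single inhomogeneous cubic (the Fricke--Jimbo relation), so $M_B(1)$ is an affine cubic surface in $\C^3$, whose closure in $\P^3$ is a cubic surface, i.e.\ a del Pezzo surface of degree $3$ (smooth for the generic eigenvalue data dictated by our flags), and the plane at infinity meets it in a triangle of three lines. For $\widetilde{E_6},\widetilde{E_7},\widetilde{E_8}$ (three punctures, $r=3,4,6$) I would run the analogous reduction---or, more economically, obtain these families from the $\widetilde{D_4}$ one by the standard confluence (``fusion'') of punctures together with the reflection functors of the affine root system---to present each $M_B(1)$ as an explicit hypersurface; the (weighted) projective models that emerge are a cubic in $\P^3$, a double cover $\{w^2=f_4(x,y,z)\}\subset\P(1,1,1,2)$, and a sextic double cover $\{w^2=z^3+z\,a_4(x,y)+b_6(x,y)\}\subset\P(1,1,2,3)$, i.e.\ del Pezzo surfaces of degrees $3,2,1$, with the boundary in each of the last three cases an irreducible rational curve carrying a single node and cut out by a linear form in the weight-one coordinates.

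To make the identification canonical rather than model-dependent, I would check the del Pezzo degree intrinsically: $M_B(1)$ is a smooth affine symplectic surface admitting a compactification with anticanonical (log Calabi--Yau) boundary, and $K^2$ of that del Pezzo is pinned down by an Euler-characteristic/weight count on $H^*(M_B(1))$---which one can cross-check against the perverse numbers of $S^{[1]}$ computed elsewhere in this paper. Finally one verifies the boundary: for $\widetilde{D_4}$ the hyperplane at infinity is reducible into three $(-1)$-lines forming an anticanonical cycle (the triangle), while for $\widetilde{E_6},\widetilde{E_7},\widetilde{E_8}$ it is an irreducible nodal rational anticanonical curve cut out by a hyperplane (resp.\ weight-one) section. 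The main obstacle is the explicit invariant theory in the $GL(4,\C)$ and $GL(6,\C)$ cases: there the trace algebra is no longer generated by a short list of pairwise traces and the single defining equation is not visible directly, so one genuinely needs either the confluence-from-$\widetilde{D_4}$ route or a careful use of the crystallographic structure of the associated affine root system to cut the presentation down to a hypersurface of the claimed weighted degree.
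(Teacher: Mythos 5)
The paper does not prove this statement at all: it is imported verbatim from \cite{character} (Etingof--Oblomkov--Rains, Theorems 6.14 and 6.19), so the only ``proof'' in the paper is the citation, and your attempt should be judged as a reconstruction of that external result. Your treatment of the two easy cases is fine: the $\widetilde{A_0}$ argument (abelianness of $GL(1)$ forces the puncture class to be trivial, giving $\C^*\times\C^*$) is complete, and the $\widetilde{D_4}$ reduction to the Fricke--Jimbo affine cubic, whose homogenization meets the plane at infinity in the triangle $xyz=0$, is the standard route and matches what the cited theorem asserts.

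The genuine gap is in the $\widetilde{E_6}$, $\widetilde{E_7}$, $\widetilde{E_8}$ cases, which is exactly where the content of \cite{character} lies. The ``more economical'' shortcut you propose does not exist in this setting: confluence/fusion of punctures produces irregular singular points and hence wild character varieties, not the tame $3$-punctured, rank $3,4,6$ moduli at hand, and reflection functors for (multiplicative) quiver varieties act on dimension vectors of a fixed star-shaped quiver, so they cannot transform the $4$-legged $\widetilde{D_4}$ quiver into the $3$-legged $\widetilde{E}$-type quivers. That leaves only your primary route, the explicit invariant theory for $GL(3),GL(4),GL(6)$ trace algebras, which you acknowledge but do not carry out; producing the hypersurface presentations in $\P^3$, $\P(1,1,1,2)$ and $\P(1,1,2,3)$, verifying smoothness of the projective model for the given eigenvalue data, and checking that the boundary section is an irreducible nodal rational curve is precisely what Etingof--Oblomkov--Rains do (via explicit presentations coming from generalized DAHAs attached to these affine root systems), and none of it follows from what you wrote. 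The Euler-characteristic/weight cross-check you suggest can at best confirm the degree of a del Pezzo already known to compactify $M_B(1)$ anticanonically; it neither produces the compactification nor identifies the boundary curve, and using the perverse numbers of $S^{[1]}$ for this would be circular in the context of $P=W$. Since the theorem is used in the paper as a black box, citing \cite{character}, as the author does, is the correct course; your sketch as it stands would not substitute for that reference.
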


\noindent
Contrary to the moduli of parabolic Higgs bundle case, we don't know much about character varieties for $n>1$. Nevertheless, there are conjectures in \cite{hodge number} which predict the behavior of the mixed Hodge numbers of character varieties. We will go back to this point in section $5.4$.

\subsection{Multiplicativity of perverse filtrations for Hitchin systems}
We will use the technique we developed in previous chapters to prove the multiplicativity of the five families of Hitchin systems.
\begin{proposition}\label{decomposition 5 cases}
Let $h:M_D\to\C$ be $n=1$ cases for the five families. Then $M_D$ is smooth and has trivial canonical bundle. The dual graph of irreducible components of the fiber over $0$ is affine Dynkin diagram $\widetilde{A_0}$, $\widetilde{D_4}$, $\widetilde{E_6}$, $\widetilde{E_7}$, $\widetilde{E_8}$, respectively. Let $\hat{h}: h^{-1}\C^*\to\C^*$ be the smooth part of the map, let $j:\C^*\to\C$, let $\hat{R}^1=R^1\hat{h}_*\Q$. Then a perverse decomposition of $h:M_D\to \C$ can be written as follows.
$$
Rh_*\Q_{M_D}[1]\cong \{\Q_\C[1]\}\bigoplus\left\{j_*\hat{R}^1\oplus\Q_0^k\right\}[-1]\bigoplus \{\Q_\C[1]\}[-2]
$$
where 
$$k=
\begin{cases}
0& \widetilde{A_0}\text{ case}\\
4& \widetilde{D_4}\text{ case}\\
6& \widetilde{E_6}\text{ case}\\
7& \widetilde{E_7}\text{ case}\\
8& \widetilde{E_8}\text{ case}.\\
\end{cases}
$$
In particular, the dimension of the perverse filtration is given by 
$$
\dim \Gr_pH^d(M_D,\Q)=
\begin{cases}
1&p=d=0\\
1&p=d=2\\
2&p=d=1,\widetilde{A_0}\text{ case}\\
k&p=1,d=2,\widetilde{D_4},\widetilde{E_6},\widetilde{E_7},\widetilde{E_8}\text{ case}\\
0&\text{otherwise.}
\end{cases}
$$
\end{proposition}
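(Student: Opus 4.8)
The plan is to reduce everything to the explicit geometry of $M_D$ supplied by Gr\"ochenig. By \cite{grochenig} (Theorem~4.1 and Theorem~\ref{moduli}), for $n=1$ the space $M_D$ is the $\Gamma$-equivariant Hilbert scheme of $T^*E$, i.e. the crepant resolution of $T^*E/\Gamma$, where $\Gamma=\{0\},\Z/2\Z,\Z/3\Z,\Z/4\Z,\Z/6\Z$. Since $T^*E\cong E\times\A^1$ has trivial canonical bundle and $\Gamma$ acts by symplectic automorphisms (hence in $SL_2$ on each local model), $T^*E/\Gamma$ is Gorenstein with trivial dualizing sheaf; a crepant resolution of a surface with quotient singularities is smooth and inherits the trivial canonical bundle. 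This proves the first two sentences of the statement.

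Next I would determine the fibre of $h$ over $0$. The Hitchin map is the composite $M_D\to T^*E/\Gamma\to\A^1/\Gamma=\A^1$, the last arrow induced by the projection $E\times\A^1\to\A^1$; away from $0$ the group $\Gamma$ acts freely on the $\A^1$-factor, so $h$ is a smooth elliptic fibration with fibre $E$ there, while the scheme-theoretic fibre over $0$ is the multiple fibre $\ell\cdot(E/\Gamma)=\ell\cdot\P^1$ pulled back to the resolution. The singularities of $T^*E/\Gamma$ lie over the $\Gamma$-fixed points of $E$ (crossed with $0$), and at a fixed point with stabiliser $\Z/m\Z$ the local model is the $A_{m-1}$ surface singularity, since $\Gamma$ acts on the base and cotangent directions with opposite weights. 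Cataloguing the stabiliser types of the $\Z/\ell\Z$-action on $E$ gives, for $\ell=1,2,3,4,6$ respectively: no singular fibre; four $A_1$'s; three $A_2$'s; two $A_3$'s and one $A_1$; one $A_5$, one $A_2$ and one $A_1$. The minimal resolution replaces each $A_{m-1}$ by a chain of $m-1$ $(-2)$-curves, and together with the strict transform of $\ell\cdot\P^1$, the unique component of multiplicity $>1$, these assemble into the star-shaped configurations $\widetilde{A_0},\widetilde{D_4},\widetilde{E_6},\widetilde{E_7},\widetilde{E_8}$ (equivalently the Kodaira fibres $I_0,I_0^*,IV^*,III^*,II^*$, the additive fibres of finite monodromy order $\ell$). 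In particular the fibre over $0$ has $n_0=1,5,7,8,9$ irreducible components, so $n_0-1=k$.

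For the perverse decomposition I would invoke the structure theorem for a proper surjective map from a smooth surface to a smooth curve used in the proof of Proposition~\ref{multiplicative} (\cite{surface}): $Rh_*\Q_{M_D}[1]\cong\{j_*\hat R^0[1]\}\oplus\{j_*\hat R^1[1]\oplus\bigoplus_{p}\Q_p^{n_p-1}\}[-1]\oplus\{j_*\hat R^2[1]\}[-2]$. Here $0$ is the only point over which $h$ is not smooth, $n_0-1=k$, and the smooth fibres being connected genus-one curves forces $\hat R^0\cong\hat R^2\cong\Q_{\C^*}$ with $j_*\Q_{\C^*}=\Q_\C$; this yields the displayed decomposition, whose perverse-sheaf summands are $\mathcal{P}_0=\mathcal{P}_2=\Q_\C[1]$ and $\mathcal{P}_1=j_*\hat R^1[1]\oplus\Q_0^k$. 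For the perverse numbers, since $r(h)=1$ and $\dim M_D=2$ we have $\Gr_p^PH^d(M_D)=\mathbb{H}^{d-1}(\C,\mathcal{P}_p[-p])$; as $\C=\A^1$ is contractible, $\mathbb{H}^*(\C,\Q_\C[1])$ is $\Q$ concentrated in degree $-1$, giving $\Gr_0^PH^0=\Gr_2^PH^2=\Q$ and nothing else from $\mathcal{P}_0,\mathcal{P}_2$. From $\mathcal{P}_1$ the skyscraper $\Q_0^k$ contributes $\Q^k$ to $\Gr_1^PH^2$, while $\mathbb{H}^*(\C,j_*\hat R^1)$ (read off from the standard triangle $j_!\hat R^1\to j_*\hat R^1\to (R^1j_*\hat R^1)[-1]$) reduces to the monodromy invariants of $\hat R^1$ on $H^1(E;\Q)\cong\Q^2$, placed in degree $0$; this local monodromy is rotation by a primitive $\ell$-th root of unity, so the invariant subspace is $\Q^2$ for $\ell=1$ (type $\widetilde{A_0}$, constant family) and $0$ for $\ell\ge2$. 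Hence $\Gr_1^PH^1=\Q^2$ in the $\widetilde{A_0}$ case and $\Gr_1^PH^2=\Q^k$ in the other four, with all other graded pieces vanishing, which is the asserted table.

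The only step carrying genuine content is the second one: extracting the affine Dynkin types and the integers $k$ from the explicit resolution of $T^*E/\Gamma$, or equivalently from the classification of Kodaira fibres of prescribed finite monodromy order. Once that local computation is in hand, the triviality of $K_{M_D}$, the perverse decomposition, and the bookkeeping of the perverse numbers are all formal.
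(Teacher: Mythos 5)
Your argument is correct and follows essentially the same route as the paper's proof: the explicit $A_{m-1}$ singularities of $T^*E/\Gamma$ together with the crepant resolution give smoothness and triviality of the canonical bundle and the affine Dynkin configuration over $0$, the decomposition for a proper surface-over-curve map from \cite{surface} gives the displayed splitting with $k=n_0-1$, and the graded dimensions follow because the finite-order monodromy of $\hat R^1$ on $H^1(E;\Q)$ has no invariants for $\ell\ge 2$, so $\mathbb{H}^*(\C,j_*\hat R^1)=0$ (the paper lists the monodromy matrices and deduces the same vanishing). One minor slip: the parenthetical triangle $j_!\hat R^1\to j_*\hat R^1\to (R^1j_*\hat R^1)[-1]$ is not distinguished as written (use instead $j_*\hat R^1\to Rj_*\hat R^1\to (R^1j_*\hat R^1)[-1]$ or the sequence $0\to j_!\hat R^1\to j_*\hat R^1\to i_*(\hat R^1)^{T}\to 0$), but the conclusion you extract from it, namely that $\mathbb{H}^*(\C,j_*\hat R^1)$ is the space of monodromy invariants placed in degree $0$, is correct.
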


\begin{proof}
The quotient of $T^*E$ by $\Gamma$ is computed using elementary methods. We list the type of singularities in our five cases.\\
\begin{center}
\begin{tabular}{|c|c|}
\hline Case & Singularities\\
\hline $\widetilde{A_0}$& none\\
\hline $\widetilde{D_4}$& $4$ $A_1$\\
\hline $\widetilde{E_6}$& $3$ $A_2$\\
\hline $\widetilde{E_7}$& $1$ $A_1$, $2$ $A_3$\\
\hline $\widetilde{E_8}$& $1$ $A_1$, 1 $A_2$, 1 $A_5$\\
\hline
\end{tabular}
\end{center}
Note that all singularities take place in the fiber over $0$, so the dual graph of irreducible components match the affine Dynkin diagram. The action of $\Gamma$ on $T^*E$ preserves the canonical form, so the trivial canonical bundle descends to the quotient. The minimal resolution of type $A$ singularities is crepant, so $M_D$ has trivial canonical bundle. The perverse decomposition is again due to Theorem 3.2.2 of \cite{surface}. Here the map $h:M_D\to\C$ has connected fibers, so $j_*\hat{R}^0=j_*\hat{R}^2=\Q_\C$. The dimension of the perverse filtration will follow if we show $\mathbb{H}^*(j_*\hat{R}^1)=0$ in $\widetilde{D_4},\widetilde{E_6},\widetilde{E_7},\widetilde{E_8}$ cases. In fact,  the local systems $\hat{R}^1$ can be described explicitly. They are rank $2$ representation of $\Z=\pi_1(C^*)$ with monodromy 
$
\left(
\begin{array}{cc}
-1&0\\
0&-1
\end{array}
\right)
$
,
$
\left(
\begin{array}{cc}
0&-1\\
1&-1
\end{array}
\right)
$
,
$
\left(
\begin{array}{cc}
0&-1\\
1&0
\end{array}
\right)
$
,
$
\left(
\begin{array}{cc}
0&-1\\
1&1
\end{array}
\right)
$.
A simple \v{C}ech cohomology argument shows that all cohomology group of $\hat{R}^1$ vanishes, and a spectral sequence argument shows that $j_*\hat{R}^1$ also vanishes.
\end{proof}

\noindent
Although $M_D$ is non-compact, the small diagonal embedding $\Delta_{n,*}:M_D\to M_D^n$ is still proper. So we have the push-forward in Borel-Moore homology. We may still define
$$
\Delta_{*,n}:H^*(M_D)\cong H_{4-*}^{BM}(M_D)\to H_{4-*}^{BM}(M_D^n)\cong H^{*+4(n-1)}(M_D^n)
$$
The following proposition is a counterpart of Proposition \ref{diagonal}. 

\begin{proposition}\label{diagonal open}
Let $f:M_D\to\C$ be as in Proposition \ref{decomposition 5 cases}. In $\widetilde{A_0}$ case, the Gysin push-forward by the small diagonal embedding 
$\Delta_{n,*}(\gamma)=0$ for any $\gamma\in H^*(M_D)$ and $n>1$.
In the other four cases, let $E_i$ be exceptional divisors of the resolution, then 
$$
\Delta_{2,*}(1)=-\sum_{i=1}^{k}{[E_i]\otimes [E_i]}
$$
and $\Delta_{n,*}(\gamma)=0$ for any $n>2$ or $n=2$, $\gamma\neq1$.
In particular, the perversity estimation of diagonal in Proposition \ref{diagonal}
$$\mathfrak{p}(\Delta_{n,*}(\gamma))\le\mathfrak{p}(\gamma)+2(n-1)$$ is still true. 
\end{proposition}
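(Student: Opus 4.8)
The strategy is to compute $\Delta_{n,*}$ explicitly, exploiting that $H^*(M_D;\Q)$ is very small by Proposition~\ref{decomposition 5 cases} --- it equals $H^*(E)$ in the $\widetilde{A_0}$ case and is concentrated in degrees $0$ and $2$ in the other four --- and then to read off the perversity estimate from the fact, also recorded there, that the classes $[E_i]$ sit in perversity $1$.

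I would begin with two formal reductions. Since $\Delta_2\colon M_D\to M_D^2$ is a closed embedding, hence proper, the projection formula gives $\Delta_{2,*}(\gamma)=[\Delta]\cup(\gamma\otimes 1)$ for every $\gamma\in H^*(M_D)$, where $[\Delta]:=\Delta_{2,*}(1)\in H^4(M_D^2)$ is the diagonal class; this is the non-compact analogue of Lemma~\ref{dual}, now using Poincar\'e--Lefschetz duality between $H^*$ and $H^*_c$. Secondly, $\Delta_n$ factors as the composite of closed embeddings $M_D\xrightarrow{\Delta_{n-1}}M_D^{n-1}\xrightarrow{\Delta_2\times\mathrm{id}^{n-2}}M_D^n$, so functoriality of proper push-forward in Borel--Moore homology yields $\Delta_{n,*}=(\Delta_2\times\mathrm{id}^{n-2})_*\circ\Delta_{n-1,*}$. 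Everything is thereby reduced to understanding $[\Delta]$.

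In the $\widetilde{A_0}$ case $M_D=T^*E$, which is isomorphic to $E\times\A^1$ since $\Omega^1_E$ is trivial; so after reordering factors $\Delta_{M_D}=\Delta_E\times\Delta_{\A^1}$ inside $(E\times E)\times(\A^1\times\A^1)$, and by the K\"unneth formula $[\Delta]=[\Delta_E]\otimes[\Delta_{\A^1}]$ with $[\Delta_{\A^1}]\in H^2(\A^1\times\A^1)=0$, so $[\Delta]=0$. Hence $\Delta_{2,*}$ vanishes identically and the second reduction kills every $\Delta_{n,*}$ with $n>1$. In the four remaining cases, Proposition~\ref{decomposition 5 cases} gives $H^{\ge 3}(M_D)=0=H^4(M_D)$, so the K\"unneth formula gives $H^4(M_D^2)=H^2(M_D)\otimes H^2(M_D)$ and $H^6(M_D^2)=0$. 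I would then identify $[\Delta]$ inside $H^2\otimes H^2$ by pairing against compactly supported classes, using $\int_{M_D^2}[\Delta]\cup(\mu\otimes\nu)=\int_{M_D}\mu\cup\nu$ for $\mu,\nu\in H^2_c(M_D)$, taking for $\mu,\nu$ the compact exceptional curves $[E_i]$ together with a smooth Hitchin fibre $[F]$, and feeding in the intersection numbers $E_i\cdot E_j$ (a negative-definite matrix of Dynkin type $A$), $E_i\cdot F=0$ and $F\cdot F=0$. The last two vanishings force the perversity-$2$ generator of $H^2(M_D)$ out of $[\Delta]$, so $[\Delta]$ is a $\Q$-bilinear combination of the classes $[E_i]\otimes[E_j]$, and a short computation with the intersection matrix gives the stated $\Delta_{2,*}(1)=-\sum_i[E_i]\otimes[E_i]$. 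Since $H^6(M_D^2)=0$, the projection formula then gives $\Delta_{2,*}(\gamma)=0$ whenever $\deg\gamma>0$, so only a scalar $\gamma$ produces a nonzero class; and since $(\Delta_2\times\mathrm{id})_*\bigl(\sum_i[E_i]\otimes[E_i]\bigr)=\sum_i\Delta_{2,*}([E_i])\otimes[E_i]=0$ we get $\Delta_{3,*}(1)=0$, hence inductively $\Delta_{n,*}=0$ for all $n>2$.

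The perversity estimate is then immediate. Each $[E_i]$ has perversity $\le 1$ by Proposition~\ref{decomposition 5 cases}, so each $[E_i]\otimes[E_i]$ has perversity $\le 2$ by the additivity of perversity under external tensor products (Proposition~\ref{product}); hence $\mathfrak{p}(\Delta_{2,*}(1))\le 2=\mathfrak{p}(1)+2(2-1)$, and every other class $\Delta_{n,*}(\gamma)$ that could a priori break the inequality is zero, so $\mathfrak{p}(\Delta_{n,*}(\gamma))\le\mathfrak{p}(\gamma)+2(n-1)$ holds in all five families. I expect the main obstacle to be the explicit computation of $[\Delta]$ in the four singular cases: setting up Poincar\'e--Lefschetz duality carefully on the non-compact surface $M_D$ --- keeping ordinary cohomology, compactly supported cohomology and Borel--Moore homology straight --- and using the geometry of Proposition~\ref{decomposition 5 cases} (a generic Hitchin fibre is disjoint from the exceptional curves and has self-intersection zero) to confirm that the perversity-$2$ class genuinely does not enter.
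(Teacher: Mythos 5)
Your proposal follows essentially the same route as the paper: kill everything except $\Delta_{2,*}(1)$ by degree reasons (the paper does this by noting $\Delta_{n,*}$ raises degree by $4(n-1)$ while $H^{>2n}(M_D^n)=0$; your projection-formula/factorization reduction is an equivalent packaging), and then pin down $\Delta_{2,*}(1)\in H^2(M_D)\otimes H^2(M_D)$ by pairing against the compact cycles $E_i$ and a general fibre $F$, using $E_i\cdot F=F\cdot F=0$ to exclude the perversity-$2$ generator. Your treatment of the $\widetilde{A_0}$ case via $T^*E\cong E\times\A^1$ and $[\Delta_{\A^1}]=0$ is a harmless variant of the paper's pairing with $E\otimes E$ and $E\cdot E=0$. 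One caveat: the ``short computation with the intersection matrix'' does not literally yield $-\sum_i[E_i]\otimes[E_i]$ if $[E_i]$ denotes the class of the exceptional $(-2)$-curve. Since the pairings are $\langle\Delta_{2,*}(1),E_i\otimes E_j\rangle=E_i\cdot E_j=-C_{ij}$ with $C$ the (type $A$, block) Cartan matrix, the expansion in the curve classes is $\Delta_{2,*}(1)=-\sum_{a,b}(C^{-1})_{ab}[E_a]\otimes[E_b]$ (e.g.\ $-\tfrac12\sum_i[E_i]\otimes[E_i]$ in the $\widetilde{D_4}$ case, with cross terms when $A_m$, $m\ge2$, configurations occur), which is inconsistent with the intersection data you yourself record. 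The paper's own proof contains the same slip (it asserts $\langle E_i,E_j\rangle=-\delta_{ij}$), and the discrepancy is immaterial for the proposition's real content: all that is used downstream is that $\Delta_{2,*}(1)$ lies in the span of the $[E_a]\otimes[E_b]$, hence has perversity at most $2$, so your derivation of $\mathfrak{p}(\Delta_{n,*}(\gamma))\le\mathfrak{p}(\gamma)+2(n-1)$ stands.
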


\begin{proof}
Note that $\Delta_{n,*}$ increases the degree by $4(n-1)$, however in our cases, the top nontrivial degree for $H^*(M_D^n$) is $2n$. So when $n\ge3$, the push-forward is automatically $0$. When $n=2$, the only possible nonzero term is $\Delta_{2,*}(1)$. In the $\widetilde{A_0}$ case, $H^4(M_D\times M_D)$ is one dimensional, generated by the class $[\C]\otimes[\C]$ and $H_4(M_D\times M_D)$ is generated by $E\otimes E$. $\langle\Delta_{2,*}(1),E\otimes E\rangle_{M_D\times M_D}=\langle E,E\rangle_{M_D}=0$, so we have $\Delta_{2,*}(1)=0$. For other four cases, according to the decomposition, we pick a basis $[E_1],\cdots,[E_k],\Sigma\in H^2(M_D)$, where $\Sigma$ is a generic section of map $f:M_D\to\C$ whose perversity is $2$. To write $\Delta_{2,*}(1)$ in terms of the basis, it suffices to intersect it with the dual basis. The dual basis in $H_2(M_D)$ is $\{E_1,\cdots,E_k,F\}$, where $F$ denote the cycle class of general fiber. Since 
$$\langle\Delta_{2,*}(1),E_i\otimes E_j\rangle_{M_D\times M_D}=\langle E_i,E_j\rangle_{M_D}=-\delta_{ij},$$
$$\langle\Delta_{2,*}(1),E_i\otimes F\rangle_{M_D\times M_D}=\langle E_i,F\rangle_{M_D}=0,$$
$$\langle\Delta_{2,*}(1),F\otimes F\rangle_{M_D\times M_D}=\langle F,F\rangle_{M_D}=0.$$
We conclude that
$$
\Delta_{2,*}(1)=-\sum_{i=1}^{k}{[E_i]\otimes [E_i]}
$$ 
\end{proof}

\begin{theorem}\label{multiplicative open}
Let $f:M_D\to\C$ be as in Proposition \ref{decomposition 5 cases}. Then the perverse filtration on $H^*(M_D^{[n]})$ defined by the map $h:M_D^{[n]}\to \C^{n}$ is multiplicative .
\end{theorem}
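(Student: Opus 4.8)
The plan is to repeat the proof of \theoremref{multiplicative^n} with $S$ replaced by $M_D$. That proof used projectivity of $S$ only through three geometric inputs: the Lehn description of the cohomology ring of $S^{[n]}$ (\theoremref{lehn}), the multiplicativity of the perverse filtration on $H^*(S)$ itself (\propositionref{multiplicative}), and the perversity estimate for the small diagonals $\Delta_n\colon S\to S^n$ (\propositionref{diagonal}, obtained there from the signed orthonormal basis of \propositionref{basis}). In the present setting the first is supplied by \propositionref{4.10}, the second by \propositionref{multiplicative} (already stated for a proper surjective map from a smooth quasi-projective surface to a smooth quasi-projective curve, hence applicable to the proper Hitchin map $h\colon M_D\to\C$), and the third by \propositionref{diagonal open}, which yields $\mathfrak{p}(\Delta_{n,*}\gamma)\le\mathfrak{p}(\gamma)+2(n-1)$ directly from the explicit geometry of \propositionref{decomposition 5 cases}, bypassing \propositionref{basis} altogether.

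First I would reduce the statement to a statement about the wreath-product ring. Since $h\colon M_D^{[n]}\to\C^n$ factors as $M_D^{[n]}\to M_D^{(n)}\to\C^n$ by \theoremref{moduli}, and since \propositionref{4.4}, \corollaryref{perversity} and \propositionref{4.15} are all stated for an arbitrary proper map from a smooth quasi-projective surface to a smooth quasi-projective curve, the perverse filtration on $H^*(M_D^{[n]})$ for $h$ is the restriction to $H^*(M_D^{[n]})$ of the abstract perverse filtration on the wreath product $H^*(M_D)\{\mathfrak{S}_n\}$. Thus it suffices to prove that the abstract perverse filtration is multiplicative. To interpret this wreath product as $H^*(M_D^{[n]})$ in the non-compact case one invokes \propositionref{4.10}, whose hypothesis — a smooth projective compactification $\overline{M_D}\supset M_D$ with $H^*(\overline{M_D})\twoheadrightarrow H^*(M_D)$ surjective — I would verify case by case: by \propositionref{decomposition 5 cases} the group $H^*(M_D)$ is concentrated in degrees $\le 2$ and spanned by algebraic classes (the fibre class, a section $\Sigma$ of $h$, and the exceptional curves $E_i$), all of which extend to the $\Gamma$-quotient of a fibrewise $\P^1$-compactification of $T^*E$, suitably resolved.

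Next I would run the combinatorial core. By \propositionref{4.15} it is enough to check $\mathfrak{p}_{abs}(x\cdot y)\le\mathfrak{p}_{abs}(x)+\mathfrak{p}_{abs}(y)$ for $x,y$ in the basis $\mathfrak{B}\{\mathfrak{S}_n\}$, where $\mathfrak{B}$ is a basis of $H^*(M_D)$ filtered for the perverse filtration of $h$ (an explicit one is read off \propositionref{decomposition 5 cases}). Lehn's product formula computes the cup product one $\langle\sigma,\tau\rangle$-orbit $E\subset[n]$ at a time, through a pull-back $f^{\langle\sigma\rangle,\langle\sigma,\tau\rangle}$, a push-forward $f_{\langle\sigma,\tau\rangle,\langle\sigma\tau\rangle}$, and a factor $e^{g(\sigma,\tau)}$ with $e$ the Euler class; since $e$ has top degree, $e^{g(\sigma,\tau)}=0$ once $g(\sigma,\tau)(E)\ge 2$, leaving only $g=0$ and $g=1$. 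For $g=0$ one combines \lemmaref{pull-back} (whose hypothesis, multiplicativity on $H^*(M_D)$, is \propositionref{multiplicative}) with \lemmaref{push-forward} (whose hypothesis, the diagonal estimate, is \propositionref{diagonal open}) to obtain $\mathfrak{p}_{abs}=\mathfrak{p}_{abs}(a\sigma)+\mathfrak{p}_{abs}(a'\tau)-2g(\sigma,\tau)$, exactly as in the proof of \theoremref{multiplicative^n}; for $g=1$ one has $a=a'=1$ and the verification is the same bookkeeping using $|E|=|\langle\sigma\rangle\backslash E|+|\langle\tau\rangle\backslash E|+|\langle\sigma\tau\rangle\backslash E|$. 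Summing over orbits $E$ yields the inequality for $x\cdot y$, and hence the theorem.

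I expect the only point that is not purely formal to be Step 1: confirming that \propositionref{4.10} genuinely applies (existence of the compactification) and that the Borel–Moore/Gysin formalism underlying Lehn's formula is unaffected by non-compactness — note that the relevant maps $\Delta_{n,*}$ remain proper, as remarked before \propositionref{diagonal open}. The substantive geometric content has been isolated in \propositionref{decomposition 5 cases} and \propositionref{diagonal open}; granting those, \theoremref{multiplicative^n}'s argument is inherited essentially verbatim, since it never used projectivity of $S$ beyond the three inputs now supplied by \propositionref{4.10}, \propositionref{multiplicative} and \propositionref{diagonal open}.
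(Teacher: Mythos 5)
Your proposal is correct and follows essentially the same route as the paper: reduce via \propositionref{4.15} to the abstract perverse filtration on the wreath product $H^*(M_D)\{\mathfrak{S}_n\}$, invoke \propositionref{4.10} with the compactification $\overline{M_D}=\widetilde{E\times\P^1/\Gamma}$, and run the orbit-by-orbit estimate with \lemmaref{pull-back} and \lemmaref{push-forward}, whose hypotheses are supplied by \propositionref{multiplicative} and \propositionref{diagonal open}. The only cosmetic difference is that the paper kills the $g(\sigma,\tau)=1$ case outright, since non-compactness of $M_D$ gives $H^4(M_D)=0$ and hence $e=0$, whereas you retain the projective-case bookkeeping for $g=1$ --- harmless, since that term vanishes anyway.
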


\begin{proof}

Since $M_D=\widetilde{T^*E/\Gamma}$, we set $\overline{M_D}=\widetilde{E\times \P^1/\Gamma}$. By the explicit geometry we know that the restriction map $H^*(\overline{M_D})\to H^*(M_D)$ is surjective. So we can use Proposition \ref{4.10} to compute the cup product. The method we use is similar to the proof of Theorem \ref{multiplicative^n}. We fix filtered basis $B'$ as follows.

\begin{enumerate}
\item{Type $\widetilde{A_0}$. Let $B'=\{1,\alpha,\beta, \alpha\cup\beta\}$, where $\alpha,\beta$ are basis of $H^1(M_D)$.}
\item{Type $\widetilde{D_4},\widetilde{E_6},\widetilde{E_7},\widetilde{E_8}$. Let $B'=\{1,E_1,\cdots,E_k,\Sigma\}$.}
\end{enumerate}

\noindent
By Proposition \ref{decomposition 5 cases} and Proposition \ref{diagonal open}, the basis $B'$ is filtered with respect to the perverse filtration for the map $h:M_D\to\C$ and has the perverse estimation of diagonal embedding. So the hypotheses of Lemma \ref{pull-back} and Lemma \ref{push-forward} are satisfied. Now by Proposition \ref{4.15}, it suffices to prove that the abstract perverse filtration defined on $H^*(M_D)\{\mathfrak{S}_n\}$ is multiplicative. Furthermore, it suffices to prove the result for our filtered basis, namely
\begin{equation*}
\begin{array}{rl}
 &\displaystyle \mathfrak{p}\left(\bigotimes_{i=1}^n\bigotimes_{j=1}^{a_i}\alpha_{ij}\cdotp\sigma\cup\bigotimes_{i=1}^n\bigotimes_{j=1}^{a_i'}\alpha'_{ij}\cdotp\tau\right)\\
\le&\displaystyle 
\sum_{i=1}^n{\sum_{j=1}^{a_i}{\mathfrak{p}(\alpha_{ij})}}+\sum_{i=1}^n{(i-1)a_i}\\
+&\displaystyle 
\sum_{i=1}^n{\sum_{j=1}^{a'_i}{\mathfrak{p}(\alpha'_{ij})}}+\sum_{i=1}^n{(i-1)a'_i},
\end{array}
\end{equation*}
where $\alpha_{ij}$ and $\alpha'_{ij}$ run over basis $B'$.  Note that the cup product formula computes independently on each orbit of $\langle\sigma,\tau\rangle$-action on $[n]$ individually. Let $O$ be an orbit of the action $\langle\sigma,\tau\rangle$ on $[n]$, i.e. $|\langle\sigma,\tau\rangle\backslash O|=1$. The product is computed by
\begin{equation*}
\begin{array}{rrl}
A^{\otimes \langle\sigma\rangle\backslash O}\cdotp\sigma|_O\otimes A^{\otimes \langle\tau\rangle\backslash O}\cdotp\tau|_O &\to&
A^{\otimes \langle\sigma\tau\rangle\backslash O}\cdotp\sigma\tau|_O\\
a\cdotp\sigma|_O\otimes a'\cdotp\tau|_O&\mapsto& f_{\langle\sigma,\tau\rangle,\langle\sigma\tau\rangle}(f^{\langle\sigma\rangle,\langle\sigma,\tau\rangle}(a)\cdotp
f^{\langle\tau\rangle,\langle\sigma,\tau\rangle}(a')\cdotp e^{g(\sigma,\tau)})\cdotp\sigma\tau|_O
\end{array}
\end{equation*}
for every $O$. Note that the Euler class $e$ is of top degree and $M_D$ is smooth and non-compact, so that it suffices to consider the case when $g(\sigma,\tau)=0$. By Lemma \ref{pull-back} and Lemma \ref{push-forward}, we have
\begin{equation*}
\begin{array}{rrl}
& &\displaystyle\mathfrak{p}\left(f_{\langle\sigma,\tau\rangle,\langle\sigma\tau\rangle}(f^{\langle\sigma\rangle,\langle\sigma,\tau\rangle}(a)\cdotp
f^{\langle\tau\rangle,\langle\sigma,\tau\rangle}(a')\cdotp e^{g(\sigma,\tau)})\cdotp\sigma\tau|_O\right)\\
&=&\displaystyle\mathfrak{p}\left(f_{\langle\sigma,\tau\rangle,\langle\sigma\tau\rangle}(f^{\langle\sigma\rangle,\langle\sigma,\tau\rangle}(a)\cdotp
f^{\langle\tau\rangle,\langle\sigma,\tau\rangle}(a'))\right)+|O|-|\langle\sigma\tau\rangle\backslash O|\\
&=&\displaystyle\mathfrak{p}\left(f^{\langle\sigma\rangle,\langle\sigma,\tau\rangle}(a)\cdotp
f^{\langle\tau\rangle,\langle\sigma,\tau\rangle}(a'))\right)+2(|\langle\sigma\tau\rangle\backslash O|-1)+|O|-|\langle\sigma\tau\rangle\backslash O|\\
&=&\displaystyle\mathfrak{p}(a)+\mathfrak{p}(a')+|O|+|\langle\sigma\tau\rangle\backslash O|-2\\
&=&\displaystyle\mathfrak{p}(a\cdotp\sigma)-(|O|-|\langle\sigma\rangle\backslash O|)+\mathfrak{p}(a'\cdotp\tau)-(|O|-|\langle\tau\rangle\backslash O|)+|O|+|\langle\sigma\tau\rangle\backslash O|-2\\
&=&\displaystyle\mathfrak{p}(a\cdotp\sigma)+\mathfrak{p}(a'\cdotp\tau)-2g(\sigma,\tau)\\
&=&\displaystyle\mathfrak{p}(a\cdotp\sigma)+\mathfrak{p}(a'\cdotp\tau)
\end{array}
\end{equation*}
\end{proof}

\noindent
Combining Theorem \ref{multiplicative open} and Theorem \ref{moduli}, we have
\begin{theorem}
For the five families of moduli space of parabolic Higgs bundles described in Theorem \ref{moduli}, the perverse filtration defined by the Hitchin map is multiplicative.
\end{theorem}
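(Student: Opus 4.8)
The plan is to obtain this statement as an immediate consequence of \theoremref{multiplicative open} together with the geometric identification recorded in \theoremref{moduli}. First I would recall that \theoremref{moduli} furnishes, for each of the five families and each $n\ge1$, an isomorphism of varieties $M_D(n)\cong M_D^{[n]}$ under which the Hitchin map $M_D(n)\to\mathbb{A}^n$ is identified with the composition
$$
M_D^{[n]}\to M_D^{(n)}\to (\mathbb{A}^1)^{(n)}=\mathbb{A}^n,
$$
where $M_D=M_D(1)$ is the smooth quasi-projective surface $h:M_D\to\C\cong\mathbb{A}^1$ analyzed in \propositionref{decomposition 5 cases}, and the map $M_D^{(n)}\to(\mathbb{A}^1)^{(n)}$ is induced by $M_D^n\to(\mathbb{A}^1)^n$, i.e.\ it is precisely the symmetric product of $h:M_D\to\C$ in the sense of Section 2. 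Thus the map denoted $h:M_D^{[n]}\to\C^n$ in \theoremref{multiplicative open} is exactly the Hitchin map transported along this isomorphism.

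Next I would observe that an isomorphism of varieties $\Psi:M_D(n)\xrightarrow{\sim}M_D^{[n]}$ intertwining the Hitchin map with $h:M_D^{[n]}\to\mathbb{A}^n$ induces a graded ring isomorphism $\Psi^*:H^*(M_D^{[n]};\Q)\xrightarrow{\sim}H^*(M_D(n);\Q)$ carrying $Rh_*\Q_{M_D^{[n]}}$ to the analogous complex for the Hitchin map; hence $\Psi^*$ identifies the perverse filtration on $H^*(M_D(n))$ defined by the Hitchin map with the perverse filtration on $H^*(M_D^{[n]})$ defined by $h$. \theoremref{multiplicative open} states that the latter is multiplicative with respect to cup product, i.e.\ $\mathfrak{p}(\alpha\cup\beta)\le\mathfrak{p}(\alpha)+\mathfrak{p}(\beta)$ for all classes; transporting this along the ring isomorphism $\Psi^*$ gives the multiplicativity of the perverse filtration for the Hitchin map on each $M_D(n)$, which is the claim.

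The entire mathematical content of the statement therefore already resides in \theoremref{multiplicative open}, whose proof rests on the explicit perverse decomposition of \propositionref{decomposition 5 cases}, the diagonal estimate of \propositionref{diagonal open}, Lehn's description of the cup product via \propositionref{4.10}, and the comparison of abstract and geometric perversities in \propositionref{4.15}. At this last step there is no genuine additional obstacle; the only point one should state carefully is that the Hitchin map factors through the Hilbert--Chow morphism in the precise way needed to match $h:M_D^{[n]}\to C^{(n)}$, and this is exactly the assertion of \theoremref{moduli}, which I would simply cite.
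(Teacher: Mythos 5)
Your proposal is correct and follows exactly the paper's route: the paper deduces this theorem by combining \theoremref{moduli} (the identification $M_D(n)\cong M_D^{[n]}$ with the Hitchin map factoring through Hilbert--Chow) with \theoremref{multiplicative open}, which is precisely the transport argument you spell out. Your write-up merely makes explicit the (routine) fact that the isomorphism of varieties over $\mathbb{A}^n$ identifies the two perverse filtrations, which the paper leaves implicit.
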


\subsection{Full version of $P=W$ for $n=1$}
\noindent
In this section, we prove the full version of $P=W$ conjecture in the $n=1$ cases by using the explicit geometry of the Hitchin map.

\medskip
\noindent
I thank Dingxin Zhang for suggesting the following lemma to me.

\begin{lemma}\label{10.2}
Let $X$ be any of the del Pezzo surface in Theorem \ref{10.1}. Let $i:T\to X$ be the closed embedding of the removed curve in Theorem \ref{10.1} and let $j:U\hookrightarrow X$ be its complement. Then 
$$
W_2H^2(U)\cong \im\left(H^2_c(U)\to H^2(U)\right).
$$ 
\end{lemma}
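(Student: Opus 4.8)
The plan is to show that both sides of the claimed equality coincide with the image of the restriction map $j^*\colon H^2(X)\to H^2(U)$; once this is done the lemma reduces to a single statement about intersection numbers on $X$, which is checked case by case using the explicit geometry recorded in \theoremref{10.1}.

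First I would handle the right-hand side. For the smooth projective compactification $X\supset U$ with complement $T$, the canonical morphism $j_!\Q_U\to Rj_*\Q_U$ factors as $j_!\Q_U\to\Q_X\to Rj_*\Q_U$, so the ``forget supports'' map $H^2_c(U)\to H^2(U)$ is the composite $H^2_c(U)\to H^2_c(X)=H^2(X)\xrightarrow{j^*}H^2(U)$; hence $\im\big(H^2_c(U)\to H^2(U)\big)=j^*\big(\im(H^2_c(U)\to H^2(X))\big)$. The long exact sequence of compactly supported cohomology for $X=U\sqcup T$, namely $\cdots\to H^2_c(U)\to H^2(X)\xrightarrow{i^*}H^2(T)\to\cdots$, then identifies $\im\big(H^2_c(U)\to H^2(X)\big)$ with $\ker\big(i^*\colon H^2(X)\to H^2(T)\big)$, so the right-hand side equals $j^*(\ker i^*)$.

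Next I would identify the left-hand side with $j^*(H^2(X))$. Since $X$ is smooth projective, $H^2(X)$ is pure of weight $2$ and $j^*$ is a morphism of mixed Hodge structures, so $j^*(H^2(X))\subseteq W_2H^2(U)$. For the reverse inclusion I would use the exact sequence of mixed Hodge structures $H^2(X)\xrightarrow{j^*}H^2(U)\to H^3_T(X)$ together with Poincar\'e--Lefschetz duality for the smooth surface $X$, which gives $H^3_T(X)\cong H^{BM}_1(T)(-2)$. Since $H^{BM}_1(T)=H_1(T)$ is the homology of a proper curve it has weights in $[-1,0]$, so $H^3_T(X)$ has weights in $[3,4]$; hence $\mathrm{coker}(j^*)$ has weights $>2$, and $W_2H^2(U)\subseteq\im(j^*)=j^*(H^2(X))$.

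It then remains to prove $j^*(H^2(X))=j^*(\ker i^*)$, i.e. that $H^2(X)=\ker(i^*)+\ker(j^*)$. By Poincar\'e--Lefschetz duality again, $H^2_T(X)\cong H^{BM}_2(T)(-2)=\bigoplus_k\Q[T_k](-2)$, where the $T_k$ are the irreducible components of $T$, and the natural map $H^2_T(X)\to H^2(X)$ sends the class of $T_k$ to its cycle class $\cl(T_k)$; thus $\ker(j^*)=\mathrm{span}_{\Q}\{\cl(T_k)\}$. The desired equality therefore holds once the restriction of $i^*$ to $\mathrm{span}\{\cl(T_k)\}$ — which is the intersection matrix $M=(T_k\cdot T_l)_{k,l}$ viewed inside $H^2(T)$ — is nondegenerate, for then its image is all of $H^2(T)$ and in particular contains $i^*H^2(X)$. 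In the $\widetilde{E_6}$, $\widetilde{E_7}$, $\widetilde{E_8}$ cases $T$ is irreducible and, being a hyperplane section, is anticanonical, so $M=(T^2)=(K_X^2)=(\deg X)\neq 0$ because $X$ is del Pezzo; in the $\widetilde{D_4}$ case $T$ is a triangle of three lines on the cubic surface $X$, each a $(-1)$-curve meeting each of the other two transversally in one point, so $M$ has $-1$ on the diagonal and $1$ off it, with $\det M=4\neq 0$. In every case $M$ is nondegenerate, which finishes the proof. The only nonformal step is this last one: identifying the components of $T$ and their self-intersection numbers precisely enough to see $\det M\neq 0$. That is exactly where the explicit geometry of \theoremref{10.1} enters; everything else is a formal consequence of the long exact sequences of the pair and of Poincar\'e--Lefschetz duality in the category of mixed Hodge structures.
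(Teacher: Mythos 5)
Your proposal is correct and follows essentially the same route as the paper: both reduce the statement to the identity $H^2(X)=\ker i^*+\ker j^*$ (equivalently, that the composite $H^2_T(X)\to H^2(X)\to H^2(T)$, i.e.\ the intersection matrix of the components of $T$, is nondegenerate), and both verify this case by case using the triangle matrix with $\det=4$ and the nonvanishing self-intersection of the nodal anticanonical curve. The only minor variations are that you prove the input $W_2H^2(U)=\im\bigl(H^2(X)\to H^2(U)\bigr)$ by a weight bound on $H^3_T(X)$ where the paper cites Deligne's Corollaire 3.2.17, and you compute $T^2=K_X^2=\deg X$ where the paper only invokes ampleness of $T$.
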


\begin{proof}
We have a diagram where the row and the column are distinguished triangles

$$
\begin{tikzcd}
\ &i_*i^!\Q_X\ar{d}&\ \\
Rj_!\Q_U\ar{r}&\Q_X\ar{r}\ar{d}& i_*\Q_T\\
\ & Rj_*\Q_U&\ \\
\end{tikzcd}
$$

Taking cohomology in degree $2$, we have
$$
\begin{tikzcd}
\ &H^2_T(X)\ar{d}{i_*}\ar{rd}{\psi}&\ \\
H^2_c(U)\ar{r}{j_!}\ar{rd}[swap]{\phi}& H^2(X)\ar{r}{i^*}\ar{d}{j^*}& H^2(T)\\
\ &H^2(U)&\ 
\end{tikzcd}
$$
\noindent
By \cite{deligne}, Corollaire 3.2.17, the image of $j^*$ is precisely $W_2H^2(U)$. So it suffices to prove that $\im j_!+\ker j^*=H^2(X)$. By exactness, this is equivalent to proving that $\im i_*+\ker i^*=H^2(X)$. Therefore, it suffices to prove that $\psi=i^*i_*$ is an isomorphism. In fact, the morphism $\psi$ maps $\xi\in  H^2_T(X)\cong H_2(T)$ to $\xi^\dagger:H_2(T)\to \Q$, where $\xi^\dagger(\gamma)=\int_X\xi\cup i_*(\gamma)$. This defines a symmetric bilinear form on $H_2(T)$ defined by the intersection number of components of $T$ viewed in $X$. To show that $\psi$ is an isomorphism, it suffices to show that this bilinear form is nondegenerate. In the case where $T$ is a triangle, then the intersection matrix is
$$
\left(
\begin{array}{ccc}
-1&1&1\\
1&-1&1\\
1&1&-1
\end{array}
\right)
$$
In the case when $T$ is a nodal $\P^1$, since it is cut out by hyperplane section away from the singularities of the weighted projective space, so it is an ample divisor, therefore the self intersection of $T$ is nonzero.
\end{proof}

\begin{theorem}
The perverse filtration for the map $M_D(1)\to \C$ and the mixed Hodge filtration on $M_B(1)$ correspond.
\end{theorem}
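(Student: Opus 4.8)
The plan is to check the correspondence $P_k=W_{2k}=W_{2k+1}$ case by case, using the explicit perverse decomposition of Proposition~\ref{decomposition 5 cases}, the explicit models for $M_B$ in Theorem~\ref{10.1}, and Lemma~\ref{10.2}. Write $\Xi\colon M_D\to M_B$ for the non-abelian Hodge diffeomorphism; being a diffeomorphism, it induces isomorphisms on $H^*$ and on $H^*_c$ that are compatible with the forgetful maps $H^*_c\to H^*$ and preserve cohomological degree. By Proposition~\ref{decomposition 5 cases}, $H^*(M_D)$ is concentrated in degrees $0$ and $2$ (and, only in the $\widetilde{A_0}$ case, also in degree $1$), with $\mathfrak p(1)=0$, $P_2H^*(M_D)=H^*(M_D)$, $H^1(M_D)=0$ in the four non-$\widetilde{A_0}$ cases, and $P_1H^2(M_D)=\langle[E_1],\dots,[E_k]\rangle$, the span of the fundamental classes of the exceptional curves over $0$. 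On the Betti side, $W_0=W_1$ equals $H^0$, $W_4H^*(M_B)=H^*(M_B)$, and only even weights occur (so $W_{2k}=W_{2k+1}$ automatically). Hence, once one records that the extremal filtration steps match for trivial reasons, the whole statement reduces to the single identity $\Xi^*\big(W_2H^2(M_B)\big)=P_1H^2(M_D)$, together with $\Xi^*\big(W_2H^1(M_B)\big)=P_1H^1(M_D)$ in the $\widetilde{A_0}$ case.

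For $\widetilde{A_0}$ this is immediate: $M_B=\C^*\times\C^*$ has $H^0,H^1,H^2$ pure of weights $0,2,4$, while $M_D=T^*E\cong E\times\A^1$ has $H^0,H^1,H^2$ pure of perversities $0,1,2$ for the projection to $\A^1$, so the identities hold degree by degree. For $\widetilde{D_4},\widetilde{E_6},\widetilde{E_7},\widetilde{E_8}$, I would show that \emph{both} $W_2H^2(M_B)$ and $P_1H^2(M_D)$ equal the image of the forgetful map from compactly supported cohomology. On the Betti side this is exactly Lemma~\ref{10.2}. On the Higgs side, $P_1H^2(M_D)=\langle[E_i]\rangle$ is spanned by classes of compact curves, hence is contained in $\im\big(H^2_c(M_D)\to H^2(M_D)\big)$; to get equality I would use the compactification $\overline{M_D}=\widetilde{(E\times\P^1)/\Gamma}$ from the proof of Theorem~\ref{multiplicative open}, for which $M_D=\overline{M_D}\setminus F_\infty$ with $F_\infty$ the fibre of the elliptic fibration $\overline{M_D}\to\P^1$ over the second fixed point of $\Gamma$. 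The fibre $F_\infty$ is a Kodaira fibre whose dual graph is the affine Dynkin diagram in question, hence a tree with $k+1$ components; consequently $H_1(F_\infty)=0$ and the intersection pairing of its components has a one-dimensional kernel, spanned by the fibre class. Feeding this into the long exact sequences relating $H^*_c(M_D)$, $H^*(\overline{M_D})$ and $H^*(F_\infty)$ gives $\dim\im\big(H^2_c(M_D)\to H^2(M_D)\big)=k$, hence equality with $P_1H^2(M_D)$. Since $\Xi$ identifies $\im\big(H^2_c(M_B)\to H^2(M_B)\big)$ with $\im\big(H^2_c(M_D)\to H^2(M_D)\big)$, the desired identity follows, and combining the three cases one obtains $\Xi^*(W_{2k}H^*(M_B))=P_kH^*(M_D)$ for all $k$.

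The crux will be the equality $P_1H^2(M_D)=\im\big(H^2_c(M_D)\to H^2(M_D)\big)$, i.e.\ the corank computation for the forgetful map on the Higgs side. One must be careful here: although $H^2(M_D)$ turns out to be \emph{pure} of weight $2$ as the cohomology of a quasi-projective variety (so the mixed Hodge filtration of $M_D$ itself is \emph{not} the filtration we are after), the map $H^2_c(M_D)\to H^2(M_D)$ is still far from surjective, and its cokernel is detected precisely by the relation $[F_0]=[F_\infty]=0$ in $H^2(M_D)$ among the components of the fibre over $0$---equivalently, by the fact that a section of the Hitchin map has nonzero image in $H^2$ of a general fibre whereas the classes $[E_i]$ do not. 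Everything else is bookkeeping: the extremal steps of $P$ and $W$ are forced, $\Xi^*$ preserves cohomological degree, and the absence of odd weights supplies $W_{2k}=W_{2k+1}$ for free.
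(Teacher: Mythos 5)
Your plan follows the paper's own proof almost step for step: the numerical match comes from Proposition~\ref{decomposition 5 cases} together with the weight computation for the explicit surfaces of Theorem~\ref{10.1}; the statement is reduced to the single identity $P_1H^2(M_D)=W_2H^2(M_B)$ (the $\widetilde{A_0}$ case being immediate); the Betti side is exactly Lemma~\ref{10.2}; and the Dolbeault side is handled by showing $P_1H^2(M_D)=\im\left(H^2_c(M_D)\to H^2(M_D)\right)$, after which compatibility of the forgetful map with the diffeomorphism finishes the argument. The only place you diverge is the proof of this last identity, and there your route contains a factual slip. The paper argues directly: $H^2_c(M_D)\cong H_2(M_D)$ is spanned by the exceptional curves together with the class of a fibre, the fibre class dies in $H^2(M_D)$, and the $[E_i]$ are independent, so the image is exactly $P_1H^2(M_D)$. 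You instead pass to $\overline{M_D}=\widetilde{(E\times\P^1)/\Gamma}$ and compute the corank via the fibre at infinity, asserting that $F_\infty$ is a Kodaira fibre of the same affine Dynkin type with $k+1$ components. This is false in the $\widetilde{E_6},\widetilde{E_7},\widetilde{E_8}$ cases: because $\Gamma$ preserves the symplectic form on $T^*E$, it acts at a fixed point by $(\zeta,\zeta^{-1})$ over $0$ but by $(\zeta,\zeta)$ over $\infty$, so the singularities at infinity are of type $\tfrac{1}{m}(1,1)$ (plus some $A_1$, $A_2$), not Du Val of the same type, and $F_\infty$ has only $4$ components in each of these three cases rather than $k+1$.

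The slip is repairable rather than fatal: the two facts you actually feed into the long exact sequence --- that the dual graph of $F_\infty$ is a tree and that the intersection matrix of its components has one-dimensional radical spanned by the fibre class (Zariski's lemma, valid for any fibre) --- survive, and a case-by-case check gives $b_2(\overline{M_D})=\#\{\text{components of }F_0\}+\#\{\text{components of }F_\infty\}$, so the corank count still yields $k$ and the image is still the span of the $[E_i]$. But as written, the step ``feeding this into the long exact sequences gives $\dim\im=k$'' leans on the wrong component count and the unstated computation of $b_2(\overline{M_D})$, so you must either redo that bookkeeping with the correct $F_\infty$, or avoid the compactification altogether as the paper does (or note that a compactly supported class restricts trivially to a far-away fibre, hence pairs to zero with $[F]$, which already forces the image into the span of the $[E_i]$).
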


\begin{proof}
By the spectral sequence of weight filtration of mixed Hodge structure, dimensions of graded pieces of weight filtration is easily computed. 
$$
\dim \Gr_w^WH^d(M_B)=
\begin{cases}
1 & w=d=0\\
1 & w=4,d=2\\
2 & w=2,d=1, \widetilde{ A_0}\text{ case}\\
k & w=2,d=2, \widetilde{ D_4},\widetilde{E_6},\widetilde{E_7},\widetilde{E_8}\text{ case}\\
0& \text{otherwise}.
\end{cases}
$$  
Compare with Proposition \ref{decomposition 5 cases}, numerical $P=W$ holds for $n=1$ in our five cases. To prove the full version of the $P=W$ conjecture, it suffices to prove that $P_1H^2(M_D)=W_2H^2(M_B)$ in $\widetilde{D_4},\widetilde{E_6},\widetilde{E_7},\widetilde{E_8}$ cases. ($\widetilde{A_0}$ cases is trivially true.) By Proposition \ref{decomposition 5 cases}, $P_1H^2(M_D)$ in four cases are all spanned by the fundamental classes of exceptional curves of the minimal resolutions, and $H^2_c(M_D)$ is generated by the fundamental classes of exceptional curves and a generic fiber of the morphism $M_D\to \C$. Note that the fiber class is $0$ in $H^2(M_D)$, so we have 
$$
P_1H^2(M_D)=\im\left(H^2_c(M_D)\to H^2(M_D)\right).
$$
By Lemma \ref{10.2}, we have $W_2H^2(M_B)=\im(H^2_c(M_B)\to H^2(M_B))$. This completes the proof.
\end{proof}

\begin{remark}
In \cite{P=W}, Theorem 3.2.1 asserts that the forgetful map $H_c^{6g-6}(M_D)\to H^{6g-6}(M_D)$ is the zero map, where $6g-6$ is the complex dimension of the moduli space in the context. In fact, they consider moduli space of degree $1$ Higgs bundles and twisted representations, so their result does not contradict ours.  
\end{remark}

\subsection{Perverse numbers and numerical $P=W$}
\noindent
In this section, we give some partial numerical evidence for $P=W$ Conjecture \ref{1.1} for our five families of Hitchin fibrations. We use Proposition \ref{4.4} and Proposition \ref{decomposition 5 cases} to compute the perverse numbers $\Gr_p H^d(M_D^{[n]})$. Conjecture 1.2.1 in \cite{hodge number} which predicts that the mixed Hodge numbers of character varieties can be computed by a combinatorial formula. We made a conjecture that in our five families of Hitchin systems, the perverse numbers equal the conjectural mix Hodge numbers of the corresponding character varieties. We have verified our conjecture for small $n$.

\begin{theorem}\label{8.1}
Let $f:M_D\to\C$ be $n=1$ case of the five families. Denote perverse numbers by $p^{i,j}=\dim \Gr_iH^j(M_D^{[n]})$. Let perverse Poincar\'e polynomial be $P_n(q,t)=\sum_{i,j}p^{i,j}q^it^j$. Then for the $\widetilde{A_0}$ case, the generating series is
$$
\sum_{n=0}^\infty s^nP_n(q,t)=\prod_{m=1}^\infty\frac{(1+s^mq^{m}t^{2m-1})^2}{(1-s^mq^{m-1}t^{2m-2})(1-s^mq^{m+1}t^{2m})}.
$$
For the other four cases, the generating series are
$$
\sum_{n=0}^\infty s^nP_n(q,t)=
\prod_{m=1}^\infty\frac{1}{(1-s^mq^{m-1}t^{2m-2})(1-s^mq^mt^{2m})^k(1-s^mq^{m+1}t^{2m})}
$$
where $k$ is defined in Proposition \ref{decomposition 5 cases}.
\end{theorem}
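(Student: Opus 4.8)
The strategy is to deduce Theorem~\ref{8.1} as a perverse refinement of the G\"ottsche formula, by feeding the perverse decomposition of $h\colon M_D\to\C$ from Proposition~\ref{decomposition 5 cases} into the combinatorial description of $H^*(M_D^{[n]})$ coming from the wreath product (Proposition~\ref{4.15}). All generating series below are formal power series in $s$ with coefficients polynomial in $q,t$.

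First I would record the bigraded Poincar\'e series of $H^*(M_D)$ with respect to the grading by (perversity, cohomological degree). By Proposition~\ref{decomposition 5 cases} this series is
$$
P_1(q,t)=1+2qt+q^2t^2\quad(\widetilde{A_0}),\qquad P_1(q,t)=1+kqt^2+q^2t^2\quad(\widetilde{D_4},\widetilde{E_6},\widetilde{E_7},\widetilde{E_8}),
$$
where in the $\widetilde{A_0}$ case the middle term records the two odd classes of $H^1(M_D)$, of perversity $1$; in the other four cases it records the $k$ exceptional curve classes in $H^2(M_D)$, of perversity $1$; the last term records the section class of perversity $2$; and the first term is the fundamental class, of perversity $0$. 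Next I would pass to the wreath product. Since $h$ is proper and $H^*(\overline{M_D})\to H^*(M_D)$ is surjective (as used in Theorem~\ref{multiplicative open}), the additive identification $H^*(M_D^{[n]})=(H^*(M_D)\{\mathfrak S_n\})^{\mathfrak S_n}$ of Theorem~\ref{hilb-chow} and Proposition~\ref{4.10} holds, and by Proposition~\ref{4.15} it carries the geometric perverse filtration to the abstract one. Fixing the filtered basis of Proposition~\ref{basis}, Definition~\ref{4.6} exhibits a basis element of $H^*(M_D)\{\mathfrak S_n\}$ supported on a permutation of cycle type $1^{a_1}\cdots n^{a_n}$ as a product over the cycles of ``decorated cycles'': an $m$-cycle decorated by a class $\alpha$ of cohomological degree $d$ and perversity $p$ contributes the shift $[-2(m-1)]$ to the degree and the term $m-1$ to $\mathfrak p_{\mathrm{abs}}$, hence carries the monomial $s^mq^{m-1+p}t^{2m-2+d}$.

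Taking $\mathfrak S_n$-invariants amounts, block by block in the cycle type, to forming for each $m$ and each basis class $\alpha$ a symmetric power of the corresponding generator when $d+2(m-1)$ (equivalently $d$) is even and an exterior power when $d$ is odd, the Koszul signs of Definition~\ref{4.6} matching these parities. Summing $s^n$ over all $n$, all cycle types, and all decorations therefore factors, by the usual symmetric-function identity, as
$$
\sum_{n\ge0}s^nP_n(q,t)=\prod_{m\ge1}\ \prod_{d\text{ even},\,p}\frac{1}{\bigl(1-s^mq^{m-1+p}t^{2m-2+d}\bigr)^{h_{p,d}}}\ \cdot\ \prod_{m\ge1}\ \prod_{d\text{ odd},\,p}\bigl(1+s^mq^{m-1+p}t^{2m-2+d}\bigr)^{h_{p,d}},
$$
where $h_{p,d}=\dim\Gr_p^PH^d(M_D)$; here Proposition~\ref{product} (additivity of perversity under external tensor product) is exactly what ensures perversities add, so that the variable $q$ enters as in the G\"ottsche formula with one extra grading (setting $q=1$ recovers the classical formula, a useful check). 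Substituting the series of the previous paragraph: for $\widetilde{A_0}$ the fundamental class gives $\prod_m(1-s^mq^{m-1}t^{2m-2})^{-1}$, the two odd perversity-$1$ classes in $H^1$ give $\prod_m(1+s^mq^mt^{2m-1})^2$, and the section class gives $\prod_m(1-s^mq^{m+1}t^{2m})^{-1}$, whose product is the first formula in Theorem~\ref{8.1}; for the remaining four cases the $k$ perversity-$1$ classes in $H^2$ give $\prod_m(1-s^mq^mt^{2m})^{-k}$ instead, the other two factors being unchanged, giving the second formula.

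\textbf{Main obstacle.} The only substantive point is the last step of the bookkeeping: verifying that under $H^*(M_D^{[n]})=(H^*(M_D)\{\mathfrak S_n\})^{\mathfrak S_n}$ the $\mathfrak S_n$-invariants of the blocks of decorated $m$-cycles genuinely assemble into the symmetric/exterior algebra whose trigraded Hilbert series is the displayed product --- i.e.\ that the signs in Definition~\ref{4.6} are governed by the parity of $d$, and that the degree and perversity shifts attached to an $m$-cycle decorated by $\alpha$ are exactly $2(m-1)$ and $m-1$ on top of $\deg\alpha$ and $\mathfrak p(\alpha)$. Once this is checked against Definition~\ref{4.6} and the formula for $\mathfrak p_{\mathrm{abs}}$ in Proposition~\ref{4.15}, the rest is formal manipulation of generating functions.
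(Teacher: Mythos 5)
Your proposal is correct and takes essentially the same route as the paper: the paper likewise feeds the $n=1$ perverse decomposition of Proposition~\ref{decomposition 5 cases} into the decomposition of the Hilbert scheme pushforward (Proposition~\ref{4.4}, equivalently the cohomological repackaging in Corollary~\ref{perversity} and Proposition~\ref{4.15} that you invoke), tracks the degree shift $2(m-1)$ and perversity shift $m-1$ per $m$-cycle, and identifies the resulting sum over partitions with the expansion of the infinite product. Your presentation via decorated cycles and the standard symmetric/exterior Euler-product identity is just a tidier way of performing the same term-by-term matching, and your parity bookkeeping (symmetric powers on even-degree classes, exterior powers on odd, with the $q$-exponent shifted by perversity) agrees with the paper's computation.
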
 

\begin{proof}
We prove the equalities by expanding both hand sides and identifying the corresponding terms. Since all cases are similar, we prove the $\widetilde{D_4}$ case as an illustration of the calculations. By Proposition \ref{4.4}, K\"unneth formula and MacDonald theorem, we have 

$$
\begin{array}{rl}
 &H^*\left(M_D^{[n]}\right)[n]\\
= &\displaystyle\mathbb{H}\left(\bigoplus_{\nu=1^{a_1}\cdots n^{a_n}} Rr_{C,*}^{(\nu)}\left(\bigoplus_{\bold{r}+\bold{s}+\bold{t}=\bold{a}}\mathcal{P}_0^{\{\bold{r}\}}\boxtimes \mathcal{P}_1^{(\bold{s})}\boxtimes \mathcal{P}_2^{\{\bold{t}\}}\right)[-C(\bold{r},\bold{s},\bold{t})]\right)\\
= &\displaystyle\bigoplus_{\nu=1^{a_1}\cdots n^{a_n}}\bigoplus_{\bold{r}+\bold{s}+\bold{t}=\bold{a}}\mathbb{H}\left(\mathcal{P}_0^{\{\bold{r}\}}\boxtimes \mathcal{P}_1^{(\bold{s})}\boxtimes \mathcal{P}_2^{\{\bold{t}\}}\right)[-C(\bold{r},\bold{s},\bold{t})]\\
= &\displaystyle\bigoplus_{\nu=1^{a_1}\cdots n^{a_n}}\bigoplus_{\bold{r}+\bold{s}+\bold{t}=\bold{a}}\mathbb{H}\left(\mathcal{P}_0^{\{\bold{r}\}}\right)\otimes \mathbb{H}\left(\mathcal{P}_1^{(\bold{s})}\right)\otimes \mathbb{H}\left(\mathcal{P}_2^{\{\bold{t}\}}\right)[-C(\bold{r},\bold{s},\bold{t})]\\
\end{array} 
$$

\noindent
By Proposition \ref{decomposition 5 cases}, we have $\mathcal{P}_0=\Q_\C[1]$, $\mathcal{P}_1=j_*\hat{R}^1[1]\oplus\Q_0^4$, $\mathcal{P}_2=\Q_\C[1]$. Note that $j_*\hat{R}^1$ has no cohomology so that we have

\begin{center}
\begin{tabular}{|c|c|c|c|}
\hline   & $\mathbb{H}(\mathcal{P}_0)$ & $\mathbb{H}(\mathcal{P}_1)$ & $\mathbb{H}(\mathcal{P}_2)$\\
\hline dimension & $1$ & $4$ & $1$\\
\hline degree & $-1$ & $0$ & $-1$\\
\hline
\end{tabular}
\end{center}

\noindent
Together with Proposition \ref{product} and Remark \ref{quotient}, the table shows that the summand
$$
\mathbb{H}\left(\mathcal{P}_0^{\{\bold{r}\}}\right)\otimes \mathbb{H}\left(\mathcal{P}_1^{(\bold{s})}\right)\otimes \mathbb{H}\left(\mathcal{P}_2^{\{\bold{t}\}}\right)[-C(\bold{r},\bold{s},\bold{t})]
$$
is a vector space of dimension $\displaystyle\prod_{i=1}^n\left(\begin{array}{c}s_i+3\\3\end{array}\right)$, and all cohomology classes in this summand are of cohomological degree $\sum_{i=1}^n-r_i+s_i+t_i$ and perversity $\sum_{i=1}^ns_i+2t_i$. Here we use the fact that $\mathbb{H}(\mathcal{P}_0)$ and $\mathbb{H}(\mathcal{P}_2)$ are in odd degree, so that 

$$
\mathbb{H}\left(\mathcal{P}_0^{\{\bold{r}\}}\right)=\bigotimes_{i=1}^n Sym^{r_i} \mathbb{H}^{-1}(\mathcal{P}_0)=\C,
$$
and similarly for $\mathbb{H}\left(\mathcal{P}_2^{\{\bold{t}\}}\right)$. So 
$$
\begin{array}{rl}
P_n(q,t)=&\displaystyle\sum_{\nu=1^{a_1}\cdots n^{a_n}}q^{n-l(\nu)}t^{2n-l(\nu)}\prod_{i=1}^n\sum_{r_i+s_i+t_i=a_i}\left(\begin{array}{c}s_i+3\\3\end{array}\right)q^{s_i+2t_i}t^{-r_i+s_i+t_i}\\
=&\displaystyle\sum_{\nu=1^{a_1}\cdots n^{a_n}}q^{n-l(\nu)}t^{2n-l(\nu)}\prod_{i=1}^n t^{-a_i}\sum_{r_i+s_i+t_i=a_i}\left(\begin{array}{c}s_i+3\\3\end{array}\right)q^{s_i+2t_i}t^{2s_i+2t_i}\\
=&\displaystyle\sum_{\nu=1^{a_1}\cdots n^{a_n}}q^{n-l(\nu)}t^{2n-2l(\nu)}\prod_{i=1}^n \sum_{r_i+s_i+t_i=a_i}\left(\begin{array}{c}s_i+3\\3\end{array}\right)q^{s_i+2t_i}t^{2s_i+2t_i}\\
\end{array}
$$
Therefore 
$$
\sum_{n=0}^\infty P_n(q,t)=\sum_{n=0}^\infty\sum_{\nu=1^{a_1}\cdots n^{a_n}}q^{n-l(\nu)}t^{2n-2l(\nu)}\prod_{i=1}^n \sum_{r_i+s_i+t_i=a_i}\left(\begin{array}{c}s_i+3\\3\end{array}\right)q^{s_i+2t_i}t^{2s_i+2t_i}
$$

\noindent
On the other hand, to get a term in the product generating series with factor $s^n$ is equivalent to the following data: (1) A partition $\nu=1^{a_1}\cdots n^{a_n}$ of $n$, such that the factor $m=i$ contributes $(s^{i})^{a_i}$, (2) a triple $(r_i,s_i,t_i)$ for each $i$ satisfying $r_i+s_i+t_i=a_i$, such that the expansions of three parenthesis contribute $(s^i)^{r_i}$, $(s^i)^{s_i}$ and $(s^i)^{t_i}$, respectively. So the term obtained in this way is 
$$
\begin{array}{rl}
 &\displaystyle\prod_{i=1}^n\left(\begin{array}{c}s_i+3\\3\end{array}\right) s^{i(r_i+s_i+t_i)}q^{(i-1)r_i+is_i+(i+1)t_i}t^{(2i-2)r_i+2is_i+2it_i}\\
=&\displaystyle\prod_{i=1}^n\left(\begin{array}{c}s_i+3\\3\end{array}\right) s^{ia_i}q^{(i-1)a_i+s_i+2t_i}t^{(2i-2)a_i+2s_i+2t_i}\\
=&\displaystyle s^nq^{n-l(\nu)}t^{2n-2l(\nu)}\prod_{i=1}^n\left(\begin{array}{c}s_i+3\\3\end{array}\right)q^{s_i+2t_i}t^{2s_i+2t_i}.\\
\end{array}
$$
Here we use the fact that $a_i=r_i+s_i+t_i$, $n=\sum_{i=1}^n ia_i$ and $l(\nu)=\sum_{i=1}^n a_i$. By comparing with the expansion of the additive generating series, the theorem follows.
\end{proof}

\noindent
If we believe the $P=W$ Conjecture, the perverse numbers of Hitchin system should equal the mixed Hodge numbers of corresponding character varieties. In fact, the Conjecture 1.2.1 in \cite{hodge number} and Theorem \ref{8.1} suggest the following conjecture.

\begin{conjecture}
Let $\bm{\mu}$ be multi-partition which encodes the parabolic data of $M_B(n)$. Let $k$ be defined as in Proposition \ref{decomposition 5 cases}. Let $\mathbb{H}_{\bm{\mu}}$ be defined as in section 1.1 of \cite{hodge number}. Then for $\widetilde{A_0}(n)$ case, we have
$$
\prod_{m=1}^\infty\frac{(1+s^mq^{m}t^{2m-1})^2}{(1-s^mq^{m-1}t^{2m-2})(1-s^mq^{m+1}t^{2m})}=\sum_{n=0}^\infty (st^2q)^n\mathbb{H}_{\bm{\mu}}(-\sqrt{q},\frac{\sqrt{q}}{t})
$$
For the other four cases, we have
$$
\prod_{m=1}^\infty\frac{1}{(1-s^mq^{m-1}t^{2m-2})(1-s^mq^mt^{2m})^k(1-s^mq^{m+1}t^{2m})}=\sum_{n=0}^\infty (st^2q)^n\mathbb{H}_{\bm{\mu}}(-\sqrt{q},\frac{\sqrt{q}}{t})
$$
\end{conjecture}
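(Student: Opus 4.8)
The plan is to read this conjecture as a closed-form identity between two power series and to use Theorem~\ref{8.1} to dispose of its left-hand side. By Theorem~\ref{8.1} the infinite product on the left of each displayed equality is exactly $\sum_{n\ge0}s^nP_n(q,t)$, the generating series of the perverse numbers of the Hilbert schemes $M_D^{[n]}=M_D(n)$. The right-hand side $\sum_{n\ge0}(st^2q)^n\mathbb{H}_{\bm{\mu}}(-\sqrt q,\sqrt q/t)$ is, in the variables of \cite{hodge number}, the specialization of the kernel of Hausel, Letellier and Rodriguez-Villegas attached to the parabolic data $\bm{\mu}$ of $M_B(n)$, which Conjecture~1.2.1 of \cite{hodge number} predicts to be the mixed Hodge generating series of $M_B(n)$. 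Thus the conjecture is precisely the \emph{numerical} $P=W$ statement for the pairs $(M_D(n),M_B(n))$ with $n\ge2$ — the case $n=1$ being the theorem established above — written without reference to $M_B(n)$ at all. I would therefore not attempt to construct a geometric identification of filtrations, but instead prove the equality of the two explicit series directly.

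\textbf{Step 1 (reduce the Betti side to a Hilbert scheme).} The $n=1$ theorem of this paper exhibits $(M_D(1),M_B(1),h,\mathbb{A}^1,\Phi)$ as a homological $P=W$ package, so Theorem~\ref{4.22} promotes it to a homological $P=W$ package $(M_D(n),M_B(1)^{[n]},h^{[n]},\mathbb{A}^n,\Phi^{[n]})$. Hence $P_kH^*(M_D(n))=W_{2k}H^*(M_B(1)^{[n]})=W_{2k+1}H^*(M_B(1)^{[n]})$, and the left-hand series of the conjecture equals the weight-refined Poincar\'e series (with $q$ tracking half-weight, $t$ tracking degree) of the Hilbert scheme of points on the explicit surface $M_B(1)$ of Theorem~\ref{10.1}. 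Since the mixed Hodge numbers of $M_B(1)$ are completely known, G\"ottsche's formula expands this series as an infinite product; this product is of course the one in Theorem~\ref{8.1} again, but it is now read purely on the Betti side. So the conjecture has been reduced to the bare assertion that this G\"ottsche expansion equals $\sum_{n\ge0}(st^2q)^n\mathbb{H}_{\bm{\mu}}(-\sqrt q,\sqrt q/t)$ — an identity of symmetric-function origin, with no further geometric input, and independent of whatever $M_B(n)$ actually is.

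\textbf{Step 2 (the symmetric-function identity) and the main obstacle.} Using the Cauchy-kernel / modified Macdonald polynomial formula for $\mathbb{H}_{\bm{\mu}}$ recalled in \cite{hodge number}, together with the fact that in all five cases the eigenvalue multiplicities are near-rectangular — $(n-1,1)$ for $\widetilde{A_0}$, $(n,n)(n,n)(n,n)(n,n-1,1)$ for $\widetilde{D_4}$, and similarly for $\widetilde{E_6},\widetilde{E_7},\widetilde{E_8}$ — one expects the partition sum defining $\mathbb{H}_{\bm{\mu}}$ to collapse, at the specialization $(-\sqrt q,\sqrt q/t)$, into a product whose factors can then be matched one by one with $\prod_{m\ge1}(\cdots)$ in Theorem~\ref{8.1}. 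This collapse is the crux, and I expect it to be the genuine obstacle: it amounts to principal-specialization-type evaluations of modified Macdonald polynomials indexed by hook-like or near-rectangular shapes at a single point, which are not available in the needed generality. For $n=1$ the character varieties are described explicitly (Theorem~\ref{10.1}) and the matching is elementary, but a full proof for $n\ge2$ appears to require either Conjecture~1.2.1 of \cite{hodge number} for these particular $\bm{\mu}$ or a new closed evaluation of the associated Macdonald sums — which is why the present paper only records numerical verification for small $n$. I would note, however, that Step~1 is unconditional and already yields the prediction that the conjectural mixed Hodge numbers of $M_B(n)$ must coincide with those of the Hilbert scheme $M_B(1)^{[n]}$.
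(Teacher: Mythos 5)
The statement you were asked to prove is labelled a \emph{conjecture} in the paper, and the paper offers no proof of it: the only support given is the remark immediately following it, which records Mathematica verification for small $n$ ($n\le 6$ for $\widetilde{D_4}$, $n\le 4$ for $\widetilde{E_6}$, $n\le 3$ for $\widetilde{E_7}$, $n\le 2$ for $\widetilde{E_8}$) and states explicitly that attempts at general $n$ ran into combinatorial difficulties. So there is no paper proof to compare against, and your proposal is, by your own account, not a proof either: your Step~2 --- a closed evaluation of the specialized sum $\mathbb{H}_{\bm{\mu}}(-\sqrt q,\sqrt q/t)$ into the infinite product of Theorem~\ref{8.1}, i.e.\ a principal-specialization-type collapse of the modified Macdonald sums attached to the near-rectangular types $(n-1,1)$, $(n,n)(n,n)(n,n)(n,n-1,1)$, etc. --- is exactly the missing ingredient, and it is the same obstacle that leaves the statement conjectural in the paper. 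So as a proof attempt there is a genuine gap, but you have diagnosed it correctly rather than papered over it.

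Your Step~1, on the other hand, is sound and consistent with the paper's framework: Theorem~\ref{8.1} identifies the left-hand product with $\sum_n s^nP_n(q,t)$; the full $n=1$ $P=W$ theorem makes $(M_D(1),M_B(1),h,\A^1,\Phi)$ a homological $P=W$ package; Theorem~\ref{4.22} together with Theorem~\ref{moduli} (which gives $M_D(1)^{[n]}\cong M_D(n)$) then upgrades this to $(M_D(n),M_B(1)^{[n]},h^{[n]},\A^n,\Phi^{[n]})$, so the left-hand side is also the weight-refined series of $H^*\bigl(M_B(1)^{[n]}\bigr)$. This reformulation --- that the conjecture amounts to the HLRV prediction for $M_B(n)$ agreeing with the known weight series of the Hilbert scheme $M_B(1)^{[n]}$ --- is implicit in the paper (it is the ``byproduct'' of Theorem~\ref{4.22} mentioned in the introduction) but is worth making explicit; note only that $\bm{\mu}$ varies with $n$ on the right-hand side, and that even granting Conjecture~1.2.1 of \cite{hodge number} one would still need a numerical $P=W$ statement comparing $M_D(n)$ with $M_B(n)$ (or, equivalently, a comparison of $M_B(n)$ with $M_B(1)^{[n]}$), which is unavailable for $n\ge 2$ since those character varieties are not understood. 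Hence no combination of the paper's results closes Step~2, and the statement remains open exactly where you located the difficulty; for the $\widetilde{A_0}$ case the identity is the cohomological form of Conjecture~4.2.1 of \cite{hodge number 2}, as the paper notes.
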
 

\begin{remark}
In fact, the conjecture for $\widetilde{A_0}$ case is the cohomological version of Conjecture 4.2.1 in \cite{hodge number 2}, and the other four cases are new. With the help of Mathematica, we prove for $n\le 6$ for $\widetilde{D_4}$ case, $n\le 4$ for $\widetilde{E_6}$ case, $n\le3$ for $\widetilde{E_7}$ and $n\le2$ for $\widetilde{E_8}$ case. Efforts to prove for general $n$ so far all ended with combinatorial difficulties. More understanding on $q,t$-Kostka numbers would be helpful.
\end{remark}


\begin{thebibliography}{99}
\bibitem{BBD}
A. A. Beilinson, J. N. Bernstein, P. Deligne, Faisceaux pervers, \emph{Astérisque, vol. 100, Soc. Math.
France, Paris, 1982}.
\bibitem{hilb}
M. de Cataldo, L. Migliorini, The Douady space of a complex surface, \emph{Adv. in Math. 151 (2000), 283-312}.
\bibitem{hodge}
M. de Cataldo, L. Migliorini, The Hodge theory of algebraic maps, \emph{Ann. Scient. Ec. Norm. Sup., 4e serie, t.38, 2005, p. 693-750.}
\bibitem{surface}
M. de Cataldo, L. Migliorini, Intersection forms, topology of algebraic maps and motivic decompositions for resolutions of threefolds, \emph{Algebraic cycles and Motives, London Mat.Soc., Lecture Notes Series, n.343, vol.1, pp.102-137, Cambridge University Press, Cambridge, UK, 2007.}
\bibitem{exchange}
M. de Cataldo, T. Hausel, L. Migliorini, Exchange between perverse and weight filtration for the Hilbert schemes of points of two surfaces, \emph{Journal of Singularities, vol 7 (2013), 23-38.}
\bibitem{P=W}
M. de Cataldo, T. Hausel, L. Migliorini, Topology of Hitchin systems and Hodge theory of character varieties: the case A1, \emph{Annals of Mathematics 175 (2012), 1329-1407}.
\bibitem{perverse number}
W. Chuang, D. Diaconescu, R. Donagi, T Pantev, Parabolic Refined Invariants and Macdonald Polynomials, \emph{Communications in Mathematical Physics, 335, 1323-1379(2015).}
\bibitem{deligne}
P. Deligne, Th\'eorie de Hodge. II. \emph{Inst. Hautes \'Etudes Sci. Publ. Math. 40 (1971), pp. 5-57.}
\bibitem{character}
P. Etingof, A. Oblomkov, E. Rains, Generalized double affine Hecke algebras of rank 1 and quantized del Pezzo surfaces, \emph{Advances in Mathematics Volume 212, Issue 2, 10 July 2007, Pages 749-796}.
\bibitem{grochenig}
M. Gr\"ochenig, Hilbert schemes as moduli of Higgs bundles and local systems, \emph{Int Math Res Notices (2014) 2014 (23): 6523-6575. doi: 10.1093/imrn/rnt167}.
\bibitem{hodge number}
T. Hausel, E. Letellier, F. Rodriguez-Villegas, Arithmetic harmonic analysis on character and quiver varieties, \emph{Duke Mathematical Journal, Vol. 160, No. 2, © 2011 DOI 10.1215/00127094-1444258}.
\bibitem{hodge number 2}
T. Hausel, E. Letellier, F. Rodriguez-Villegas, Arithmetic harmonic analysis on character and quiver varieties II, \emph{Advances in Mathematics 234 (2013) 85-128.}
\bibitem{sheaves}
M. Kashiwara, P. Schapira, Sheaves on manifolds, \emph{Springer, 1990}.
\bibitem{lehn0}
M. Lehn, Chern classes of tautological sheaves on Hilbert schemes of points on surfaces, \emph{Inventiones Mathematicae, 1999, 136(1):157-207.}
\bibitem{lehn}
M. Lehn, C. Sorger, The cup product of the Hilbert scheme for K3 surfaces, \emph{Inventiones mathematicae
May 2003, Volume 152, Issue 2, pp 305-329}.
\bibitem{macdonald}
I. G. Macdonald, Symmetric products of an algebraic curve, \emph{Topology, Vol. 1, pp. 319-343.}
\bibitem{mhm}
L. Maxim, M. Saito, J. Sch\"urmann, Symmetric products of mixed Hodge modules, \emph{Journal de Math\'ematiques Pures et Appliqu\'ees, Volume 96, Issue 5, November 2011, Pages 462-483}.
\bibitem{harmonic}
C.T. Simpson, Harmonic bundles on noncompact curves, \emph{Journal of the American Mathematical Society, Vol. 3, No. 3 (Jul., 1990), pp. 713-770}.

\end{thebibliography}
\end{document}